\documentclass{article}

\usepackage[margin=1in]{geometry}
\usepackage[toc,page]{appendix}
\usepackage{etoolbox}
\appto\appendix{\addtocontents{toc}{\protect\setcounter{tocdepth}{0}}}
\usepackage[OT2,OT1]{fontenc}
\usepackage{lmodern}
\usepackage{amsmath,amssymb,amsfonts,amsthm,amscd,mathtools}
\usepackage{mathrsfs}
\usepackage{stmaryrd}
\usepackage{dsfont}
\usepackage{multirow}
\usepackage{enumitem}
\usepackage{comment}
\usepackage{verbatim}
\usepackage{xcolor}
\usepackage{graphicx}
\usepackage{float}
\usepackage{upgreek}
\usepackage{colonequals}
\usepackage{romannum}
\usepackage{relsize}
\usepackage{url}
\usepackage{tikz}
\usepackage{tikz-cd}
\usetikzlibrary{cd,arrows,matrix,positioning}
\usepackage[all,cmtip]{xy}
\usepackage[colorlinks=true,linkcolor=black,citecolor=black,urlcolor=black]{hyperref}

\numberwithin{equation}{section}
\numberwithin{table}{section}
\theoremstyle{plain}
\newtheorem{theorem}{Theorem}[section]

\newtheorem{lemma}[theorem]{Lemma}
\newtheorem{proposition}[theorem]{Proposition}

\newtheorem{corollary}[theorem]{Corollary}

\newtheorem*{theorem*}{Theorem}
\newtheorem*{definition*}{Definition}
\newtheorem*{claim*}{Claim}
\theoremstyle{definition}
\newtheorem{definition}[theorem]{Definition}

\newtheorem{example}[theorem]{Example}
\theoremstyle{remark}
\newtheorem{remark}[theorem]{Remark}

\newif\ifshowcomments
\showcommentstrue

\DeclareFontEncoding{OT2}{}{}

\makeatletter
\newcommand{\setcmd}[2]{%
  \@ifundefined{#1}
    {\expandafter\newcommand\csname #1\endcsname{#2}}
    {\expandafter\renewcommand\csname #1\endcsname{#2}}}
\makeatother

\newcommand{\bA}{\mathbb{A}}
\newcommand{\bB}{\mathbb{B}}
\newcommand{\bC}{\mathbb{C}}
\newcommand{\bD}{\mathbb{D}}
\newcommand{\bE}{\mathbb{E}}
\newcommand{\bF}{\mathbb{F}}
\newcommand{\bG}{\mathbb{G}}

\newcommand{\bK}{\mathbb{K}}

\newcommand{\bN}{\mathbb{N}}

\newcommand{\bP}{\mathbb{P}}
\newcommand{\bQ}{\mathbb{Q}}
\newcommand{\bR}{\mathbb{R}}
\newcommand{\bS}{\mathbb{S}}
\newcommand{\bT}{\mathbb{T}}

\newcommand{\bX}{\mathbb{X}}
\newcommand{\bY}{\mathbb{Y}}
\newcommand{\bZ}{\mathbb{Z}}

\setcmd{A}{\bA}
\setcmd{B}{\bB}
\setcmd{C}{\bC}
\setcmd{D}{\bD}
\setcmd{E}{\bE}
\setcmd{F}{\bF}
\setcmd{G}{\bG}
\setcmd{K}{\bK}
\setcmd{N}{\bN}
\setcmd{P}{\bP}
\setcmd{Q}{\bQ}
\setcmd{R}{\bR}
\setcmd{T}{\bT}
\setcmd{X}{\bX}
\setcmd{Y}{\bY}
\setcmd{Z}{\bZ}

\newcommand{\kbar}{\overline{k}}
\newcommand{\Kbar}{\overline{K}}
\newcommand{\Gm}{\mathbb{G}_m}

\newcommand{\cA}{\mathcal{A}}
\newcommand{\cB}{\mathcal{B}}

\newcommand{\cI}{\mathcal{I}}
\newcommand{\cJ}{\mathcal{J}}
\newcommand{\cK}{\mathcal{K}}
\newcommand{\cL}{\mathcal{L}}
\newcommand{\cM}{\mathcal{M}}

\newcommand{\cP}{\mathcal{P}}

\newcommand{\cU}{\mathcal{U}}

\newcommand{\cZ}{\mathcal{Z}}

\newcommand{\sB}{\mathscr{B}}

\newcommand{\sS}{\mathscr{S}}

\setcmd{s}{\sS}

\newcommand{\fm}{\mathfrak{m}}
\newcommand{\fn}{\mathfrak{n}}

\newcommand{\fp}{\mathfrak{p}}

\DeclareMathOperator{\Gal}{Gal}

\DeclareMathOperator{\Hom}{Hom}

\DeclareMathOperator{\Spec}{Spec}
\DeclareMathOperator{\Sp}{Sp}

\DeclareMathOperator{\ord}{ord}

\DeclareMathOperator{\red}{red}

\newcommand{\ol}[1]{\overline{#1}}

\newcommand{\eps}{\varepsilon}

\newcommand{\hhat}{\hat{h}}

\newcommand{\an}{\mathrm{an}}

\newcommand{\Stab}{\operatorname{Stab}}
\newcommand{\Sk}{\operatorname{Sk}}
\newcommand{\ch}{\operatorname{char}}
\newcommand{\id}{\operatorname{id}}
\newcommand{\charac}{\operatorname{char}}
\newcommand{\perf}{\mathrm{perf}}
\newcommand{\sep}{\mathrm{sep}}
\newcommand{\sm}{\mathrm{sm}}
\newcommand{\abs}[1]{\lvert#1\rvert}
\author{Fateme Sajadi}

\title{Frey-Mazur theorem for semistable elliptic curves over function fields}
\date{}
\begin{document}

\maketitle

\begin{abstract}
Let $k$ be a field, let $B/k$ be a smooth projective geometrically integral curve, and put $K=k(B)$. We prove a Frey--Mazur type theorem for nonisotrivial semistable elliptic curves over $K$. More precisely, after fixing the genus of $B$ and the characteristic of $k$, a sufficiently large prime-to-characteristic torsion representation determines the $K$-isogeny class. 
\end{abstract}

\pagenumbering{arabic}

\section{Introduction}
The Frey--Mazur philosophy originating in \cite{MazurGoldfeld1978} predicts that for sufficiently large $N$, the $N$-torsion subgroup of an elliptic curve along with its Galois structure determines the isogeny class of the elliptic curve. Over a number field, the familiar conjecture is usually stated for prime torsion.   We ask the same question for geometric function fields.  For a complex base curve, Bakker and Tsimerman proved that isomorphic prime-torsion monodromy representations of two nonisotrivial elliptic families force the families to be isogenous once the prime exceeds a constant depending only on the gonality of the base \cite[Thm~2]{bakker2016p}.  Their proof uses complex hyperbolic geometry.  In this article, we impose semistability and use a global height gap. This gives a statement over arbitrary constant fields in every characteristic, when we look at composite levels prime to the characteristic of the base. 

Let $k$ be a field. The \emph{characteristic exponent} of $k$ is
\[
 p=\begin{cases}
 1,&\ch(k)=0,\\
 \ch(k),&\ch(k)>0.
 \end{cases}
\]
Let $B$ be a geometrically irreducible, smooth projective curve over $k$ and let $K:=k(B)$.  For a closed point $v\in B$, write $d_v=[\kappa(v):k]$ and normalize the associated absolute value by
\[
 -\log |a|_v=d_v\ord_v(a),\qquad a\in K^\times.
\]
Let $|B|$ be the set of closed points of $B$. For a finite set $\Sigma\subset |B|$, put
\[
 \deg_k\Sigma=\sum_{v\in\Sigma}d_v.
\]
An elliptic curve $E/K$ is nonisotrivial if $j(E)\notin\kbar$. Equivalently, its $j$-map $j_E:B\dashrightarrow\mathbb P^1_k$ is nonconstant.  It is semistable if it has either good or multiplicative reduction at every closed point of $B$. Denote $G_K:=\Gal(K^{\sep}/K)$ and for an elliptic curve $E$ over $K$, let $E[N]$ denote the geometric $N$-torsion with its natural $G_K$-action. Note that when $(N,p)=1$ then $N$-torsion points are defined over $K^{\sep}$.

\begin{theorem}\label{thm:main}
Fix integers $b\geq 0$ and $p\geq 1$, where $p=1$ or $p$ is prime.  There is a constant $C(b,p)$ with the following property. Let $k$ have characteristic exponent $p$, let $B/k$ be a smooth projective geometrically integral curve of genus at most $b$, and put $K=k(B)$.  Let $E,F/K$ be nonisotrivial semistable elliptic curves.  If $(N,p)=1$, $N>C(b,p)$, and
\[
 \phi:E[N]\xrightarrow{\sim}F[N]
\]
is an isomorphism of $G_K$-modules, then $E$ and $F$ are isogenous over $K$.
\end{theorem}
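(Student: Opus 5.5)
We follow the classical Frey--Mazur strategy via the graph of $\phi$: consider the abelian surface $A=E\times F$ and the subgroup $\Gamma_\phi=\{(x,\phi(x)):x\in E[N]\}\subset A[N]$. Since $\phi$ is $G_K$-equivariant, $\Gamma_\phi$ is a $K$-rational finite étale subgroup scheme of order $N^2$, and we form the quotient $A'=A/\Gamma_\phi$, which is again an abelian surface over $K$ with semistable reduction everywhere. Assume $E$ and $F$ are not isogenous; we aim to contradict $N>C(b,p)$. The non-isogeny hypothesis gives $\Hom_K(E,F)=0$, and hence $\operatorname{End}(A)=\operatorname{End}(E)\times\operatorname{End}(F)$. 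Since $E,F$ are nonisotrivial, these endomorphism rings are $\bZ$ after passing to $\kbar K$ (there is no CM for nonisotrivial curves).

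The key quantitative input is a comparison of (differential/Faltings-type) heights, computed geometrically on $B$. For a semistable elliptic curve $E/K$, the height is $h(E)=\deg\omega_{\mathcal E}=\frac{1}{12}\deg_k(\Delta_{\min})=\frac{1}{12}\deg j_E$. For a prime-to-$p$ isogeny $A\to A'$ of semistable abelian varieties over a function field, the degree of the Hodge bundle is unchanged. This is because the kernel is étale, so the induced map of Néron models is étale on identity components and $\omega$ is preserved, up to the correction coming from component groups. That correction is governed by the toric part and is controlled at multiplicative places. We make this precise by tracking how $\Gamma_\phi$ meets the toric part at each bad place $v$: there $E[N]$ and $F[N]$ have canonical filtrations $\mu_N\subset E[N]$ (over $K_v^{\sep}$), and $\phi$ must respect them, forcing the orders of $\ord_v\Delta_E$ and $\ord_v\Delta_F$ to have related $N$-adic behaviour. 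Comparing the two sides, we want to show that a certain divisor supported on the bad places, namely $\ord_v(\Delta_E)\ord_v(\Delta_F)$-type quantities modulo $N$, must be $N$-divisible. Together with Szpiro-type bounds in function fields, $\deg\Delta\le 6(2b-2+\deg_k\Sigma)p^{e}$ (with inseparability handled via a Frobenius twist), this gives total degrees bounded linearly in the conductor.

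The decisive step is a \emph{height gap}. Via Zarhin's trick or a polarization argument, $A'$ carries a principal polarization whose degree relative to the product polarization is $N^2$. We then choose an elliptic curve, or a nontrivial map $E\to A'\to F$ after composing with projections, of degree controlled by $N$. Concretely, $\operatorname{Hom}$ of the lattice $\operatorname{NS}(A')$ contains classes whose intersection numbers are forced to be $\ge cN$. Meanwhile, heights on $B$ (deg of Hodge bundle, bounded by Szpiro by $C'(b,p)\cdot\deg_k\Sigma$) bound those intersection numbers from above by the Hodge index theorem on the Kodaira--Néron model surface. More precisely, following the surface approach, we view $E$ and $F$ as elliptic surfaces $S_E,S_F\to B$ and $\Gamma_\phi$ as giving a correspondence. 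We use the Shioda--Tate/height pairing on the Mordell--Weil lattice of $E_{F}$ over $K(F)$, or equivalently $\Hom(F,E)$ as sections. Our plan is to construct a nontrivial section of small height (using $\phi$) and to show that a section of height $<$ a gap $\epsilon(b,p)$ must be torsion, hence giving an isogeny.

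We expect the main obstacle to be this gap step: the lower bound for nontorsion canonical heights in terms of $b$ and $p$ alone, uniform over $k$, together with the precise local analysis at multiplicative places showing that $\phi$ yields a point whose height decays like $1/N$. The rest, namely the étaleness of $\Gamma_\phi$ and Szpiro-type bounds, is comparatively routine.
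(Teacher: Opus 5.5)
Your proposal has a genuine gap at the step you call decisive. You plan to construct a nontrivial section of small height using $\phi$ in the Mordell--Weil group of $E$ over $K(F)$, equivalently in $\Hom(F,E)$. By rigidity, every $K$-morphism $F\to E$ is a homomorphism followed by a translation, so $E(K(F))=E(K)\oplus\Hom_K(F,E)$. Under your standing assumption that $E,F$ are not isogenous, the second summand is $0$, so the only sections are constant. An isomorphism of finite Galois modules $E[N]\to F[N]$ gives no rational map $F\to E$ at all; producing such a section is the conclusion, not a step. Your closing logic is also reversed: a section of height below a gap being torsion would be a constant torsion point, not an isogeny, since nonzero elements of $\Hom(F,E)$ are never torsion.

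The surrounding ingredients do not fill the hole. Invariance of the Hodge degree under the prime-to-$p$ isogeny $A\to A/\Gamma_\phi$ only gives $h_\omega(A/\Gamma_\phi)=h_\omega(E)+h_\omega(F)$, which contains no $N$. Descending $N(\lambda_E\times\lambda_F)$ to a principal polarization requires $\Gamma_\phi$ to be isotropic, i.e.\ $r_\phi=-1$. Scalar changes only move $r_\phi$ by squares, so this cannot always be arranged, and Zarhin's trick does not help on $A'$ itself. The NS-lattice/Hodge-index inequality you allude to is never specified. Finally, your local analysis cannot force any $N$-divisibility. The congruences $cn_{E,v}\equiv cn_{F,v}\equiv0$ and $an_{E,v}\equiv dn_{F,v}\pmod N$ are satisfied with small exponents (e.g.\ $n_{E,v}=n_{F,v}$ and $\phi$ diagonal), so local information alone never bounds $N$.

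What is missing is a global mechanism that turns this local information into a bound on $N$. The paper's argument runs as follows.

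First, Pesenti--Szpiro plus pigeonholing gives a set $\Sigma$ of jointly multiplicative places with fixed bounded exponents $(u_E,u_F)$ and $\deg_k\Sigma\gg h_\omega(E)+h_\omega(F)$.

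Second, after lowering the level by a bounded factor and rescaling $\phi$ by one global scalar, the local congruences show that the graph $\Gamma_\psi$ retracts at every $v\in\Sigma$ into the signed diagonal of the skeleton $\mathbb R/u_E\mathbb Z\times\mathbb R/u_F\mathbb Z$ of $E\times F$. This is a proper closed subset.

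Third, a uniform height-gap theorem applies. It rests on Looper's Arakelov--Green functions and on Fekete points equidistributing to the Chambert-Loir measure, which has full support on the skeleton, made uniform by ultralimits. It says such a restricted set either has total N\'eron--Tate height $\ge\delta h_\omega(A)>0$ or lies on a divisor in $|L_A^{\otimes q}|$ with $q=q(b,p)$. Torsion points force the latter.

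Fourth, the divisor descends to $K$, and a zero lemma on $E\times F$ (stabilizers of components, Hodge index) bounds $N$. The only escape is a component that is a translate of a Galois-stable elliptic subgroup projecting nontrivially onto both factors, and that subgroup is where the isogeny actually comes from.

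Your reductions (Frobenius to separable $j$, Szpiro, Tate filtrations) are in line with the paper. But none of the three decisive ingredients appears in your plan: skeletal confinement of the graph, the equidistribution height gap, and the zero lemma. Without a replacement for them, the argument never reaches a bound on $N$.
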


 If $p>1$, the condition $(N,p)=1$ is indispensable since $E[p^a](\Kbar)$ need not be a free rank-two $\mathbb Z/p^a\mathbb Z$-module, and may have fewer than $p^{2a}$ geometric points. Nonisotriviality is also necessary. For example, over an algebraically closed constant field, the prime-to-$p$ torsion representations of constant elliptic curves are trivial.

\subsection{Methods of proof}
This paper was inspired by the recent work of Looper and Yap. They proved uniform boundedness of torsion on abelian varieties over characteristic-zero function fields with no isotrivial part  \cite[Thm~1.1]{looper2026uniform}.  A new local input is their uniform lower bound for Looper's determinant-valued Arakelov--Green function when all points reduce to the identity component of the N\'eron model \cite[Thm~1.5]{looper2026uniform}.  Analytically, those points retract to the origin of the canonical Berkovich skeleton.

The only property of the origin used in \cite[Thm 1.5]{looper2026uniform} is that the origin is
a proper closed set.  In this paper, we expand this idea to any proper closed subset of the skeleton under suitable conditions.  Fekete configurations for an ample metrized line bundle equidistribute to the canonical Chambert--Loir measure. On a semistable abelian variety that measure has full support on the canonical skeleton.  Therefore, configurations constrained over any proper closed part of the skeleton cannot be asymptotically Fekete.  We make this observation uniform and apply it to the diagonal inside the product  $\bS^1\times \bS^1$ coming from the graph of the isomorphism $E[N]\to F[N]$ inside the skeleton of the product $E\times F$. Here we explain the setup and the main steps of the proof. 

Berkovich analytification requires a chosen non-Archimedean absolute value and a complete ground field \cite{berkovich1990spectral}.   For a closed point $v\in |B|$, let $\operatorname{ord}_v$ be normalized to have value group $\mathbb Z$, and set
\[
  d_v=[\kappa(v):k],\qquad  |a|_v=\exp\bigl(-d_v\operatorname{ord}_v(a)\bigr)  \quad(a\in K^\times),  \qquad C_v=\widehat{\overline{K_v}}.
\]
The absolute value on $K$ extends uniquely to $C_v$.  With this normalization,
\[
  \sum_{v\in |B|}\log|a|_v=0\qquad(a\in K^\times).
\]
If $E/K$ is finite, $B_E$ is the normalization of $B$ in $E$, and $w\in |B_E|$ lies above $v$, we normalize the absolute value at $w$ by
\[
  -\log|b|_w=[\kappa(w):k]\operatorname{ord}_w(b) \qquad(b\in E^\times).
\]
Consequently, if \(e(w/v)=e_w,\) and \(f(w/v)=f_w=[\kappa(w):\kappa(v)]\), then for any $a\in K^\times$
\[
  |a|_w=|a|_v^{e_wf_w},\quad  \qquad \sum_{w\mid v}e_wf_w=[E:K].
\]

Let $(A,\lambda)/K$ be a principally polarized abelian variety. If $P_A$ denotes the normalized Poincar\'e bundle on $A\times A^\vee$, set
\begin{equation}\label{eq:intro-line-bundle}
 \mathcal M_\lambda=(\id_A,\lambda)^*P_A,  \qquad L_A=\mathcal M_\lambda^{\otimes 3}.
\end{equation}
The bundle $\mathcal M_\lambda$ represents $2\lambda$, so $L_A$ is symmetric, very ample, and normally generated. We write $h_{\omega,K}(A)$ for the stable Hodge height and $\widehat h_{A,L_A,K}$ for the associated N\'eron--Tate height. Here $\omega$ denotes the Hodge bundle and is unrelated to the ultrafilter used below.
We now describe the skeletal statement.

Fix $g$ and $p$.  Choose $\ell_p\geq 3$ prime to $p$ and put
\[
\kappa_p=\begin{cases} \mathbb Q(\zeta_{\ell_p}),&p=1,\\ \mathbb F_p(\zeta_{\ell_p}),&p>1,
\end{cases}
\]
where $\zeta_{\ell_p}$ is a primitive $\ell_p$-th root of unity. Let $X=\cA_{g,\ell_p}$ be the fine moduli scheme of principally polarized abelian varieties of dimension \(g\) with full level-\(\ell_p\) structure and let $\overline X$ be a projective toroidal compactification, and choose a finite affine cover $X=\bigcup_aU_a$ carrying the dynamical data constructed in Section \ref{sec:uniform-package}. See also \cite[Prop 6.4]{looper2026uniform}. The fixed models give boundary functions $\lambda_{\partial X}$ and $\lambda_{\partial U_a}$. A bad-reduction fiber represented in $U_a$ is centered in that chart if
\[
\lambda_{\partial U_a}(A)\le\lambda_{\partial X}(A).
\]

To define compatibility, consider centered fibers $A_j/C_j$ over complete algebraically closed non-Archimedean fields where $\kappa_p$ is trivially valued, proper closed subsets $Z_j\subsetneq S(A_j)$, and positive rescaling factors $\varepsilon_j=\lambda_{\partial X}(A_j)^{-1}$.  For a nonprincipal ultrafilter $\omega$, set
\[
  \mathcal U^{\varepsilon}=\left\{(a_j)\in\prod_j C_j: \sup_j |a_j|^{\varepsilon_j}<\infty\right\}, \qquad |(a_j)|_\omega=\lim_\omega |a_j|^{\varepsilon_j}.
\]
The quotient of $\mathcal U^{\varepsilon}$ by the kernel of this seminorm is the complete algebraically closed valued field $H(\omega)$. See Lemma~\ref{lem:ultralimit-field} below, which proves this for the varying fields used here. See Favre--Gong's construction \cite[Prop~3.9]{favre2025non}.  The relative moduli point and abelian scheme specialize to an abelian variety $A_\omega/H(\omega)$, called the \emph{fundamental ultralimit}.

\begin{definition}\label{def:compatibility}
The centered sequence $(A_j,Z_j)$ is \emph{$\omega$-compatible} if there is a proper closed set $Z_\omega\subsetneq S(A_\omega)$ such that, for every sequence $x_j\in A_j(C_j)$,
\[
  r_{A_j}(x_j)\in Z_j\text{ for every }j \quad\Longrightarrow\quad r_{A_\omega}(x_\omega)\in Z_\omega.
\]
A class of pairs is \emph{admissible} if every centered sequence in the class is $\omega$-compatible for at least one nonprincipal ultrafilter $\omega$.  An \emph{admissible system} is one such class for every chart $U_a$.
\end{definition}

\begin{definition}\label{def:universal-restriction}
Let $A/K$ be an abelian variety, let $\Sigma\subset |B|$ be finite, and let $Z_v\subset S(A_{C_v})$ be closed for every $v\in\Sigma$.  A finite set $P\subset A(\overline K)$ is \emph{universally $\Sigma$-restricted} if, for every $x\in P$, every $v\in\Sigma$, and every $K$-embedding of a finite field of definition of $x$ into $C_v$, one has
\[
  r_{A,C_v}(x)\in Z_v.
\]
Equivalently, after placing $P$ in a finite extension $E/K$, this condition holds at every place $w\mid v$.
\end{definition}

In order to pass from the fine level moduli space to an abelian variety without a chosen level structure, fix a finite separable extension $K'/K$ over which $A[\ell_p]$ is trivial.    For $w\mid v$, put $c_{w/v}=e(w/v)f(w/v)$.  Our normalization gives $|a|_w=|a|_v^{c_{w/v}}$ for $a\in K$, and hence, after choosing the corresponding extension of the valuation, we get a natural scaling homeomorphism
\[
\sigma_{w/v}:S(A_{C_v})\longrightarrow S(A_{C_w}).
\]
It multiplies tropical coordinates by $c_{w/v}$ and commutes with the canonical retractions.  We therefore transport a skeletal restriction $Z_v\subset S(A_{C_v})$ by setting
\[
  Z_w=\sigma_{w/v}(Z_v).
\]
We also ask that every transported pair $(A_{C_w},Z_w)$, endowed with the chosen level structure, belong to one of the admissible chart classes used in the uniform local theorem.  It is enough to impose this condition for one bounded level-trivializing extension $K'/K$ and every $w\mid v$.  We call a system satisfying this condition \emph{level-stably admissible}.

\begin{theorem}\label{thm:heightgap}
Fix $g\geq 1$, $M_0>0$, a characteristic exponent $p$, and a level-stably admissible system of skeletal restrictions as above.  There are constants $q\geq 2$ and $\delta>0$, depending only on these data, with the following property.  Let $(A,\lambda)/K$ be any $g$-dimensional principally polarized abelian variety, and suppose that $h_{\omega,K}(A)>0$.  Let $\Sigma$ be a finite set of semistable bad-reduction places, prescribe a proper closed set $Z_v\subsetneq S(A_{C_v})$ from the given system for every $v\in\Sigma$, and assume that
\[
\deg_k\Sigma\geq \frac{h_{\omega,K}(A)}{M_0}.
\] 
If $P\subset A(\overline K)$ is a universally $\Sigma$-restricted set of distinct points and $|P|\geq h^0(A,L^{\otimes q})$, then either a nonzero section of $L_{\overline K}^{\otimes q}$ vanishes on $P$, or
\[
\sum_{x\in P}\widehat h_{A,L,K}(x) \geq \delta h_{\omega,K}(A)>0.
\]
\end{theorem}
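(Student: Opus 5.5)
The plan is to follow the Looper--Yap strategy: decompose the Néron--Tate height of $P$ into local contributions through a determinant-valued Arakelov--Green function, and get the gain from the restricted places in $\Sigma$. First I would pass to a finite extension $E/K$ over which $P$ and the level-$\ell_p$ structure are defined. Heights and $h_{\omega,K}$ are invariant after dividing by $[E:K]$, and the transported sets $Z_w=\sigma_{w/v}(Z_v)$ keep the restriction at every $w\mid v$. The level structure then gives a $K'$-point of $X=\cA_{g,\ell_p}$. Over each place it lands in some chart $U_a$, and I choose $a$ so that the fiber is centered. Following \cite{looper2026uniform}, assume no nonzero section of $L^{\otimes q}$ vanishes on $P$, and choose a subset $P_0\subset P$ of size $n=h^0(A,L^{\otimes q})$ that is unisolvent for sections of $L^{\otimes q}$. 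The determinant of the evaluation matrix of a basis of $H^0(A,L^{\otimes q})$ on $P_0$ is then a nonzero element. The product formula applied to it gives
\[
 \sum_{x\in P_0}\widehat h(x)\;\geq\;\frac{1}{[E:K]}\sum_{w}\lambda_w(P_0)\;-\;(\text{global term}),
\]
where $\lambda_w$ is the local determinant (Fekete-type) energy relative to the canonical metric. The global term is controlled by $h_{\omega,K}(A)$, via the uniform comparison between the Hodge height and the height of the sections, as in the dynamical package of Section~\ref{sec:uniform-package}.

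Second, I would establish the local bounds. At every place, the uniform lower bound of \cite[Thm~1.5]{looper2026uniform}, or its analogue at arbitrary places, gives $\lambda_w(P_0)\geq -c\,\lambda_{\partial X}(A_w)$. Here $c$ can be taken as small as we like relative to the main term once $q$ is large, because Fekete configurations equidistribute to the Chambert--Loir measure. Summing over places, $\sum_w d_w\lambda_{\partial X}(A_w)$ is comparable to $h_{\omega,K}(A)$, so these errors cost at most $\epsilon\, h_{\omega,K}(A)$. At the places $w\mid v\in\Sigma$, I want an improved bound $\lambda_w(P_0)\geq \eta\,\lambda_{\partial X}(A_w)$ with $\eta>0$ uniform. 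Here I use the fact that the semistability and bad reduction of $v$ give $\lambda_{\partial X}(A_w)\gtrsim d_w$. Summing over $\Sigma$ and using $\deg_k\Sigma\geq h_{\omega,K}(A)/M_0$ then produces a positive gain of size $(\eta/M_0')\,h_{\omega,K}(A)$. Choosing $q$ so that $\epsilon<\eta/(2M_0')$ yields $\delta$.

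The main obstacle is the uniform improved local bound at restricted places. I would prove it by contradiction through the fundamental ultralimit. Suppose there is a sequence of centered fibers $A_j$, restrictions $Z_j$, and configurations constrained to $Z_j$ whose normalized energies $\varepsilon_j\lambda_j$ tend to $0$. Passing to $H(\omega)$ via Lemma~\ref{lem:ultralimit-field}, $\omega$-compatibility from Definition~\ref{def:compatibility} places the ultralimit configurations in the retraction preimage of $Z_\omega\subsetneq S(A_\omega)$. Continuity of the rescaled energies gives an asymptotically Fekete family for $A_\omega$ supported over $Z_\omega$. This contradicts equidistribution to the Chambert--Loir measure, whose support is all of $S(A_\omega)$. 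The delicate points are that $q$ and the configurations vary simultaneously, which I would handle with a diagonal argument on $q$, and the continuity of energies under the ultralimit, including the uniformity over the finitely many charts $U_a$. This step fixes $\eta$, and hence $q$ and $\delta$, depending only on $g$, $M_0$, $p$, and the admissible system.

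Finally, since $P_0\subset P$ and heights are nonnegative, the bound for $P_0$ gives the bound for $P$.
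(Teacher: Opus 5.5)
\textbf{There is a genuine gap in the step you call the main obstacle, and it is exactly where the paper's proof departs from your outline.}

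The ultralimit argument cannot produce a gain $\eta>0$ at one prescribed level $q$ that is uniform over the admissible class. For the limit $A_\omega$ one only knows $\liminf_q\bigl(m_q(A_\omega;Z_\omega)-r(F_\omega)\bigr)>0$, with a threshold $q_0$ that depends on the limit object. At a fixed level the limit may well satisfy $m_q(A_\omega;Z_\omega)\le r(F_\omega)$, so there is no contradiction. Moreover, Lemma~\ref{lem:green-continuity} passes to the ultralimit only at a fixed level and recipe, so you cannot take limits of configurations at levels $q_j\to\infty$ to manufacture an ``asymptotically Fekete family for $A_\omega$''.

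What the diagonal argument actually gives (Theorem~\ref{thm:uniform-local}) is weaker. Fix a sequence $k_1<k_2<\cdots$ in advance. Then every admissible pair is $n$-good, i.e.\ $m_{k_n}(A;Z)\ge r(F)+\frac1n\lambda_{\partial X}(A)$, for \emph{some} $n$ in a finite set $I_a$. Both the level $k_n$ and the gain $1/n$ depend on the pair. So your quantifier order ``fix $\eta$, then choose $q$ with $\epsilon(q)<\eta/(2M_0')$'' is not available: the gain is tied to the level at which it occurs.

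\textbf{The missing idea is the weighted pigeonhole} of Lemma~\ref{lem:positive-sequence} and Theorem~\ref{thm:global-alternative}.
\begin{enumerate}
\item Choose $k_n=\lceil D_0n^3\rceil$ and weights $\rho_{a,n}=1/(t\sqrt{k_n})$, so that $\sum_{a,n}\rho_{a,n}<1$ and, at every level, $\frac{\rho_{a,n}}{G_{g,p}M_0n}>C_a\frac{\log k_n}{k_n}$.
\item Pass to the level field $K'$, of degree $D\le G_{g,p}$.
\item Partition the places above $\Sigma$ by the chart $a$ in which they are centered and the index $n$ at which they are good.
\item Select one class $W_{a,n}$ carrying boundary mass at least $\rho_{a,n}\frac{D}{G_{g,p}}\deg_k\Sigma$.
\item Run the global argument at the single level $k_n$, with the single lift $F_a$ and good basis of that chart, and with $q=\max k_n$. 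Non-vanishing of sections at level $q$ implies non-vanishing at level $k_n$.
\end{enumerate}
A single global chart is needed anyway for the product formula, so choosing a chart place by place, as you propose, is not enough.

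A secondary point: the loss at the remaining places is not bounded place by place by a small multiple of $\lambda_{\partial X}(A_w)$ ``by equidistribution''. Equidistribution is qualitative and gives no rate. The loss is controlled only globally, by Looper's determinant estimate in Theorem~\ref{thm:dynamical-data}:
\[
\tfrac{1}{[E:K_0]}\textstyle\sum_w\eta_{k_n,w}\le C_a\tfrac{\log k_n}{k_n}h_{\omega}.
\]
The gain on $W_{a,n}$ beats this loss only because $k_n$ was chosen beforehand to make $\delta_{a,n}>0$.
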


Note that only the selected places $\Sigma$ are required to be semistable. 

\begin{theorem}\label{thm:descent} 
Let $X/K$ be projective, let $L$ be a line bundle on $X$, and let $P\subset X(K^{\mathrm{sep}})$ be a finite $\operatorname{Gal}(K^{\mathrm{sep}}/K)$-stable set.  If a nonzero section of $L_{\overline K}$ vanishes on $P$, then a nonzero section of $L$ over $K$ vanishes on $P$.
\end{theorem}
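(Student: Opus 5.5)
Write $V=H^0(X,L)$, a finite-dimensional $K$-vector space because $X$ is projective. Flat base change gives $H^0(X_{\overline K},L_{\overline K})=V\otimes_K\overline K$. The plan is to express "vanishing on $P$" as the kernel of a $K$-linear map, and then to use that kernels of linear maps commute with field extension. The only real point is that this linear map is defined over $K$, because $P$ is Galois-stable and its points are separable.

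The first step is to choose a finite Galois extension $K'/K$ inside $K^{\sep}$ over which every point of $P$ is defined. Such an extension exists since $P$ is finite and each $x\in P$ has residue field finite separable over $K$. At each $x\in P$, pick a trivialization of the fiber $L_{K'}|_x\cong K'$. Evaluation then gives a $K'$-linear map
\[
\mathrm{ev}:V\otimes_KK'\to \prod_{x\in P}K'.
\]
Its kernel $W'$ is the space of sections over $K'$ that vanish on $P$, and it does not depend on the trivializations chosen. The main step is to show that $W'$ is stable under the semilinear action of $\Gal(K'/K)$ on $V\otimes_KK'$, where $\sigma$ acts on the second factor. This holds because $\sigma$ carries a section vanishing at $x$ to one vanishing at $\sigma(x)$, and $\sigma(x)\in P$ by Galois stability.

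Next, apply Galois descent for vector spaces (Speiser's lemma / Hilbert 90). A $\Gal(K'/K)$-stable $K'$-subspace of $V\otimes_KK'$ has the form $W\otimes_KK'$ with $W=(W')^{\Gal(K'/K)}=W'\cap V$. The hypothesis supplies a nonzero section over $\overline K$ vanishing on $P$. Since $\overline K\supset K'$, the kernel of $\mathrm{ev}\otimes_{K'}\overline K$ is $W'\otimes_{K'}\overline K$, so that section forces $W'\neq0$. Hence $W\neq0$, and any nonzero element of $W$ is a section of $L$ over $K$ vanishing on $P$.

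The step needing most care is the descent, since $K^{\sep}/K$ may be infinite and $\overline K/K$ inseparable. Both issues disappear once we reduce to the finite Galois $K'$ and use that kernels commute with the flat extension $\overline K/K'$. If "vanishing" is meant scheme-theoretically on the reduced subscheme $P$, the same argument applies. In that case $P$ descends to a reduced closed subscheme $P_K$ of $X$ by Galois stability, and $W$ is simply the kernel of the restriction map $V\to H^0(P_K,L|_{P_K})$, whose formation commutes with base change to $\overline K$.
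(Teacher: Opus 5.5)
Your proposal is correct and follows the paper's proof: pass to a finite Galois extension containing $P$, note that the evaluation kernel is stable under the semilinear Galois action and becomes the geometric kernel after extending scalars to $\overline K$ (hence is nonzero), and then apply Galois descent for vector spaces. Writing the kernel as that of an explicit evaluation map, and invoking flatness of $\overline K/K'$, just spells out the identification the paper states in one line.
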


Later in Section \ref{sec:compatibility}, we give some criteria on when the prescribed closed sets $Z_j\subset S(A_j)$ are admissible. One can modify the isomorphism $\phi:E[N]\to F[N] $ to get that for a big subset of the multiplicative reductions $v$, the graph of $\phi$ reduces to the diagonal inside $S(E_v)\times S(F_v)$. Thus we can apply the height gap Theorem \ref{thm:heightgap}. Since every element of the graph of $\phi$ consists of torsion points, the sum of their canonical heights is zero. Hence a section should vanish on all of them. We argue in Section \ref{sec divisors} that when $E, F$ are nonisogenous this would result in a bound on $N$.

\subsection*{Outline}
Section \ref{sec:product-formula} fixes the height normalizations and proves a positive determinant gap for configurations whose retractions lie in a proper closed subset of a fixed skeleton. Section \ref{sec:uniform-package} constructs dynamical charts in arbitrary characteristic and uses ultralimits to make this gap uniform for admissible collections. Section \ref{sec:compatibility} provides practical compatibility criteria. Section \ref{sec:uniform-global-proof} combines the local gaps with a global determinant estimate and proves Theorems \ref{thm:heightgap} and \ref{thm:descent}.

Section \ref{sec reductions} reduces the main theorem to perfect constant fields and separable $j$-maps. The semistable discriminant inequality of Szpiro then provides many places where both curves have multiplicative reduction with bounded discriminant exponents. Section \ref{sec admissibility} analyzes a prime-power torsion isomorphism at these places. After lowering the level by a bounded factor and multiplying the isomorphism by one global scalar, its graph retracts into a signed diagonal in each product skeleton. Then we prove that these signed diagonals form a level-stably admissible system. Section \ref{sec divisors} applies the height gap to the graph of a given isomorphism, descends the resulting divisor, and bounds the level by intersection theory unless the curves are K-isogenous. A final argument passes from prime-power levels to arbitrary levels.

\section{Heights and equidistribution}\label{sec:product-formula}\label{sec:base-change}\label{sec:local-energy}
Let $|B|$ denote the set of closed points of $B$.  Since $B$ is smooth, the local ring at every $v\in|B|$ is a discrete valuation ring.  Put
\[
d_v=\deg_k(v)=[\kappa(v):k],\qquad -\log|a|_v=d_v\ord_v(a).
\]

\begin{theorem}\cite[Thm 1.4.11]{stichtenoth2009algebraic}.\label{prop:product-formula}
For every $a\in K^\times$ one has
\[
 \sum_{v\in|B|}\log|a|_v=0.
\]
Only finitely many terms are nonzero.
\end{theorem}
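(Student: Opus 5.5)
The plan is to reduce the statement to the case of a principal divisor and then use that a nonzero rational function has degree zero divisor. Fix $a\in K^\times$. If $a\in k^\times$ then $a$ is a unit at every local ring $\mathcal O_{B,v}$, so $\ord_v(a)=0$ for all $v$ and every term vanishes. Otherwise $a$ is transcendental over $k$, because $k$ is algebraically closed in $K=k(B)$: $B$ is geometrically integral. Then $a$ defines a nonconstant morphism $a:B\to\mathbb P^1_k$, which is finite because $B$ is a smooth projective curve.

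The first step is finiteness. The places with $\ord_v(a)>0$ are exactly the closed points of $a^{-1}(0)$, and those with $\ord_v(a)<0$ are the closed points of $a^{-1}(\infty)$. Both fibres of the finite morphism are finite sets, so only finitely many terms are nonzero.

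The main step is the equality
\[
\sum_{v}d_v\max(\ord_v(a),0)=[K:k(a)]=\sum_v d_v\max(-\ord_v(a),0).
\]
For the zeros, take the finite extension of Dedekind domains $k[a]\subset R$, where $R$ is the integral closure of $k[a]$ in $K$. The places of $K$ over the prime $(a)$ of $k[a]$ are the $v$ with $\ord_v(a)>0$. For such $v$ the ramification index is $e_v=\ord_v(a)$ and the residue degree is $f_v=[\kappa(v):k]=d_v$, since $\kappa((a))=k$. The fundamental identity $\sum_{v\mid (a)}e_vf_v=[K:k(a)]$ for finite separable extensions then gives the first equality. If $K/k(a)$ is inseparable, use instead that $R$ is a finitely generated torsion-free module over the PID $k[a]$, hence free of rank $[K:k(a)]$. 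Computing $\dim_k R/aR$ via CRT, $R/aR\cong\prod_v R_v/\mathfrak m_v^{e_v}$, gives the same identity without any separability hypothesis.

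Applying the same argument to $1/a$ handles the poles. Subtracting the two equalities gives $\sum_v d_v\ord_v(a)=0$, and hence $\sum_v\log|a|_v=-\sum_v d_v\ord_v(a)=0$.

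The only delicate point is this fundamental identity in possibly inseparable characteristic-$p$ situations. The freeness and CRT computation of $\dim_k R/aR$ avoids it, so the expected obstacle is only bookkeeping that the residue field of $(a)$ is $k$ itself, which makes $f_v=d_v$.
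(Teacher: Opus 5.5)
Your proof is correct, but it takes a different route from the source the paper cites; the paper gives no argument of its own beyond the reference to Stichtenoth.

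\textbf{The cited proof.} Stichtenoth works intrinsically in the function field. The weak approximation theorem gives $\deg(a)_\infty\le n:=[K:k(a)]$ (his Thm.~1.3.3). The reverse inequality comes from a dimension count in Riemann--Roch spaces. Let $u_1,\dots,u_n$ be a $k(a)$-basis of $K$ and let $C\ge0$ be a divisor with $(u_j)\ge -C$. Then the $n(l+1)$ elements $a^iu_j$ ($0\le i\le l$) are $k$-linearly independent in $\mathcal L(l(a)_\infty+C)$, whose dimension is at most $l\deg(a)_\infty+\deg C+1$. Letting $l\to\infty$ forces $\deg(a)_\infty\ge n$, and applying the same argument to $a^{-1}$ handles the zeros.

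\textbf{Your proof.} You use the geometric model instead: the finite morphism $a\colon B\to\mathbb P^1_k$, freeness of $R$ over the PID $k[a]$, and the Chinese remainder computation $\dim_k R/aR=\sum_{\ord_v(a)>0}d_v\ord_v(a)$. This is the proof of Hartshorne II.6.9--6.10, the same ingredient the paper invokes in Proposition~\ref{prop:ef-scaling}. You add the $d_v$ bookkeeping needed because $k$ is not algebraically closed.

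\textbf{Two small points.}
\begin{enumerate}
\item The freeness/CRT computation already works without any separability hypothesis, so it is the whole proof. The appeal to the separable fundamental identity and the case split are superfluous.
\item You assert that $R$ is finitely generated over $k[a]$ without justification. It follows because $a$ is a finite morphism: $\Spec R=a^{-1}(\mathbb A^1_k)$ is an affine open of $B$, finite over $\mathbb A^1_k$ and normal. Alternatively, use Noether's finiteness of integral closure for finitely generated $k$-algebras.
\end{enumerate}

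\textbf{Trade-offs.} Your route is shorter and insensitive to inseparability of $K/k(a)$, but it presupposes Dedekind theory and the curve model $B$. Stichtenoth's argument uses only valuations on $K/k$ and needs neither a model nor an integral closure, at the cost of the approximation theorem and the estimate $\ell(D)\le\deg D+1$ for effective $D$.
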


Let $E/K$ be finite, let $B_E$ be the normalization of $B$ in $E$, and write $w\mid v$ for a closed point $w\in B_E$ above $v$.  Set
\[
 e_w=e(w/v),\qquad f_w=f(w/v)=[\kappa(w):\kappa(v)].
\]
We normalize the absolute values on $E$ relative to the original field $k$:
\[
 -\log|b|_w=[\kappa(w):k]\ord_w(b).
\]

\begin{proposition}\label{prop:ef-scaling}
For $a\in K^\times$ and $w\mid v$,
\begin{equation}\label{eq:ef-scaling}
 -\log|a|_w=e_wf_w(-\log|a|_v),
 \qquad |a|_w=|a|_v^{e_wf_w}.
\end{equation}
Moreover,
\begin{equation}\label{eq:fundamental-identity}
\sum_{w\mid v}e_wf_w=[E:K].
\end{equation}
Both formulas remain valid for inseparable extensions.
\end{proposition}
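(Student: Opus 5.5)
The first formula is a computation at a single place. Let $\mathcal O_v$ be the local ring of $B$ at $v$, let $\mathcal O_w$ be the local ring of $B_E$ at $w$, and let $a\in K^\times$. By definition of the ramification index, $\ord_w(a)=e_w\ord_v(a)$. By the tower law for residue fields, $[\kappa(w):k]=[\kappa(w):\kappa(v)][\kappa(v):k]=f_wd_v$. Multiplying these gives
\[
-\log|a|_w=[\kappa(w):k]\ord_w(a)=f_wd_v\,e_w\ord_v(a)=e_wf_w(-\log|a|_v).
\]
Exponentiating yields $|a|_w=|a|_v^{e_wf_w}$. Nothing here uses separability, so this half holds verbatim for inseparable $E/K$.

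For the fundamental identity I will use the standard semi-local argument, taking care to avoid any appeal to separability. Let $\mathcal O'$ be the integral closure of $\mathcal O_v$ in $E$; it is the semi-local ring of $B_E$ over $v$. The key input is that $\mathcal O'$ is a finite $\mathcal O_v$-module even when $E/K$ is inseparable. This holds because $B$ is a curve of finite type over the field $k$, so its local rings are excellent (equivalently, Japanese), and hence normalization in a finite extension is finite. This is exactly the fact that makes $B_E$ a curve, finite over $B$, which the paper already uses implicitly. Since $\mathcal O_v$ is a DVR, hence a PID, and $\mathcal O'$ is torsion-free, $\mathcal O'$ is free of rank $[E:K]$ over $\mathcal O_v$. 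Reducing modulo a uniformizer $\pi$ of $v$ gives $\dim_{\kappa(v)}\mathcal O'/\pi\mathcal O'=[E:K]$.

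Next I compute the same dimension one place at a time. The maximal ideals of $\mathcal O'$ are the $\fm_w$ for $w\mid v$. Since $\pi\mathcal O'=\prod_{w\mid v}\fm_w^{e_w}$, the Chinese remainder theorem gives $\mathcal O'/\pi\mathcal O'\cong\prod_{w\mid v}\mathcal O'/\fm_w^{e_w}$. Each factor has the filtration $\fm_w^i/\fm_w^{i+1}$ for $0\le i<e_w$. Every graded piece is one-dimensional over $\kappa(w)$, because $\mathcal O'_{\fm_w}=\mathcal O_w$ is a DVR. So each factor has $\kappa(v)$-dimension $e_wf_w$, and summing over $w\mid v$ gives $\sum_{w\mid v}e_wf_w=[E:K]$. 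The only step I regard as delicate is the finiteness of the normalization in the inseparable case, and I would dispatch it by citing that finitely generated $k$-algebras are Nagata rings. Alternatively, one can quote \cite{stichtenoth2009algebraic}, whose fundamental equality is proved for arbitrary finite extensions of function fields.
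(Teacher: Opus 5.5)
Your proof is correct and follows the paper. The first identity is the same computation, $\ord_w(a)=e_w\ord_v(a)$ and $[\kappa(w):k]=f_wd_v$. For the fundamental identity the paper simply cites the degree formula for finite morphisms of regular curves (Hartshorne II.6.9). Hartshorne's proof of that formula is your semi-local argument: $\mathcal O'$ is free of rank $[E:K]$ over the DVR, and one then counts $\dim_{\kappa(v)}\mathcal O'/\pi\mathcal O'$ place by place.

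Your version is somewhat more careful than the bare citation. It accounts for nontrivial residue degrees $f_w$, and it justifies the finiteness of $B_E\to B$ in the inseparable case via the Nagata property. Hartshorne's statement, which works over an algebraically closed field and assumes a finite morphism, takes both for granted.
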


\begin{proof}
We have $\ord_w(a)=e_w\ord_v(a)$ and $[\kappa(w):k]=f_w[\kappa(v):k]$, proving \eqref{eq:ef-scaling}.  The second identity is the degree formula for a finite morphism of regular curves \cite[Chap~II, Prop~6.9]{hartshorne2013algebraic}.
\end{proof}

If a local quantity is defined by finite sums, minima and maxima of functions $-\log|f|_v$, then after pullback to $w$ it scales by $e_wf_w$. This applies to local Weil functions, boundary heights, escape rates, resultant terms, and the Arakelov--Green functions used in the rest of this article.

For a symmetric ample line bundle $L$ on an abelian variety $A/K$, Tate's limit
\[
 \hhat_{A,L,K}(x)= \lim_{m\to\infty}m^{-2}h_{L,K}([m]x)
\]
defines the N\'eron--Tate height.  It is nonnegative and vanishes on torsion. The construction and its local decomposition are valid over product-formula fields \cite[Chap 9]{bombieri2007heights}.  Nonnegativity can also be seen directly by choosing a projective height $h_L\geq0$ and taking the limit.

\begin{definition}
Let $A/K$ be a principally polarized abelian variety of dimension $g$. Choose a finite extension $E/K$ over which $A_E$ is semistable and has level-$\ell_p$  structure. Let $B_E$ be the normalization of $B$ inside $E$. By properness, its moduli map extends to a morphism \(p_A:B_E\to\mathcal A^*_{g,\ell_p}\) to the minimal compactification.  If $\omega$ denotes the Hodge line bundle on that compactification, define 
\begin{equation}\label{eq:stable-hodge-moduli}
 h_{\omega,K}(A)=\frac{1}{[E:K]}\deg_k p_A^*\omega.
\end{equation}
\end{definition}

This is the stable Hodge-height normalization used by Looper--Yap \cite[Sec~2.2]{looper2026uniform}. Its independence of the chosen semistable level extension follows from base change for the Hodge bundle \cite[Chap~V--VI]{faltings2013degeneration}.    For every finite extension $F/K$,
\begin{equation}\label{eq:hodge-scaling}
 h_{\omega,F}(A_F)=[F:K]h_{\omega,K}(A).
\end{equation}

\subsection{Arakelov--Green function}\label{sec:local-energy}
Let $K_0$ be an arbitrary field endowed with an absolute value (possibly trivial), let $A/K_0$ be an abelian variety, let $L$ be symmetric and ample, and let $[m]$, $m\geq2$, be the polarized endomorphism.  After replacing $L$ by a fixed power, choose an embedding $A\hookrightarrow\P^N$ for which $[m]$ extends to a morphism $f:\P^N\to\P^N$, and choose a homogeneous lift $F=(F_0,\ldots,F_N):\mathbb A^{N+1}\to\mathbb A^{N+1}$.  Thus the $F_i$ are homogeneous of the common degree $d=m^2$ and have no common nonzero zero.  For $P\in\mathbb A^{N+1}\setminus\{0\}$, put
\begin{equation}\label{eq:escape-rate-definition}
 \widehat H_F(P)=\lim_{s\to\infty}d^{-s}\log\|F^s(P)\|,  \qquad \|P\|=\max_i|P_i|,
\end{equation}
and set
\begin{equation}\label{eq:resultant-term}
r(F)=\frac{1}{d^N(N+1)(d-1)} \log|\operatorname{Res}(F)|^{-1}.
\end{equation}
The first quantity is the homogeneous escape rate and the second is the normalized Macaulay-resultant term used by Looper \cite[Sec~3.1]{looper2026uniform}. Note that for \(a\ne0\),
\[
\operatorname{Res}(aF)=a^{(N+1)d^N}\operatorname{Res}(F),  \qquad  \widehat H_{aF}=\widehat H_F+\frac{\log|a|}{d-1},
\]
and hence
\[
  r(aF)=r(F)-\frac{\log|a|}{d-1}.
\]
Thus \eqref{eq:resultant-term} is exactly the normalization for which a scalar change of the homogeneous lift cancels the corresponding change in the escape rate. Put $c(n)=h^0(A,L^{\otimes n})$.  For a basis $\cB_n=(\eta_1,\ldots,\eta_{c(n)})$ and a tuple $Q=(Q_1,\ldots,Q_{c(n)})$, Looper's Arakelov--Green function is
\begin{equation}\label{eq:green}
 g_{\cB_n}(Q)=\frac1{c(n)}\sum_i\widehat H_F(\widetilde Q_i)  -\frac1{nc(n)}\log\left| \det(\eta_j(\widetilde Q_i))_{i,j}\right|+r(F).
\end{equation}
Here $\widetilde Q_i\in\mathbb A^{N+1}\setminus\{0\}$ is any homogeneous coordinate vector representing $Q_i$, and each $\eta_j$ is evaluated on that vector as a homogeneous polynomial of degree $n$.  The determinant is the determinant of the resulting $c(n)\times c(n)$ evaluation matrix.  Formula \eqref{eq:green} is independent of the chosen homogeneous lifts and of the lift $F$ \cite[Sec 2]{looper2024arakelov}.  A change of basis adds a constant.

A \emph{good basis}  means a basis produced by Looper's explicit good-basis construction. It is selected from the finite spanning family formed from coordinate monomials and bounded products of coordinate polynomials of iterates of $F$ \cite[Sec~2.2]{looper2024arakelov}.  

\begin{proposition}\label{prop:green-field-hypotheses}\cite[Sec~2.1]{looper2024arakelov}
Over every valued field $K_0$, in arbitrary characteristic, the escape-rate limit exists and \eqref{eq:green} is independent of the homogeneous lifts of the points and of the lift $F$.  
\end{proposition}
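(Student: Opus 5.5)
The plan is to verify the two assertions of the proposition separately, working directly with the defining formulas \eqref{eq:escape-rate-definition} and \eqref{eq:green}. The only inputs needed are the multiplicativity and ultrametric (or triangle) inequality of $|\cdot|$ on $K_0$, which hold in every characteristic, and the fact that $F$ has no common nonzero zero.

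\textbf{Existence of the escape rate.} Since the $F_i$ are homogeneous of degree $d$ with no common nonzero zero, the standard Nullstellensatz argument gives constants $c_1,c_2>0$, depending only on $F$ and $|\cdot|$, with
\[
c_1\|P\|^d\le\|F(P)\|\le c_2\|P\|^d.
\]
The upper bound comes from the triangle inequality applied to the coefficients. For the lower bound:
\begin{itemize}[label={}]
\item write $X_i^{D}=\sum_j G_{ij}F_j$ for some $D$ and polynomials $G_{ij}$ with coefficients in $K_0$;
\item this yields $\|P\|^{D}\le c\,\|P\|^{D-d}\|F(P)\|$.
\end{itemize}
Only the existence of such a relation over $K_0$ is used, and the Macaulay resultant supplies one in any characteristic. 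For a trivial absolute value the constants can be taken to be $1$.

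Then $|\log\|F(P)\|-d\log\|P\||\le C$, so the sequence $d^{-s}\log\|F^s(P)\|$ is Cauchy with telescoping error at most $C\sum_s d^{-s-1}$. Hence the limit exists. It also satisfies $\widehat H_F(aP)=\widehat H_F(P)+\log|a|$ and $\widehat H_F(F(P))=d\,\widehat H_F(P)$.

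\textbf{Independence of the point lifts.} Replace $\widetilde Q_i$ by $a_i\widetilde Q_i$:
\begin{itemize}[label={}]
\item the escape-rate term changes by $\frac1{c(n)}\sum_i\log|a_i|$;
\item since each $\eta_j$ is homogeneous of degree $n$, row $i$ of the determinant is scaled by $a_i^n$, so the determinant term changes by $-\frac{1}{nc(n)}\sum_i n\log|a_i|$.
\end{itemize}
These two changes cancel.

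\textbf{Independence of the lift $F$.} Two lifts of the same $f$ differ by a nonzero scalar, $F'=aF$, because the components define the same morphism with no common zero, so they are proportional by a unit of degree $0$. Here I will use the identities already recorded: $\widehat H_{aF}=\widehat H_F+\frac{\log|a|}{d-1}$, which follows from $(aF)^s=a^{1+d+\cdots+d^{s-1}}F^s$, and $r(aF)=r(F)-\frac{\log|a|}{d-1}$. Their sum in \eqref{eq:green} is therefore unchanged. The $1/c(n)$ average of the $\widehat H$ terms contributes exactly $\frac{\log|a|}{d-1}$.

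\textbf{Main obstacle.} The delicate point is the lower bound $\|F(P)\|\ge c_1\|P\|^d$ uniformly over arbitrary, possibly non-complete or trivially valued, $K_0$ of arbitrary characteristic. I would obtain it from the resultant identities $\operatorname{Res}(F)X_i^{D}=\sum_j G_{ij}F_j$, which hold over $\mathbb Z[\text{coefficients}]$ and specialize to any field, with $\operatorname{Res}(F)\neq0$ exactly because $F$ has no common nonzero zero. This gives $c_1$ explicitly in terms of $|\operatorname{Res}(F)|$ and the sizes of the coefficients of the $G_{ij}$.
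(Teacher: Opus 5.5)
Your argument is correct and is essentially the route the paper takes. The paper only cites Looper for this statement, but its own Lemma~\ref{lem:uniform-escape} proves existence of the escape rate by the same Macaulay--B\'ezout lower bound $\|F(P)\|\ge |\operatorname{Res}(F)|^{\alpha}\|F\|^{-\beta}\|P\|^d$ followed by telescoping, and independence of the lift is exactly the cancellation between $\widehat H_{aF}=\widehat H_F+\frac{\log|a|}{d-1}$ and $r(aF)=r(F)-\frac{\log|a|}{d-1}$ recorded just before the statement. Your remaining checks (the degree-$n$ row scaling for point lifts, and that two lifts of the same $f$ differ by a scalar) are fine, provided the basis $\mathcal B_n$ in \eqref{eq:green} is held fixed, as you implicitly do.
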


The canonical skeleton $S(A)$ and the retraction $r_A:A^{\an}\to S(A)$ arise from Raynaud uniformization \cite{bosch1984stable,berkovich1990spectral}.  The canonical Chambert--Loir probability measure $\mu_{A,L}$ \cite{chambert2003mesures} is supported on the skeleton, and its pushforward to the tropical torus is Haar measure.  In particular it has full support on $S(A)$ \cite[Thm 1.1]{gubler2010non}, see also \cite{gubler2023monge}. For the remainder of this section, let $C$ be an algebraically closed, complete, nontrivially valued non-Archimedean field of arbitrary characteristic and base change $A$ and the dynamical data to $C$.  Define
\[
 m_n(A)=\inf_{Q\in A(C)^{c(n)}}g_{\cB_n}(Q).
\]
For a closed $Z\subset S(A)$, let
\[
  m_n(A;Z) =\inf_{\substack{Q=(Q_1,\ldots,Q_{c(n)})\\ Q_i\in r_A^{-1}(Z)\ }}  g_{\mathcal B_n}(Q).
\]
The excess $m_n(A;Z)-m_n(A)$ is independent of the basis.

\begin{theorem}\label{thm:local-gap}
Suppose $A$ has bad reduction and $\varnothing\ne Z\subsetneq S(A)$ is closed.  For every sequence of good bases as defined above,
\begin{equation}\label{eq:local-gap}
 \gamma(A,Z):=\liminf_{n\to\infty} \bigl(m_n(A;Z)-m_n(A)\bigr)>0.
\end{equation}
Moreover, for the homogeneous lift used to construct those good bases,
\begin{equation}\label{eq:local-resultant-gap}
 \gamma_F(A,Z):=\liminf_{n\to\infty} \bigl(m_n(A;Z)-r(F)\bigr)>0.
\end{equation}
\end{theorem}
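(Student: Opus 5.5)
We argue by contradiction via equidistribution of Fekete configurations, following the strategy of \cite[Thm~1.5]{looper2026uniform}, where $Z$ was the origin of the skeleton. Since the excess $m_n(A;Z)-m_n(A)$ does not depend on the basis, we may work throughout with the fixed good bases $\cB_n$ attached to the lift $F$. We first establish \eqref{eq:local-resultant-gap} and then deduce \eqref{eq:local-gap} from it. For this deduction we use the known asymptotic $m_n(A)\to r(F)$ for good bases (Looper's computation of the transfinite diameter / energy of $\mu_{A,L}$ in \cite[Sec~2]{looper2024arakelov}). It gives $\liminf_n(m_n(A;Z)-m_n(A))=\liminf_n(m_n(A;Z)-r(F))$, so the two gaps coincide.

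Suppose $\gamma_F(A,Z)\le 0$. Then there is a subsequence $n_k$ and tuples $Q^{(k)}\in r_A^{-1}(Z)^{c(n_k)}$ with $g_{\cB_{n_k}}(Q^{(k)})\le r(F)+o(1)$. Consider the empirical measures $\nu_k=\frac1{c(n_k)}\sum_i\delta_{Q^{(k)}_i}$ on $A^{\an}$, which is compact. After passing to a subsequence, $\nu_k\to\nu$ weakly. The first step is the energy lower-semicontinuity and equidistribution statement. Write the Green function as $r(F)$ plus a discrete energy $-\frac{1}{nc(n)}\log|\det|+\frac1{c(n)}\sum\widehat H_F$. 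The standard Fekete argument shows that the liminf of this discrete energy bounds the continuous Arakelov energy $I(\nu)\ge 0$ from above, and that $I(\nu)=0$ exactly when $\nu=\mu_{A,L}$. Here the continuous energy is built from the canonical Green function of $\overline L$ on $A^{\an}$. Hence asymptotically minimal configurations satisfy $\nu=\mu_{A,L}$. This is the non-Archimedean Fekete/equidistribution theorem. Its proof goes through Looper's comparison of the determinant term with a product of pairwise Green kernels (a Vandermonde-type bound) and the Zhang/Chambert-Loir theory of adelic metrics. We have to check that this step is characteristic-free, which holds because all ingredients are valuation-theoretic (Proposition~\ref{prop:green-field-hypotheses}).

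The second step uses that $r_A$ is continuous and $Z$ is closed, so $r_A^{-1}(Z)$ is closed in $A^{\an}$. Every $\nu_k$ is supported there, and therefore so is $\nu=\mu_{A,L}$. But $\mu_{A,L}$ is supported on $S(A)$ and has full support there, since its pushforward is Haar measure \cite[Thm~1.1]{gubler2010non}. This forces $S(A)\subset Z$, contradicting $Z\subsetneq S(A)$. Bad reduction is used here, because it makes $S(A)$ a positive-dimensional real torus carrying a nondegenerate measure. We conclude $\gamma_F(A,Z)>0$.

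The main obstacle is the first step: proving that $\liminf$ of the discrete energies dominates $I(\nu)$ and that equality forces $\nu=\mu_{A,L}$, uniformly for good bases and without characteristic assumptions. Lower semicontinuity needs truncated Green kernels because of the diagonal singularity. The identification of the minimiser needs positivity of the energy pairing for signed measures of total mass zero, via Thuillier or Chambert-Loir--Ducros potential theory. My first attempt would be to reduce to $S(A)$ itself. I would retract the configuration via $r_A$, using that the canonical Green function factors through $r_A$ for points in a fibre. This reduces everything to an explicit real-torus computation, where the kernel is a Fourier-positive periodic quadratic function and uniqueness of Haar measure as energy minimiser is elementary.
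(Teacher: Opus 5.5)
Your core argument is the same as the paper's: near-minimizers supported on $r_A^{-1}(Z)$, Fekete equidistribution to $\mu_{A,L}$, closedness of $r_A^{-1}(Z)$ with the Portmanteau theorem, and full support of $\mu_{A,L}$ on $S(A)$.

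\textbf{The gap: deducing \eqref{eq:local-gap} from \eqref{eq:local-resultant-gap}.} You prove \eqref{eq:local-resultant-gap} first and then recover \eqref{eq:local-gap} from the asserted asymptotic $m_n(A)\to r(F)$. The first half needs only the one-sided bound $\liminf_n(m_n(A)-r(F))\ge 0$. That bound makes $g_{\cB_{n_k}}(Q^{(k)})\le r(F)+o(1)$ force $g_{\cB_{n_k}}(Q^{(k)})-m_{n_k}(A)\to0$. It is Looper's determinant estimate \cite[Thm~3.1]{looper2024arakelov}, the only such input the paper uses.

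Transferring the gap back to \eqref{eq:local-gap}, however, requires the reverse inequality $\limsup_n(m_n(A)-r(F))\le 0$. That is a local transfinite-diameter identity for good-basis determinants on $A$. Nothing in the paper or its cited estimates supplies it; those estimates are one-sided lower bounds. Moreover, the theorem must hold over an arbitrary complete $C$, including the ultralimit fields $H(\omega)$ of Section~\ref{sec:uniform-package}, which do not come from a global field. If in fact $\liminf_n(m_n(A)-r(F))>0$, then positivity of $m_n(A;Z)-r(F)$ tells you nothing about $m_n(A;Z)-m_n(A)$.

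\textbf{The fix is to reverse the order, as the paper does.} Since $m_n(A;Z)\ge m_n(A)$ trivially, a failure of \eqref{eq:local-gap} means the liminf is $0$. Then near-minimizers in $r_A^{-1}(Z)$ are asymptotically Fekete directly from the definition of $m_n(A)$, with no reference to $r(F)$. Equidistribution gives the contradiction, and adding Looper's one-sided bound then yields \eqref{eq:local-resultant-gap}.

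\textbf{What you call the main obstacle.} The paper does not reprove equidistribution. It cites the Fekete equidistribution theorem of Boucksom--Gubler--Martin \cite[Cor D]{boucksom2022differentiability}, which holds over an arbitrary complete non-Archimedean field, so no separate characteristic check is needed. Your fallback plan of reducing to the skeleton would not work as stated. The canonical Green function does not factor through $r_A$ on fibres: for two points with the same retraction, its value depends on where their paths separate off the skeleton, and it blows up on the diagonal. In dimension one this gives at best an inequality. For $\dim A\ge 2$, including the surface $A=E\times F$ used later, the determinantal energy has no pairwise-kernel expression at all. Citing the existing theorem is the right move.
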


\begin{proof}
Assume the liminf is zero.  Passing to a subsequence, choose tuples $Q_n=(Q_{n,1},\ldots,Q_{n,c(n)})$ with all $r_A(Q_{n,i})\in Z$ and
\[
 g_{\cB_n}(Q_n)-m_n(A)\longrightarrow0.
\]
Thus $Q_n$ is asymptotically Fekete.  The Fekete equidistribution theorem of Boucksom--Gubler--Martin, which is valid over an arbitrary complete non-Archimedean field, implies
\[ \nu_n:=\frac1{c(n)}\sum_i\delta_{Q_{n,i}} \longrightarrow\ \mu_{A,L}
\]
on $A^{\an}$ \cite[Cor D]{boucksom2022differentiability}. The set $r_A^{-1}(Z)$ is closed because $r_A$ is continuous.  Every $\nu_n$ is supported on $r_A^{-1}(Z)$, so the Portmanteau theorem for weak convergence of Radon measures shows that $\mu_{A,L}$ is also supported on $Z$ \cite[Thm~2.1]{billingsley1999convergence}. On the other hand, choose $s\in S(A)\setminus Z$.  Since $Z$ is closed and the skeleton is a compact real torus, there is an open neighborhood $U$ of $s$ in $S(A)$ disjoint from $Z$.  Then $r_A^{-1}(U)$ is an open neighborhood of $s$ in $A^{\mathrm{an}}$, disjoint from $r_A^{-1}(Z)$.  Full support on the skeleton gives $\mu_{A,L}(r_A^{-1}(U))>0$, contradicting the asserted support. Therefore the liminf in \eqref{eq:local-gap} is positive.

Looper's determinant estimate \cite[Thm~3.1]{looper2024arakelov} gives
\[
 \liminf_{n\to\infty}\bigl(m_n(A)-r(F)\bigr)\geq0.
\]
Adding this inequality to \eqref{eq:local-gap} proves
\eqref{eq:local-resultant-gap}. 
\end{proof}

\begin{remark}\label{rem:why-bad}
If the absolute value of $C$ is replaced by $|\cdot|^a$ for $a>0$, every term in \eqref{eq:green} is multiplied by $a$, and hence
\begin{equation}\label{eq:gap-scaling} \gamma(A_{|\cdot|^a},Z)=a\gamma(A_{|\cdot|},Z), \qquad \gamma_F(A_{|\cdot|^a},Z)=a\gamma_F(A_{|\cdot|},Z).
\end{equation}
In particular, at an extension $w\mid v$ the local gap is multiplied by $e_wf_w$. 
\end{remark}

For the rest of this section fix $K=k(B)$ and a polarized abelian variety $(A,L)/K$ together with a dynamical presentation as in the beginning of this section.  For each $v\in|B|$, fix an algebraic closure of $K_v$ and write $C_v=\widehat{\overline{K_v}}$.  

\begin{proposition}\label{prop:adelic-identities}
There are ordered good \(K\)-bases $\mathcal B_n$, and for each fixed polarized dynamical presentation of $A$, a constant $C_A>0$ such that every finite extension $E/K$ satisfies
\begin{equation}\label{eq:adelic-determinant}
\frac{1}{[E:K]}\sum_{w\in |B_E|}m_{n,w}(A_E) \geq -C_A\frac{\log n}{n}.
\end{equation}
If $Q=(Q_1,\ldots,Q_{c(n)})\in A(E)^{c(n)}$ and no nonzero element of $H^0(A_E,L_E^{\otimes n})$ vanishes at every $Q_i$, then
\begin{equation}\label{eq:global-green-height}
\frac{1}{c(n)}\sum_{i=1}^{c(n)}\widehat h_{A,L,K}(Q_i) =\frac{1}{[E:K]}\sum_{w\in |B_E|}g_{\mathcal B_n,w}(Q).
\end{equation}
\end{proposition}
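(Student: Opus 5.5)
We plan to prove the two assertions separately, with the product formula (Theorem~\ref{prop:product-formula}) as the common engine. We first fix the data once and for all: a good basis $\mathcal B_n$ of $H^0(A,L^{\otimes n})$ defined over $K$, and a homogeneous lift $F$ with coefficients in $K$. Because these are defined over $K$, they give good bases at every place $w$ of every finite extension $E/K$. Only finitely many places of $B$ have bad reduction or nonunit coefficients in $F$, $\operatorname{Res}(F)$ or the good-basis construction. At every other place $v$, and at every $w\mid v$, the dynamical data have good reduction.

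\textbf{Identity \eqref{eq:global-green-height}.} Take $Q\in A(E)^{c(n)}$ with no nonzero section vanishing on all $Q_i$. Then the evaluation matrix $(\eta_j(\widetilde Q_i))$ is invertible, so its determinant $D$ lies in $E^\times$. Choose the lifts $\widetilde Q_i$ with coordinates in $E$, and sum \eqref{eq:green} over $w\in|B_E|$.
\begin{itemize}
\item The determinant term is $-\frac1{nc(n)}\sum_w\log|D|_w$, which vanishes by the product formula applied on $B_E$.
\item The resultant term is $\sum_w r_w(F)$, which vanishes for the same reason, since $\operatorname{Res}(F)\in K^\times$.
\item The escape-rate terms give $\frac1{c(n)}\sum_i\sum_w\widehat H_{F,w}(\widetilde Q_i)$.
\end{itemize}
By the local decomposition of the Néron--Tate height \cite[Chap 9]{bombieri2007heights}, and the normalization in Proposition~\ref{prop:ef-scaling}, the inner sum $\sum_w\widehat H_{F,w}(\widetilde Q_i)$ equals $[E:K]\,\widehat h_{A,L,K}(Q_i)$. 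The only point to check is that the exponent of $L$ used in the embedding matches the $L$ of the height. If it does not, we absorb the fixed power into the normalization. Dividing by $[E:K]$ gives the identity.

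\textbf{Inequality \eqref{eq:adelic-determinant}.} We separate the finitely many bad places from the good ones.

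At a place $w$ of good reduction for the dynamical data, every point has escape rate $\log\|\widetilde Q\|_w$ for a primitive (integral, unimodular) lift, and $r_w(F)=0$. The good basis reduces to a basis of the special fibre, so the determinant of integral vectors satisfies $|D|_w\le1$. Hence $g_{\mathcal B_n,w}\ge0$ and $m_{n,w}\ge0$.

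At each of the finitely many bad places $v$ of $B$, we invoke Looper's quantitative determinant estimate \cite[Thm~3.1]{looper2024arakelov}. In its effective form it gives $m_{n,w}(A)\ge -c_v\frac{\log n}{n}$ with $c_v$ depending only on $A_{C_v}$ and the fixed presentation. By Remark~\ref{rem:why-bad}, the constant at $w\mid v$ scales by $e_wf_w$. Summing over $w\mid v$ with \eqref{eq:fundamental-identity} then produces $[E:K]\,c_v\frac{\log n}{n}$. Dividing by $[E:K]$ and taking $C_A=\sum_v c_v$ gives \eqref{eq:adelic-determinant}, uniformly in $E$.

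The main obstacle is the bad-place estimate. Theorem~\ref{thm:local-gap} records only the qualitative $\liminf\ge0$, whereas we need the explicit $O(\log n/n)$ error term. For this we would extract the rate from Looper's proof, which bounds the determinant via Fekete/transfinite-diameter comparisons with good bases built from iterates of $F$. The error there comes from the number of iterates, roughly $\log n$, divided by $n$. A secondary check is that the good-place bound $|D|_w\le1$ really uses a basis whose reduction is still a basis. This holds because the good-basis family is integral away from finitely many places, which we add to the bad set.
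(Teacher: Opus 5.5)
\textbf{Verdict: there is a genuine gap, in the bad-place step of \eqref{eq:adelic-determinant}.} Your argument for \eqref{eq:global-green-height} is the paper's: the determinant and resultant sums vanish by the product formula on $B_E$, and the escape-rate sum is $[E:K]$ times the N\'eron--Tate height. Your good-place analysis for \eqref{eq:adelic-determinant} is also correct: there $r(F)_w=0$ and $g_{\mathcal B_n,w}\ge 0$, and integrality of the basis alone gives $|D|_w\le1$.

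The gap is your bound $m_{n,w}(A)\ge -c_v\frac{\log n}{n}$ at the bad places. Looper's local estimate is relative to the resultant term: it reads $g_{\mathcal B_n,w}(Q)\ge r(F)_w-\eta_{n,w}$ with $\eta_{n,w}=O(\frac{\log n}{n})$, as in \eqref{eq:local-determinant}. Correspondingly, the proof of Theorem~\ref{thm:local-gap} records $\liminf_n\bigl(m_n(A)-r(F)\bigr)\ge 0$, not $\liminf_n m_n(A)\ge0$. Moreover $r(F)_w<0$ at every pole of $\operatorname{Res}(F)$.

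Your bound is in fact false in general. Write $m_{n,w}(F)$ for the value computed with the good basis built from $F$. Replacing $F$ by $aF$ with $a\in K^\times$ multiplies each good-basis section $P_\lambda$ by $a^{(n-|\alpha|-\sum_\ell e_\ell)/(d-1)}$. The escape-rate and resultant changes cancel, so
\[
m_{n,w}(aF)=m_{n,w}(F)-(1-\theta_n)\frac{\log|a|_w}{d-1},\qquad \theta_n=O\Bigl(\frac{\log n}{n}\Bigr).
\]
Suppose your local bound held at all places for both $F$ and $aF$. Torsion points are Zariski dense, so there are $c(n)$ of them with nonzero evaluation determinant. Applying \eqref{eq:global-green-height} to them gives $\sum_w m_{n,w}\le 0$. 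Together with your lower bounds, this forces $m_{n,v}=O(\log n/n)$ at every place $v$ of $K$, for both presentations. That is impossible at any place where $|a|_v\neq1$, since the two values then differ by roughly $\log|a|_v/(d-1)$.

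The repair is to keep $r(F)_w$ in the bad-place estimate and then sum over all places. Since $r(F)_w=\eta_{n,w}=0$ at the good places, $\sum_w r(F)_w=0$ by the product formula for $\operatorname{Res}(F)\in K^\times$. This is exactly the cancellation you already used for \eqref{eq:global-green-height}. The remaining error $\sum_w\eta_{n,w}$ scales by $e_wf_w$ and, via \eqref{eq:fundamental-identity}, gives \eqref{eq:adelic-determinant}. The corrected argument is the content of Looper's adelic estimate \cite[Thm~1.1]{looper2024arakelov}, which the paper simply cites.
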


\begin{proof}
The first assertion is Looper's $O(\log n/n)$ adelic determinant estimate \cite[Thm~1.1]{looper2024arakelov}.  For the second, the determinant and resultant sums vanish by the product formula, while the escape-rate sum is the canonical height; see \cite[Sec~2--3]{looper2024arakelov} and \cite[Cor~9.5.14]{bombieri2007heights}.
\end{proof}

\begin{lemma}\label{lem:evaluation-selection}
Let $X$ be an integral projective variety over $K$, let $L$ be ample, and let
\[
  V=H^0(X_{\overline K},L_{\overline K}^{\otimes n}),  \qquad \dim_{\overline K}V=c(n).
\]
If a finite set $P\subset X(\overline K)$ with $|P|\geq c(n)$ is not contained in the zero divisor of any nonzero element of $V$, then $P$ contains a subset $Q$ of cardinality $c(n)$ whose evaluation determinant is nonzero.
\end{lemma}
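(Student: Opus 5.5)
This is pure linear algebra: an evaluation matrix of full column rank has an invertible $c(n)\times c(n)$ minor. Fix a basis $\eta_1,\ldots,\eta_{c(n)}$ of $V$. Choose homogeneous coordinate lifts $\widetilde x$ of the points $x\in P$ via a projective embedding given by a power of $L$. If one prefers to avoid an embedding, trivialize $L^{\otimes n}$ at each $x$ by choosing a nonzero vector in the fiber $L^{\otimes n}_x$. Either way, each $\eta_j$ has a well-defined value $\eta_j(\widetilde x)\in\overline K$. Vanishing at $x$ does not depend on the lift, and changing the lift multiplies a row of the matrix below by a nonzero scalar.

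Next form the $|P|\times c(n)$ evaluation matrix $M=(\eta_j(\widetilde x))_{x\in P,\,j}$. A vector $a=(a_1,\ldots,a_{c(n)})\in\overline K^{c(n)}$ lies in the kernel of $M$ exactly when the section $\sum_j a_j\eta_j\in V$ vanishes at every point of $P$. By hypothesis no nonzero element of $V$ vanishes on all of $P$. Hence $\ker M=0$, and $M$ has rank $c(n)$.

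A matrix of column rank $c(n)$ has $c(n)$ linearly independent rows; this is where $|P|\geq c(n)$ is used. Choose such rows and let $Q\subset P$ be the corresponding points. The rows of $M$ are indexed by the distinct elements of $P$, so $|Q|=c(n)$. The square submatrix $(\eta_j(\widetilde Q_i))_{i,j}$ is then invertible, so the evaluation determinant of $Q$ is nonzero. This property is independent of the chosen basis and lifts, since those choices only multiply the determinant by nonzero scalars.

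No step is a genuine obstacle. The only points needing care are that evaluation is well defined up to scalars, and that integrality of $X$ and ampleness of $L$ are not actually needed beyond making $V$ finite dimensional of dimension $c(n)$.
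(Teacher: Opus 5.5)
Your proof is correct and is the paper's argument in matrix form: the rows of your evaluation matrix are the functionals $\ell_x\in V^*$, and $\ker M=0$ is exactly the paper's observation that the $\ell_x$ have no common nonzero annihilator, so they span $V^*$ and a basis of $c(n)$ of them can be extracted. One small correction: full column rank already forces $|P|\ge c(n)$, so that hypothesis is not actually what makes the row selection possible.
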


\begin{proof}
Evaluation at $x\in P$ gives a linear functional $\ell_x\in V^*$, well defined up to a nonzero scalar.  If the $\ell_x$ did not span $V^*$, their common annihilator would contain a nonzero section vanishing on $P$.  Hence they span $V^*$, and one can select a basis from them.  The corresponding subset $Q$ has nonzero evaluation determinant.
\end{proof}

\section{Dynamical charts and ultralimits}
\label{sec:uniform-package}

Let 
\begin{equation}\label{eq:level}
\ell_p=\begin{cases}
    3,&p\ne 3,\\
    4,&p=3.
\end{cases}
\end{equation}
Thus \(\ell_p\ge 3\) and \((\ell_p,p)=1\).  Fix a primitive \(\ell_p\)-th root of unity and put
\[
\kappa_p=\begin{cases}
    \mathbb Q(\zeta_{\ell_p}),&p=1,\\
    \mathbb F_p(\zeta_{\ell_p}),&p>1.
  \end{cases}
\]
The fine level-\(\ell_p\) moduli scheme and the chosen toroidal data descend over this cyclotomic field extension.  We give \(\kappa_p\) the trivial absolute value.

Let \(U\) be a quasi-projective \(\kappa_p\)-scheme.  Fix a projective compactification \(U\hookrightarrow\overline U\), and fix a coherent ideal \(\mathcal I_{\partial U}\) whose vanishing set is \(\overline U\setminus U\).  Let \(F/\kappa_p\) be a  valued extension with valuation ring \(F^\circ\), and let \(x\in U(F)\).  Properness and separatedness give a unique extension
\[
  \widetilde x:\Spec F^\circ \longrightarrow\overline U.
\]
If the closed point of \(\widetilde x\) lies in an affine chart \(V\) on which \(\mathcal I_{\partial U}\) is generated by \(f_1,\ldots,f_s\), define the
\emph{model boundary function}
\begin{equation}\label{eq:model-boundary}
\lambda_{\partial U}(x) =-\log\max_i|f_i(\widetilde x)| =\min_i\{-\log|f_i(\widetilde x)|\}.
\end{equation}
Every finitely generated ideal of a valuation ring is generated by an element of maximal norm. The value $\lambda_{\partial U}(x)$  is exactly the valuation of the finitely generated ideal \(\widetilde x^*\mathcal I_{\partial U}\subset F^\circ\). Thus, it is independent of the chart and generators.

For a product-formula field \(K=k(B)\) and \(x\in U(K)\), use this fixed model at every place and put
\[
  h_{\partial U,K}(x)=\sum_{v\in|B|}\lambda_{\partial U,v}(x).
\]
Only finitely many terms are nonzero. Replacing \(|\cdot|\) by \(|\cdot|^c\) multiplies the model boundary function by
\(c\).

\begin{definition}\label{def:ultralimit-ring}
Let \(C_j\) be algebraically closed, complete non-Archimedean fields containing \(\kappa_p\) trivially. Let \(\epsilon_j>0\), and fix a nonprincipal ultrafilter \(\omega\) on \(\mathbb N\).  Set
\[
  \mathcal U^\epsilon
  =\left\{(a_j)\in\prod_jC_j:\sup_j|a_j|^{\epsilon_j}<\infty\right\}.
\]
The seminorm
\(
  |(a_j)|_\omega= \lim_\omega |a_j|^{\epsilon_j}
\)
has kernel \(\mathfrak n_\omega\).  Write
\[
  H(\omega)=\mathcal U^\epsilon/\mathfrak n_\omega,
  \qquad
  R(\omega)=\{a\in H(\omega):|a|_\omega\le1\}.
\]

\begin{remark}
Let $J$ be an $\omega$-large set. Then we may similarly define $\mathcal U^\epsilon_J :=\{(a_j)\in \prod_{j\in J}C_j : \sup_j|a_j|^{\epsilon_j}<\infty\}$, and $\fm_\omega$ to be the kernel of the norm $\mid .\mid_\omega$. Since $J$ is $\omega$-large, the quotient $\cU^\epsilon_J/ \fm_\omega$ is identical to $H(\omega)$. 
\end{remark}

For a \(\kappa_p\)-scheme \(X\) and points \(x_j\in X(C_j)\), an
\((\epsilon_j)\)-\emph{degeneration} is a point
\(x\in X(\mathcal U^\epsilon)\) inducing every \(x_j\).  Its base change
\(x_\omega\in X(H(\omega))\) is the \emph{fundamental ultralimit}.
\end{definition}

\begin{lemma}
The kernel \(\mathfrak n_\omega\) is a maximal ideal of \(\mathcal U^\epsilon\)
\end{lemma}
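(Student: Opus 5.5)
The plan is to show that every element outside $\mathfrak n_\omega$ is invertible in $\mathcal U^\epsilon$ modulo $\mathfrak n_\omega$. It is cleaner to show that $\mathcal U^\epsilon/\mathfrak n_\omega$ is a field.

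First I check that $\mathcal U^\epsilon$ is a ring and that $\mathfrak n_\omega$ is an ideal. The ultrametric inequality gives $|a_j+b_j|^{\epsilon_j}\le\max(|a_j|^{\epsilon_j},|b_j|^{\epsilon_j})$, and multiplicativity gives $|a_jb_j|^{\epsilon_j}=|a_j|^{\epsilon_j}|b_j|^{\epsilon_j}$. So bounded sequences are closed under sum and product, and $1=(1)_j$ lies in $\mathcal U^\epsilon$. The $\omega$-limit exists for every bounded real sequence, since $\omega$ is an ultrafilter on $\mathbb N$ and bounded sequences have ultralimits in a compact interval. The limit respects $\max$ and products, so $|\cdot|_\omega$ is a multiplicative, ultrametric seminorm. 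Hence its kernel $\mathfrak n_\omega$ is an ideal: it is closed under addition by the ultrametric inequality, and if $|a|_\omega=0$ and $b$ is bounded by $M$, then $|ab|_\omega\le M\cdot 0=0$. It is a proper ideal because $|1|_\omega=1$.

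For maximality, take $a=(a_j)\in\mathcal U^\epsilon\setminus\mathfrak n_\omega$, so $c:=|a|_\omega>0$. The set $J=\{j: |a_j|^{\epsilon_j}>c/2\}$ is $\omega$-large, by the definition of the ultralimit. Define $b_j=a_j^{-1}$ for $j\in J$ and $b_j=0$ otherwise. Then $|b_j|^{\epsilon_j}<2/c$ for all $j$, so $b\in\mathcal U^\epsilon$. Moreover $ab-1$ vanishes on $J$, and since $J$ is $\omega$-large, $|ab-1|_\omega=0$, i.e.\ $ab-1\in\mathfrak n_\omega$. Thus every element of the quotient outside zero is a unit, the quotient is a field, and $\mathfrak n_\omega$ is maximal.

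There is no real obstacle. The only point needing care is that the inverse must be uniformly bounded in the rescaled norm. That is exactly why we restrict to the $\omega$-large set where $|a_j|^{\epsilon_j}$ stays bounded below by $c/2$, rather than inverting $a_j$ wherever it is merely nonzero.
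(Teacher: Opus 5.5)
Your proof is correct and follows essentially the same route as the paper: invert $a_j$ on the $\omega$-large set where $|a_j|^{\epsilon_j}$ is bounded below by $c/2$, and pad the complement (you use $0$, the paper uses $1$) to get an element of $\mathcal U^\epsilon$ that is an inverse modulo $\mathfrak n_\omega$. Your extra check that $\mathfrak n_\omega$ is a proper ideal, coming from a multiplicative ultrametric seminorm, is a detail the paper leaves implicit.
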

\begin{proof}
Let \(b=(b_j)\notin\mathfrak n_\omega\), and write \(c=\lim_\omega |b_j|^{\epsilon_j}>0\).  On an \(\omega\)-large set, \(|b_j|^{\epsilon_j}\ge c/2\).  Let $a_j = b_j^{-1}$ on this set and let \(a_j\) be  \(1\) on the complement. The new sequence $a=(a_j)$ satisfies
\[
\sup_{j} |a_j|^{\epsilon_j} \le \max \{1, \frac 2 c \}
\]
Therefore, $a$ belongs to \(\mathcal U^\epsilon\) and $ab =1$ modulo $\fn_\omega$.  Thus every nonzero class is a unit, proving maximality.
\end{proof}

\begin{lemma}\label{lem:ultralimit-field}
The quotient \(H(\omega)\), with its quotient
norm, is a complete algebraically closed non-Archimedean valued field.
\end{lemma}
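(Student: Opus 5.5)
The plan is to treat $H(\omega)$ as a metric ultraproduct and check the four properties in turn: valued field, non-Archimedean, complete, algebraically closed.

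\textbf{Valued field.} For each $j$, $|\cdot|^{\epsilon_j}$ is again a non-Archimedean absolute value on $C_j$, since $t\mapsto t^{\epsilon_j}$ is increasing and multiplicative. Ultralimits of real numbers preserve products, sums and non-strict inequalities. Hence $|\cdot|_\omega$ is a multiplicative seminorm on $\mathcal U^\epsilon$ satisfying $|a+b|_\omega\le\max(|a|_\omega,|b|_\omega)$. Its kernel $\mathfrak n_\omega$ is maximal by the preceding lemma, so $H(\omega)$ is a field. The seminorm descends to a well-defined multiplicative ultrametric absolute value on it, because $\bigl||a|_\omega-|b|_\omega\bigr|\le|a-b|_\omega$. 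This absolute value equals the quotient norm: for any $n\in\mathfrak n_\omega$ we have $|a+n|_\omega=|a|_\omega\le\sup_j|a_j|^{\epsilon_j}$ on representatives, and the infimum of these sup-norms is attained in the limit after truncation (see the next step).

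\textbf{Bounded representatives.} I will repeatedly use the following normalization. Any class $a$ has a representative $(a_j)$ with $\sup_j|a_j|^{\epsilon_j}\le|a|_\omega+\eta$ for any prescribed $\eta>0$. To get it, replace $a_j$ by $0$ on the $\omega$-small set where $|a_j|^{\epsilon_j}>|a|_\omega+\eta$; this changes the sequence only by an element of $\mathfrak n_\omega$. In particular, every element of $R(\omega)$ has a representative with all $|a_j|^{\epsilon_j}\le 1$: truncate to $0$ where $|a_j|^{\epsilon_j}>1$, using that $|a_j|^{\epsilon_j}\to_\omega|a|_\omega\le1$.

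\textbf{Completeness.} This is where I expect the main work, since it uses that a nonprincipal ultrafilter on $\mathbb N$ is countably incomplete. Take a Cauchy sequence in $H(\omega)$. It suffices to show that a subsequence $a^{(m)}$ with $|a^{(m+1)}-a^{(m)}|_\omega<2^{-m}$ converges. It is bounded, say by $M$, and I fix representatives $(a^{(m)}_j)$ with $|a_j^{(m)}|^{\epsilon_j}\le M+1$ for all $j,m$. Put
\[
J_m=\{j\ge m:\ |a^{(i+1)}_j-a^{(i)}_j|^{\epsilon_j}<2^{-i}\ \text{for all } i<m\}.
\]
These sets have the following properties:
\begin{itemize}
\item each $J_m$ is $\omega$-large, being a finite intersection of $\omega$-large sets with a cofinite set;
\item the sets are decreasing, $J_{m+1}\subseteq J_m$;
\item $\bigcap_m J_m=\varnothing$.
\end{itemize}
Define $b_j=a^{(m)}_j$ for $j\in J_m\setminus J_{m+1}$, and $b_j=0$ for $j\notin J_1$. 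Then $b=(b_j)$ is bounded by $M+1$, so $b\in\mathcal U^\epsilon$. For $j\in J_m$, say $j\in J_{m'}\setminus J_{m'+1}$ with $m'\ge m$, the ultrametric inequality gives
\[
|b_j-a^{(m)}_j|^{\epsilon_j}\le\max_{m\le i<m'}2^{-i}\le 2^{-m}.
\]
Hence $|b-a^{(m)}|_\omega\le2^{-m}$, and $a^{(m)}\to b$.

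\textbf{Algebraic closure.} Let $P\in H(\omega)[x]$ be nonconstant. After dividing by the leading coefficient, I may assume $P$ is monic of degree $d$. Substituting $x=c^{-1}y$ and multiplying by $c^{d}$ for a suitable nonzero $c$ (for instance the class of a constant sequence of small norm, or $c=1$ if the valuation is trivial), I may further assume all coefficients lie in $R(\omega)$. By the normalization step, I lift the coefficients to sequences with every component of norm $\le1$. For each $j$, the monic polynomial $P_j\in C_j[x]$ has a root $r_j$ because $C_j$ is algebraically closed. Integrality forces $|r_j|\le1$, by the ultrametric inequality applied to the equation $P_j(r_j)=0$. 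Hence $r=(r_j)\in\mathcal U^\epsilon$, and $P(r)$ is represented by the sequence $(P_j(r_j))=0$, so $P(r)=0$ in $H(\omega)$.
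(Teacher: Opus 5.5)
Your proof is correct and follows essentially the same route as the paper. Completeness is proved by a diagonal sequence built along decreasing $\omega$-large sets; you build the condition $j\ge m$ into $J_m$, where the paper uses the index $s(j)$. Algebraic closedness is proved by taking coordinatewise roots.

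The only real difference is your preliminary rescaling to integral coefficients, which the paper avoids with the root bound $|z_j|\le\max\{1,|a_{0,j}|,\ldots,|a_{q-1,j}|\}$. Your suggestion of a ``constant sequence'' does not make sense for varying $C_j$, and $\kappa_p$ is trivially valued, but any $c\in H(\omega)$ with $0<|c|_\omega<1$, raised to a large power, does the job.
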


\begin{proof}
Let \((x_m)\) be Cauchy in \(H(\omega)\).  Take  a subsequence \((y_m)\) such that \(|y_{m+1}-y_m|_\omega<2^{-m-2}\), and choose representatives \(a^{(m)}=(a^{(m)}_j)\in\cU^\epsilon\) of $y_m$. By the definition of the seminorm, there are decreasing sets \(J_m\in\omega\) such that, whenever \(j\in J_m\), one has
\[
 |a^{(k+1)}_j-a^{(k)}_j|^{\epsilon_j}\le 2^{-k-1}
 \quad(1\le k\le m).
\]
Let $s(j)=1$ if $j\not \in J_1$ and in the complement let $s(j)$ be the maximum index at most $j$ such that $j\in J_{s(j)}$. Define a new sequence $a=(a_j)$ by \(a_j=a^{(s(j))}_j\).
Note that \(s(j)\to_\omega\infty\) and for any $j$ we have 
\begin{equation}\label{eq:ultralimit-diagonal-bound}
|a_j|^{\epsilon_j} \le  \max\left\{   |a^{(1)}_j|^{\epsilon_j},    \max_{1\le k<s(j)} |a^{(k+1)}_j-a^{(k)}_j|^{\epsilon_j} \right\} \le\max\{|a^{(1)}_j|^{\epsilon_j},1/4\}.
\end{equation}
Hence
\[
\sup_j|a_j|^{\epsilon_j} \le\max\left\{\sup_j|a^{(1)}_j|^{\epsilon_j},1/4\right\}<\infty,
\]
so \(a=(a_j)\) belongs to \(\mathcal U^\epsilon\). For fixed \(m\), on the \(\omega\)-large set \(J_m\cap\{j\ge m\}\) one has \(s(j)\ge m\), and 
\[
 |a_j-a^{(m)}_j|^{\epsilon_j}
 \le\max_{m\le k<s(j)}2^{-k-1}
 \le2^{-m-1}.
\]
Thus $y_m$ converges to $a$ and  the original Cauchy sequence has a limit. 

Let $P(T)\in H(\omega)[T]$ be a polynomial given by $P(T) = T^q+\sum_{i=0}^{q-1}a_{i}T^i$. Write $a_i = (a_{i,j})$ and $P_j=T^q+\sum_{i<q}a_{i,j}T^i$. Then, $P_j$ is a polynomial with coefficients in $C_j$ and has a root $z_j$.  The non-Archimedean root bound
\[
 |z_j|\le \max\{1,|a_{0,j}|,\ldots,|a_{q-1,j}|\}
\]
for \(P_j\) shows that
\((z_j)\in\mathcal U^\epsilon\).  Its class is a root of the original
polynomial $P(T)$.  Thus \(H(\omega)\) is algebraically closed.  
\end{proof}

\begin{lemma}\label{lem:boundary-degeneration}
Let \(X\) be a separated quasi-projective \(\kappa_p\)-scheme equipped with the model boundary function as above, and let \(x_j\in X(C_j)\).  An \((\epsilon_j)\)-degeneration exists if and only if
\begin{equation}\label{eq:moderate-boundary}
\sup_j\epsilon_j\lambda_{\partial X}(x_j)<\infty,
\end{equation}
and it is unique when it exists. If \(\lambda_{\partial X}(x_j)>0\) and \(\epsilon_j=\lambda_{\partial X}(x_j)^{-1}\), then the degeneration exists and $\lambda_{\partial X}(x_\omega)=1$. Moreover, \(x_\omega\) does not extend to an \(X\)-point over \(R(\omega)\). 
\end{lemma}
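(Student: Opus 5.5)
Since $X$ is quasi-projective, I will reduce to the affine case by covering $\overline X$ with finitely many affine charts and then work coordinate-wise. The underlying idea is that an $\mathcal U^\epsilon$-point should be the same thing as a family of points whose coordinates in some fixed affine chart stay uniformly bounded after rescaling. I will fix a finite affine open cover $\overline X=\bigcup_i V_i$ with closed embeddings $V_i\hookrightarrow\mathbb A^{n_i}$ defined over $\kappa_p$, and also fix generators $f_{i,1},\dots,f_{i,s_i}$ of $\mathcal I_{\partial X}$ on each $V_i$.

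For the forward direction, let $x\in X(\mathcal U^\epsilon)$ be a degeneration. Because $\omega$ is an ultrafilter, the boundary function transported along $\epsilon_j$ can be estimated as follows. Write $\widetilde x_j$ for the extension of $x_j$ to $\Spec C_j^\circ$, centered in some chart $V_i$. Since $\mathcal U^\epsilon$ is a ring of bounded sequences, the point $x$ factors through an affine open $W\subset X$; if needed I refine $W$ to a principal open $D(h)\cap V_i$. Then the coordinates of $x_j$, together with $h(x_j)^{-1}$, have $|\cdot|^{\epsilon_j}$ uniformly bounded. Since $W\subset X$, the boundary ideal restricted to $W$ is the unit ideal, so $1=\sum_k c_k f_{i,k}$ with $c_k\in\mathcal O(W)$. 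Evaluating at $x_j$ gives $\max_k|f_{i,k}(x_j)|\ge \bigl(\max_k|c_k(x_j)|\bigr)^{-1}$, and the right-hand side is bounded below uniformly after raising to the power $\epsilon_j$. This gives \eqref{eq:moderate-boundary}.

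The converse is the main obstacle. Given \eqref{eq:moderate-boundary}, the boundary function is bounded, so on each chart the coordinates lie in $C_j^\circ$ and some $|f_{i,k}(x_j)|^{\epsilon_j}\ge e^{-M}$. I pass to an $\omega$-large set where the chart index $i$ and the index $k$ are constant; this is harmless by the Remark after Definition \ref{def:ultralimit-ring}. On $D(f_{i,k})\cap V_i\subset X$, the coordinate sequences and $f_{i,k}(x_j)^{-1}$ lie in $\mathcal U^\epsilon$, so they define a point of $X(\mathcal U^\epsilon)$. The delicate point is that this does not literally produce every $x_j$ for $j$ outside the large set. I will handle this by choosing the chart index for each $j$ individually, using only finitely many charts, and gluing through the finite product decomposition of $\mathcal U^\epsilon$ given by the idempotents of the partition of $\mathbb N$.

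For uniqueness, I use that $X$ is separated and that a point of $X(\mathcal U^\epsilon)$ is determined by its images in every $X(C_j)$. This holds because $\mathcal U^\epsilon\subset\prod C_j$, and on an affine open where two points agree, the coordinates are elements of $\prod C_j$.

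For the normalization $\epsilon_j=\lambda_{\partial X}(x_j)^{-1}$, the bound in \eqref{eq:moderate-boundary} equals $1$. Since $\lambda$ is a min of $-\log|f_k|$, computed chartwise, I get $\lambda_{\partial X}(x_\omega)=\lim_\omega\epsilon_j\lambda_{\partial X}(x_j)=1$; the maximum and the ultralimit commute over finitely many generators. Finally, if $x_\omega$ extended to an $R(\omega)$-point, it would reduce into $X$, and the model boundary function would vanish. That contradicts $\lambda_{\partial X}(x_\omega)=1>0$, because $\lambda_{\partial X}$ vanishes on points integral in $X$.
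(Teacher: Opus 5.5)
\textbf{There is a gap in the necessity direction.} The rest of your proposal is sound and essentially matches the paper: sufficiency (evaluate at the extensions $\widetilde x_j$ chart by chart, refine the partition by the generator realizing the maximum, glue via the characteristic idempotents), uniqueness (separatedness together with $\mathcal U^\epsilon\hookrightarrow\prod_j C_j$, where the paper instead cites Looper--Yap), and non-extension.

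The necessity argument fails at two points.

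\emph{First}, you claim that a degeneration $x\in X(\mathcal U^\epsilon)$ factors through a single affine open $W\subset X$. This is false. $\Spec\mathcal U^\epsilon$ is far from local (every $\Spec C_j$ sits in it as a closed point), and its image can meet every closed point of $X$. For example, take $X=\mathbb A^2\setminus\{0\}\subset\mathbb P^2$. Let $x_j$ run through $\overline{\kappa_p}$-points lying over all (countably many) closed points of $X$. Then $\lambda_{\partial X}(x_j)=0$ and a degeneration exists. But no affine open of the non-affine $X$ contains its image.

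\emph{Second}, suppose a piece does land in one $W=D(h)\cap V_i$. The inequality $\max_k|f_{i,k}(x_j)|^{\epsilon_j}\ge 1/B$ is still not a bound on $\epsilon_j\lambda_{\partial X}(x_j)$. The reason is that $\lambda_{\partial X}(x_j)$ is computed in the chart containing the reduction of $\widetilde x_j$, which need not be $V_i$, and then $|f_{i,k}(x_j)|$ can exceed $1$. Concretely, take $\overline X=\mathbb P^1$ and $X=\mathbb G_m=D(t)\subset\Spec\kappa_p[t]$. For $|t_j|>1$ one has $\lambda_{\partial X}(x_j)=\log|t_j|$, while your inequality only gives $|t_j|^{\epsilon_j}\ge 1/B$. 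What is actually needed is the upper bound $|t_j|^{\epsilon_j}\le B$, which you have available but never use.

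\textbf{How to repair it.} You can fix both points inside your approach. Quasi-compactness of $\Spec\mathcal U^\epsilon$ and a relation $\sum_l c_lg_l=1$ give a finite partition of $\mathbb N$ whose pieces each land in one affine open. Then compare $V_i$ with the chart $V_{i'}$ that contains the reduction: write $f_{i,k}=\sum_l b_{kl}f_{i',l}$ on $V_i\cap V_{i'}$, and bound $|b_{kl}(x_j)|^{\epsilon_j}$ using that $\mathcal O(V_i\cap V_{i'})$ is generated by $\mathcal O(V_i)$ and $\mathcal O(V_{i'})$.

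The paper's route is shorter and reuses what you already built.
\begin{itemize}
\item Your sufficiency construction produces, with no hypothesis on $\lambda_{\partial X}$, a point $\overline x\in\overline X(\mathcal U^\epsilon)$ with fibers $x_j$.
\item By your own uniqueness argument applied to the separated $\overline X$, $\overline x$ equals the composite of any degeneration with $X\hookrightarrow\overline X$.
\item On the refined partition, $\overline x^{*}\mathcal I_{\partial X}$ is principal, generated by $b=(b_j)$ with $|b_j|=e^{-\lambda_{\partial X}(x_j)}$.
\item Factoring through $X$ means this ideal is the unit ideal. That holds if and only if $\inf_j|b_j|^{\epsilon_j}>0$, i.e. $\sup_j\epsilon_j\lambda_{\partial X}(x_j)<\infty$. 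This gives both directions at once.
\end{itemize}

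A smaller omission occurs in the normalization step. To compute $\lambda_{\partial X}(x_\omega)$ in the chart $V_{i_0}$ with $J_{i_0}\in\omega$, you should note that the glued coordinates on that piece have rescaled norm at most $1$. Hence they define an $R(\omega)$-point of $V_{i_0}$ extending $x_\omega$, and by the valuative criterion this is $\widetilde x_\omega$.
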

\begin{proof}
Write \(D\) for the boundary closed subscheme defined by the ideal sheaf \(\mathcal I_{\partial X}\).  Choose a finite affine cover \(\overline X= \bigcup_{\alpha=1}^mV_\alpha,\) where \( V_\alpha=\Spec A_\alpha\), and generators \(\mathcal I_{\partial X}|_{V_\alpha} =(f_{\alpha,1},\ldots,f_{\alpha,s_\alpha})\). Let \(R_j=C_j^\circ\).  Properness gives a unique extension \(\widetilde x_j:\Spec R_j\to\overline X\).  Choose \(\alpha(j)\) so that the closed point lies in \(V_{\alpha(j)}\).  Define $J_\alpha:=\{j:\alpha(j)=\alpha\}$ and  $e_\alpha=(\mathbf 1_{J_\alpha}(j))_j$ where $\mathbf 1_{J_\alpha}$ is the characteristic function of $J_\alpha$. Note that $e_\alpha\in\mathcal U^\epsilon$. We have
\begin{equation}  \label{eq:chart-idempotents}
e_\alpha^2=e_\alpha,  \qquad e_\alpha e_\beta=0\quad(\alpha\ne\beta),  \qquad \sum_\alpha e_\alpha=1.
\end{equation}   
Consequently, $\mathcal U^\epsilon\simeq\prod_\alpha e_\alpha\mathcal U^\epsilon$, and $\Spec\mathcal U^\epsilon =\coprod_\alpha\Spec(e_\alpha\mathcal U^\epsilon)$. For \(a\in A_\alpha\), define
\[
\Phi_\alpha(a)_j=\begin{cases}
a(\widetilde x_j), & j\in J_\alpha,\\
0, & j\notin J_\alpha.
\end{cases}
\]
For \(j\in J_\alpha\), the value belongs to \(R_j\), so
\(| \Phi_\alpha(a)_j|^{\epsilon_j} \le1\).  Hence \(\Phi_\alpha: A_\alpha\to e_\alpha\mathcal U^\epsilon\) is a homomorphism.  The maps \(\Phi_\alpha\) glue across the clopen decomposition to a point \(\overline x\in\overline X(\mathcal U^\epsilon)\) with fibers \(x_j\). We still need to show that $\ol x$ factors through $X\hookrightarrow \ol X$. Refine the partition by 
\[
J_{\alpha r}=\left\{j\in J_\alpha: r=\min\operatorname*{arg\,max}_{1\le i\le s_\alpha} |f_{\alpha,i}(\widetilde x_j)|\right\}
\]
where $\operatorname*{arg\,max}$ is the set of  indices where the maximum is achieved and let \(e_{\alpha r} = (\mathbf 1_{J_{\alpha r}}(j))_j\). Then
\[
e_{\alpha r}e_{\alpha t}=0\quad(r\ne t), \qquad \sum_re_{\alpha r}=e_\alpha.
\]
Define \(b_\alpha=\sum_{r=1}^{s_\alpha} e_{\alpha r}\Phi_\alpha(f_{\alpha,r}) \in e_\alpha\,\mathcal U^\epsilon\). For \(j\in J_\alpha\), one has the exact computation
\begin{equation}\label{eq:b-coordinate}
|(b_\alpha)_j| =\max_i|f_{\alpha,i}(\widetilde x_j)| =\exp\bigl(-\lambda_{\partial X}(x_j)\bigr).
\end{equation}
Moreover, if
\[
(q_{\alpha i})_j=\begin{cases}
f_{\alpha,i}(\widetilde x_j)/(b_\alpha)_j,&j\in J_\alpha,\\ 0,&j\notin J_\alpha,
\end{cases}
\]
then \(|(q_{\alpha i})_j|\le1\), so \(q_{\alpha i}\in e_\alpha \, \mathcal U^\epsilon\) and \( \Phi_\alpha(f_{\alpha,i})=b_\alpha q_{\alpha i}\). Conversely, the displayed definition of \(b_\alpha\) writes it as a linear combination of the \(\Phi_\alpha(f_{\alpha,i})\).  Thus
\[
\bigl(\Phi_\alpha(f_{\alpha,1}),\cdots, \Phi_\alpha(f_{\alpha,s_\alpha})\bigr)=(b_\alpha)
\]
inside \(e_\alpha\,\mathcal U^\epsilon\).  With \(b=\sum_\alpha b_\alpha\), we have therefore proved $\overline x^*\mathcal I_{\partial X}=(b)$. An element \(c=(c_j)\in\mathcal U^\epsilon\) is a unit if and only if $\inf_j|c_j|^{\epsilon_j}>0$. Equations \eqref{eq:b-coordinate} give
\[
\inf_j|b_j|^{\epsilon_j} =\exp\left(-\sup_j\epsilon_j\lambda_{\partial X}(x_j)\right).
\]
Hence \(b\) is a unit exactly when \eqref{eq:moderate-boundary} holds.  The map \(\ol x : \Spec\, \mathcal U^\epsilon\to \overline X\) factors through \(X=\overline X\setminus D\) exactly when the pulled-back boundary ideal is the unit ideal.  This proves existence and necessity. Uniqueness follows from \cite[Prop 4.1]{looper2026uniform}. Even though they work over characteristic $0$, their proof for this proposition does not use this property and holds over any characteristic. 

Now assume \(\lambda_j =\epsilon_j^{-1}\), then the degeneration exists. A finite partition has exactly one \(\omega\)-large member.  Choose \(\alpha_0\) with \(J_{\alpha_0}\in\omega\) and then \(r_0\) with \(J_{\alpha_0r_0}\in\omega\).  In \(H(\omega)\), the class of a characteristic idempotent is \(1\) when its set is \(\omega\)-large and \(0\) otherwise. Hence $e_{\alpha_0, r_0} =1$ and $e_{\alpha, r} =0$ if $(\alpha,r)\neq (\alpha_0, r_0)$. Every \(\Phi_{\alpha_0}(a)\) has rescaled norm at most \(1\), so its class defines an \(R(\omega)\)-point \(\overline x_R:\Spec R(\omega)\to V_{\alpha_0}\) with generic fiber \(x_\omega\).  On the \(\omega\)-large set \(J_{\alpha_0}\),
\[
\left(\max_i|f_{\alpha_0,i}(\widetilde x_j)|\right)^{\epsilon_j}  =\exp(-\epsilon_j\lambda_j)=e^{-1}.
\]
It follows that
\(\max_i|f_{\alpha_0,i}(\overline x_R)|_\omega=e^{-1}\),
with equality attained by \(i=r_0\). This proves $\lambda_{\partial X}(x_\omega)=1$. Finally, for arbitrary \(c_i\in R(\omega)\),
\[
\left|\sum_i c_i f_{\alpha_0,i}(\overline x_R)\right|_\omega \le \max_i|c_i|_\omega |f_{\alpha_0,i}(\overline x_R)|_\omega   \le e^{-1}<1.
\]
Thus the pulled-back boundary ideal is not the unit ideal, so
\(\overline x_R\) does not factor through \(X\). 
\end{proof}

\begin{lemma}\label{lem:bad-ultralimit}
Let \(X\) be the fine moduli scheme of principally polarized abelian varieties with full level \(\ell_p\) structure, and let \(\overline X\) be a projective toroidal compactification carrying the universal semiabelian scheme. Take $x_j\in X(C_j)$ where $\lambda_{\partial X}(x_j)>0$. Under the fundamental rescaling $\epsilon_j = \lambda_{\partial X}(x_j)^{-1}$, the induced abelian variety \(A_\omega/H(\omega)\), corresponding to $x_\omega$, has bad reduction and admits a semiabelian model over
\(R(\omega)\).
\end{lemma}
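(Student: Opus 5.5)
The plan is to run the proof of Lemma~\ref{lem:boundary-degeneration} again, this time keeping track of the universal semiabelian scheme over $\overline X$. With $\epsilon_j=\lambda_{\partial X}(x_j)^{-1}$, that lemma gives a unique degeneration $x\in X(\mathcal U^\epsilon)$. It also produces a chart index $\alpha_0$ with $J_{\alpha_0}\in\omega$ and an $R(\omega)$-point $\overline x_R:\Spec R(\omega)\to V_{\alpha_0}\subset\overline X$ whose generic fiber is $x_\omega$. Moreover, $\overline x_R$ does not factor through $X$. The abelian variety $A_\omega$ is the pullback of the universal abelian scheme along $x_\omega$. So the first step is to pull back the universal semiabelian scheme $\mathcal G\to\overline X$ along $\overline x_R$. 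The result $\mathcal G_R:=\overline x_R^*\mathcal G$ is a semiabelian scheme over $R(\omega)$ with generic fiber $A_\omega$, because semiabelian schemes are stable under base change and $\mathcal G|_X$ is the universal abelian scheme. This gives the second assertion directly. We only need to note that $\overline X$ is defined over $\kappa_p$, that $\kappa_p$ sits trivially valued inside $R(\omega)$, and that $\overline x_R$ is a $\kappa_p$-morphism.

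The main step is to show that the reduction is bad, meaning that the special fiber of $\mathcal G_R$ is not an abelian variety. On a toroidal compactification, the locus where the universal semiabelian scheme is abelian (equivalently, where its toric rank is zero) is exactly the interior $X$. Indeed, the boundary strata are indexed by cusps with positive toric rank, and the boundary divisor is the reduced toroidal boundary whose ideal is $\mathcal I_{\partial X}$ up to radical. So it suffices to show that the closed point of $\overline x_R$ maps into $D=\overline X\setminus X$. Lemma~\ref{lem:boundary-degeneration} showed that for every $i$,
\[
|f_{\alpha_0,i}(\overline x_R)|_\omega\le e^{-1}<1 .
\]
Hence every generator of $\mathcal I_{\partial X}$ on $V_{\alpha_0}$ pulls back into the maximal ideal of $R(\omega)$. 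Therefore the special point of $\Spec R(\omega)$ maps into $V(\mathcal I_{\partial X})=D$, and the special fiber of $\mathcal G_R$ has positive toric rank.

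One more point needs care. Bad reduction means that $A_\omega$ admits no abelian (good) model over $R(\omega)$. Since $R(\omega)$ is a valuation ring that is possibly non-discrete and non-noetherian, I would argue as follows. Suppose $A_\omega$ had good reduction. Then the level-$\ell_p$ structure, being étale with $\ell_p$ prime to $p$, extends over $R(\omega)$. So $x_\omega$ would extend to an $X$-point over $R(\omega)$. By separatedness of $\overline X$ (valuative criterion for valuation rings), this extension agrees with $\overline x_R$, which contradicts the final assertion of Lemma~\ref{lem:boundary-degeneration}. Alternatively, the semiabelian model is unique (via the uniqueness of the extension of semiabelian schemes, or Raynaud uniformization), and its positive toric rank already excludes good reduction.

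I expect the main obstacle to be justifying these extension and uniqueness statements over the non-noetherian valuation ring $R(\omega)$. To handle this, I would reduce to finitely presented data: write $R(\omega)$ as a filtered colimit of its finitely generated subrings, use that the moduli problem and $\overline X$ are of finite presentation, and, for the level structure, use that finite étale covers of $\Spec R(\omega)$ whose generic fiber is split extend uniquely.
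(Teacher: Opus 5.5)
Your proposal is correct and follows the paper's proof. The semiabelian model comes from pulling back the universal semiabelian scheme along the extension $\Spec R(\omega)\to\overline X$. Bad reduction follows because a good model, together with the unique extension of the \'etale level-$\ell_p$ structure, would give an $X$-valued extension of $x_\omega$, contradicting Lemma~\ref{lem:boundary-degeneration}. Your toric-rank observation, that the closed point of $\overline x_R$ lands in the boundary, is a harmless extra; as you note yourself, it excludes good reduction only when combined with that contradiction argument or with uniqueness of semiabelian models.
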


\begin{proof}
If \(A_\omega\) had good reduction, it would extend to an abelian scheme over \(R(\omega)\). Because \(\ell_p\) is invertible, the \(\ell_p\)-torsion is finite \'etale and the generic full level structure extends uniquely over the valuation ring. The resulting moduli map \(\Spec R(\omega)\to X\) would extend the point \(x_\omega\), contradicting Lemma~\ref{lem:boundary-degeneration}.  Thus the reduction is bad.  Since \(\overline X\) is proper, the map \(x_\omega\) extends to \(\Spec R(\omega)\to\overline X\). Pulling back the universal semiabelian scheme gives the asserted model.  See \cite[Chap~IV--V]{faltings2013degeneration}.
\end{proof}

\begin{lemma}
\label{lem:uniform-escape}
Fix \(N\) and \(d\ge2\).  There is a constant \(C_{N,d}>0\) such that,
for every non-Archimedean valued field, every degree-\(d\) homogeneous lift
\(F=(F_0,\ldots,F_N)\) with a coefficient equal to \(1\), resultant
\(R\ne0\), every \(P\ne0\), and every \(s\ge0\),
\begin{equation}\label{eq:uniform-escape}
\left| \widehat H_F(P)-d^{-s}\log\|F^s(P)\| \right|
 \le
\frac{C_{N,d}}{d^s(d-1)} \left(\log\|F\|+\log^+|R|^{-1}\right).
\end{equation}
\end{lemma}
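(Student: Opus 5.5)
The plan is the standard one-step telescoping estimate, made uniform through the Macaulay resultant. I would first prove a one-step bound: for every \(Q\ne0\),
\[
 \bigl|\log\|F(Q)\|-d\log\|Q\|\bigr|\le C'_{N,d}\bigl(\log\|F\|+\log^+|R|^{-1}\bigr).
\]
The upper bound is immediate from the ultrametric inequality: \(\|F(Q)\|\le\|F\|\,\|Q\|^d\). Since some coefficient equals \(1\), we also have \(\|F\|\ge1\), so \(\log\|F\|\ge0\).

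For the lower bound I would use the classical Nullstellensatz identities for the resultant. There are integer polynomials \(G_{ij}\) in the coefficients of \(F\) and in the variables, depending only on \((N,d)\) and homogeneous of degree \(e=(N+1)(d-1)+1-d\) in the variables, such that
\[
 R\,X_i^{\,e+d}=\sum_{j}G_{ij}(F)\,F_j(X) .
\]
Moreover each \(G_{ij}\) has coefficient-degree at most some \(D=D(N,d)\). Since the coefficients of \(G_{ij}\) are integers, the ultrametric inequality gives
\[
 |R|\,|Q_i|^{e+d}\le\|F\|^{D}\|Q\|^{e}\|F(Q)\| .
\]
Taking the maximum over \(i\) yields
\[
 \log\|F(Q)\|\ge d\log\|Q\|-\log^+|R|^{-1}-D\log\|F\|.
\]
This is uniform over all non-Archimedean fields and all characteristics. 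It uses only that the \(G_{ij}\) are defined over \(\mathbb Z\) and that integers have absolute value at most \(1\). I expect justifying these integral identities with degree bounds depending only on \((N,d)\) to be the main point. I would cite the generic Macaulay resultant over \(\mathbb Z[\text{coefficients}]\): \(\operatorname{Res}\cdot X_i^{M}\) lies in the ideal \((F_0,\dots,F_N)\) in degree \(M=(N+1)(d-1)+1\), and specializing coefficients gives the identity over any field.

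Next comes the telescoping step. Put \(\Delta(Q)=\log\|F(Q)\|-d\log\|Q\|\), so \(|\Delta|\le B:=C'(\log\|F\|+\log^+|R|^{-1})\) uniformly. Then
\[
 d^{-(t+1)}\log\|F^{t+1}(P)\|-d^{-t}\log\|F^{t}(P)\|=d^{-(t+1)}\Delta(F^t(P)).
\]
Here \(F^t(P)\neq0\) because \(R\ne0\). Summing over \(t\ge s\), the series converges, which re-proves existence of \(\widehat H_F\), and gives
\[
 \bigl|\widehat H_F(P)-d^{-s}\log\|F^s(P)\|\bigr|\le B\sum_{t\ge s}d^{-(t+1)}=\frac{B}{d^{s}(d-1)} .
\]
This is the stated bound with \(C_{N,d}=C'_{N,d}=\max(D,1)\), taking \(C'=\max(D+1,1)\) to absorb both sides.
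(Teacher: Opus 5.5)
Your proposal is correct and follows essentially the same argument as the paper: the trivial upper bound $\|F(Q)\|\le\|F\|\,\|Q\|^d$, a lower bound from an integral Macaulay--B\'ezout identity for the resultant specialized from $\mathbb Z$, and then telescoping the one-step error to obtain the factor $1/(d^s(d-1))$. The paper allows a power $R^{\alpha}$ and unspecified degrees in that identity, while you take $\alpha=1$ in the critical degree $(N+1)(d-1)+1$; this is also legitimate, because the resultant is an inertia form and inertia forms agree with $(F_0,\ldots,F_N)$ in degrees at least that critical degree.
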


\begin{proof}
We have \(\|F(P)\|\le\|F\|\,\|P\|^d\).  The universal Macaulay B\'ezout identity provides integers \(\alpha,\beta,D>0\), depending only on \(N,d\), and homogeneous polynomials \(G_{ij}\), with integral coefficients polynomial in the coefficients of \(F\), such that
\[
R^\alpha X_i^D=\sum_{j=0}^NG_{ij}(F;X)F_j(X), \qquad \deg_XG_{ij}=D-d,
\]
and the coefficient degree in \(F\) is at most \(\beta\).  Choose \(i\) with \(|P_i|=\|P\|\) and evaluate the $i$-th identity at $P$.  Since every integer has non-Archimedean norm at most \(1\), we get
\[
\|F(P)\|\ge |R|^\alpha\|F\|^{-\beta}\|P\|^d.
\]
Because one coefficient of \(F\) is \(1\), \(\|F\|\ge1\). Moreover the resultant is a fixed integral polynomial in the coefficients, so \(\log^+|R|\ll_{N,d}\log\|F\|\).  Therefore there exists a constant $C_{N,d}$ such that
\[
 \left|\log\|F(P)\|-d\log\|P\|\right|
 \le C_{N,d}\left(\log\|F\|+\log^+|R|^{-1}\right).
\]
For any $t>s$ we have that 
\begin{align*}
\left|d^{-t}\log\|F^t(P)\| -d^{-s}\log\|F^s(P)\|\right| &= \left|\sum_{i=s+1}^t d^{-i }\log\|F^i(P)\| - d^{-(i-1)}\log\|F^{(i-1)}(P)\| \right|  \\&\le  \sum_{i=s+1}^t\frac{C_{N,d}}{d^i} \left(\log\|F\|+\log^+|R|^{-1}\right)\\& \le \frac{C_{N,d}}{d^s(d-1)} \left(\log\|F\|+\log^+|R|^{-1}\right)
\end{align*}
By taking the limit as $t\to \infty$, we get inequality \eqref{eq:uniform-escape}.
\end{proof}

Denote $A:=\mathcal U^\epsilon$. Let \(\rho_j:A\to C_j\) be the \(j\)-th coordinate map and let \(\rho_\omega:A\to H(\omega)\) be the natural quotient map.

\begin{lemma}\label{lem:projective-ultralimit}
 Let \(Y\hookrightarrow\mathbb P^N_A\) be a closed subscheme and write \(Y_j=Y\times_{A,\rho_j}C_j\).   Every sequence \(y_j\in Y_j(C_j)\) determines a unique point \(\widetilde y\in Y(A)\) whose base change along \(\rho_j\) is \(y_j\).  Moreover, $\widetilde{y}$ determines an ultralimit \(y_\omega\in Y(H(\omega))\).  The construction is functorial for morphisms of projective \(A\)-schemes. 
\end{lemma}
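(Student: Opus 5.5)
The plan is to reduce to homogeneous coordinates in $\mathbb P^N$ and repeat, coordinate by coordinate, the normalization-and-idempotent argument used in Lemma~\ref{lem:boundary-degeneration}.

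\emph{Existence.} Let $y_j=[y_{j,0}:\cdots:y_{j,N}]$ with $y_{j,i}\in C_j$. For each $j$ pick an index $i(j)$ with $|y_{j,i(j)}|=\max_i|y_{j,i}|$, and divide through so that $y_{j,i(j)}=1$. Every coordinate then satisfies $|y_{j,i}|\le 1$, so the sequences $a_i=(y_{j,i})_j$ lie in $A=\mathcal U^\epsilon$; the bound holds uniformly because $\sup_j|y_{j,i}|^{\epsilon_j}\le1$. The tuple $(a_0,\dots,a_N)$ generates the unit ideal of $A$. Indeed, with $J_i=\{j:i(j)=i\}$ and idempotents $e_i=\mathbf 1_{J_i}$, the element $\sum_i e_ia_i$ equals $1$ coordinatewise. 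Hence $[a_0:\cdots:a_N]$ defines a point $\widetilde y\in\mathbb P^N(A)$, namely a rank-one free quotient of $A^{N+1}$, whose pullback along $\rho_j$ is $y_j$.

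To see that $\widetilde y$ lies in $Y(A)$, take any homogeneous equation $G$ of $Y$ with coefficients in $A$. On the clopen piece $\Spec(e_iA)$ we are in the affine chart $x_i\neq0$. There $G(a)/a_i^{\deg G}$ is an element of $e_iA$ whose $j$-th coordinate is $\rho_j(G)(y_j)=0$, so it vanishes. The ideal of $Y$ is finitely generated in each chart, so this argument covers all equations of $Y$.

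\emph{Uniqueness.} Suppose $\widetilde y'\in\mathbb P^N(A)$ has the same fibers. Since $A$ is a product-like ring, I would first show that a point of $\mathbb P^N(A)$ is determined by a line quotient of $A^{N+1}$, and that this quotient is Zariski-locally, hence after a finite clopen refinement, given by a unimodular vector. A refinement is finite because unimodularity involves finitely many coefficients. Two unimodular vectors whose coordinate projections are proportional in each $C_j$ then differ by a scalar $u=(u_j)$. The key task is to show that $u$ and $u^{-1}$ lie in $A$. Normalizing as above makes the maximal coordinate equal to $1$ on each side, which forces $|u_j|=1$ for every $j$.

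The \textbf{main obstacle} is precisely this step: $A$ is not local and not Noetherian, so I must check that every $A$-point of $\mathbb P^N$ admits such a normalized representative. My approach is to show that the locus where $\max_i|a_{i,j}|^{\epsilon_j}$ is bounded below is all of $j$. This follows from unimodularity, $\sum c_ia_i=1$, combined with the ultrametric inequality: it gives $\max_i|a_{i,j}|\ge (\max_i|c_{i,j}|)^{-1}$, and this quantity is bounded below after raising to $\epsilon_j$. Dividing by the maximal coordinate then stays inside $A$. Alternatively, as in Lemma~\ref{lem:boundary-degeneration}, I could cite the uniqueness argument of \cite[Prop 4.1]{looper2026uniform}, which is characteristic-free.

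\emph{Ultralimit and functoriality.} The ultralimit is $y_\omega=\widetilde y\times_{A,\rho_\omega}H(\omega)$, a point of $Y(H(\omega))$ obtained by base change. For functoriality, let $f\colon Y\to Y'$ be a morphism of projective $A$-schemes. Then $f(\widetilde y)$ is an $A$-point of $Y'$ whose fibers are $f_j(y_j)$, so by uniqueness it equals $\widetilde{f_j(y_j)}$. Base-changing along $\rho_\omega$ gives $f(y_\omega)=(f_j(y_j))_\omega$.
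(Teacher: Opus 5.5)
Your existence argument, your definition of $y_\omega$ by base change along $\rho_\omega$, and your deduction of functoriality from uniqueness are the same as the paper's proof. Normalizing $y_{j,i(j)}=1$ so that $\sum_i e_ia_i=1$ is equivalent to the paper's normalization $\max_i|a_{j,i}|=1$ with auxiliary inverses $b_i$. Your remark that the ideal of $Y$ is finitely generated in each chart is unnecessary, and it can fail over the non-Noetherian ring $A$. The coordinatewise computation $G(a)=0$ works for each homogeneous equation separately. You diverge from the paper at uniqueness. The paper notes that $\bigcap_j\ker\rho_j=0$. Since $Y$ is separated over $A$, the equalizer of two sections with the same $C_j$-fibers is a closed subscheme $V(I)\subset\Spec A$ through which every $\rho_j$ factors, so $I=0$. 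That argument needs no description of $\mathbb P^N(A)$ and works for any separated $A$-scheme.

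Your route needs every $A$-point of $\mathbb P^N$ to be given by a single unimodular vector, that is, every quotient $A^{N+1}\twoheadrightarrow\mathcal L$ with $\mathcal L$ invertible. The step that needs work is not the normalization you flag. It is the assertion ``Zariski-locally, hence after a finite clopen refinement.'' That inference is false in general: over a Dedekind domain with nontrivial class group, $\Spec$ is connected and invertible modules need not be free. Finiteness of the cover does not address this. The step does hold for this $A$, by your own ultrametric trick applied to the cover rather than to the coordinates. Suppose $\mathcal L$ is trivial on $D(f_1),\dots,D(f_k)$ with $\sum_tg_tf_t=1$. Then $\max_t|f_{t,j}|^{\epsilon_j}\ge(\max_t\sup_j|g_{t,j}|^{\epsilon_j})^{-1}$. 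Partition the indices $j$ by the first $t$ attaining the maximum; the resulting idempotents $e_t$ make $f_t$ a unit in $e_tA$. Hence $\Spec(e_tA)\subset D(f_t)$ and $\mathcal L\simeq A$. Once you have unimodular representatives $a,a'$ with $a'_j=u_ja_j$, normalization is unnecessary: if $\sum_ic_ia_i=1$ then $u=\sum_ic_ia'_i\in A$, and symmetrically $u^{-1}\in A$. With this step filled in, your argument is complete and gives the more explicit fact that $\mathbb P^N(A)$ is the set of unimodular vectors modulo $A^\times$. The paper's separatedness argument is shorter and more general.
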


\begin{proof}
Let $y_j\in Y_j(C_j)$ and choose homogeneous coordinates \(y_j=[a_{j,0}:\cdots:a_{j,N}]\) with \(\max_i|a_{j,i}|=1\).  Let \(\iota(j)\) be the least index at which the maximum is attained. Let \(J_i=\{j:\iota(j)=i\}\), and let \(e_i=(\mathbf 1_{J_i}(j))_j\in A\).  Then
\begin{equation}  \label{eq:projective-idempotents}
e_i^2=e_i, \qquad e_ie_k=0\quad(i\ne k), \qquad \sum_{i=0}^Ne_i=1.
\end{equation}
Define \(b_i=(b_{i,j})_j\) by
\[
b_{i,j}=\begin{cases}
 a_{j,i}^{-1},&j\in J_i,\\0,&j\notin J_i.
\end{cases}
\]
Since \(|a_{j,i}|=1\) on \(J_i\), one has
\(\sup_j|b_{i,j}|^{\epsilon_j}\le1\), so \(b_i\in A\).  If
\(a_i=(a_{j,i})_j\), then
\begin{equation}\label{eq:projective-unit-ideal}
b_i a_i=e_i, \qquad \sum_{i=0}^N b_i a_i=\sum_{i=0}^Ne_i=1.
\end{equation}
Thus the \(a_i\) generate the unit ideal and define an \(A\)-point of projective space. Let \(f=\sum_\nu c_\nu X^\nu\) be a homogeneous equation of \(Y\), where \(c_\nu=(c_{\nu,j})_j\in A\).  Since ring operations in \(A\) are coordinatewise, we get
\[
f(a_0,\ldots,a_N)=\bigl(f_j(a_{j,0},\ldots,a_{j,N})\bigr)_j=0.
\]
Hence the point $\widetilde y := [a_0:\cdots:a_N]$ factors through \(Y\).  Base change along \(A\to H(\omega)\) gives \(y_\omega\).  Exactly one of \(J_0,\cdots J_N\)  is \(\omega\)-large, let $i_0$ be the index for which $J_{i_0}$ is \(\omega\)-large. The reduced coordinate \(\bar a_{i_0}\) has norm \(1\), so the homogeneous tuple remains a valid point of $\P^N_{H(\omega)}$ after reduction.

Note that \(\bigcap_j\ker\rho_j=0\) and \(\coprod_j\Spec C_j\to\Spec A\) is schematically dense.  Since every projective \(A\)-scheme is separated, two \(A\)-points with the same \(C_j\)-fibers are equal.  Let \(\phi:Y\to Z\) be a morphism of projective \(A\)-schemes. Then \(\phi(\widetilde y)\) and the point constructed from \((\phi_j( y_j))_j\) have the same \(C_j\)-fibers and they are equal.  After base change,
\[
(\phi_j(y_j))_\omega=\phi_\omega(y_\omega).
\]
\end{proof}

\begin{lemma}\label{lem:green-continuity}
Let  \(F:\mathbb P^N_A\to\mathbb P^N_A\) be a degree-\(d\) endomorphism that is represented by a homogeneous lift over \(A\), with one coefficient normalized to \(1\). Fix a list \(\mathbf s=(s_1,\ldots,s_{c(q)})\) of relative homogeneous sections. Let \(\mathbf s_j \) and $\mathbf s_\omega$ denote the base change of $\mathbf s$ under $\rho_j$ and $\rho_\omega$, respectively. Let \(G_{\mathbf s_\omega}\) denote the expression \eqref{eq:green} formed with $\mathbf s_\omega$, with value \(+\infty\) when its evaluation determinant vanishes. Assume that $\mathbf s_j$ is a good basis on an $\omega$-large set, and write $B_{q,j} = \mathbf s_j$. If \(\mathbf s_\omega\) is a basis, this is the usual good-basis Green function.  For tuples \(Q_j\) of cardinality $c(q)$, and their ultralimit \(Q_\omega\), one has, in the extended real line,
\begin{equation}\label{eq:green-continuity}
\lim_\omega\epsilon_j g_{\mathcal B_{q,j}}(Q_j) =G_{\mathbf s_\omega}(Q_\omega).
\end{equation}
Moreover,
\begin{equation}\label{eq:resultant-continuity}
  \lim_\omega\epsilon_j r(F_j)=r(F_\omega).
\end{equation}
\end{lemma}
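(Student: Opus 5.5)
We prove the two limits \eqref{eq:green-continuity} and \eqref{eq:resultant-continuity} term by term, using the decomposition \eqref{eq:green}:
\[
\epsilon_j g_{\mathcal B_{q,j}}(Q_j)=\frac{1}{c(q)}\sum_i\epsilon_j\widehat H_{F_j}(\widetilde Q_{j,i})-\frac{\epsilon_j}{qc(q)}\log\bigl|\det(s_{k,j}(\widetilde Q_{j,i}))\bigr|+\epsilon_j r(F_j).
\]
Since \eqref{eq:green} does not depend on the homogeneous lifts (Proposition~\ref{prop:green-field-hypotheses}), we may use the normalized coordinates of Lemma~\ref{lem:projective-ultralimit}. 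Thus each $\widetilde Q_{j,i}$ has $\max_k|a_{j,k}|=1$. These coordinates assemble into $a\in A^{N+1}$ generating the unit ideal, and their reduction gives a lift of $Q_{\omega,i}$ of norm exactly $1$.

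The basic tool is that the map $a\mapsto\lim_\omega|a_j|^{\epsilon_j}$ is the norm of $\rho_\omega(a)$ on $H(\omega)$, by definition. It is multiplicative and non-Archimedean, and it commutes with polynomial expressions having coefficients in $A$. Hence, for every $a\in A$,
\[
\lim_\omega\epsilon_j\log|a_j|=\log|\rho_\omega(a)|_\omega\in[-\infty,\infty).
\]
The value $-\infty$ occurs exactly when $a\in\mathfrak n_\omega$.

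\emph{Resultant term.} The resultant $\operatorname{Res}(F)$ is a universal integral polynomial in the coefficients of $F$, so it is an element $R\in A$ with $R_j=\operatorname{Res}(F_j)$ and $\rho_\omega(R)=\operatorname{Res}(F_\omega)$. Because $F$ is an endomorphism over $A$, $R$ is a unit of $A$. Indeed, the Macaulay B\'ezout identity of Lemma~\ref{lem:uniform-escape} can be applied to the unit ideal generated by the coordinates. So $\inf_j|R_j|^{\epsilon_j}>0$ and $\operatorname{Res}(F_\omega)\ne0$. Applying the basic tool gives \eqref{eq:resultant-continuity} with finite values. The same argument, applied to the normalizing coefficient equal to $1$, shows $\sup_j\epsilon_j\log\|F_j\|<\infty$. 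Since $\|F_j\|\ge 1$, the rescaled sequence $\epsilon_j\log\|F_j\|$ is bounded, and likewise $\epsilon_j\log^+|R_j|^{-1}$ is bounded.

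\emph{Escape rates.} This is the main obstacle, because $\widehat H$ is a limit and so does not directly commute with $\lim_\omega$. We use Lemma~\ref{lem:uniform-escape}. Multiply \eqref{eq:uniform-escape} by $\epsilon_j$. The bounds just established show that
\[
\bigl|\epsilon_j\widehat H_{F_j}(P_j)-d^{-s}\epsilon_j\log\|F_j^s(P_j)\|\bigr|\le C'/d^s,
\]
with $C'$ independent of $j$. The same inequality holds over $H(\omega)$ for $F_\omega$, with a constant controlled by the same bound, since $\log\|F_\omega\|$ and $\log|R_\omega|^{-1}$ are the $\omega$-limits. For fixed $s$, $F^s(P)$ is a tuple of elements of $A$, and $\|\cdot\|$ is a maximum. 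The basic tool therefore gives
\[
\lim_\omega d^{-s}\epsilon_j\log\|F_j^s(P_j)\|=d^{-s}\log\|F_\omega^s(P_\omega)\|.
\]
This limit is finite, because $F^s(P)$ still generates the unit ideal. Letting $s\to\infty$ yields $\lim_\omega\epsilon_j\widehat H_{F_j}(P_j)=\widehat H_{F_\omega}(P_\omega)$.

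\emph{Determinant.} The determinant $D=\det(s_k(\widetilde Q_i))$ is an element of $A$ whose $j$-coordinate is the determinant for $\mathbf s_j$ and whose image under $\rho_\omega$ is the determinant for $\mathbf s_\omega$. The basic tool gives
\[
\lim_\omega\epsilon_j\log|D_j|=\log|D_\omega|_\omega.
\]
This equals $-\infty$ precisely when the evaluation determinant of $\mathbf s_\omega$ at $Q_\omega$ vanishes. In that case the Green function is $+\infty$, matching the convention for $G_{\mathbf s_\omega}$.

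Summing the three pieces gives \eqref{eq:green-continuity} in the extended reals. The sum is well defined: the escape-rate and resultant terms are finite, and only the determinant term can be infinite. On the $\omega$-large set where $\mathbf s_j$ is a good basis, the left-hand side is the Green function $g_{\mathcal B_{q,j}}$. If $\mathbf s_\omega$ is a basis, the right-hand side is the usual Green function.
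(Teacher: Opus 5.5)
Your proposal is correct and follows the paper's proof essentially step for step. Both arguments use normalized lifts and the continuity of fixed polynomial expressions under the rescaled $\omega$-limit. Both use the unit resultant to get the uniform bounds $\sup_j\|F_j\|^{\epsilon_j}<\infty$ and $\inf_j|R_j|^{\epsilon_j}>0$ needed for Lemma~\ref{lem:uniform-escape}, take the $\omega$-limit at fixed $s$ and then let $s\to\infty$, and treat the determinant in the extended reals.

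One small correction: the B\'ezout identity gives the converse implication (unit resultant $\Rightarrow$ no common zeros). The fact that $R$ is a unit, which the paper simply asserts, comes instead from $F$ having no common zero on any fiber of $\mathbb P^N_A$, so that $R$ lies in no prime ideal of $A$.
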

\begin{proof}
Since $F$ is an endomorphism of all of \(\mathbb P^N_A\), its Macaulay resultant \(R=\operatorname{Res}(F)\) is a unit of \(A\). Denote by $F_j$ the base change of $F$ along $\rho_j$ and let $R_j$ be its Macaulay resultant. We normalize the homogeneous coordinates of every entry of \(Q_j\) to have norm one, as in the proof of Lemma~\ref{lem:projective-ultralimit} and we use $P_j$ for points in the $c(q)$-tuple $Q_j$ and $P_\omega$ for points of $Q_\omega$.  For every fixed polynomial expression \(\cP\) over \(A\),
\begin{equation}\label{eq:polynomial-continuity}
|\cP_\omega(\bar a)|_\omega =\lim_\omega|\cP_j(a_j)|^{\epsilon_j}.
\end{equation}
Hence, $|\operatorname{Res}(F_\omega)|_\omega=\lim_\omega|\operatorname{Res}(F_j)|^{\epsilon_j}$, and by definition, we get $ \lim_\omega\epsilon_j r(F_j)=r(F_\omega)$. For every fixed iterate \(s\), 
\[
\lim_\omega\epsilon_j\log\|F_j^s(P_j)\|=\log\|F_\omega^s(P_\omega)\|_\omega.
\]
Put $M=\sup_j\|F_j\|^{\epsilon_j}$ and $\delta_R=\inf_j|R_j|^{\epsilon_j}$. Note that $M<\infty$ and since $R$ is a unit in $A$ we have that $\delta_R>0$. The uniform estimate in Lemma~\ref{lem:uniform-escape} gives
\[
\epsilon_j\left| \widehat H_{F_j}(P_j)-d^{-s}\log\|F_j^s(P_j)\| \right| \le \frac{C_{N,d}}{d^s(d-1)} \left(\log M+\log^+\delta_R^{-1}\right).
\]
The morphism \(F_\omega\) has coefficient norm at most \(M\), resultant norm at least \(\delta_R\), and still has a coefficient \(1\). Hence the same lemma gives 
\[
\left| \widehat H_{F_\omega}(P_\omega)-d^{-s}\log\|F_\omega^s(P_\omega)\| \right| \le \frac{C_{N,d}}{d^s(d-1)} \left(\log M+\log^+\delta_R^{-1}\right).
\]
Taking the \(\omega\)-limit at fixed \(s\) and then letting \(s\to\infty\) proves 
\[
\lim_\omega\epsilon_j\widehat H_{F_j}(P_j)=\widehat H_{F_\omega}(P_\omega).
\]
If \(D_j\) is the evaluation determinant, then \(D_\omega\) is the class of \((D_j)\), and therefore
\[
\lim_\omega\epsilon_j(-\log|D_j|)=-\log|D_\omega|_\omega
\]
in the extended reals. The value is \(+\infty\) precisely when \(D_\omega=0\).  Combining these identities proves \eqref{eq:green-continuity}. 
\end{proof}

Let \(K\) be a field, let \(A/K\) be an abelian variety, and let a very ample line bundle \(L\) give a projectively normal closed immersion
\[
\iota:A\hookrightarrow\mathbb P^N_K, \qquad \iota^*\mathcal O_{\mathbb P^N_K}(1)\simeq L.
\]
This means that \( K[X_0,\ldots,X_N]_q\longrightarrow H^0(A,L^{\otimes q}) \)
is surjective for every \(q\ge1\).
Let
\[
F=(F_0,\ldots,F_N), \qquad F_i\in K[X_0,\ldots,X_N]_d, \qquad d\ge2,
\]
be a homogeneous lift with \(\operatorname{Res}_d (F_0,\ldots,F_N)\ne0\).  Thus \(F\) defines an endomorphism of \(\mathbb P^N_K\). We assume that $F$ preserves \(\iota(A)\) and extends the polarized endomorphism given by multiplication by $m\in \Z$. For \(k\ge1\), write \(F^{[k]}=(F_0^{[k]},\ldots,F_N^{[k]})\) for the \(k\)-th iterate of $F$.  Let \(b=d(N+1)\).  If \(q\ge b\), define
\[
r_-(q)=\left\lfloor \log_{(N+1)/N}\!\bigl(\max\{1,q/b\}\bigr)\right\rfloor, \qquad r_+(q)=\left\lfloor \log_{(2N+2)/(2N+1)}q \right\rfloor.
\]
Let \(\Lambda_q(N,d)\) be the finite set of tuples $\lambda= \bigl(\alpha;r;(i_\ell,k_\ell,e_\ell)_{1\le\ell\le r}\bigr)$ satisfying
\[
\begin{gathered}
 r\in\mathbb Z_{\ge0},\qquad r_-(q)\le r\le r_+(q),\qquad
 \alpha\in\mathbb Z_{\ge0}^{N+1},\qquad |\alpha|<b,\\
 i_\ell\in\{0,\ldots,N\},\qquad k_\ell\in\mathbb Z_{\ge1},\qquad
 e_\ell\in\{1,\ldots,d-1\},\\
 |\alpha|+\sum_{\ell=1}^r e_\ell d^{k_\ell}=q.
\end{gathered}
\]
Here \(r\) is the number of factors, \(\ell\) labels a factor occurrence,
and \(i_\ell\) selects the \(i_\ell\)-th coordinate polynomial of the
iterate \(F^{[k_\ell]}\) where repetitions are allowed.  Put
\[
 P_\lambda(F;X)
 =X^\alpha\prod_{\ell=1}^r
   \bigl(F_{i_\ell}^{[k_\ell]}(X)\bigr)^{e_\ell}
 \in K[X_0,\ldots,X_N]_q,
 \qquad
 X^\alpha=\prod_{i=0}^NX_i^{\alpha_i},
 \qquad |\alpha|=\sum_{i=0}^N\alpha_i,
\]
with the empty product interpreted as \(1\).  If \(1\le q<b\), instead set
\[
 \Lambda_q(N,d)
 =\{\alpha\in\mathbb Z_{\ge0}^{N+1}:|\alpha|=q\},
 \qquad P_\alpha(F;X)=X^\alpha.
\]
We refer to such $\lambda$'s as recipe indices. Define
\[
 s_\lambda=\iota^*P_\lambda(F;X)\in H^0(A,L^{\otimes q}),
 \qquad
 \mathcal S_q(A,F)=(s_\lambda)_{\lambda\in\Lambda_q(N,d)}.
\]
This is the monomial-expanded form of Looper's family
\cite[Sec~2.2]{looper2024arakelov}.  Put \(c(q)=h^0(A,L^{\otimes q})\).  An ordered
\(c(q)\)-tuple of distinct recipe
indices whose associated sections form a basis of
\(H^0(A,L^{\otimes q})\) is called a \emph{good basis}. 

\begin{lemma}\label{lem:good-basis-base-change}
For every \(q\ge1\), the finite list \(\mathcal S_q(A,F)\) spans \(H^0(A,L^{\otimes q})\), and contains a good basis.  If \(K'/K\) is any field extension, then \(\mathcal S_q(A_{K'},F_{K'})\) is the termwise scalar extension of \(\mathcal S_q(A,F)\).  An ordered \(c(q)\)-tuple of distinct recipe indices is a good basis over \(K\) if and only if the corresponding tuple is a good basis over \(K'\).  
\end{lemma}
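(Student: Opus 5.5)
The plan has three parts: the spanning claim, the existence of a good basis, and the base-change statements.

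\emph{Spanning.} By projective normality, the restriction map $K[X_0,\ldots,X_N]_q\to H^0(A,L^{\otimes q})$ is surjective. So it suffices to show that the polynomials $P_\lambda(F;X)$, $\lambda\in\Lambda_q(N,d)$, together with the homogeneous ideal of $\iota(A)$, span $K[X]_q$. For $q<b$ the recipes are all monomials of degree $q$, so there is nothing to prove. For $q\ge b$ I would follow Looper's argument \cite[Sec~2.2]{looper2024arakelov}, which uses only two inputs.

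\begin{enumerate}
\item Since $\operatorname{Res}(F)\neq0$, the ideal $(F_0,\ldots,F_N)$ contains every monomial of degree $\ge b-N$; this is Macaulay's theorem, valid over any field. In fact $K[X]_{D}=\sum_i F_i\,K[X]_{D-d}$ for $D\ge (N+1)(d-1)+1$.
\item Apply this identity repeatedly, each time to the iterate $F^{[k]}$, whose resultant is also nonzero. Any monomial of degree $q$ then becomes a combination of products $X^\alpha\prod_\ell (F^{[k_\ell]}_{i_\ell})^{e_\ell}$ with $|\alpha|<b$. Grouping factors base-$d$ gives exponents $e_\ell\le d-1$, and the degree count gives the bounds $r_-(q)\le r\le r_+(q)$.
\end{enumerate}

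This is the step I expect to be the main obstacle. One has to check that the monomial-expanded indexing, with its explicit bounds on $r$, really captures every term produced by the reduction. I would verify this by induction on $q$, tracking degrees and the number of factors. Once the spanning is established, any spanning finite list contains a basis, so a good basis exists.

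\emph{Base change.} The recipe set $\Lambda_q(N,d)$ is defined combinatorially, independently of the field. The polynomials $P_\lambda(F;X)$ are built from the coefficients of $F$ by ring operations and iteration, so their images under $K\to K'$ are $P_\lambda(F_{K'};X)$. Pullback by $\iota$ commutes with flat base change, and $H^0(A_{K'},L_{K'}^{\otimes q})=H^0(A,L^{\otimes q})\otimes_K K'$ by flat base change of cohomology. Hence $\mathcal S_q(A_{K'},F_{K'})$ is the termwise scalar extension of $\mathcal S_q(A,F)$.

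\emph{Good bases.} Finally, a $c(q)$-tuple of vectors in a $K$-vector space $V$ of dimension $c(q)$ is a basis if and only if its determinant with respect to a fixed $K$-basis is nonzero. That determinant is unchanged by scalar extension to $V\otimes_KK'$, whose dimension is still $c(q)$. So the good-basis condition holds over $K$ if and only if it holds over $K'$.
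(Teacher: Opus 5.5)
Your proposal is correct and follows essentially the same route as the paper. Spanning and the existence of a good basis are taken from Looper's construction \cite[Sec~2.2]{looper2024arakelov}; the paper does not reprove the bounds $r_-(q)\le r\le r_+(q)$ either, so the step you flag is exactly the one it cites. The base-change claims then follow, as you say, because iteration, multiplication and restriction commute with $K\to K'$, from flat base change $H^0(A,L^{\otimes q})\otimes_K K'\simeq H^0(A_{K'},L_{K'}^{\otimes q})$, and from the fact that the determinant (equivalently, the wedge) of a $c(q)$-tuple is nonzero over $K$ if and only if it is nonzero over $K'$.
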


\begin{proof}
Looper's  argument in \cite[Sec~2.2]{looper2024arakelov} shows that \(\mathcal S_q(A,F)\) spans \(H^0(A,L^{\otimes q})\) and contains a basis. Iteration, multiplication, and restriction commute with \(K\to K'\), so the indexed list commutes term by term with scalar extension.  Moreover, flat base change gives
\[
 H^0(A,L^{\otimes q})\otimes_KK' \simeq H^0(A_{K'},L_{K'}^{\otimes q}).
\]
Since \(K'/K\) is faithfully flat, the wedge of an ordered \(c(q)\)-tuple is nonzero before extension if and only if it is nonzero afterwards. The recipe type is unchanged, so the good-basis properties are preserved as well.
\end{proof}

\begin{proposition}\label{prop:extend-multiplication}
Let \(S\) be a Noetherian affine \(\kappa_p\)-scheme and let \(\pi:\mathcal A\to S\) be an abelian scheme.  Suppose that a symmetric, rigidified, relatively very ample line bundle \(\mathcal L\) gives a projectively normal closed immersion \(\mathcal A\hookrightarrow\mathbb P^N_S\), and fix \(s\in S\).  There are an integer \(M\ge2\) and a Zariski neighborhood \(V\) of \(s\) such that multiplication by \(M\) on \(\mathcal A_V\) extends to an endomorphism of \(\mathbb P^N_V\) of degree \(M^2\).
\end{proposition}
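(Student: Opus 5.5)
Work first over the local ring $\mathcal O_{S,s}$ and then spread out to a Zariski neighborhood $V$ of $s$. Write $X_0,\ldots,X_N$ for the homogeneous coordinates on $\mathbb P^N_S$, and $R=\bigoplus_q H^0(\mathcal A,\mathcal L^{\otimes q})$. Since $\mathcal L$ is symmetric and rigidified, the theorem of the cube gives $[M]^*\mathcal L\simeq\mathcal L^{\otimes M^2}$ as rigidified bundles over $S$. After shrinking $S$ around $s$ we may assume $\operatorname{Pic}(S)$ is trivial along the relevant pieces, so this isomorphism is a global isomorphism of line bundles on $\mathcal A$. Hence $[M]^*X_i$ is a section of $\mathcal L^{\otimes M^2}$ for each $i$. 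By projective normality, $\mathcal O_S[X_0,\ldots,X_N]_{M^2}\to H^0(\mathcal A,\mathcal L^{\otimes M^2})$ is surjective once $S$ is affine; here one uses cohomology and base change, which holds because $H^1(\mathcal A_t,\mathcal L_t^{\otimes q})=0$ for ample $\mathcal L_t$ and $q\ge1$. We may therefore choose homogeneous polynomials $F_i$ of degree $M^2$ over $\mathcal O(S)$ restricting to $[M]^*X_i$. Every choice of lift agrees with $[M]$ on $\mathcal A$, but we are free to add to $F_i$ any element of the homogeneous ideal $I_{\mathcal A}$ in degree $M^2$.

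The real content is to choose the lifts so that $(F_0,\ldots,F_N)$ has no common zero on the fiber $\mathbb P^N_{\kappa(s)}$; equivalently, the Macaulay resultant $\operatorname{Res}(F)$ is nonzero in $\kappa(s)$. Since $\operatorname{Res}(F)\in\mathcal O(S)$, its nonvanishing locus is then an open $V\ni s$ over which $F$ defines an endomorphism of $\mathbb P^N_V$ of degree $M^2$ extending $[M]$. So we reduce to the classical problem over the field $\kappa=\kappa(s)$, or over its algebraic closure, followed by a descent and density argument. On $\mathcal A_s$ itself, the $F_i$ have no common zero, because $[M]$ is a morphism and the $[M]^*X_i$ generate $\mathcal L_s^{\otimes M^2}$. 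The common zeros therefore lie off $\mathcal A_s$, and we kill them by adding elements of $I_{\mathcal A_s}$.

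The standard dimension count goes as follows. If $I_{\mathcal A_s}$ is generated in degree $\le e$ and $M^2\ge e$, then the linear system $\{F_i+G_i:G_i\in (I_{\mathcal A_s})_{M^2}\}$ has base locus exactly $\mathcal A_s$ in each coordinate. A general tuple $(G_0,\ldots,G_N)$ then has common zero locus equal to the common zero locus on $\mathcal A_s$, which is empty. This is the Bertini-type argument used for the ``polarized endomorphism extends to projective space'' statement in Fakhruddin's lemma. We arrange it by induction: choose $G_0,\ldots,G_{r}$ successively so that each new hypersurface $\{F_r+G_r=0\}$ cuts every irreducible component of the previous intersection not contained in $\mathcal A_s$ properly. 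Avoiding finitely many components is an open nonempty condition on the linear system, since base points only lie on $\mathcal A_s$. After $N+1$ steps the intersection off $\mathcal A_s$ has dimension $-1$, so it is empty.

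I expect the main obstacle to be making this work when $\kappa(s)$ is finite, as happens in positive characteristic. There, ``general'' choices may have no rational points. Three fixes are available: enlarge $M$, which gives more room in high degree and can be exploited by a Poonen-type or explicit-monomial choice; pass to a finite extension and take norms; or, more simply, work with $M$ large and use elements $G_i\in I$ built from powers of generators, which avoids genericity altogether. The plan is to try the last fix first: take $G_i = H_i\cdot X_i^{M^2-\deg H_i}$-type combinations with the $H_i$ ranging over generators of $I$. Having done so, we lift the chosen $G_i$ from $\kappa(s)$ to $\mathcal O(S)$, which is possible by surjectivity in degree $M^2$ for $I$ after shrinking $S$. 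Openness of $\operatorname{Res}\ne0$ then yields $V$.
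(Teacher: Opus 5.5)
Your skeleton matches the paper's. You lift the pulled-back coordinates $[M]^*X_i$ through the surjection $p_*\mathcal O(M^2)\to\pi_*\mathcal L^{\otimes M^2}$, correct them by sections of the locally free kernel (whose fiber at $s$ is $H^0(\mathbb P^N_s,\mathcal I_{\mathcal A_s}(M^2))$), make the resultant nonzero in $\kappa(s)$, and shrink to the locus where it is a unit. For infinite $\kappa(s)$, your Bertini-type induction is Fakhruddin's argument, which the paper cites through Looper--Yap.

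The genuine gap is the case of finite $\kappa(s)$, which you flag but do not settle. It cannot be sidestepped: for $p>1$ the charts $U_a$ are of finite type over the finite field $\kappa_p=\mathbb F_p(\zeta_{\ell_p})$, so all their closed points have finite residue fields.

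None of your three remedies is carried out, and two of them fail as described.
\begin{itemize}
\item For the explicit choice $G_i=H_iX_i^{M^2-\deg H_i}$ you give no reason why $(F_i+G_i)$ has no common zero off $\mathcal A_s$. The bad locus is the resultant hypersurface in the space of tuples, and a fixed pattern has no reason to avoid it. In the infinite case it was exactly genericity that did this.
\item Norms do not help. The product $\prod_\sigma(F_i+G_i^\sigma)$ restricts to $F_i^k$ on $\mathcal A_s$, so the resulting map no longer extends $[M]$. Moreover, the factor vanishing at a given point may differ for different $i$, so common zeros can reappear.
\item Solving over $\overline{\kappa(s)}$ does not help either, since the $G_i$ must be $\kappa(s)$-rational to lift to $\mathcal O(V)$.
\item The dimension count cannot be run directly over $\mathbb F_q$. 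Each step must avoid one proper affine subspace of $(I_{\mathcal A_s})_{M^2}(\mathbb F_q)$ per component of the previous intersection. B\'ezout only bounds the number of components by a power of $M^2$, which can exceed $q$, and $q$ parallel hyperplanes already exhaust the space, so no choice need survive.
\end{itemize}

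What closes the gap is Poonen's extension theorem over finite fields, which the paper invokes. It is proved by the closed point sieve, which shows that once the degree is large, lifts with no common zero on $\mathbb P^N_s$ exist. Hence for some $r$ the iterate $[m]^r=[m^r]$ extends to an endomorphism $\overline G$ of $\mathbb P^N_s$. Taking $M=m^r$ and this $\overline G$ on the fiber, your lifting and resultant-openness steps then finish the argument exactly as in the paper.
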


\begin{proof}

If \(\kappa(s)\) is infinite, the result follows from
\cite[Prop.~6.5]{looper2026uniform}. Assume that \(\kappa(s)\) is finite and let \(m\geq 2\). Then by Poonen's extension theorem \cite[Thm.~2.2]{poonen2013extending}, for the polarized endomorphism \([m]\colon \mathcal A_s\to\mathcal A_s\),  there is an \(r\geq 1\) such that $[m]^r=[m^r]$ extends to an endomorphism of \(\mathbb P^N_s\). Set \(M=m^r\) and \(e=M^2\), and write the corresponding degree-\(e\) tuple as $\overline G=(\overline G_0,\ldots,\overline G_N)$.

After replacing \(S\) by an affine open neighborhood \(V\) of \(s\), base change identifies the fiber of \(\pi_*\mathcal L^{\otimes e}\) at \(s\) with \(H^0(\mathcal A_s,\mathcal L_s^{\otimes e})\), and \(\pi_*\mathcal L^{\otimes e}\) is finite locally free. Let \(p\colon\mathbb P^N_V\to V\) be the projection and let
\[
\rho_e : p_*\mathcal O_{\mathbb P^N_V}(e) \longrightarrow \pi_*\mathcal L^{\otimes e}
\]
be the restriction. Let \(\mathcal C_e=\operatorname{coker}(\rho_e)\). Its fiber at \(s\) is the cokernel of
\[
H^0(\mathbb P^N_s,\mathcal O(e)) \longrightarrow H^0(\mathcal A_s,\mathcal L_s^{\otimes e}),
\]
which vanishes because $\cL$ is projectively normal. Thus \(\mathcal C_e\otimes\kappa(s)=0\), and Nakayama's lemma gives \((\mathcal C_e)_s=0\). Shrinking \(V\) again, we may assume that \(\mathcal C_e=0\) on $V$. The map \(\rho_e\) is therefore surjective. Its kernel \(\mathcal K_e\) is locally free, and the exact sequence
\[
0\to \mathcal K_e\to p_*\mathcal O_{\mathbb P^N_V}(e) \longrightarrow \pi_*\mathcal L^{\otimes e}\to 0
\]
commutes with base change. The rigidified symmetric isomorphism $[M]^*\mathcal L\simeq \mathcal L^{\otimes e}$ allows us to pull back the coordinate sections \(x_0,\ldots,x_N\) to sections $t_i\in H^0(\mathcal A_V,\mathcal L^{\otimes e})$. Since \(V\) is affine, we can lift $t_i$ to
\[
G_i^0\in H^0(\mathbb P^N_V,\mathcal O(e))
\]
Both \(\overline G_i\) and \(G_i^0|_s\) restrict to \(t_i|_s\), so
\[
\overline G_i-G_i^0|_s \in \mathcal K_e\otimes\kappa(s) =H^0(\mathbb P^N_s,\mathcal I_{\mathcal A_s}(e)).
\]
Each of these differences lifts to a section \(H_i\in H^0(V,\mathcal K_e)\). Set
\[
G_i=G_i^0+H_i.
\]
The tuple \(G=(G_0,\ldots,G_N)\) restricts to \([M]\) on \(\mathcal A_V\) and specializes at \(s\) to \(\overline G\). The Macaulay resultant of \(G\) specializes to the nonzero resultant of \(\overline G\). After shrinking \(V\) to the locus where this resultant is invertible, \(G\) defines an endomorphism of \(\mathbb P^N_V\) of degree \(e=M^2\) extending \([M]\colon\mathcal A_V\to\mathcal A_V\).
\end{proof}

\subsection{Dynamical Data}

In this subsection, we prove an analogue of \cite[Prop~6.4]{looper2026uniform}  in arbitrary characteristic. 

\begin{definition}\label{def:centered}
For \(A/C\) represented by a point of \(U_a(C)\), put
\[
  A\text{ is }C_0\text{-centered in }U_a
  \quad\Longleftrightarrow\quad
  \lambda_{\partial U_a}(A)
  \le C_0\lambda_{\partial X}(A).
\]
\end{definition}

Let $U_1,\cdots, U_r$ be a finite covering of $X_{g,\ell_p}$ over $\kappa_p$, then $\lambda_{\partial X} = \min_i \lambda_{\partial U_i}$. Hence, every abelian variety $A$ over $C$ is $1$-centered in at least one open $U_i$. 

\begin{theorem}\label{thm:dynamical-data}
For every \(g\) and characteristic exponent \(p\), there are data with the
following properties.
\begin{enumerate}[label=\textnormal{(D\arabic*)},leftmargin=3.8em]
\item A fine moduli scheme \(X=X_{g,\ell_p}\) of principally polarized \(g\)-dimensional abelian varieties with full level-\(\ell_p\) structure, a projective toroidal compactification \(\overline X\) carrying a universal semiabelian scheme, the Hodge bundle \(\omega\), and a finite affine cover \(X=\bigcup_{a=1}^tU_a\) over \(\kappa_p\).

\item The canonical symmetric relatively ample bundle obtained from the universal Poincar\'e bundle, and its fixed third power \(L\), such that on each \(U_a\) it gives a projectively normal immersion in \(\mathbb P^{N_a}_{U_a}\). For some \(m_a\ge2\), multiplication by \(m_a\) extends to an endomorphism of \(\mathbb P^{N_a}_{U_a}\) of degree \(m_a^2\), with a normalized homogeneous lift \(F_a\).  Put \(c(n)=h^0(A,L^{\otimes n})\).

\item Fix $a$. There is a constant $C_a$ with the following property. 

Given a product-formula field \(K_0\) and \(x\in U_a(K_0)\), choose  a good basis \(\mathcal B_{n,a,x}\). Let $E/K_0$ be a finite extension. The basis  \(\mathcal B_{n,a,x}\) is base-changed to \(E\) and to every completion of \(E\) at its valuations.  For any $c(n)$-tuple $Q$ whenever the evaluation determinant of \(Q\) is nonzero, there are numbers \(\eta_{n,w} \ge0\) satisfying
\begin{align}
g_{\mathcal B_{n,a,x},w}(Q)&\ge r(F_a)_w-\eta_{n,w},
\label{eq:local-determinant}\\
\frac1{[E:K_0]}\sum_{w\in|B_E|}\eta_{n,w}
&\le C_a\frac{\log n}{n}h_{\omega,K_0}(A),
\label{eq:determinant-error}\\
\sum_{w\in|B_E|}r(F_a)_w&=0,
\label{eq:resultant-product}
\end{align}

\item Let $x\in X(K)$ correspond to an abelian variety $A$, and let $v$ be a semistable place of bad reduction of $A$. Then
\begin{equation}\label{eq:boundary-order}
    \lambda_{\partial X,v}(x)\ge d_v.
\end{equation}
\end{enumerate}
All these data depend only on \(g\) and \(p\), not on \(k\), \(B\), or the
abelian variety \(A\).
\end{theorem}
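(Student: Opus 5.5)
The plan is to build the data item by item. The only genuinely new ingredients compared with \cite[Prop~6.4]{looper2026uniform} are the arbitrary-characteristic extension of multiplication, which is Proposition~\ref{prop:extend-multiplication}, and the uniform control of the determinant error in (D3).

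For (D1), take $X=\mathcal A_{g,\ell_p}$ over $\kappa_p$. It is a fine moduli scheme because $\ell_p\ge3$ and $(\ell_p,p)=1$. Take a projective toroidal compactification carrying the universal semiabelian scheme \cite[Chap~IV--V]{faltings2013degeneration}, together with the Hodge bundle $\omega$ on it.

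For (D2), let $\mathcal M=(\id,\lambda)^*P$ be built from the universal Poincar\'e bundle, rigidified along the zero section, and set $L=\mathcal M^{\otimes3}$. It is symmetric and relatively very ample. Fiberwise it is normally generated, so it is projectively normal after shrinking to open affines, using cohomology and base change as in the proof of Proposition~\ref{prop:extend-multiplication}. For each point $s\in X$, Proposition~\ref{prop:extend-multiplication} then gives an affine neighborhood $V_s$ and an integer $M_s\ge2$ such that $[M_s]$ extends to $\mathbb P^{N}_{V_s}$. Since $X$ is quasi-compact, finitely many $V_s$ cover $X$; call them $U_a$, with $m_a=M_{s_a}$. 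On each $U_a$, normalize a homogeneous lift $F_a$ so that one coefficient equals $1$. This is possible after shrinking further, so that some coefficient is a unit. The resultant of $F_a$ is then a unit on $U_a$.

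For (D4), let $v$ be a semistable bad place. The moduli map of $\operatorname{Spec}\mathcal O_v$ cannot land in $X$: otherwise the pulled-back abelian scheme would give good reduction. Hence the closed point maps into $\partial X$, so $\widetilde x^*\mathcal I_{\partial X}\subset\mathfrak m_v$. This gives $\ord_v\ge1$, and with our normalization $\lambda_{\partial X,v}(x)\ge d_v$. A technical point: one must use the level-trivializing base change and check that the scaling of Proposition~\ref{prop:ef-scaling} is compatible with this.

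For (D3), fix $a$ and $x\in U_a(K_0)$, and specialize $F_a$ to $x$. Identity \eqref{eq:resultant-product} is the product formula (Theorem~\ref{prop:product-formula}) applied to $\operatorname{Res}(F_{a,x})\in K_0^\times$, with its extension to $E$ via \eqref{eq:fundamental-identity}. For \eqref{eq:local-determinant}, define
\[
\eta_{n,w}:=\max\{0,\,r(F_a)_w-m_{n,w}(A)\}.
\]
Then \eqref{eq:local-determinant} holds trivially. The content is \eqref{eq:determinant-error}, and this is the main obstacle.

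The plan for \eqref{eq:determinant-error} is to follow Looper's local determinant estimate \cite[Thm~3.1]{looper2024arakelov} quantitatively. That estimate bounds $\eta_{n,w}\le C\frac{\log n}{n}\bigl(\log\|F_{a,x}\|_w+\log^+|\operatorname{Res}F_{a,x}|_w^{-1}\bigr)$, with $C$ depending only on $N_a,m_a$. The bound comes from the explicit recipe basis $\mathcal S_q$ and the uniform escape estimate, Lemma~\ref{lem:uniform-escape}, and it is valid in all characteristics because it uses only non-Archimedean inequalities with integer coefficients.

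At places of good reduction inside the chart, the coefficients of $F_{a,x}$ are integral and the resultant is a unit, so the error vanishes. Summing over places therefore gives $\frac{1}{[E:K_0]}\sum_w(\cdots)\ll h_{\partial U_a,K_0}(x)$ plus the height of the coefficients. Both are functions on $U_a$ with poles only along $\overline{U_a}\setminus U_a$.

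The remaining step compares this with the Hodge height. One must bound $h_{\partial U_a}$ on $U_a$ by $h_\omega$. This fails on all of $U_a$, because $\partial U_a$ includes parts of $X$ where $\omega$ is nondegenerate. This is why the centered condition of Definition~\ref{def:centered} appears downstream. In the statement I would therefore absorb it by requiring $x$ to be $C_0$-centered, or by shrinking $U_a$ so that its boundary lies in $\partial X$ up to an ample correction. Then $\omega$ being ample on the minimal compactification, pulled back to $\overline X$, gives $h_{\partial X}\ll h_\omega+O(1)$. The final issue is to remove the $O(1)$. I would handle it using the fact that every term is a sum of local contributions that vanish at places of good reduction in the chart.
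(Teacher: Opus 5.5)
You handle (D1), (D2) and (D4) as the paper does. The gap is in (D3): you never prove \eqref{eq:determinant-error} as stated. You claim that bounding $h_{\partial U_a}$ by $h_\omega$ fails on $U_a$, propose adding a centeredness hypothesis on $x$, and leave the ``removal of the $O(1)$'' unproved. Adding centeredness changes the theorem into one that cannot be used downstream. In the proof of Theorem~\ref{thm:global-alternative}, \eqref{eq:determinant-error} is summed over \emph{all} places of $E$. This includes places of good reduction where $x$ reduces into $X\setminus U_a$. There $\lambda_{\partial X,w}=0<\lambda_{\partial U_a,w}$, so $x$ is not centered. Centeredness is a local device for the ultralimit argument of Theorem~\ref{thm:uniform-local}; it is not a condition on global heights.

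The missing idea is that over a function field there is no obstruction and no additive constant, because every height is a degree on the proper curve $B_0$. First, $h_{\partial U_a}$ is not needed at all. Normalize one coefficient of $F_{a,x}$ to $1$ and put $d_a=m_a^2$. The product formula for $\operatorname{Res}F_{a,x}$ then gives
\[
\sum_w\log^+|\operatorname{Res}F_{a,x}|_w^{-1}=\sum_w\log^+|\operatorname{Res}F_{a,x}|_w\le(N_a+1)d_a^{N_a}\sum_w\log\|F_{a,x}\|_w .
\]
Hence Looper's local bound already sums to $\ll\frac{\log n}{n}h_{\mathrm{coeff},K_0}(F_{a,x})$.

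Second, extend $x$ to $u:B_0\to X^*$ using properness of the minimal compactification. View the coefficient map $\varphi_a:U_a\to\mathbb P^t$ as a rational map on the component of $X^*$ containing $u(B_0)$. Suppose $\varphi_a$ is given by sections of a fixed bundle $\mathcal M$, and $\Omega=\omega^{\otimes e_g}$ is ample with $\Omega^{\otimes b}\otimes\mathcal M^{-1}$ globally generated. Then
\[
h_{\mathrm{coeff},K_0}(F_{a,x})=\deg(\varphi_a\circ u)^*\mathcal O(1)\le\deg u^*\mathcal M\le b\deg u^*\Omega=b\,e_g\,h_{\omega,K_0}(A),
\]
because the base divisor only lowers the degree. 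This is Lemma~\ref{lem:fixed-model-height}(ii), and it is exactly how the paper concludes.

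The same global-generation argument also bounds $h_{\partial U_a}$ by a multiple of $h_\omega$. Use Lemma~\ref{lem:fixed-model-height}(i) for $X^*\setminus U_a$, together with mutual power containments of the two boundary ideals on the graph closure of $U_a$ in $\overline{U_a}\times X^*$. So the comparison you believed fails is true, and your route closes once you supply this argument instead of changing the hypothesis.
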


We break the proof of Theorem~\ref{thm:dynamical-data} into the following Lemmas.

\begin{lemma}\label{lem:data-moduli}
The data in \textnormal{(D1)--(D2)} exist over \(\kappa_p\).
\end{lemma}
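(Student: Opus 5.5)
The plan is to assemble (D1) and (D2) from standard moduli theory and then apply Proposition~\ref{prop:extend-multiplication} chart by chart, with a compactness argument to make the cover finite.

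\textbf{Step 1: (D1).} Because $\ell_p\ge3$ and $(\ell_p,p)=1$, the moduli problem of principally polarized abelian varieties with full level-$\ell_p$ structure is rigid. Mumford's GIT construction, as extended to arbitrary characteristic by Faltings--Chai, therefore represents it by a smooth quasi-projective scheme $X=X_{g,\ell_p}$ over $\mathbb Z[1/\ell_p,\zeta_{\ell_p}]$. We base change this scheme to $\kappa_p$. Faltings--Chai \cite[Chap~IV--V]{faltings2013degeneration} also give the projective toroidal compactification $\overline X$, after choosing a projective admissible cone decomposition, together with the universal semiabelian scheme and the Hodge bundle $\omega$ extending over $\overline X$. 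All of these choices depend only on $g$ and $p$.

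\textbf{Step 2: the line bundle.} On the universal abelian scheme $\mathcal A\to X$, the principal polarization $\lambda$ and the rigidified Poincar\'e bundle give $\mathcal M_\lambda=(\id,\lambda)^*P$. This bundle is symmetric, rigidified and relatively ample, and it represents $2\lambda$. Set $L=\mathcal M_\lambda^{\otimes3}$. On each geometric fiber $L$ is the cube of a symmetric ample bundle, hence very ample and projectively normal; in every characteristic this follows from Koizumi and Mumford. By cohomology and base change, $\pi_*L$ is locally free, since the higher cohomology of an ample bundle on an abelian variety vanishes. On any affine open $U$ where $\pi_*L$ is free of rank $N+1$, choosing a basis gives a closed immersion $\mathcal A_U\hookrightarrow\mathbb P^N_U$. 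Projective normality on fibers, together with the Nakayama argument already used in the proof of Proposition~\ref{prop:extend-multiplication}, shows that this immersion is projectively normal after shrinking $U$. For this we need surjectivity of $\operatorname{Sym}^q\to\pi_*L^{\otimes q}$ for every $q$. Since the homogeneous coordinate ring is generated in degree one, it suffices to check finitely many degrees, which we do via Castelnuovo--Mumford regularity.

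\textbf{Step 3: (D2) and finiteness.} For each point $s\in X$, apply Proposition~\ref{prop:extend-multiplication} to get an integer $M_s\ge2$ and an affine neighbourhood $V_s$ on which $[M_s]$ extends to a degree-$M_s^2$ endomorphism of $\mathbb P^N_{V_s}$, with invertible Macaulay resultant. This covers both closed points with finite residue field, via Poonen, and points with infinite residue field. Because $X$ is of finite type over $\kappa_p$, it is quasi-compact, so finitely many $V_s$ cover $X$. Refining to affine opens where $\pi_*L$ is free gives the cover $U_1,\dots,U_t$ and the integers $m_a$. Finally, normalize each homogeneous lift $F_a$ so that one coefficient equals $1$. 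Over a general base this requires a coefficient that is a unit. We arrange this by shrinking the chart further around each point to the locus where some fixed coefficient is invertible, and dividing by it. Quasi-compactness keeps the cover finite after this refinement.

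\textbf{Main obstacle.} The delicate point is uniform projective normality together with keeping every chart affine. I expect to handle it by the fibrewise-vanishing-plus-Nakayama argument of Proposition~\ref{prop:extend-multiplication}, restricted to the finitely many degrees needed for generation.
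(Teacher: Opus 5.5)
Your proposal is correct and follows the paper's route. The paper likewise takes the moduli scheme, toroidal compactification and Hodge bundle from Faltings--Chai, uses the cube of $(\id,\lambda)^*\mathcal P$ as a fiberwise very ample, projectively normal bundle (citing Sekiguchi for this in arbitrary characteristic, where you cite Koizumi--Mumford), applies cohomology and base change to get the immersion, then applies Proposition~\ref{prop:extend-multiplication} at every point and uses quasi-compactness to get a finite cover; your Castelnuovo--Mumford reduction to finitely many degrees is harmless but unnecessary, since projective normality holds on every fiber, so the cokernel of $\operatorname{Sym}^q\pi_*L\to\pi_*L^{\otimes q}$ has zero fiber everywhere and vanishes by Nakayama.
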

\begin{proof}
The fine moduli scheme, its universal abelian scheme, the Hodge bundle, and a projective toroidal compactification carrying the universal semiabelian degeneration are supplied by the algebraic theory of Faltings--Chai \cite[Chap~IV--V]{faltings2013degeneration}. Let $\cA$ be the universal abelian scheme over $X$ and $\lambda:\cA\to \cA^\vee$ be its principal polarization. Let 
\[\Gamma_\lambda:\cA\to \cA\times_X\cA^\vee\] 
be the graph of $\lambda$. Let $\cM:=\Gamma_\lambda^*\cP$ where $\cP$ is the universal Poincar\'e bundle. Sekiguchi's Theorem  \cite{sekiguchi1977projective} implies that $\cM^3$ is very ample and normally generated on every abelian fiber.  After taking this power once and for all, cohomology and base change make the relevant direct image locally free, and on an affine neighborhood a basis gives a projectively normal immersion \(\mathcal A\hookrightarrow\mathbb P^N_X\).   Proposition~\ref{prop:extend-multiplication} supplies open neighborhoods for any $x\in X$ with the dynamical data. Since $X$ is Noetherian  and quasi-compact, we can find a finite subcover.
\end{proof}

\begin{lemma}\label{lem:fixed-model-height}
Let \(X\) be an integral projective variety over a  field \(\kappa\), and let \(\Omega\) be an ample line bundle on \(X\).
\begin{enumerate}[label=\textnormal{(\roman*)}]
\item For every coherent ideal \(\mathcal I\subset\mathcal O_X\), there is an integer \(a\ge0\), depending only on \((X,\Omega,\mathcal I)\), such that, for every smooth projective curve \(B/\kappa\) and morphism \(u:B\to X\) whose generic point avoids \(V(\mathcal I)\),
\[
    \deg u^{-1}V(\mathcal I)\le a\deg u^*\Omega.
\]

\item If \(\varphi:X\dashrightarrow\mathbb P^R\) is a fixed rational map defined on an open set \(U\), there is \(b\ge0\), depending only on the fixed map and \(\Omega\), such that
\[
\deg(\varphi\circ u)^*\mathcal O(1) \le b\deg u^*\Omega
\]
for every smooth projective curve \(B/\kappa\) and morphism \(u:B\to X\) whose generic point belongs to \(U\).
\end{enumerate}
\end{lemma}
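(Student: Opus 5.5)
For (i), the plan is to dominate the ideal by a section of a power of $\Omega$. Since $\Omega$ is ample, $\mathcal I\otimes\Omega^{\otimes a}$ is globally generated for some $a\ge 1$ depending only on $(X,\Omega,\mathcal I)$ (Serre). Choose finitely many generating sections $s_1,\ldots,s_r\in H^0(X,\mathcal I\otimes\Omega^{\otimes a})\subset H^0(X,\Omega^{\otimes a})$.

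Now let $u:B\to X$ be a morphism whose generic point avoids $V(\mathcal I)$. The ideal $u^{-1}\mathcal I\cdot\mathcal O_B$ is an invertible ideal $\mathcal O_B(-D)$ with $D$ effective, and $\deg u^{-1}V(\mathcal I)=\deg D$. Near the generic point the $s_i$ generate $\Omega^{\otimes a}$, so not all $u^*s_i$ vanish identically on $B$. Pick $s_i$ with $u^*s_i\ne0$; it is a nonzero section of $u^*\Omega^{\otimes a}$ lying in $\mathcal O_B(-D)\otimes u^*\Omega^{\otimes a}$. Its divisor is therefore $\ge D$, which gives
\[
\deg D\le \deg u^*\Omega^{\otimes a}=a\deg u^*\Omega .
\]
If $\kappa$ is not algebraically closed, degrees are taken over $\kappa$ and the same argument applies verbatim. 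Also $\deg u^*\Omega\ge0$, since $\Omega$ is ample, hence nef.

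For (ii), resolve the indeterminacy of $\varphi$. Let $\mathcal J$ be the base ideal of the linear system $\varphi^*\mathcal O(1)$ on $X$, i.e. extend $\varphi$ via $\varphi=[f_0:\cdots:f_R]$ with $f_i$ sections of some line bundle $\mathcal N$ on $X$. Take $\mathcal N$ to be a twist by a power of $\Omega$, so that $\mathcal N\otimes\Omega^{-c}$ is anti-effective or at least dual to an effective bundle. If $u(\eta_B)\in U$, then $(\varphi\circ u)^*\mathcal O(1)=u^*\mathcal N\otimes\mathcal O_B(-E)$, where $E$ is the fixed part, namely the base divisor $u^{-1}V(\mathcal J)$. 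Hence
\[
\deg(\varphi\circ u)^*\mathcal O(1)\le \deg u^*\mathcal N .
\]

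The step I expect to be the main obstacle is arranging $\mathcal N$ with $\deg u^*\mathcal N\le b\deg u^*\Omega$. To handle it, write $\mathcal N=\Omega^{\otimes c}\otimes\mathcal N'^{-1}$, where $\mathcal N'=\Omega^{\otimes c}\otimes\mathcal N^{-1}$ has a nonzero section $t$ for large $c$. To keep $u(B)$ away from trouble, replace each $f_i$ by $f_it$, which is a section of $\Omega^{\otimes c}$ defining the same rational map. The point is that one does not need $u(B)\not\subset V(t)$: the map $\varphi\circ u$ is given by the restrictions of the $f_it$, which all vanish simultaneously only on the base locus, and the generic point lies in $U$ where some $f_i$ is nonzero. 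The real requirement is only that some $f_it$ not vanish identically on $B$. Since $u(\eta_B)$ could lie in $V(t)$, choose the representation for the given $u$: the sections $t$ with large $c$ form a base-point-free family, so some $t$ avoids $u(\eta_B)$, and each such choice gives the bound $b=c$, uniform in $u$. Then
\[
\deg(\varphi\circ u)^*\mathcal O(1)\le c\deg u^*\Omega .
\]
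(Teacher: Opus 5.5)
Part (i) is correct and is the paper's argument. Serre's theorem makes $\mathcal I\otimes\Omega^{\otimes a}$ globally generated, and your nonzero section $u^*s_i$ of $u^*\Omega^{\otimes a}(-D)$ is an explicit witness for the paper's remark that this globally generated twist has nonnegative degree.

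Part (ii) follows the paper's outline: represent $\varphi$ by sections $f_0,\ldots,f_R$ of one line bundle $\mathcal N$, then dominate $\mathcal N$ by $\Omega^{\otimes c}$. But it relies on an unjustified step. You assert that some $f_i$ is nonzero at $u(\eta_B)$ whenever $u(\eta_B)\in U$, i.e.\ that a single global representation has no base points in $U$. This can fail. On the quadric cone $X=V(xy-z^2)\subset\mathbb P^3$, the map $\varphi=[x:z]=[z:y]$ to $\mathbb P^1$ is a morphism on $U=X\setminus\{\text{vertex}\}$, and its fibers are ruling lines $\ell$ with $2\ell\sim H$. Since $\operatorname{Pic}X=\mathbb Z\cdot\mathcal O_X(1)$, any $f_0,f_1\in H^0(X,\mathcal O_X(m))$ with $f_0/f_1=x/z$ satisfy $\operatorname{div}f_0=\ell_0+G$ and $\operatorname{div}f_1=\ell_\infty+G$. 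Here $\ell_0=\{x=z=0\}$, $\ell_\infty=\{y=z=0\}$, and $G\sim(2m-1)\ell$ is a nonzero effective divisor. If $u$ parametrizes a component of $G$, then $u(\eta_B)\in U$ but every $u^*f_i=0$; for $[x:z]$ itself, $G=\ell_0$. Multiplying by $t$ cannot help, since $f_it$ vanishes wherever all the $f_i$ do.

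The paper's proof makes the same tacit assumption (``the pulled-back sections are not all zero generically''), and the same repair works for both. Locally on $U$, $\varphi$ is given by regular functions without common zero. Clearing denominators by homogeneous forms gives, for each $x\in U$, a representation by sections of some $\mathcal O_X(m_x)$ not all vanishing at $x$. By quasi-compactness, finitely many of these, with base-point-free loci $U_1,\ldots,U_s$, cover $U$. Choose $c$ so that every $\Omega^{\otimes c}\otimes\mathcal O_X(-m_k)$ is globally generated, and for a given $u$ use the $k$ with $u(\eta_B)\in U_k$.

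Separately, your $t$-detour is unnecessary, and the sentence ``one does not need $u(B)\not\subset V(t)$'' is false: if $u(B)\subset V(t)$, all $u^*(f_it)$ vanish. You do then correct course by choosing $t$ depending on $u$. It is cleaner to argue as the paper does. Once $(\varphi\circ u)^*\mathcal O(1)\cong u^*\mathcal N(-E)$ with $E\ge0$, global generation of $\Omega^{\otimes c}\otimes\mathcal N^{-1}$ gives $\deg u^*\mathcal N\le c\deg u^*\Omega$ directly, because the pullback of a globally generated line bundle to a curve has nonnegative degree. This global generation, supplied by Serre's theorem, is also the precise condition that should replace your vague requirement that $\mathcal N\otimes\Omega^{-c}$ be ``anti-effective''.
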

\begin{proof}
By Serre's global-generation theorem
\cite[Chap~II, Thm~5.17]{hartshorne2013algebraic}, we choose \(a\) so that \(\mathcal I\otimes\Omega^{\otimes a}\) is globally generated.  The image of \(u^*\mathcal I\to\mathcal O_B\) is \(\mathcal O_B(-D)\), where \(D=u^{-1}V(\mathcal I)\). Its twist by \(u^*\Omega^{\otimes a}\) is globally generated, so \(-\deg D+a\deg u^*\Omega\ge0\).  This proves (i).

Fix a projective embedding of \(X\) with very ample bundle \(\mathcal A\). By \cite[Lec 7]{JoeHarris1992Alg.Geom},  the rational homogeneous coordinates of \(\varphi\) can be represented by  \(\sigma_0,\ldots,\sigma_R\in H^0(X,\mathcal M)\) where $\cM= A^{\otimes m}$ for some $m\ge 1$.  Since the generic point of \(u(B)\) belongs to \(U\), the pulled-back sections are not all zero generically.  Let \(D'\) be the scheme-theoretic common divisor of these sections on $B$.  The morphism \(\varphi\circ u\) extends to the proper curve \(B\), and
\[
 (\varphi\circ u)^*\mathcal O(1) \simeq u^*\mathcal M\otimes\mathcal O_B(-D').
\]
By the same theorem of Serre, there is \(b\) so that \(\Omega^{\otimes b}\otimes\mathcal M^{-1}\) is globally generated. It follows that 
\[
\deg  (\varphi\circ u)^*\mathcal O(1) \le \deg u^*\mathcal M\le b\deg u^*\Omega
\] 
proving (ii). 
\end{proof}

\begin{lemma}\label{lem:data-determinant}
The constant $C_a$ in \textnormal{(D3)} exists.
\end{lemma}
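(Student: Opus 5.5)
The plan is to adapt Looper's local determinant estimate \cite[Thm~3.1]{looper2024arakelov} and its globalization \cite[Thm~1.1]{looper2024arakelov} to the fixed chart $U_a$. The key point is that every error term will be controlled by a local height-type quantity of the moduli point $x$ that sums to a multiple of $h_{\omega,K_0}(A)$.

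First fix the chart $U_a$ with its immersion $\mathcal A_{U_a}\hookrightarrow\mathbb P^{N_a}_{U_a}$ and lift $F_a$. The coefficients of $F_a$ and of the recipe sections $P_\lambda(F_a;X)$ are regular functions on $U_a$. For $x\in U_a(K_0)$ we specialize. At a place $w$ of $E$, Looper's local argument bounds the defect $r(F_a)_w-g_{\mathcal B_{n,a,x},w}(Q)$ from above by
\[
\eta_{n,w}:=C'\frac{\log n}{n}\Bigl(\log^+\|F_{a,x}\|_w+\log^+|\operatorname{Res}(F_{a,x})|_w^{-1}+\log^+\|\mathcal B_{n,a,x}\|_w\Bigr),
\]
with $C'$ depending only on $N_a$ and $m_a$. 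The mechanism is Hadamard's inequality for the evaluation determinant against the good basis, combined with the uniform escape estimate Lemma~\ref{lem:uniform-escape}. The norm of the basis contributes at most a factor of $\log n/n$, because each recipe section is a product of $r\le r_+(n)=O(\log n)$ iterate coordinates. After normalizing each tuple by its maximal-norm coordinate, this gives \eqref{eq:local-determinant} with $\eta_{n,w}\ge0$. Since $w$ is non-Archimedean, no Archimedean $\log n$ corrections from Hadamard appear.

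Identity \eqref{eq:resultant-product} is immediate. The resultant is computed over $K_0$, so the product formula (Theorem~\ref{prop:product-formula} together with Proposition~\ref{prop:ef-scaling}) gives $\sum_w\log|\operatorname{Res}(F_{a,x})|_w=0$.

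The remaining task is \eqref{eq:determinant-error}. We need to show that $\frac1{[E:K_0]}\sum_w$ of each bracketed term is at most a constant times $h_{\omega,K_0}(A)$. Each term is a $\max(0,-\log|\cdot|)$ of fixed regular functions on $U_a$, so it is bounded by a positive multiple of the boundary function $\lambda_{\partial U_a,w}(x)$ attached to a fixed model of $\overline U_a$. The functions $\|F_a\|$ and $\|\mathcal B\|$ have poles only along $\partial U_a$, and $\operatorname{Res}(F_a)$ is a unit on $U_a$. Summing over places gives $h_{\partial U_a}(x)$. We then bound $h_{\partial U_a}$ by the degree of the pullback of an ample bundle on $\overline U_a$. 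Applying Lemma~\ref{lem:fixed-model-height}(i) to the curve $B_E\to\overline{U_a}$ (after a semistable level extension, which only rescales by $[E:K_0]$), and Lemma~\ref{lem:fixed-model-height}(ii) to compare with the map to the minimal compactification where $\omega$ is ample, should give $h_{\partial U_a}(x)\le C''[E:K_0]\,h_{\omega,K_0}(A)$.

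\textbf{Main obstacle.} The comparison in the last step does not follow directly. $\omega$ is only semiample on the toroidal compactification, and $\overline U_a$ differs from $\overline X$. The map from a chosen projective closure $\overline U_a$ to $\mathcal A_{g,\ell_p}^*$ is only rational in the relevant direction, and Lemma~\ref{lem:fixed-model-height}(ii) bounds degrees in the wrong direction for our purpose. The first thing I would try is to choose $\overline U_a$ as the closure of $U_a$ inside $\mathcal A^*_{g,\ell_p}$ itself, so that the ample bundle is a power of $\omega$. Then $\partial U_a=\overline U_a\setminus U_a$ is cut out by an ideal on that closure, and part (i) applies directly with $\Omega=\omega^{\otimes k}$. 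If the $\log^+\|\mathcal B\|$ term needs a separate treatment, I would absorb it by writing each recipe section's coefficients as polynomials in the coefficients of $F_a$, with degree $O(n)$ and integral structure. This keeps its contribution bounded by $O(\log n/n)$ times the same boundary height.
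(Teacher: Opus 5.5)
Your overall plan works, but the $\log^+\|\mathcal B\|_w$ term is wrong as you justify it and has to be dropped. Once it is gone, your height comparison is a valid alternative to the paper's.

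\textbf{The basis-norm term.} The term $\log^+\|\mathcal B_{n,a,x}\|_w$ in your $\eta_{n,w}$ is not $O(\log n)$ if $\|\mathcal B\|$ means the coefficient norm of the recipe sections, and your fallback does not rescue it. One has $\|F^{[k]}\|_w\le\|F\|_w^{(d^k-1)/(d-1)}$, and this is sharp in general: take $F=(X_0^d,cX_1^d)$ with $|c|_w>1$. So coefficient counting only gives $\log^+\|\mathcal B_n\|_w\ll\frac{n}{d-1}\log\|F_a\|_w$. Your proposal to write the coefficients of $P_\lambda$ as polynomials of degree $O(n)$ in those of $F_a$ gives exactly this bound. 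With your prefactor $\frac{\log n}{n}$, the sum over places is then of order $\log n\cdot h_{\omega,K_0}(A)$, not $\frac{\log n}{n}h_{\omega,K_0}(A)$. Bounding $\frac{1}{nc(n)}\log|\det|$ directly through coefficient norms is no better: it loses $O(1)\cdot\log\|F_a\|_w$ per place, which does not decay in $n$.

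The fix is that this term never arises. In the non-Archimedean Hadamard bound, estimate
$|P_\lambda(\widetilde Q_i)|\le\|\widetilde Q_i\|^{|\alpha|}\prod_\ell\|F^{k_\ell}(\widetilde Q_i)\|^{e_\ell}$.
That is, evaluate the iterates rather than bounding their coefficients, and compare each factor with $d^{k_\ell}\widehat H_F(\widetilde Q_i)$ using Lemma~\ref{lem:uniform-escape} at $s=k_\ell$.
\begin{itemize}
\item The error per factor is $\frac{C_{N,d}}{d-1}(\log\|F_a\|_w+\log^+|\operatorname{Res}(F_a)|_w^{-1})$, independent of $k_\ell$.
\item There are $|\alpha|+\sum_\ell e_\ell=O(\log n)$ factors.
\item Dividing by $nc(n)$ gives \eqref{eq:local-determinant} with $\eta_{n,w}\ll\frac{\log n}{n}(\log\|F_a\|_w+\log^+|\operatorname{Res}(F_a)|_w^{-1})$.
\end{itemize}
This is the Looper estimate the paper imports, in the form $\frac1{[E:K_0]}\sum_w\eta_{n,w}\le C_a\frac{\log n}{n}h_{\mathrm{coeff},K_0}(F_a)$.

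\textbf{The height comparison.} The paper does not use boundary functions at this step. It treats the coefficient morphism $\varphi_a:U_a\to\operatorname{Mor}_{m_a^2}(\mathbb P^{N_a},\mathbb P^{N_a})\subset\mathbb P^t$ as a rational map $\varphi_{a,\alpha}$ on each component $X^*_{a,\alpha}$ of the minimal compactification that meets $U_a$. It then applies Lemma~\ref{lem:fixed-model-height}(ii) with $\Omega=\omega^{\otimes e_g}$ to get
$h_{\mathrm{coeff},K_0}(F_a)=\deg(\varphi_{a,\alpha}\circ u)^*\mathcal O(1)\le C'_{a,\alpha}e_g\,h_{\omega,K_0}(A)$.
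The resultant term is absorbed into $h_{\mathrm{coeff}}$: the product formula gives $\sum_w\log^+|R|_w^{-1}=\sum_w\log^+|R|_w\ll h_{\mathrm{coeff}}$.

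So part (ii) is not in the wrong direction. The relevant rational map goes out of $X^*$, not out of $\overline U_a$.

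Your route also works:
\begin{itemize}
\item take $\overline U_a$ to be the closure of $U_a$ in $X^*$;
\item use $\mathcal I_{\partial U_a}^{k_c}\cdot c\subset\mathcal O$ on each affine chart to get $\log^+|c(x)|_w\le k_c\lambda_{\partial U_a,w}(x)$, for each coefficient of $F_a$ and for $\operatorname{Res}(F_a)^{-1}$;
\item apply part (i) on each component separately, to have integrality.
\end{itemize}
It costs this extra pole-order step and a specific choice of compactification. In exchange it gives place-by-place control; for example, $\eta_{n,w}=0$ wherever $x$ reduces into $U_a$.
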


\begin{proof}
Put \(d_a=m_a^2\). Now let \(K_0=k_0(B_0)\) be a geometric function field with the normalizations of Section~2, and let \(x\in U_a(K_0)\).  Choose  any good basis \(\mathcal B_{n}\) by Lemma \ref{lem:good-basis-base-change}. We use the same ordered basis  for every finite extension \(E/K_0\) and at every completion of \(E\).

The algebraic determinant estimate of \cite[Thm~3.1]{looper2024arakelov}, with the conclusion as restated in \cite[Sec~3.1]{looper2026uniform}, is uniform over all good bases, the constant in its proof depends only on \(d_a=m_a^2\) and \(N_a\), not on the selected list or on \(n\). Therefore there is $C_a=C_a(d_a, N_a)$ such that for any $c(n)$-tuple $Q$ and $E/K_0$ where $E$ contains the field of definition of $Q$ there are $\eta_{n,w}\ge0$ for any place $w$ of $E$ such that
\[
g_{\mathcal B_{n},w}(Q)\ge r(F_a)_w-\eta_{n,w}, \qquad \eta_{n,w}\ge0,
\]
with
\[
\frac1{[E:K_0]}\sum_w\eta_{n,w} \le C_a\frac{\log n}{n}h_{\mathrm{coeff},K_0}(F_a).
\]

Let
\(\varphi_a:U_a\to\operatorname{Mor}_{m_a^2}(\mathbb P^{N_a},
\mathbb P^{N_a})\) be the coefficient morphism of the normalized lift. The  space $ \operatorname{Mor}_{m_a^2} (\mathbb P^{N_a}, \mathbb P^{N_a})$ is an open neighborhood in  $\P^t$ where $t= (N_a+1) \binom{N_a+m_a^2}{N_a}$. Embed this affine space naturally inside $\P^t$. Let \(X^*\) be the normal projective minimal compactification and let  \(X^*_{a,1}, \ldots,X^*_{a,s_a}\) be precisely the irreducible components with nonempty intersection with \( U_a\).  For each \(\alpha\), the nonempty open set \(U_a\cap X^*_{a,\alpha}\) is dense in the integral projective variety \(X^*_{a,\alpha}\).  Hence,  \(\varphi_a\) defines a fixed rational map
\[
\varphi_{a,\alpha}:X^*_{a,\alpha}\dashrightarrow\mathbb P^t.
\]
Let \(e_g>0\) be  an integer such that \(\Omega= \omega^{\otimes e_g}\) restricts to an ample line bundle on each component.   Since $X^*$ is proper the morphism $x:\Spec K_0 \to U_a$ extends to a morphism \(u:B_0\to X^*\). Since the generic point of $B_0$ maps to \(U_a\), the morphism $u$ factors through the unique component \(X^*_{a,\alpha}\) containing that generic image. Note that the generic image lies in the domain of \(\varphi_{a,\alpha}\). Lemma~\ref{lem:fixed-model-height} gives
\[
h_{\mathrm{coeff},K_0}(F_a) =\deg(\varphi_{a,\alpha}\circ u)^*\mathcal O(1) \le C'_{a,\alpha}\deg u^*\Omega =C'_{a,\alpha}e_g h_{\omega,K_0}(A).
\]
Put \(C'_a=\max_\alpha C'_{a,\alpha}\). Absorb \(C'_ae_g\) into \(C_a\) to obtain \eqref{eq:determinant-error}. See \cite[Prop~2.6, 2.8, 2.9]{looper2026uniform}.  

Finally, \(r(F_a)_w\) is a fixed rational multiple of \(-\log|\operatorname{Res}(F_a)|_w\).  The resultant is nonzero in the field of definition, so the product formula gives \eqref{eq:resultant-product}.
\end{proof}

\begin{lemma}\label{lem:data-boundary}
The model boundary functions satisfy \textnormal{(D4)}.
\end{lemma}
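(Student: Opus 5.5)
We must show: if $x\in X(K)$ corresponds to $A$ and $v$ is a semistable place of bad reduction, then $\lambda_{\partial X,v}(x)\ge d_v$. The plan is to read $\lambda_{\partial X,v}(x)$ as $d_v$ times the order of vanishing at $v$ of the boundary ideal pulled back along the extended moduli map, and to show that this order is at least $1$.

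First we unwind the definition. Let $\widetilde x:\Spec \mathcal O_{B,v}\to\overline X$ be the unique extension given by properness; strictly, this is over the completion or over $C_v^\circ$, but the valuation of an ideal is unchanged by passing to the completion. By \eqref{eq:model-boundary}, $\lambda_{\partial X,v}(x)$ is the valuation of the finitely generated ideal $\widetilde x^*\mathcal I_{\partial X}\subset \mathcal O_{B,v}$, measured with the normalization $-\log|a|_v=d_v\ord_v(a)$. Since $\mathcal O_{B,v}$ is a DVR, this ideal equals $\mathfrak m_v^{e}$ for some $e\ge 0$; it is nonzero because the generic point maps into $X$. Hence
\[
\lambda_{\partial X,v}(x)=d_v\,e .
\]
It therefore suffices to prove $e\ge 1$, that is, that the closed point of $\widetilde x$ lies in the boundary $V(\mathcal I_{\partial X})=\overline X\setminus X$.

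Next we argue by contradiction. Suppose $e=0$. Then $\widetilde x$ factors through $X$, so pulling back the universal abelian scheme over $X$ gives an abelian scheme over $\Spec\mathcal O_{B,v}$ (or over its completion) whose generic fiber is $A$ together with its level-$\ell_p$ structure. Thus $A$ has good reduction at $v$, contradicting the assumption that $v$ is a bad-reduction place. This is the same mechanism as in the proof of Lemma~\ref{lem:bad-ultralimit}, now run in the opposite direction.

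The main obstacle is rationality. The point $x$ is assumed $K$-rational on the level-$\ell_p$ moduli space, so the level structure is defined over $K$. If the statement is really intended for $A$ whose level structure exists only over an extension $K'$, we would pass to $w\mid v$. Because $\ell_p$ is invertible, Néron models and the Néron--Ogg--Shafarevich criterion show that bad semistable reduction is preserved at $w$. The computation above then gives $\lambda_{\partial X,w}\ge d_w=f_w d_v$, and Proposition~\ref{prop:ef-scaling} rescales this to $\lambda_{\partial X,v}\ge d_v/e_w$, which is weaker than the claim when $e_w>1$. Semistability is what rules this out: the torsion extension $K(A[\ell_p])/K$ is tamely ramified at a semistable place, so this route alone does not remove the ramification factor. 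We therefore keep to the literal hypothesis $x\in X(K)$, where the argument needs no field extension. Beyond that, it uses only that the closed point of $\widetilde x$ lies in the boundary, and that the boundary ideal has integer valuation at least one there.
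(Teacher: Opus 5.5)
Your proof is correct and is essentially the paper's argument. Both read $\lambda_{\partial X,v}(x)$ as $d_v$ times the $v$-adic order of the pulled-back boundary ideal, and both note that the closed point of the extended moduli map must lie on the boundary, since otherwise pulling back the universal abelian scheme would give good reduction at $v$; the paper obtains the extension from the semiabelian extension theorem, but properness of $\overline X$ suffices, as you use.

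Your last paragraph is unnecessary, because the paper only applies \textnormal{(D4)} to moduli points rational over the level-trivializing field $K'$, which is exactly your literal reading. Drop its claim that semistability rules out the ramification factor: tame ramification of $K(A[\ell_p])/K$ does not force $e_w=1$.
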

\begin{proof}
At a semistable place, the moduli morphism extends to the toroidal compactification by the semiabelian extension theorem \cite[Chap~IV]{faltings2013degeneration}.  If the reduction is bad, its closed point lies on the boundary.  On an affine chart with integral boundary generators \(f_1,\ldots,f_s\), each generator lies in the maximal ideal and at least one realizes the boundary-ideal valuation.  Hence
\[
  \lambda_{\partial X,v}(A)
  =d_v\min_i\operatorname{ord}_v(f_i(A))\ge d_v.
\]
\end{proof}

\subsection{Uniform local positivity by ultralimits}\label{sec:uniform-local}

\begin{definition}\label{def:admissible-system}
For each chart \(U_a\), let \(\mathcal Z_a\) be a class of pairs
\((A/C,Z)\), where \(C\) is algebraically closed and complete and contains
\(\kappa_p\) trivially, \(A/C\) is a \(1\)-centered bad-reduction fiber of
\(U_a\), and \(\varnothing\ne Z\subsetneq S(A)\) is closed.  The tuple
\(\mathcal Z=(\mathcal Z_1,\ldots,\mathcal Z_t)\) is \emph{admissible} if,
for every sequence in every \(\mathcal Z_a\), there is a nonprincipal
ultrafilter for which the sequence is compatible in the sense of
Definition~\ref{def:compatibility}.
\end{definition}

Choose a strictly increasing sequence $2\leq k_1<k_2<\cdots$ and set $h(n)=c(k_n)$.  We say a centered local pair $(A/C,Z)$ in a fixed chart is \emph{$n$-good} if
\begin{equation}\label{eq:n-good}
 m_{k_n}(A;Z)
 \geq r(F)+\frac1n\lambda_{\partial X}(A),
\end{equation}
where $m_{k_n}$ is defined in Subsection \ref{sec:local-energy}

\begin{theorem}\label{thm:uniform-local}
Let $\cZ_a$ be admissible.  For every increasing sequence $(k_n)$ there is a nonempty finite set $I_a\subset\Z_{\geq1}$ such that every pair in $\cZ_a$ is $n$-good for at
least one $n\in I_a$.
\end{theorem}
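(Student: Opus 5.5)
We argue by contradiction, using a compactness argument via ultralimits. Suppose no finite set $I_a$ works. Then for every $n\ge1$ the finite set $\{1,\ldots,n\}$ fails, so there is a pair $(A_n/C_n,Z_n)\in\mathcal Z_a$ that is not $j$-good for any $j\le n$:
\[
m_{k_j}(A_n;Z_n)<r(F_n)+\tfrac1j\lambda_{\partial X}(A_n)\qquad(1\le j\le n).
\]
Each $A_n$ is a $1$-centered bad-reduction fiber of $U_a$. By (D4) and Lemma~\ref{lem:bad-ultralimit}, $\lambda_{\partial X}(A_n)>0$, so we may put $\epsilon_n=\lambda_{\partial X}(A_n)^{-1}$.

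By admissibility there is a nonprincipal ultrafilter $\omega$ for which the sequence is $\omega$-compatible, with a proper closed $Z_\omega\subsetneq S(A_\omega)$. Since the sequence is $1$-centered, $\epsilon_n\lambda_{\partial U_a}(A_n)\le1$. Lemma~\ref{lem:boundary-degeneration}, applied to $U_a$, then gives an $(\epsilon_n)$-degeneration of the moduli points inside $U_a$. Over it, the universal abelian scheme, its projectively normal embedding into $\mathbb P^{N_a}$, and the normalized lift $F_a$ all specialize to $\mathcal U^\epsilon$ and then to $H(\omega)$. Thus $A_\omega$ carries dynamical data $F_\omega$ induced from the chart, and by Lemma~\ref{lem:bad-ultralimit} $A_\omega$ has bad reduction.

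For fixed $j$, rescale the inequality above by $\epsilon_n$ and take the $\omega$-limit along $n\ge j$, choosing near-minimizing tuples $Q_n$ with $r_{A_n}(Q_{n,i})\in Z_n$. Good bases are built from finitely many recipe indices $\Lambda_{k_j}$, so on an $\omega$-large set the same ordered recipe tuple is selected. Lemma~\ref{lem:green-continuity} gives $\lim_\omega\epsilon_n g(Q_n)=G_{\mathbf s_\omega}(Q_\omega)$ and $\lim_\omega\epsilon_n r(F_n)=r(F_\omega)$. Compatibility forces $r_{A_\omega}(Q_{\omega,i})\in Z_\omega$. We therefore obtain
\[
G_{\mathbf s_\omega}(Q_\omega)\le r(F_\omega)+\tfrac1j .
\]
We must also know that $\mathbf s_\omega$ is a good basis for $A_\omega$. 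This follows from Lemma~\ref{lem:good-basis-base-change} applied over the degeneration: the wedge of the tuple is a section over $\mathcal U^\epsilon$, and we may choose, among the finitely many recipe tuples, one whose wedge is nonzero at $\omega$. If the wedge vanished at $\omega$, the Green value would be $+\infty$ and the inequality would fail, so a finite value forces $\mathbf s_\omega$ to be a basis. Hence $m_{k_j}(A_\omega;Z_\omega)\le r(F_\omega)+1/j$ for every $j$, and so $\liminf_j\bigl(m_{k_j}(A_\omega;Z_\omega)-r(F_\omega)\bigr)\le0$.

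This contradicts Theorem~\ref{thm:local-gap}, specifically \eqref{eq:local-resultant-gap}, applied to $A_\omega/H(\omega)$ with bad reduction and the proper closed set $Z_\omega$. That field is complete, algebraically closed and nontrivially valued by Lemma~\ref{lem:ultralimit-field}, and $\lambda_{\partial X}(A_\omega)=1$ shows the valuation is nontrivial. The one point to check is that the liminf along the subsequence $k_j$ is still positive, and it is, since a liminf over a subsequence dominates the full liminf.

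The main obstacle is the limit step. The infimum $m_{k_j}$ need not be attained, and the good basis chosen for $A_n$ may vary with $n$. We handle this by fixing near-minimizers with error $1/n$ and by pigeonholing the finite recipe set on an $\omega$-large set. We also need that a tuple of points with retractions in $Z_n$ lifts to points in $A_n(C_n)$ whose projective ultralimit, as in Lemma~\ref{lem:projective-ultralimit}, lies on $A_\omega$, which follows from functoriality.
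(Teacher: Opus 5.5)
Your proposal is correct and follows the paper's proof essentially step for step. You take pairs that fail $n$-goodness for all $n\le j$, pass to the fundamental ultralimit supplied by admissibility, and use Lemma~\ref{lem:green-continuity} together with compatibility to transport near-minimizing restricted tuples. You then contradict \eqref{eq:local-resultant-gap}. The paper instead fixes $\gamma$ first and picks a single $n$ with $k_n\ge q_0$ and $1/n<\gamma/3$, which is equivalent to your liminf formulation.

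Several of your details make explicit what the paper leaves implicit: you use $1$-centeredness to land the degeneration in $U_a$, you pigeonhole the recipe tuple on an $\omega$-large set, and you deduce from finiteness of $G_{\mathbf s_\omega}(Q_\omega)$ that $\mathbf s_\omega$ is a basis. Two attributions need correcting. First, $\lambda_{\partial X}(A_n)>0$ follows directly from bad reduction, not from (D4) or Lemma~\ref{lem:bad-ultralimit}. Second, Lemma~\ref{lem:good-basis-base-change} does not control the specialization $\mathcal U^\epsilon\to H(\omega)$, so it is your finiteness argument that actually does the work there.
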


\begin{proof}
Suppose this is not true.  For each $j$, choose a pair $(A_j/C_j,Z_j)\in\cZ_a$ that is not $n$-good for $1\leq n\leq j$.  Put $\varepsilon_j= \lambda_{\partial X}(A_j)^{-1}$. By admissibility, choose a nonprincipal ultrafilter $\omega$ and a proper closed set $Z_\omega\subsetneq S(A_\omega)$.  By Lemma \ref{lem:boundary-degeneration}, $A_\omega$ has bad reduction over $H(\omega)$ and \(\lambda_{\partial X}(A_\omega)=1\). Theorem~\ref{thm:local-gap} gives
\[
 \gamma:=\gamma_{F_\omega}(A_\omega,Z_\omega)>0.
\]
Hence, there is $q_0$ such that
\[
 m_q(A_\omega;Z_\omega)-r(F_\omega)>\frac{2\gamma}{3} \qquad(q\geq q_0).
\]
Choose $n$ so large that $k_n\geq q_0$ and $1/n<\gamma/3$.  For every $j\geq n$, we have that $A_j/C_j$ is not $n$-good, thus
\[
 m_{k_n}(A_j;Z_j)-r(F_j)<\frac{\lambda_{\partial X}(A_j)}{n}.
\]
By the definition of the infimum $m_{k_n}(A;Z)$, we may take a tuple $Q_j$, whose entries retract into $Z_j$, such that
\[
 g_{\mathcal B_{k_n,j}}(Q_j)
 \leq m_{k_n}(A_j;Z_j)+\frac{\lambda_{\partial X}(A_j)}{j}.
\]
Thus,
\begin{equation}\label{eq:explicit-ultralimit-error}
 \varepsilon_j\bigl( g_{\mathcal B_{k_n,j}}(Q_j)-r(F_j)\bigr) <\frac1n+\frac1j.
\end{equation}
Lemma~\ref{lem:green-continuity} gives
\[
\lim_\omega\varepsilon_jg_{\mathcal B_{k_n,j}}(Q_j) =g_{\mathcal B_{k_n,\omega}}(Q_\omega), \qquad \lim_\omega\varepsilon_jr(F_j)=r(F_\omega).
\]
Because $\omega$ is nonprincipal, $\lim_\omega1/j=0$. Since the restriction
system is $\omega$-compatible, all entries of $Q_\omega$ retract into
$Z_\omega$.  Taking the ultralimit in
\eqref{eq:explicit-ultralimit-error} implies
\begin{align*}
 m_{k_n}(A_\omega;Z_\omega)-r(F_\omega) &\leq g_{\mathcal B_{k_n,\omega}}(Q_\omega)-r(F_\omega)\\
 &\leq\frac1n<\frac\gamma3.
\end{align*}
This contradicts the lower bound $2\gamma/3$ because $k_n\geq q_0$.
\end{proof}

\section{Compatibility of closed skeletal sets}\label{sec:compatibility}

A sequence of proper closed subsets may fill more and more of the skeleton, and its ultralimit may be the whole limiting skeleton. Here we now give some criterion on when a sequence of closed subsets is admissible.

Fix a nonprincipal ultrafilter \(\omega\).  Consider semistable abelian
varieties \(A_j/C_j\) over algebraically closed complete non-Archimedean
fields, together with proper closed sets \(Z_j\subsetneq S(A_j)\).  Assume
that the moduli sequence has the fundamental ultralimit of
Definition~\ref{def:ultralimit-ring}.

\begin{definition}\label{def:compatibility}
The collection $(A_j,Z_j)$ is \emph{$\omega$-compatible} if there is a proper
closed set $Z_\omega\subsetneq S(A_\omega)$ such that
\[
 r_{A_j}(x_j)\in Z_j\text{ for all }j
 \quad\Longrightarrow\quad
 r_{A_\omega}(x_\omega)\in Z_\omega
\]
for every sequence $(x_j)$.  A class of pairs is
\emph{admissible} if, for every sequence in the class, there exists a
nonprincipal ultrafilter for which it is compatible.
\end{definition}

\begin{proposition}\label{prop:separator}
Suppose there are continuous functions $\phi_j:S(A_j)\to[0,1]$ and \(\varphi_\omega:S(A_\omega)\to[0,1]\) such that:
\begin{enumerate}[label=(\alph*),leftmargin=2.3em]
\item $Z_j\subseteq\phi_j^{-1}(0)$,
\item  for every 
  sequence \(x_j\in A_j(C_j)\),
  \[
    \phi_\omega(r_{A_\omega}(x_\omega))
    =\lim_\omega\phi_j(r_{A_j}(x_j));
  \]
\item $\phi_\omega$ is not identically zero.
\end{enumerate}
Then $(A_j,Z_j)$ is $\omega$-compatible, with
$Z_\omega=\phi_\omega^{-1}(0)$.
\end{proposition}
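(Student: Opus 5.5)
We check directly that $Z_\omega:=\phi_\omega^{-1}(0)$ (writing $\phi_\omega$ for the function $\varphi_\omega$ of the statement) has the properties required by Definition~\ref{def:compatibility}. There are three things to verify: $Z_\omega$ is closed, $Z_\omega$ is a proper subset of $S(A_\omega)$, and the implication on retractions holds.

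Closedness is immediate. $\phi_\omega$ is continuous on $S(A_\omega)$ and $\{0\}$ is closed in $[0,1]$, so $Z_\omega=\phi_\omega^{-1}(0)$ is closed.

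Properness comes from hypothesis (c). Since $\phi_\omega$ is not identically zero, there is a point $s\in S(A_\omega)$ with $\phi_\omega(s)>0$. Then $s\notin Z_\omega$, so $Z_\omega\subsetneq S(A_\omega)$.

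For the implication, take any sequence $x_j\in A_j(C_j)$ with $r_{A_j}(x_j)\in Z_j$ for every $j$.
\begin{enumerate}[label=(\roman*)]
\item By hypothesis (a), $\phi_j(r_{A_j}(x_j))=0$ for every $j$.
\item The $\omega$-limit of a sequence that is identically zero is $0$.
\item By hypothesis (b), $\phi_\omega(r_{A_\omega}(x_\omega))=\lim_\omega\phi_j(r_{A_j}(x_j))=0$.
\item Hence $r_{A_\omega}(x_\omega)\in Z_\omega$.
\end{enumerate}
The ultralimit $x_\omega$ makes sense by Lemma~\ref{lem:projective-ultralimit}, applied to the projective abelian scheme over $\mathcal U^\varepsilon$ that is pulled back from the fundamental degeneration of Lemma~\ref{lem:boundary-degeneration}. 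The $\omega$-limit in (b) exists because the values lie in the compact set $[0,1]$.

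No step here is a real obstacle: the substantive content, namely the continuity of $\phi$ under ultralimits, is assumed as hypothesis (b). The only point needing care is that the ultralimit point $x_\omega$ is well defined, and the lemma cited above supplies this.
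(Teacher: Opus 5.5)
Your proof is correct and is the paper's argument: (a) gives $\phi_j(r_{A_j}(x_j))=0$, (b) transfers this to $\phi_\omega(r_{A_\omega}(x_\omega))=0$, and $Z_\omega=\phi_\omega^{-1}(0)$ is closed by continuity and proper by (c). Your extra justification that $x_\omega$ is well defined is harmless but not needed, since hypothesis (b) and the definition of $\omega$-compatibility already presuppose the ultralimit point.
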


\begin{proof}
If $r_{A_j}(x_j)\in Z_j$, then
$\phi_j(r_{A_j}(x_j))=0$.  Part (b) implies $\phi_\omega(r_{A_\omega}(x_\omega))=0$.  The zero locus is closed and is proper by (c).
\end{proof}

Now suppose the skeleta are identified with one compact metric torus \(T=\mathbb R^r/\mathbb Z^r\) by homeomorphisms
\( \theta_j:S(A_j)\xrightarrow\sim T\) and 
\(\theta_\omega:S(A_\omega)\xrightarrow\sim T.
\) Call the identifications \emph{ultralimit-compatible} if
\begin{equation}\label{eq:coordinate-compatibility}
\theta_\omega(r_{A_\omega}(x_\omega)) =\lim_\omega\theta_j(r_{A_j}(x_j))
\end{equation}
for every  sequence of points.  For closed $Y_j\subseteq T$, define their $\omega$-Hausdorff limit by
\begin{equation}\label{eq:hausdorff-limit}
 Y_\omega=\left\{\lim_\omega y_j:y_j\in Y_j\right\}.
\end{equation}
Since $T$ is compact  we get that $Y_\omega$ is closed.

\begin{theorem}\label{thm:uniform-hole}
Under \eqref{eq:coordinate-compatibility}, the restrictions $Z_j=\theta_j^{-1}(Y_j)$ are $\omega$-compatible if $Y_\omega\ne T$.  Equivalently, there are a point $t\in T$, a radius $\varepsilon>0$, and an $\omega$-large set of indices $J$ such that
\begin{equation}\label{eq:uniform-hole}
 Y_j\cap B(t,\varepsilon)=\varnothing\qquad(j\in J).
\end{equation}
\end{theorem}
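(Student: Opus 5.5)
The plan has two parts: first show the compatibility claim directly with $Z_\omega=\theta_\omega^{-1}(Y_\omega)$, then prove the stated equivalence between $Y_\omega\neq T$ and the uniform-hole condition \eqref{eq:uniform-hole}.

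\emph{Compatibility.} Take the sequence $(x_j)$ with $r_{A_j}(x_j)\in Z_j$ for every $j$. Then $y_j:=\theta_j(r_{A_j}(x_j))\in Y_j$. Since $T$ is a compact metric space, the ultralimit $\lim_\omega y_j$ exists. By \eqref{eq:coordinate-compatibility} it equals $\theta_\omega(r_{A_\omega}(x_\omega))$, so this point lies in $Y_\omega$ by definition \eqref{eq:hausdorff-limit}. The set $Y_\omega$ is closed, as noted before the statement. Hence $Z_\omega=\theta_\omega^{-1}(Y_\omega)$ is closed because $\theta_\omega$ is a homeomorphism, and it is proper exactly when $Y_\omega\neq T$. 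This gives $\omega$-compatibility in the sense of Definition~\ref{def:compatibility}. It does not matter that not every sequence in $T$ is realized by points $x_j$: we only need the containment of the realized ultralimits in $Y_\omega$.

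\emph{Uniform hole implies $Y_\omega\neq T$.} Assume \eqref{eq:uniform-hole} holds for some $t$, $\varepsilon$ and $\omega$-large $J$. For any choice $y_j\in Y_j$ we have $d(y_j,t)\ge\varepsilon$ for $j\in J$. Passing to the $\omega$-limit gives $d(\lim_\omega y_j,t)\ge\varepsilon$, so $t\notin Y_\omega$.

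\emph{$Y_\omega\neq T$ implies a uniform hole.} Pick $t\notin Y_\omega$. Since $Y_\omega$ is closed, there is $\varepsilon>0$ with $B(t,2\varepsilon)\cap Y_\omega=\varnothing$. Suppose, for contradiction, that $J=\{j: Y_j\cap B(t,\varepsilon)=\varnothing\}$ is not $\omega$-large. Then its complement $J^c$ is $\omega$-large. For $j\in J^c$ choose $y_j\in Y_j\cap B(t,\varepsilon)$, and for $j\in J$ choose any $y_j\in Y_j$ (if some $Y_j$ were empty, that index lies in $J$ and can be discarded). The ultralimit then satisfies $d(\lim_\omega y_j,t)\le\varepsilon<2\varepsilon$, yet it lies in $Y_\omega$, which is a contradiction.

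The only delicate point is bookkeeping for empty $Y_j$. Either assume $Y_j\neq\varnothing$, which is implicit since $Z_j$ is nonempty in the admissible setting, or define $Y_\omega$ using only sequences on $\omega$-large index sets where the $Y_j$ are nonempty. With either convention the argument goes through unchanged.
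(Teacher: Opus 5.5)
Your proposal is correct and follows essentially the same route as the paper: compatibility comes from \eqref{eq:coordinate-compatibility} with $Z_\omega=\theta_\omega^{-1}(Y_\omega)$, and the uniform hole is obtained by contradiction from a point $t\notin Y_\omega$. Two small differences are improvements. Your fixed-margin choice $B(t,2\varepsilon)\cap Y_\omega=\varnothing$ avoids the paper's unstated diagonal selection of $y_j$ with $d(y_j,t)\to_\omega 0$, and you also supply the easy converse (uniform hole $\Rightarrow$ $Y_\omega\neq T$), which the paper leaves implicit.
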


\begin{proof}
Equation \eqref{eq:coordinate-compatibility} implies that for a sequence $(x_j)$ mapping to $Y_j$, its $\omega$-limit $x_\omega$ maps to $Y_\omega$. Assume $Y_\omega\ne T$ and choose $t\notin Y_\omega$.  As $Y_\omega$ is compact, $d(t,Y_\omega)>0$.  If no $\epsilon$ existed, then for every $n\geq1$ the set of $j$ for which $Y_j\cap B(t,1/n)\ne\varnothing$ would be $\omega$-large. Choose $y_j\in Y_j$ with $d(y_j,t)\to_\omega0$. Then $t\in Y_\omega$, a
contradiction. 
\end{proof}

\begin{corollary}\label{cor:fixed-subset}
Assume \eqref{eq:coordinate-compatibility}.  If there is one proper closed $Y\subsetneq T$ such that $\theta_j(Z_j)\subseteq Y$ for every $j$, then the collection is compatible, and one may take $Z_\omega=\theta_\omega^{-1}(Y)$.
\end{corollary}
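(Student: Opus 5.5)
This corollary follows from Theorem~\ref{thm:uniform-hole} by taking $Y_j:=Y$ for every $j$; alternatively one can argue directly from the definition of $\omega$-compatibility. I will give the direct argument, since it is short and also yields the stated choice $Z_\omega=\theta_\omega^{-1}(Y)$.

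First, I check that $Z_\omega:=\theta_\omega^{-1}(Y)$ is a proper closed subset of $S(A_\omega)$. Because $\theta_\omega$ is a homeomorphism onto $T$, the preimage of the closed set $Y$ is closed. Because $Y\ne T$ and $\theta_\omega$ is surjective, $Z_\omega\ne S(A_\omega)$.

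Next, take any sequence $x_j\in A_j(C_j)$ with $r_{A_j}(x_j)\in Z_j$ for all $j$. Then $\theta_j(r_{A_j}(x_j))\in\theta_j(Z_j)\subseteq Y$. By ultralimit-compatibility \eqref{eq:coordinate-compatibility}, the point $\theta_\omega(r_{A_\omega}(x_\omega))$ is the $\omega$-limit in $T$ of points of $Y$. The $\omega$-limit exists and is unique because $T$ is a compact metric space. It lies in $Y$ because $Y$ is closed: if the limit $t$ were outside $Y$, some ball around $t$ would miss $Y$, yet by definition of the $\omega$-limit that ball contains the terms for an $\omega$-large set of $j$, which is a contradiction. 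Hence $r_{A_\omega}(x_\omega)\in\theta_\omega^{-1}(Y)=Z_\omega$, which is exactly $\omega$-compatibility in the sense of Definition~\ref{def:compatibility}.

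The one point needing care is the same one as in Theorem~\ref{thm:uniform-hole}: the $\omega$-Hausdorff limit of the constant family $Y_j=Y$ is $Y$ itself, so it remains a proper subset of $T$. Concretely, one can pick $t\notin Y$ and $\varepsilon=d(t,Y)/2>0$; then \eqref{eq:uniform-hole} holds for every $j$. Nothing else is involved, so I expect no real obstacle.
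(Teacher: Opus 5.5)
Your proof is correct and follows essentially the paper's approach. The paper chooses $t\notin Y$, notes that a ball around $t$ misses every $\theta_j(Z_j)$, and invokes Theorem~\ref{thm:uniform-hole}; you instead check $\omega$-compatibility directly from \eqref{eq:coordinate-compatibility} and the closedness of $Y$, which is the first step of that theorem's proof specialized to a constant closed set. Your direct route produces exactly $Z_\omega=\theta_\omega^{-1}(Y)$. The paper's appeal to the theorem only gives $\theta_\omega^{-1}(Y_\omega)$ with $Y_\omega\subseteq Y$, and it also relies on the easy but unwritten implication ``uniform hole $\Rightarrow$ $Y_\omega\neq T$''.
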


\begin{proof}
Choose $t\notin Y$.  Since $Y$ is compact, a ball around $t$ misses $Y$ and hence misses every $\theta_j(Z_j)$.  Apply Theorem~\ref{thm:uniform-hole}.  
\end{proof}

\begin{example}\label{ex:filling-skeleton}
On \(T=\mathbb R/\mathbb Z\), let
\[
  Y_j=\{0,1/j,2/j,\ldots,(j-1)/j\}.
\]
Every \(Y_j\) is finite and proper, but its ultralimit is all of \(T\).  The associated restrictions are not compatible.  Thus pointwise properness alone is not sufficient.
\end{example}

\section{Height gap theorem}\label{sec:uniform-global-proof}

Fix the data of Theorem~\ref{thm:dynamical-data} and an admissible restriction
system $\cZ$. Put
\[
 M_{\mathrm{det}}=\max_a(C_a ),\qquad t=\#\{U_a\}.
\]
And let
\[
G_{g,p}=|\operatorname{GL}_{2g}(\mathbb Z/\ell_p\mathbb Z)|.
\]
The factor \(G_{g,p}\) bounds the degree of the field extension over which we have a trivial level structure.

\begin{lemma}\label{lem:positive-sequence}
For every $M_0>0$ there is a strictly increasing sequence $2\leq k_1<k_2<\cdots$ such that for all $a,n$, with \(\rho_{a,n}=\frac1{t\sqrt{k_n}}\), we get 
\begin{equation}\label{eq:rho-total}
 \sum_{a=1}^t\sum_{n\geq1}\rho_{a,n}<1
\end{equation}
and
\begin{equation}\label{eq:delta-an}
 \delta_{a,n}:=
 \frac{\rho_{a,n}}{G_{g,p}M_0n}
 -C_a \frac{\log k_n}{k_n}>0
\end{equation}
\end{lemma}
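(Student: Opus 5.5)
The plan is to build the sequence $(k_n)$ inductively, choosing each $k_n$ large enough that two conditions hold: the tail sum in \eqref{eq:rho-total} is controlled by a convergent geometric series, and the positive term in \eqref{eq:delta-an}, which decays only like $k_n^{-1/2}$, beats the determinant error, which decays like $k_n^{-1}\log k_n$. The data $t$, $G_{g,p}$, $M_0$ and $C_a$, hence also $M_{\det}$, are fixed numbers depending only on $g,p,M_0$. Since $\rho_{a,n}$ does not depend on $a$, we have
\[
\sum_{a=1}^t\sum_{n\ge1}\rho_{a,n}=\sum_{n\ge1}\frac{1}{\sqrt{k_n}}.
\]
So it suffices to impose $k_n\ge 4^{n+1}$. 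Then $k_n^{-1/2}\le 2^{-n-1}$, and the sum is at most $\sum_{n\ge1}2^{-n-1}=1/2<1$.

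For \eqref{eq:delta-an}, multiply through by $k_n$. The condition becomes
\[
\frac{\sqrt{k_n}}{tG_{g,p}M_0\,n}>C_a\log k_n,
\]
and it suffices to have $\sqrt{k_n}>tG_{g,p}M_0\,M_{\det}\,n\log k_n$ for every $a$. Write $c_0=tG_{g,p}M_0M_{\det}$. For fixed $n$, the function $x\mapsto \sqrt{x}/\log x$ tends to infinity, so there is a threshold $T_n$ with $\sqrt{x}>c_0 n\log x$ for all $x\ge T_n$. When $M_{\det}=0$ the condition is trivial, and then one takes $T_n=2$. Note that $T_n$ is chosen so the inequality holds for all $x\ge T_n$, not just at one point; this is what lets the inductive choice below go through.

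Concretely, set $k_1=\max\{16,\lceil T_1\rceil\}$, and for $n\ge2$ set
\[
k_n=\max\{k_{n-1}+1,\;4^{n+1},\;\lceil T_n\rceil\}.
\]
This gives a strictly increasing sequence with $k_1\ge 2$, and both displayed conditions hold for every $n$ and every $a$. The strictness $\delta_{a,n}>0$ holds because the inequality is strict for $x\ge T_n$ and $k_n\ge T_n$.

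No step is a real obstacle; the only point needing care is the ordering of the growth conditions. The error term in (D3) decays like $\log k/k$, while the target decays like $k^{-1/2}/n$. So the factor $n$ in the denominator must be absorbed by letting $k_n$ grow with $n$, and this is exactly what the threshold $T_n$ does.
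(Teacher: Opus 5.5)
Your proof is correct and follows essentially the same idea as the paper's: since the positive term decays only like $k_n^{-1/2}/n$ while the error decays like $\log k_n/k_n$, letting $k_n$ grow fast enough in $n$ gives both the summability bound \eqref{eq:rho-total} and the positivity \eqref{eq:delta-an}. The only difference is implementation. The paper makes the explicit choice $k_n=\lceil D_0n^3\rceil$ with a single constant $D_0$, using $\sup_{x\ge1}\log x/\sqrt{x}=2/e$ to absorb the factor $n$. You instead pick $k_n$ inductively above the non-explicit thresholds $T_n$ and $4^{n+1}$, which is equally valid because later only the existence of such a sequence is used.
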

\begin{proof}
Put \(S_{3/2}=\sum_{n=1}^\infty n^{-3/2}<\infty\).  Choose \(D_0\ge1\) so
large that
\begin{equation}
  D_0^{-1/2}S_{3/2}<1,
  \qquad
  \frac{\sqrt {D_0}}{tG_{g,p}M_0}
  >M_{\mathrm{det}}\left(\log(2D_0)+\frac6e\right).
  \label{eq:D-choice}
\end{equation}
Such a \(D_0\) exists because \(\sqrt {D_0}/\log D_0\to\infty\).  Set
\(k_n=\lceil D_0n^3\rceil\).  This is strictly increasing, \(k_1\ge2\), and
\[
  \sum_{a,n}\rho_{a,n}
  =\sum_{n\ge1}\frac1{\sqrt{k_n}}
  \le D_0^{-1/2}S_{3/2}<1.
\]
Moreover, $D_0n^3\le k_n\le D_0n^3+1\le2D_0n^3$. The identity
\(\sup_{x\ge1}(\log x)/\sqrt x=2/e\) gives
\[
  \log k_n\le\log(2D_0)+3\log n
  \le\log(2D_0)+\frac6e\sqrt n.
\]
It follows from \eqref{eq:D-choice} that
\[
  \frac{\sqrt{k_n}}{tG_{g,p}M_0n}
  >M_{\mathrm{det}}\log k_n
  \ge C_a\log k_n.
\]
After division by \(k_n\), this is exactly \eqref{eq:delta-an}.
\end{proof}

Apply Theorem~\ref{thm:uniform-local} to this sequence and every chart, and
let $I_a$ be the resulting finite nonempty index set.  Set
\begin{equation}\label{eq:q-uniform}
 q:=\max\{k_n:a\in\{1,\ldots,t\},\ n\in I_a\},
 \qquad
 \delta:=\min_{a,n\in I_a}\delta_{a,n}>0.
\end{equation}

Let \(A/K\) be an abelian variety with no level structure.  Since \(\ell_p\) is prime to
the characteristic, \(A[\ell_p]\) is finite \'etale.  Its Galois
representation shows that there is a finite separable extension \(K'/K\)
over which \(A[\ell_p]\) is trivial and
\begin{equation}
  D=[K':K]\le G_{g,p}.
  \label{eq:level-degree}
\end{equation}
The Weil pairing supplies a full symplectic level structure over $K'$.  For \(w\mid v\), define
the \emph{boundary mass}
\begin{equation}
  \nu(w/v)=f(w/v)d_v=[\kappa(w):k].
  \label{eq:boundary-mass}
\end{equation}

\begin{definition}
The prescribed system $(Z_v)_{v\in\Sigma}$ is \emph{level-stably
admissible} if there exists a finite  separable extension trivializing
$A[\ell_p]$ such that, for every $v\in \Sigma$ and every place $w\mid v$, the  local pair belongs to
one of the classes $\cZ_a$.
\end{definition}

\begin{theorem}\label{thm:global-alternative}
Let \((A,\lambda)/K\) be a principally polarized abelian variety of dimension \(g\) with \(h_{\omega,K}(A)>0\).  Let \(\Sigma\) be a finite set of places of semistable reduction, let \((Z_v)_{v\in \Sigma}\) be a level-stably admissible system of proper closed sets, and assume
\[
\deg_k\Sigma\ge\frac{h_{\omega,K}(A)}{M_0}.
\]
For every universally \(\Sigma\)-restricted finite set \(P\subset A(\overline K)\) of distinct points with \(|P|\ge h^0(A,L^{\otimes q})\), either a nonzero section of \(L_{\overline K}^{\otimes q}\) vanishes on \(P\), or
\[
  \sum_{x\in P}\widehat h_{A,L,K}(x)
  \ge\delta h_{\omega,K}(A)>0.
\]
The constants \(q,\delta\) depend only on \(g,p,M_0\), and the restriction system, not on \(k,B,A\), or the level field.
\end{theorem}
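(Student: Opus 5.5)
Assume no nonzero section of $L_{\overline K}^{\otimes q}$ vanishes on $P$. The plan is to pass to a field over which the level structure is fixed, pick a good evaluation tuple, and decompose its height into local Green functions. Each local term is then bounded below: with the gap at the restricted places and with Looper's determinant error everywhere. Dividing by degrees and summing over the chart index $n$ gives the bound.

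\textbf{Step 1 (set-up).} Choose the level-trivializing extension $K'/K$ of degree $D\le G_{g,p}$ given by level-stable admissibility. The point $x\in X(K')$ then lies in some chart $U_a$. Let $E/K'$ be a finite extension containing the field of definition of $P$.

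\textbf{Step 2 (choice of $n$ and the tuple).} Lemma~\ref{lem:evaluation-selection} gives, for each $n$ with $k_n\le q$, a subset $Q\subset P$ of size $c(k_n)$ with nonzero evaluation determinant for $L^{\otimes k_n}$. This works because a section of $L^{\otimes k_n}$ vanishing on $P$, multiplied by a section of $L^{\otimes(q-k_n)}$ nonvanishing at the finitely many points of $P$, would give a nonzero section of $L^{\otimes q}$ vanishing on $P$. Since heights are nonnegative, $\sum_{x\in P}\widehat h(x)\ge\sum_{Q}\widehat h$. By \eqref{eq:global-green-height}, $\frac1{c(k_n)}\sum_Q\widehat h=\frac1{[E:K]}\sum_w g_{\mathcal B_{k_n},w}(Q)$.

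\textbf{Step 3 (lower bound on Green functions).} Split the places $w$ according to whether $w$ lies over $\Sigma$. At every $w$, (D3) gives $g\ge r(F_a)_w-\eta_{n,w}$. At $w\mid v\in\Sigma$ the point $x$ is $1$-centered in some chart. I need all $w$ over $\Sigma$ centered in the same chart; to arrange this I pigeonhole over the $t$ charts. For each chart $a$ let $\Sigma_a$ be the set of places over $\Sigma$ centered in $U_a$. Some $a$ carries at least a $1/t$ fraction of the boundary mass, and I work in that $a$; this is what the factor $1/t$ in $\rho_{a,n}$ pays for.

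At those places the pair belongs to $\mathcal Z_a$, and Theorem~\ref{thm:uniform-local} gives some $n\in I_a$ for which the local pair is $n$-good. Here $n$ may depend on $w$; I handle this by a second pigeonhole over $I_a$, weighting with $\rho_{a,n}$ and using $\sum\rho_{a,n}<1$. Some $n$ then has good-place mass at least $\rho_{a,n}$ times the total. Because the points are universally restricted, every coordinate retracts into $Z_w$, so $g_w(Q)\ge m_{k_n}(A;Z_w)\ge r(F_a)_w+\frac1n\lambda_{\partial X,w}$. By (D4) and the scaling of Remark~\ref{rem:why-bad}, $\lambda_{\partial X,w}\ge e_w\nu(w/v)$.

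\textbf{Step 4 (summation).} Sum over $w$. The resultant terms cancel by \eqref{eq:resultant-product}, and the $\eta$ terms cost at most $C_a\frac{\log k_n}{k_n}h_{\omega,K}(A)$ by \eqref{eq:determinant-error} together with \eqref{eq:hodge-scaling}. The boundary terms contribute at least $\frac{1}{[E:K]}\sum_{w\text{ good}}\frac{e_wf_wd_v}{n}$. Using $\sum_{w\mid v}e_wf_w=[E:K]$ and the pigeonhole fractions, this is at least $\frac{\rho_{a,n}}{n}\deg_k\Sigma\ge\frac{\rho_{a,n}}{nM_0}h_{\omega,K}(A)$. The factor $G_{g,p}$ absorbs the normalization loss from $K'$. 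Hence
$$\frac1{c(k_n)}\sum_Q\widehat h\ge\delta_{a,n}h_{\omega,K}(A)\ge\delta\, h_{\omega,K}(A).$$
Since $c(k_n)\ge1$, the sum over $P$ is at least this.

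\textbf{Main obstacle.} The hard step is the bookkeeping in the pigeonhole over charts and over $I_a$. The mass must be weighted by $e_wf_w$ consistently with the scaling of the gap, and the weighted mass must dominate $\deg_k\Sigma$ up to the factor $\rho_{a,n}/G_{g,p}$. The other delicate point is that the good basis has to be the same chart-$a$ basis at every place. I would fix the basis globally from the chart containing $x$ and translate the gap using the basis-independence of $m_n(A;Z)-m_n(A)$.
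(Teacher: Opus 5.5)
Your proposal follows the paper's proof. You pass to $K'$ with $D\le G_{g,p}$ and pigeonhole the places over $\Sigma$ by chart and index $n$ using $\sum\rho_{a,n}<1$; your two-stage pigeonhole is equivalent to the paper's single one over pairs $(a,n)$. You then select $Q\subset P$ by Lemma~\ref{lem:evaluation-selection} at level $k_n\le q$, bound the local Green functions by $n$-goodness plus (D4) at the selected places and by (D3) elsewhere, and sum. Two bookkeeping points need fixing.

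\textbf{First point: the boundary bound.} The inequality $\lambda_{\partial X,w}\ge e(w/v)\nu(w/v)$ is false when $K'/K$ ramifies at $v$. For elliptic factors, for instance, the cusp parameter on the level-$\ell_p$ curve is $q^{1/\ell_p}$, not $q$. What (D4) actually gives is $\lambda_{\partial X,w}(A_{K'})\ge[\kappa(w):k]$ at places $w$ of $K'$. Only the factor $e(u/w)f(u/w)$, for $u\mid w$ in $E/K'$, comes from scaling. So the pigeonhole and the application of Theorem~\ref{thm:uniform-local} should be done at $K'$-places, which is where membership in $\mathcal Z_a$ is guaranteed. You then transfer to $u\mid w$ using the scale-invariance of \eqref{eq:n-good}. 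After dividing by $D[E:K']$ you lose exactly the factor $1/D\ge1/G_{g,p}$ that is built into $\delta_{a,n}$, so your final constant survives.

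\textbf{Second point: the choice of basis.} No change of basis is needed. Pairs in $\mathcal Z_a$ are by definition represented in $U_a$, so the moduli point $x\in X(K')$ lies in the pigeonholed chart $U_a$. You can therefore use $\mathcal B_{k_n,a}$ and $F_a$ at every place, as the paper does. Your suggested translation through the basis-independence of $m_n(A;Z)-m_n(A)$ is unnecessary. It would also not work directly: \eqref{eq:n-good} is measured against $r(F_a)$, and no upper bound on $m_{k_n}(A)-r(F_a)$ is available.
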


\begin{proof}
Choose a separable extension \(K'/K\) of degree \(D\) satisfying \eqref{eq:level-degree} over which \(A\) has full level structure.  Let \(W\) be the set of places of \(K'\) above \(\Sigma\).  By definition, every local pair $(A_w, Z_w)$ belongs to some \(\mathcal Z_a\).  Theorem~\ref{thm:uniform-local} assigns to it an
\(n\in I_a\) for which it is \(n\)-good. Let \(W_{a,n}\) be the resulting partition, and set
\[
  M_{a,n}
  =\sum_{\substack{v\in\Sigma,\ w\mid v\\w\in W_{a,n}}}\nu(w/v).
\]
Since $D\le G_{g,p}$, we have,
\begin{equation}  \label{eq:mass-lower-bound}
 \sum_{a,n}M_{a,n}=  \sum_{v\in \Sigma, \,\,w\mid v} d_v f(w/v)\ge \frac D{G_{g,p}}\sum_v d_v  =\frac D{G_{g,p}}\deg_k\Sigma.
\end{equation}
Together with \eqref{eq:rho-total}, this implies that some pair \((a,n)\) satisfies
\begin{equation}  \label{eq:mass-pigeonhole}
M_{a,n} \ge\rho_{a,n}\frac D{G_{g,p}}\deg_k\Sigma.
\end{equation}

Assume that no nonzero section of \(L_{\overline K}^{\otimes q}\) vanishes on \(P\).
Since \(q\ge k_n\), no nonzero section of
\(L_{\overline K}^{\otimes k_n}\) vanishes on \(P\).   Lemma~\ref{lem:evaluation-selection} therefore gives a subset \(Q\subset P\) of cardinality \(c(k_n)\) with nonzero evaluation determinant. Choose a finite extension $E/K'$ over which all points of $Q$ are defined. Fix \(w\in W_{a,n}\) above \(v\in\Sigma\), and let \(u\) be a place of
\(E\) above \(w\).  The \(n\)-good inequality and local-degree scaling
give
\begin{equation}
  g_{\mathcal B_{k_n,a},u}(Q)
  \ge r(F_a)_u+\frac{e(u/w)f(u/w)}n
       \lambda_{\partial X,w}(A_{K'}).
  \label{eq:local-positive-before-mass}
\end{equation}
The abelian variety \(A_{K'}\) has semistable bad reduction at \(w\), and
its fine-level moduli map meets the toroidal boundary there.  Thus
\eqref{eq:boundary-order} applied directly over \(K'\) gives
\[
  \lambda_{\partial X,w}(A_{K'})
  \ge[\kappa(w):k]=f(w/v)d_v=\nu(w/v).
\]
Thus
\begin{equation}
  g_{\mathcal B_{k_n,a},u}(Q)
  \ge r(F_a)_u+
       \frac{e(u/w)f(u/w)\nu(w/v)}n.
  \label{eq:local-positive}
\end{equation}
At every place \(u\) not lying above \(W_{a,n}\), Theorem \ref{thm:dynamical-data} implies there are $\eta_{k_n,u}\ge 0$ such that
\[
  g_{\mathcal B_{k_n,a},u}(Q)
  \ge r(F_a)_u-\eta_{k_n,u}.
\]
For selected \(w\), we have $\sum_{u\mid w}e(u/w)f(u/w)=[E:K']$. Summing  over all places of \(E\) and
using the product formula gives us
\[
\sum_{u\in|B_E|}g_{\mathcal B_{k_n,a},u}(Q)  \ge\frac{[E:K']M_{a,n}}n -\sum_{u\in|B_E|}\eta_{k_n,u}.
\]
We divide by \(D[E:K']\), and use inequalities \eqref{eq:determinant-error} 
and  \eqref{eq:mass-pigeonhole} to get
\begin{align*}
  \frac1{D[E:K']}\sum_ug_{\mathcal B_{k_n,a},u}(Q)
  &\ge\frac{M_{a,n}}{Dn}
      -\frac{C_a\log k_n}{Dk_n}h_{\omega,K'}(A)\\
  &\ge\frac{\rho_{a,n}}{G_{g,p}n}\deg_k\Sigma
      -C_a\frac{\log k_n}{k_n}h_{\omega,K}(A)\\
  &\ge\left(
       \frac{\rho_{a,n}}{G_{g,p}M_0n}
       -C_a\frac{\log k_n}{k_n}
      \right)h_{\omega,K}(A)\\
  &=\delta_{a,n}h_{\omega,K}(A)
  \ge\delta h_{\omega,K}(A).
\end{align*}
Therefore
\[
  \sum_{x\in Q}\widehat h_{A,L,K}(x) = \frac{c(k_n) }{D[E:K']}\sum_ug_{\mathcal B_{k_n,a},u}(Q)
  \ge c(k_n)\delta h_{\omega,K}(A)
  \ge\delta h_{\omega,K}(A).
\]
N\'{e}ron--Tate heights are nonnegative, so the same lower bound holds after
enlarging \(Q\) to \(P\).
\end{proof}

\begin{remark}
In characteristic zero, a finite Galois-stable set of geometric points is a finite \'etale $K$-subscheme.  In characteristic $p$, pure inseparability is subtle.  For example, over $K=\F_p(t)$ the point $t^{1/p}\in\mathbf A^1(\overline K)$ is fixed by $\Gal(\overline K/K)$, and the linear form $X-t^{1/p}Y$ vanishes there, but no nonzero $K$-linear form does.  In other words, the fixed field of the absolute Galois group inside $\overline K$ is the perfect closure $K^{\perf}$, not $K$.
\end{remark}

\begin{proof}[Proof of Theorem~\ref{thm:descent}]
Since $P\subset X(K^\sep)$, we can choose a finite Galois extension $E/K$ containing all points of $P$.  The
subspace
\[
 W_E=\{s\in H^0(X_E,L_E):s(x)=0\text{ for all }x\in P\}
\]
is stable under the semilinear action of $\Gal(E/K)$.  Extension of scalars to $\overline K$ identifies $W_E\otimes_E \ol K$ with the corresponding geometric evaluation kernel, so $W_E\ne0$.  Faithfully flat Galois descent for vector
spaces gives a $K$-subspace $W_K\subset H^0(X,L)$ with
$W_K\otimes_KE=W_E$.  Hence $W_K\ne0$, and any nonzero element has the
required property.  
\end{proof}

\section{More on elliptic curves}\label{sec reductions}\label{sec joint}
Let $E$ be a semistable elliptic curve over $K$. Denote by $\Delta_E$  the minimal discriminant of $E$. For every closed point \(v\in|B|\), put
\[
n_{E,v}:=\operatorname{ord}_v(\Delta_E),
\qquad
T(E):=\{v\in|B|:n_{E,v}>0\},
\]
and write \(d_v=[\kappa(v):k]\). Since \(E\) is semistable, every \(v\in T(E)\) is a place of multiplicative reduction of Kodaira type \(I_{n_{E,v}}\). The minimal discriminant is a section of \(\omega_E^{\otimes12}\), and hence
\begin{equation}\label{eq:disc-hodge}
\deg_k\Delta_E =\sum_{v\in T(E)}d_vn_{E,v}= 12\deg_B(\omega_E) =12h_\omega(E).
\end{equation}
See \cite[Lem.~3.1]{GP22}.

\begin{theorem}\label{thm:pesenti-szapiro}\cite[Thm.~0.1]{pesenti2000inegalite}
Let $E$ be a nonisotrivial elliptic curve over $K$ with minimal discriminant  $\Delta_{E}$ and let $\mathcal{F}_{E/B}$ be its conductor. Then
\begin{equation}\label{eq:PS}
  \deg\Delta_E\leq 6p^e \bigl(2g(B)-2+\deg \mathcal{F}_{E/B}\bigr),
\end{equation}
where $p^{e}$ is the inseparability degree of the morphism $j:B\to \P^1$
\end{theorem}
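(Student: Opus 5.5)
The plan is to reduce to the case where $j$ is separable and then bound the Hodge degree using the Kodaira--Spencer map. This is essentially Szpiro's argument, with Pesenti--Szpiro's bookkeeping for the inseparable part.

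\emph{Step 1: remove the inseparability.} Factor the $j$-map as $j=j_{\sep}\circ\mathrm{Fr}^{e}$, where $\mathrm{Fr}$ is the $k$-linear relative Frobenius and $j_{\sep}$ is separable. The aim is to show that, after possibly a quadratic twist which changes neither the discriminant degree nor the set of bad places in a harmful way, $E$ is the Frobenius pullback $E'^{(p^e)}$ of an elliptic curve $E'/K$ with separable $j$-map $j(E')=j_{\sep}$. Concretely, one writes down a Weierstrass equation for $E'$ and raises its coefficients to the $p^e$-th power. Under this operation:
\begin{itemize}
\item the minimal discriminant satisfies $\operatorname{ord}_v\Delta_E=p^e\operatorname{ord}_v\Delta_{E'}$, after checking minimality place by place;
\item the reduction types of $E$ and $E'$ match, so their conductors have the same support and $\deg\mathcal F_{E'/B}\le\deg\mathcal F_{E/B}$ up to the twist correction.
\end{itemize}
This reduces \eqref{eq:PS} to the separable inequality $\deg\Delta_{E'}\le 6\bigl(2g(B)-2+\deg\mathcal F_{E'/B}\bigr)$.

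\emph{Step 2: the separable case.} Let $S$ be the bad-reduction locus and $\pi:\mathcal E\to B$ the minimal regular model, and first assume semistability. The Kodaira--Spencer map
\[
\omega_E^{\otimes 2}\longrightarrow\Omega^1_B(\log S)
\]
is nonzero exactly because $dj\neq0$, which is where separability of $j$ is used. A nonzero map of line bundles has nonnegative degree difference, so
\[
2\deg\omega_E\le 2g(B)-2+\deg_k S.
\]
Combining this with \eqref{eq:disc-hodge}, $\deg\Delta_E=12\deg\omega_E$, gives $\deg\Delta_E\le 6(2g-2+\deg S)$. In the semistable case $\mathcal F=S$ as a reduced divisor, so this is the claimed bound.

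For general (additive) reduction I would pass to a finite tame or wild semistable-reduction extension and apply the semistable bound there. To descend, I would compare the discriminants and the Riemann--Hurwitz genus growth with the conductor exponents via the Ogg--Saito formula $\operatorname{ord}_v\Delta=\operatorname{ord}_v\mathcal F+m_v-1$.

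\emph{Expected obstacle.} For the application in this paper only the semistable case is needed, and there both steps are clean. The delicate part is the additive places, and especially wild ramification in characteristics $2$ and $3$, when descending the inequality from the semistable extension with the exact constant $6p^e$. This is precisely the content of Pesenti--Szpiro. I would cite \cite[Thm.~0.1]{pesenti2000inegalite} for it rather than reprove it, since later sections only use \eqref{eq:PS} for semistable $E$, where $\mathcal F_{E/B}=T(E)_{\red}$.
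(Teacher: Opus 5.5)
The paper gives no proof of this statement. It imports the inequality as a black box from \cite{pesenti2000inegalite} and uses it only once, in \eqref{eq:support-height}, for a semistable $E$ whose $j$-map is already separable; the inseparable part has been removed beforehand by Proposition~\ref{prop:frob-minimal}, which is essentially your Step~1 construction. Where you differ from the paper is that you actually prove the semistable case, by Szpiro's Kodaira--Spencer argument. For additive reduction you fall back on the same citation. Your sketch via semistable base change and Ogg--Saito is not an argument, since the wild descent with the constant $6p^e$ is exactly the content of that paper. So for the statement in full generality you end up where the paper is. For the paper's actual use, your Step~2 alone suffices, with $p^e=1$ and $\mathcal F_{E/B}=\sum_{v\in T(E)}[v]$.

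Step~2 is sound. The one thing you should justify is that the Kodaira--Spencer map has log poles along $S$ and is nonzero iff $j$ is separable. One way is to pass to the level-$N$ cover $B'\to B$ for some $N\geq 3$ prime to $p$. This cover is tame and ramified only over $S$, so $\omega_E$ and $\Omega^1_B(\log S)$ pull back to their analogues on $B'$. There you can compare with the isomorphism $\omega^{\otimes 2}\simeq\Omega^1(\log\text{cusps})$ on the fine modular curve of level $N$.

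Several claims in Step~1 are false as written, although the reduction survives.
\begin{itemize}
\item The equality $\ord_v\Delta_E=p^e\ord_v\Delta_{E'}$ fails at additive places, because a Frobenius pullback of a minimal equation need not be minimal. In characteristic $5$ with $a\in\mathbb F_5^\times$, the curve $y^2=x^3+at^3x+t^4$ has separable $j$ and type $IV^*$ at $t=0$ with $\ord\Delta=8$. Its Frobenius pullback $y^2=x^3+at^{15}x+t^{20}$ becomes, after $x=t^6X$, $y=t^9Y$, the minimal equation $Y^2=X^3+at^3X+t^2$, of type $IV$ with $\ord\Delta=4$.
\item Only the inequality $\ord_v\Delta_E\le p^e\ord_v\Delta_{E'}$ holds, which luckily is the direction you need. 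Likewise, only the trichotomy good/multiplicative/additive is preserved, not the Kodaira type.
\item No quadratic twist is needed, and a ramified twist would not be harmless, since it changes both $\Delta$ and $\mathcal F$. Because $j$ is nonconstant, twists are classified by $H^1(G_K,\mathbb Z/2\mathbb Z)$, which a purely inseparable extension does not change. So $E$ is exactly a Frobenius pullback, as in \cite{ulmer2011}.
\item With the relative Frobenius of $B$, your $E'$ lives over $k(B^{(p^e)})$, not over $K$. In the paper's setting $k$ is already perfect (Proposition~\ref{prop:perfect-constant}). Then $j(E)\in K^{p^e}$, $E'$ is defined over $K$, and $E'\to E$ is a $K$-isogeny, so $\mathcal F_{E'/B}=\mathcal F_{E/B}$ by isogeny invariance of the conductor.
\end{itemize}
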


Put $g=g(B)$. Assume $E$ is a semistable nonisotrivial elliptic curve over $K$ with  separable $j$-map. Then $T(E) $ is nonempty and the local conductor exponent is \(1\) at every bad place \cite[Prop.~1.1(2)]{pesenti2000inegalite}. Hence \(\mathcal{F}_{E/B}=\sum_{v\in T(E)}[v]\) and \(\deg \mathcal{F}_{E/B} =\deg T(E)\). Then equation \eqref{eq:PS} gives the bound
\begin{equation}\label{eq:support-height}
 h_{\omega,K}(E) = \frac{ \deg\Delta_E}{12}\le g-1+\frac{\deg T(E)}{2} \le (g+1)\deg T(E).
\end{equation}

\begin{proposition}\label{prop:smallplaces}
Let $E,F$ over $K$ be nonisotrivial semistable elliptic curves with separable $j$-maps. Let $M\geq48(g+1)$. Let \(Q=\ell^\alpha>M\), where \(\ell\ne p\), and suppose $E[Q]\simeq F[Q]$ as $G_K$-modules.
Then the set
\[
 \Sigma_{\mathrm{sm}}= \{v:1\leq n_{E,v},n_{F,v}\leq M\}
\]
satisfies
\begin{equation}\label{eq:small-mass}
 \deg_k\Sigma_{\mathrm{sm}}\geq \frac{h_{\omega,K}(E)+h_{\omega,K}(F)}{4(g+1)}.
\end{equation}
\end{proposition}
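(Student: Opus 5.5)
The proof compares local ramification of $E[Q]$ and $F[Q]$ at each bad place, then runs a mass count using \eqref{eq:disc-hodge} and \eqref{eq:support-height}.

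\textbf{Local step.} Fix $v\in T(E)$. Over the completion $K_v$, the curve $E$ is, up to an unramified quadratic twist, the Tate curve $\mathbb G_m/q_E^{\mathbb Z}$ with $\operatorname{ord}_v(q_E)=n_{E,v}$. Since $\ell\neq p$, the inertia group acts on $E[Q]$ through its tame quotient. Its image is the unipotent group generated by $\begin{pmatrix}1&n_{E,v}\\0&1\end{pmatrix}$ modulo $Q$, which is cyclic of order $Q/\gcd(Q,n_{E,v})$. In particular $E[Q]$ is unramified at $v$ if and only if $Q\mid n_{E,v}$. At places of good reduction, $E[Q]$ is unramified by N\'eron--Ogg--Shafarevich, again using $\ell\neq p$. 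The same statements hold for $F$.

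The $G_K$-isomorphism restricts to an isomorphism of inertia modules at every $v$. Now let $v\in T(E)$ with $n_{E,v}\le M<Q$. Then $Q\nmid n_{E,v}$, so $E[Q]$ is ramified at $v$. Hence $F[Q]$ is ramified at $v$, so $v\in T(F)$, i.e.\ $n_{F,v}\ge1$. By symmetry, every $v\in T(F)$ with $n_{F,v}\le M$ lies in $T(E)$. Therefore
\[
\Sigma_{\mathrm{sm}}=\{v\in T(E): n_{E,v}\le M\}\setminus\{v\in T(F): n_{F,v}>M\}.
\]

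\textbf{Mass count.} Write $h_E=h_{\omega,K}(E)$ and $h_F=h_{\omega,K}(F)$, both positive by nonisotriviality.

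By \eqref{eq:disc-hodge},
\[
\sum_{v\in T(E),\,n_{E,v}>M} d_v \le \frac{\deg_k\Delta_E}{M}=\frac{12h_E}{M}\le \frac{h_E}{4(g+1)},
\]
and the same bound holds for $F$.

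By \eqref{eq:support-height}, $\deg T(E)\ge h_E/(g+1)$. Combining,
\[
\deg_k\{v\in T(E):n_{E,v}\le M\}\ge \frac{h_E}{g+1}-\frac{h_E}{4(g+1)}=\frac{3h_E}{4(g+1)}.
\]
Removing the places with $n_{F,v}>M$ gives
\[
\deg_k\Sigma_{\mathrm{sm}}\ge \frac{3h_E-h_F}{4(g+1)}.
\]
Swapping the roles of $E$ and $F$ gives
\[
\deg_k\Sigma_{\mathrm{sm}}\ge \frac{3h_F-h_E}{4(g+1)}.
\]
Averaging the two inequalities yields $\deg_k\Sigma_{\mathrm{sm}}\ge (2h_E+2h_F)/\bigl(8(g+1)\bigr)$, which is exactly \eqref{eq:small-mass}.

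\textbf{Main obstacle.} The only delicate point is the local claim that $E[Q]$ is unramified at a multiplicative place exactly when $Q\mid n_{E,v}$. This needs to hold for arbitrary, possibly imperfect, residue fields $\kappa(v)$ and in characteristic $p$. I would justify it by Tate uniformization over $K_v$ together with the Kummer description: $K_v(q_E^{1/Q})/K_v$ is totally tamely ramified of degree $Q/\gcd(Q,n_{E,v})$ on the unramified part, since $\ell\neq p$. Multiplicative reduction is automatic at places of $T(E)$ by semistability.
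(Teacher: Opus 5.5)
Your proposal is correct and follows essentially the paper's argument: it uses the same Tate-monodromy versus N\'eron--Ogg--Shafarevich comparison of inertia to show that bad places with small exponent are jointly multiplicative, and the same inputs \eqref{eq:disc-hodge} and \eqref{eq:support-height} drive the mass count. The only difference is bookkeeping. The paper bounds $\deg_k(T(E)\cup T(F))\ge\max\{\deg_k T(E),\deg_k T(F)\}\ge (h_E+h_F)/(2(g+1))$ and removes the large-exponent places of both curves at once, while you argue one curve at a time and average the two inequalities, which gives the same constant.
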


\begin{proof}
Put $H=h_{\omega,K}(E)+h_{\omega,K}(F)$. By \eqref{eq:support-height},
\[
 \deg_k(T(E)\cup T(F)) \geq \max\{\deg_k T(E),\deg_k T(F)\} \geq \frac{H}{2(g+1)}.
\]
Equation \eqref{eq:disc-hodge} shows that the places at which either $n_{E,v}>M$ or $n_{F,v}>M$ have total degree at most
\[
 \frac{12h_\omega(E)}M+\frac{12h_\omega(F)}M \leq\frac{H}{4(g+1)}.
\]
After removing them, a remaining place in the union is bad for at least one curve, with positive discriminant exponent strictly smaller than $Q$.

It remains to show that if $1\le n_{E,v}\le M$ and $n_{F,v}\le M$, then $1\le n_{F,v}$ and vice versa.  A nonsplit multiplicative torus splits over an unramified quadratic extension \cite[App C, Thm 14.1]{silverman2009arithmetic}, so passing to the split form does not change inertia.  At a multiplicative place with Tate exponent $n$, by Lemma \ref{lem:tate-monodromy} tame inertia on prime-to-$p$ $Q$-torsion contains the matrix 
\[
 \begin{pmatrix}1&n\\0&1\end{pmatrix}\pmod Q;
\]
this is nontrivial when $0<n<Q$.  At a good place inertia is trivial by the N\'eron--Ogg--Shafarevich criterion \cite[Thm~1]{serre1971good}.  Isomorphic $G_K$-modules cannot have these two different inertia actions. Thus every remaining place is jointly multiplicative.
\end{proof}

\subsection{Reductions}

\begin{proposition}
\label{prop:perfect-constant}
Assume that  Theorem \ref{thm:main} is known with the constant $C(b,p)$, whenever the constant field $k$ is perfect. Then it holds over every field $k$ of characteristic exponent $p$.
\end{proposition}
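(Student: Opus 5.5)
The plan is to base change to the perfect closure of $k$, or more conveniently to $\kbar$, and check that every hypothesis and the conclusion of Theorem~\ref{thm:main} transfer. Let $k'=k^{\perf}$ (or $\kbar$), $B'=B\times_k k'$ and $K'=k'(B')=K\otimes_k k'$. Since $B$ is geometrically integral, $B'$ is an integral curve and $K'$ is a field. The extension $K'/K$ is purely inseparable (or, for $\kbar$, algebraic and regular).

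The first step is the geometric invariants. Smoothness of $B/k$ gives $B'$ smooth with the same arithmetic genus, so $g(B')\le b$. The characteristic exponent is unchanged.

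The second step is that $E_{K'},F_{K'}$ stay nonisotrivial and semistable.
\begin{itemize}
\item Nonisotriviality: $j$ does not change under base change, and $\overline{K'}\supset\kbar$ has the same algebraic closure of the constants.
\item Semistability: a point $v'\in B'$ lies over $v\in B$. The local ring extension $\mathcal O_{B,v}\to\mathcal O_{B',v'}$ is flat with unramified closed fiber, since $B$ is smooth. Hence a minimal Weierstrass model over $\mathcal O_{B,v}$ pulls back to a Weierstrass model with the same discriminant valuation and the same reduction type. So good reduction stays good, multiplicative stays multiplicative, and additive cannot appear.
\end{itemize}

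The third step handles the Galois modules. Restriction gives a map $G_{K'}\to G_K$. Because $K'/K$ is purely inseparable (for $k^{\perf}$), $K^{\sep}\otimes_K K'=K'^{\sep}$ and this map is an isomorphism. For $(N,p)=1$ the torsion $E[N]$ consists of $K^{\sep}$-points, so $\phi$ remains an isomorphism of $G_{K'}$-modules $E_{K'}[N]\to F_{K'}[N]$. Applying the perfect case with the same $N>C(b,p)$ then yields a nonzero $K'$-isogeny $E_{K'}\to F_{K'}$.

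The main obstacle is descending this isogeny back to $K$. I would use the finiteness and rigidity of $\Hom$. The group $\Hom_{K'}(E_{K'},F_{K'})$ is a finitely generated free $\Z$-module, and its elements are determined by their action on prime-to-$p$ torsion, for instance $\ell$-adic Tate modules with $\ell\ne p$. The isogeny $\psi$ is defined over some finite subextension $K(\alpha)$ with $\alpha^{p^r}\in K$. For purely inseparable $L/K$ I would argue that $\Hom_L(E_L,F_L)=\Hom_K(E,F)$. The graph of $\psi$ is a closed subscheme $\Gamma\subset (E\times F)_L$, and its image under the relative Frobenius twist $p^r$ is defined over $K$. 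Alternatively, $\Hom$ is represented by an unramified (étale-type) group scheme over $K$, and étale schemes see no difference between $K$ and a purely inseparable extension, since the rational points of an étale $K$-scheme over $K$ and over $K^{\perf}$ agree. This gives $\psi$ defined over $K$.

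If one prefers $\kbar$ over $k^{\perf}$, a separable constant extension is also needed. That is handled by Galois descent: the finite set of isogenies of minimal degree is permuted by $\Gal(k^{\sep}/k)$, and one takes a norm-type composite. Using $k^{\perf}$ avoids this, so I would commit to $k'=k^{\perf}$.
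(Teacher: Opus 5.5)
Your proof follows the paper's route: base change to $k^{\perf}$, check that the genus, nonisotriviality, semistability and the $G_K$-module structure on prime-to-$p$ torsion survive, apply the perfect case, and descend the isogeny because $K'/K$ is purely inseparable. Your identification $K'^{\sep}=K^{\sep}\otimes_K K'$ plays the role of the paper's appeal to topological invariance of the \'etale site. For the descent the paper cites Chow's theorem for primary extensions (Chai--Conrad--Oort, Lem.~1.2.1.2). Your argument is the standard proof of its purely inseparable case: $\underline{\Hom}(E,F)$ is unramified, hence \'etale, over $K$, and an \'etale $K$-scheme has the same points over $K$ and over a purely inseparable extension. That step is fine. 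Your Frobenius-twist variant, made precise by base changing $\psi$ along $x\mapsto x^{p^r}$, $L\to K$, only shows that $V^r_{F/K}\circ\psi^{(p^r)}\circ F^r_{E/K}=[p^r]\circ\psi$ is defined over $K$, not $\psi$ itself. That still gives a nonzero $K$-isogeny, which is all you need.

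The one false step is in the semistability check. It is not true that $\mathcal O_{B,v}\to\mathcal O_{B',v'}$ has unramified closed fibre ``since $B$ is smooth''. Over an imperfect $k$, a smooth curve can have closed points whose residue field is inseparable over $k$. For example, take $\mathbb{A}^1_k$ with $k=\mathbb{F}_p(t)$ and $v=(x^p-t)$, so $\kappa(v)=k(t^{1/p})$. Over $k^{\perf}$ the uniformizer becomes $(x-t^{1/p})^p$, so $e(v'/v)=p$ and $\kappa(v)\otimes_k k^{\perf}$ is nonreduced. Your claim that the discriminant valuation is unchanged is therefore wrong: it is multiplied by $e(v'/v)$, a power of $p$, and an $I_n$ fibre becomes $I_{en}$.

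The conclusion you need still holds, for a different reason. For a minimal Weierstrass model at $v$, good reduction means $\Delta$ is a unit, and multiplicative reduction means $c_4$ is a unit while $\Delta$ is not. A local homomorphism of DVRs sends units to units and nonunits to nonunits. So the pulled-back model is still minimal with the same reduction type, whatever the ramification. This is the statement that semiabelian reduction is stable under arbitrary extensions of DVRs, which is what the paper invokes from BLR.

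Finally, drop the side remark about $\kbar$. Take $E$ nonisotrivial, so $\mathrm{End}_{\Kbar}(E)=\mathbb{Z}$, and let $E^d$ be its quadratic twist by a nonsquare constant. Then $E$ and $E^d$ become isomorphic over $K\kbar$, but $\Hom_K(E,E^d)=0$, because the Galois conjugate of the twisting isomorphism is its negative. So no norm or trace construction descends geometric isogenies in general, and committing to $k^{\perf}$ was the right choice.
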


\begin{proof}
There is nothing to prove when $p=1$, so assume $p>1$. Let
\[
k^{\perf}=\bigcup_{r\geq 0}k^{1/p^r}
\]
be the perfect closure of $k$ in a fixed algebraic closure. Put $B'=B\times_k k^{\perf}$, and  $K'=Kk^{\perf}=k^{\perf}(B')$. Then $B'$ is a smooth projective geometrically integral curve over $k^\perf$. Proper flat cohomology and base change gives
\[
 H^1(B',\mathcal O_{B'})\simeq H^1(B,\mathcal O_B)\otimes_k k^{\perf},
\]
proving that $g(B')=g(B)$. The extension \(K'/K\) is purely inseparable. Hence \(\operatorname{Spec} K'\to\operatorname{Spec}K\) is a universal homeomorphism. By topological invariance of the \'etale site \cite[Exp~VIII, Thm~1.1]{artin1973theorie}, pulling back induces an equivalence between the categories of finite etale \(K\)-schemes and finite etale \(K'\)-schemes. After choosing compatible separable closures, this identifies \(G_{K'}\) with \(G_K\). In particular, for every integer \(N\) prime to \(p\), it identifies the \(N\)-torsion modules of $E[N]$ and $E_{K'}[N]$. The pullback of the nonconstant \(j\)-map remains nonconstant. For every closed point $b'\in B'$ lying over $b\in B$, the local homomorphism
\[
\mathcal O_{B,b}\longrightarrow \mathcal O_{B',b'}
\]
is a faithfully flat extension of discrete valuation rings. Consequently, semistable reduction is preserved by
\cite[\S 7.4 Cor.~4]{BLR}. 

Apply the perfect-field theorem to $E_{K'}$ and $F_{K'}$. It produces a $K'$-isogeny $E_{K'}\to F_{K'}$. The extension $K'/K$ is primary, meaning that the algebraic closure of $K$ in $K'$ is purely inseparable over $K$. For abelian varieties $A_1,A_2/K$, base change across a primary extension induces a bijection
\[
\Hom_K(A_1,A_2)\xrightarrow{\sim}\Hom_{K'}((A_1)_{K'},(A_2)_{K'})
\]
by \cite[Lem~1.2.1.2]{chaiconoort}. The $K'$-isogeny therefore descends to a homomorphism $f:E\to F$ over $K$. Since $f_{K'}$ is nonzero, we get that \(f\ne0\),
and every nonzero homomorphism between elliptic curves is an isogeny.
\end{proof}

\begin{lemma}
\label{lem:primary-decomp}
Fix a number $C_0\geq 2$. Suppose that, for every prime $\ell\neq p$, every integer $\alpha\geq 1$, and every pair of nonisogenous semistable elliptic curves $E,F$ over $K$, an isomorphism \(E[\ell^\alpha]\simeq F[\ell^\alpha]\) implies $\ell^\alpha\leq C_0$. Define
\[
 C_1(C_0,p):=
\prod_{\substack{\ell\leq C_0\\ \ell\ \mathrm{prime},\ \ell\neq p}} \ell^{\lfloor\log_\ell C_0\rfloor}.
\]
Then, for every integer \(N\) prime to \(p\) and every pair of nonisogenous semistable elliptic curves \(E,F\), an isomorphism \(E[N]\simeq F[N]\) implies
\[
N\leq C_1(C_0,p).
\]
\end{lemma}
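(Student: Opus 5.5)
The plan is to reduce from $N$ to its prime-power components via the Chinese remainder theorem. Write $N=\prod_{\ell\mid N}\ell^{\alpha_\ell}$ with every $\ell\neq p$, which is possible because $(N,p)=1$. The decomposition $\mathbb Z/N\mathbb Z\simeq\prod_\ell\mathbb Z/\ell^{\alpha_\ell}\mathbb Z$ splits $E[N]$ canonically as $\bigoplus_\ell E[\ell^{\alpha_\ell}]$, and likewise for $F$. Here $E[\ell^{\alpha_\ell}]=(N/\ell^{\alpha_\ell})E[N]$ is the $\ell$-primary part.

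A $G_K$-isomorphism $\phi:E[N]\to F[N]$ is $\mathbb Z$-linear. It therefore maps $(N/\ell^{\alpha_\ell})E[N]$ onto $(N/\ell^{\alpha_\ell})F[N]$, and so restricts to a $G_K$-isomorphism $E[\ell^{\alpha_\ell}]\simeq F[\ell^{\alpha_\ell}]$ for each $\ell\mid N$.

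Now assume $E,F$ are nonisogenous. The hypothesis then gives $\ell^{\alpha_\ell}\le C_0$ for every $\ell\mid N$. In particular $\ell\le C_0$ and $\alpha_\ell\le\lfloor\log_\ell C_0\rfloor$. So each prime dividing $N$ occurs among the primes $\ell\le C_0$, $\ell\ne p$, in the product defining $C_1(C_0,p)$, with exponent at most the one appearing there. Hence $N$ divides $C_1(C_0,p)$, and therefore $N\le C_1(C_0,p)$.

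There is no real obstacle. The only point to check is that restricting $\phi$ to primary parts is Galois-equivariant and bijective. This holds because the primary components are the images of multiplication by integers, which commutes with both $\phi$ and the $G_K$-action.
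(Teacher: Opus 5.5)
Your proposal is correct and follows the paper's proof: restrict the isomorphism to each $\ell$-primary part $E[\ell^{\alpha_\ell}]\simeq F[\ell^{\alpha_\ell}]$, apply the prime-power hypothesis to get $\ell\le C_0$ and $\alpha_\ell\le\lfloor\log_\ell C_0\rfloor$, and conclude $N\le C_1(C_0,p)$. You add two details the paper leaves implicit: the primary parts are images of multiplication by $N/\ell^{\alpha_\ell}$, which justifies the restriction, and $N$ actually divides $C_1(C_0,p)$.
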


\begin{proof}
Write $N=\prod_{\ell\mid N}\ell^{\alpha_\ell}$. A $(\mathbb Z/N\mathbb Z)$-linear isomorphism $E[N]\to F[N]$ restricts to an isomorphism \(E[\ell^{\alpha_\ell}]\simeq F[\ell^{\alpha_\ell}]\) for every $\ell\mid N$. By hypothesis, $\ell^{\alpha_\ell}\leq C_0$. Hence $\ell\leq C_0$ and $\alpha_\ell\leq\lfloor\log_\ell C_0\rfloor$. Multiplying these numbers implies $N\le C_1(C_0,p)$.
\end{proof}

Assume for the rest of this subsection that $k$ is perfect of characteristic $p>0$. For an elliptic curve $C/K$, write $C^{(p)}$ for the pullback of $C$ by the absolute Frobenius of $\Spec K$. The relative Frobenius and its dual, the Verschiebung \cite[Chap 12]{katzmazur1985},
are denoted
\[
F_{C/K}:C\longrightarrow C^{(p)},
\qquad V_{C/K}:C^{(p)}\longrightarrow C.
\]

\begin{lemma}
\label{lem:frob-invariance}
The relative Frobenius is a finite flat purely inseparable isogeny of degree $p$, and
\[
V_{C/K}\circ F_{C/K}=[p]_C,
\qquad F_{C/K}\circ V_{C/K}=[p]_{C^{(p)}}.
\]
If $(m,p)=1$, then $F_{C/K}$ induces a $G_K$-equivariant isomorphism
\[
C[m]\xrightarrow{\sim}C^{(p)}[m].
\]
\end{lemma}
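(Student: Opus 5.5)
We prove the three assertions of the lemma separately.

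\textbf{Frobenius is an isogeny.} The relative Frobenius $F_{C/K}$ is a homomorphism of group schemes over $K$, by functoriality of Frobenius. On a smooth curve, the relative Frobenius is finite and flat of degree $p^{\dim C}=p$. It is a homeomorphism on underlying spaces, and on function fields it is the inclusion $K(C^{(p)})=K\cdot K(C)^p\subset K(C)$. This extension is purely inseparable of degree $p$, so $F_{C/K}$ is a purely inseparable isogeny of degree $p$.

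\textbf{The identities with Verschiebung.} Recall that $V_{C/K}$ is defined in \cite[Chap 12]{katzmazur1985} as the dual isogeny $\widehat{F_{C/K}}$. With this definition, $V\circ F=\widehat F\circ F=[\deg F]=[p]_C$. Composing this identity on the left with $F$ gives
\[
(F\circ V)\circ F=F\circ[p]_C=[p]_{C^{(p)}}\circ F,
\]
where the last step holds because $F$ is a homomorphism. Since $F$ is an epimorphism (faithfully flat), we may cancel it on the right and obtain $F\circ V=[p]_{C^{(p)}}$.

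\textbf{Prime-to-$p$ torsion.} Let $(m,p)=1$. The kernel of $F_{C/K}$ is a finite group scheme of order $p$, and it is infinitesimal. Hence $\ker F\cap C[m]$ is killed by both $p$ and $m$, so it is trivial. It follows that $F$ restricts to a monomorphism $C[m]\to C^{(p)}[m]$. Both sides are finite étale of order $m^2$, so this monomorphism is an isomorphism.

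Alternatively, $V$ maps $C^{(p)}[m]$ into $C[m]$, and $V\circ F=[p]$ is invertible on $C[m]$ because $p$ is a unit modulo $m$. This gives the inverse up to the automorphism $[p]^{-1}$.

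\textbf{Galois equivariance.} The map $F_{C/K}$ is a morphism of $K$-schemes, so on $K^{\sep}$-points it commutes with the action of $G_K$. Since $C[m]$ is étale, its $K^{\sep}$-points are all of its geometric points. Therefore $F$ induces a $G_K$-equivariant group isomorphism $C[m](K^{\sep})\to C^{(p)}[m](K^{\sep})$.

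The only step that is not purely formal is identifying $V_{C/K}$ with the dual isogeny. If one instead defines $V$ via the Cartier dual or through $\mathrm{Sym}^p$ as in Katz--Mazur, the identity $V\circ F=[p]$ is exactly the content of the cited theorem there.
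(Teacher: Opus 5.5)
Your proof is correct and follows essentially the same route as the paper. The paper cites Silverman for the Frobenius facts and for both dual-isogeny identities; you instead derive $F_{C/K}\circ V_{C/K}=[p]$ from $V_{C/K}\circ F_{C/K}=[p]$ by cancelling the epimorphism $F_{C/K}$, which is fine. It then uses $[a]V_{C/K}$ with $ap\equiv 1 \pmod m$ as a two-sided inverse on $m$-torsion, exactly your ``alternative'' argument. Your primary route (the infinitesimal kernel meets the \'etale $C[m]$ trivially, then count orders) is an equally valid variant, and your $G_K$-equivariance argument is the same as the paper's.
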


\begin{proof}
The assertions about relative Frobenius follow from \cite[Chap~II, Prop~2.11]{silverman2009arithmetic} and the dual-isogeny identities follow from \cite[Chap~III, Thm~6.1, Thm~6.2]{silverman2009arithmetic}. Let $a\in\mathbb Z$ be such that $ap\equiv 1\pmod m$. On $m$-torsion,
\[
([a]V_{C/K})\circ F_{C/K}=[ap]=[1], \qquad F_{C/K}\circ([a]V_{C/K})=[ap]=[1].
\]
Thus $[a]V_{C/K}$ is a two-sided inverse. Both morphisms are defined over $K$, so the isomorphism is $G_K$-equivariant.
\end{proof}

Since $k$ is perfect and $K/k$ has transcendence degree one, $j_E$ is generically separable if and only if $dj_E\neq 0$ in $\Omega^1_{K/k}$, and this is equivalent to $j_E\notin K^p$. 

\begin{proposition}
\label{prop:frob-minimal}
Let $E/K$ be a nonisotrivial semistable elliptic curve. There are a semistable elliptic curve $E_0/K$ where $j_{E_0}$ is generically separable, an integer $r\geq 0$, and a purely inseparable $K$-isogeny
\[
E_0\longrightarrow E
\]
of degree $p^r$. Consequently, $E_0[m]\simeq E[m]$ as $G_K$-modules for every $m$ prime to $p$.
\end{proposition}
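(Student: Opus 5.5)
We argue by descending along Frobenius, starting from $E$ and inducting on the inseparability degree of $j_E$. Since $E$ is nonisotrivial and $k$ is perfect, $j_E\in K\setminus k$, and because $K$ is finitely generated of transcendence degree one over a perfect field, there is a largest $r\ge 0$ with $j_E\in K^{p^r}$. Equivalently, the inseparability degree of $j_E\colon B\to\P^1$ is $p^r$. If $r=0$, then $j_E$ is generically separable by the criterion stated just before the proposition, and we take $E_0=E$ with the identity isogeny. In general we will show: if $j_E\in K^p$, then there is a semistable elliptic curve $E_1/K$ with $j_{E_1}^p=j_E$ together with a purely inseparable $K$-isogeny $E_1\to E$ of degree $p$. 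Iterating this $r$ times and composing the isogenies gives the claim.

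For the descent step, write $j_E=j_1^p$ with $j_1\in K$. Since $K$ is a field with $j_1$ defined over it, we can choose an elliptic curve $E_1'/K$ with $j(E_1')=j_1$. The Frobenius twist $(E_1')^{(p)}$ is the base change of $E_1'$ along the absolute Frobenius of $K$, so $j((E_1')^{(p)})=j_1^p=j_E$. Hence $(E_1')^{(p)}$ and $E$ are twists of one another. In characteristic $p\ge 5$, or whenever $j_E\neq 0,1728$ (which holds because $j_E$ is nonconstant), the automorphism group is $\{\pm1\}$, so they differ by a quadratic twist by some $d\in K^\times$. Let $E_1$ be the quadratic twist of $E_1'$ by the same $d$. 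Twisting commutes with Frobenius pullback up to replacing $d$ by $d^p$, and $d^p\equiv d$ modulo squares because $p$ is odd. Therefore $E_1^{(p)}\simeq E$ over $K$, and we take the isogeny
\[
V_{E_1/K}\colon E\simeq E_1^{(p)}\to E_1 .
\]
This is degree $p$ but not purely inseparable when $E$ is ordinary, which is the wrong direction. The fix is to use the relative Frobenius of a curve whose Frobenius twist is $E$, that is, $F_{E_1/K}\colon E_1\to E_1^{(p)}\simeq E$. By Lemma~\ref{lem:frob-invariance} this is purely inseparable of degree $p$. For $p=2,3$, the twist classes are governed by $H^1(G_K,\operatorname{Aut})$, where $\operatorname{Aut}$ is still $\{\pm1\}$ because $j_E\ne0=1728$. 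Quadratic twists in characteristic $2$ are Artin--Schreier twists, and the map $d\mapsto d^p$ on $H^1(G_K,\mathbb Z/2)$ must be handled there. We expect to treat this by noting that Frobenius pullback induces a bijection on $H^1(G_K,\mathbb Z/2)$, since $G_{K}$ is unchanged under the purely inseparable map $K\to K^{1/p}$.

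Next, $E_1$ must be semistable. Reduction type is detected after finite separable extension, and $F_{E_1/K}$ is an isogeny. Semistability, meaning that the identity component of the N\'eron model is semiabelian, is an isogeny invariant for elliptic curves, because isogenous curves share their Galois representation on $\ell$-adic Tate modules with $\ell\ne p$ and unipotence of inertia is read off there (Grothendieck's criterion). So $E_1$ is semistable, and moreover $\operatorname{ord}_v j_{E_1}=\operatorname{ord}_v j_E/p\ge$ the pole condition is automatically consistent.

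Finally, the statement on torsion follows by composing the isomorphisms of Lemma~\ref{lem:frob-invariance} along the chain, or directly: a purely inseparable isogeny of degree $p^r$ has kernel killing no prime-to-$p$ torsion, and its dual composed with it is $[p^r]$, invertible on $m$-torsion. The main obstacle is the twist bookkeeping, namely producing $E_1$ with $E_1^{(p)}\simeq E$ over $K$ rather than only over $\overline K$, especially in characteristics $2$ and $3$.
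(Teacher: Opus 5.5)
Your proposal is correct and follows the paper's route. You take $r$ maximal with $j_E\in K^{p^r}$, produce a curve with Frobenius twist isomorphic to $E$ over $K$, and use its relative Frobenius as the degree-$p$ purely inseparable isogeny. Semistability is transferred by isogeny invariance (you via Grothendieck's unipotent-inertia criterion, the paper via BLR), and you iterate $r$ times and finish with Lemma~\ref{lem:frob-invariance}.

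The one difference is that the paper cites Ulmer's lemma for the existence of the Frobenius predecessor, while you reprove it by twisting. That argument already covers $p=3$, since $d^3\equiv d$ modulo squares. The $p=2$ step you left as ``expected'' does close: the absolute Frobenius induces the identity on $G_K$, concretely $a^2\equiv a$ modulo $\{x^2+x: x\in K\}$, so you may twist $E_1'$ by the same Artin--Schreier class.
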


\begin{proof}
Since \(j(E)\) is nonconstant, its divisor is nonzero. If \(j(E)\in K^{p^n}\), then every coefficient of the divisor of \(j(E)\) is divisible by \(p^n\). Consequently, there are only finitely many such \(n\). Let \(r\geq0\) be maximal such that
\[
j(E)\in K^{p^r}.
\]
Then \(j(E)\notin K^{p^{r+1}}\). It remains to construct one predecessor whenever $j(E)=j_0^p$ with $j_0\in K$. By \cite[Lec~1, \S 2, Lem~2.1]{ulmer2011}, there is an elliptic curve \(C/K\) and a \(K\)-isomorphism
\[
\iota:C^{(p)}\xrightarrow{\sim}E.
\]
Since semi-abelian reduction is invariant under isogeny \cite[\S 7.3, Cor.~7]{BLR}, the curve \(C\) is semistable. Repeat the construction exactly $r$ times. The resulting curve $E_0$ has $j(E_0)\notin K^p$, hence the $j$-invariant of $E_0$ is generically separable. The composite isogeny has degree $p^r$, and the assertion about $m$-torsion follows by Lemma~\ref{lem:frob-invariance}.
\end{proof}

\section{Skeletal restrictions and admissibility}\label{sec skeletal}\label{sec admissibility}

\subsection{The Weil pairing}
Let \(K\) be a field, let \(E/K\) be an elliptic curve, and let \(N\geq2\) be invertible in \(K\). We write \(E[N]\) for \(E[N](\overline K)\). The Weil pairing is a bilinear, alternating, perfect, and \(G_K\)-equivariant pairing
\[
e_{N,E}:E[N]\times E[N]\longrightarrow\mu_N,
\]
where $\mu_N$ is the set of $N$-th roots of unity in $\ol K.$ In particular, it is surjective. See \cite[Chap.~III, Prop.~8.1]{silverman2009arithmetic}.

\begin{proposition}\label{prop:weilparameterexists}
Let \(E,F/K\) be elliptic curves, let \(N\geq2\) be invertible in \(K\), and let
\[
\phi:E[N]\xrightarrow{\sim}F[N]
\]
be a \(\mathbb Z/N\mathbb Z\)-linear isomorphism. There is a unique unit \(r_\phi\in(\mathbb Z/N\mathbb Z)^\times\) such that
\[
e_{N,F}\bigl(\phi(P_1),\phi(P_2)\bigr) =e_{N,E}(P_1,P_2)^{r_\phi}
\]
for all \(P_1,P_2\in E[N]\).
\end{proposition}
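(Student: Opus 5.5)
The plan is to use only that $E[N]$ and $F[N]$ are free $\mathbb Z/N\mathbb Z$-modules of rank two, together with the fact that alternating bilinear forms on a rank-two free module form a free rank-one module generated by the determinant. Galois equivariance plays no role in the existence or uniqueness of $r_\phi$; it only shows that $r_\phi$ is compatible with the $G_K$-structure, which is not asserted.

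First, fix a basis $(P,Q)$ of $E[N]$; this is possible because $N$ is invertible in $K$. Put $\zeta=e_{N,E}(P,Q)$. Bilinearity and the alternating property give
\[
e_{N,E}(aP+bQ,cP+dQ)=\zeta^{ad-bc}.
\]
Since the pairing is surjective onto $\mu_N$ and every value is a power of $\zeta$, $\zeta$ is a primitive $N$-th root of unity.

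Next, $(\phi(P),\phi(Q))$ is a basis of $F[N]$ because $\phi$ is a linear isomorphism. By the same argument, $\xi=e_{N,F}(\phi(P),\phi(Q))$ is a primitive $N$-th root of unity, so $\xi=\zeta^{r}$ for a unique $r\in(\mathbb Z/N\mathbb Z)^\times$. Linearity of $\phi$ then gives
\[
e_{N,F}\bigl(\phi(aP+bQ),\phi(cP+dQ)\bigr)=\xi^{ad-bc}=\bigl(\zeta^{ad-bc}\bigr)^{r},
\]
which is the required identity with $r_\phi=r$.

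For uniqueness, suppose $r$ and $r'$ both satisfy the identity. Taking $P_1=P$ and $P_2=Q$ gives $\zeta^{r}=\zeta^{r'}$. Since $\zeta$ has exact order $N$, this forces $r\equiv r'\pmod N$. The definition of $r_\phi$ is independent of the chosen basis because it is characterized intrinsically by the identity.

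The only point needing care is primitivity of $\zeta$, and this follows from the surjectivity stated just before the proposition. I expect no real obstacle.
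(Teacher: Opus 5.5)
Your proposal is correct and follows essentially the same argument as the paper: fix a basis $(P,Q)$, observe that $\zeta=e_{N,E}(P,Q)$ is primitive, define $r_\phi$ by $e_{N,F}(\phi(P),\phi(Q))=\zeta^{r_\phi}$, and expand by bilinearity and the alternating property. The only cosmetic differences are that you derive primitivity from surjectivity rather than from perfectness, and that you spell out uniqueness by evaluating at $(P,Q)$, which the paper leaves implicit.
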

\begin{proof}
Choose a basis \((P,Q)\) of the free \(\mathbb Z/N\mathbb Z\)-module \(E[N]\), and put \(\zeta=e_{N,E}(P,Q)\). Since the Weil pairing is perfect, we get that \(\zeta\) is a primitive \(N\)-th root of unity. Since \((\phi(P),\phi(Q))\) is a basis of \(F[N]\), there is a unique \(r_\phi\in\mathbb Z/N\mathbb Z\) such that
\[
e_{N,F}\bigl(\phi(P),\phi(Q)\bigr)=\zeta^{r_\phi}.
\]
The pairing on \(F[N]\) is perfect, so \(\zeta^{r_\phi}\) is also primitive and \(r_\phi\) is a unit inside $(\Z/N\Z)^\times$. If \(P_1=xP+yQ\) and \(P_2=zP+wQ\), then
\[
e_{N,E}(P_1,P_2)=\zeta^{xw-yz}
\]
and
\[
e_{N,F}\bigl(\phi(P_1),\phi(P_2)\bigr) =\zeta^{r_\phi(xw-yz)}.
\]
This completes the proof.
\end{proof}

\begin{lemma}\label{lem:putweiparameterinasmallset}
Let $E,F/K$ be elliptic curves. Let $N\ge2$, and let $\phi:E[N]\xrightarrow{\sim}F[N]$ be a $\Z/N\Z$-linear isomorphism. For $a\in(\Z/N\Z)^\times$,
\[
 r_{[a]\circ\phi}=a^2r_\phi.
\]
If $N=\ell^\alpha$, scalar multiplication can therefore put $r_\phi$ in the following list of square-class representatives:
\[
 \begin{cases}
 \{1,\varepsilon_\ell\},& \ell \text{ odd},\text{ where }\left(\frac {\varepsilon_\ell}{ \ell} \right) =-1 \\
 \{1\},&\ell=2,\ \alpha=1,\\
 \{1,-1\},&\ell=2,\ \alpha=2,\\
 \{1,-1,3,-3\},&\ell=2,\ \alpha\ge3,
 \end{cases}
\]
\end{lemma}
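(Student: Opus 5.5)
The proof has two parts: a direct computation of how the Weil parameter transforms under a scalar, and a description of the square classes of $(\mathbb Z/\ell^\alpha\mathbb Z)^\times$.

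For the first part, apply Proposition~\ref{prop:weilparameterexists} to $[a]\circ\phi$, which is again a $\mathbb Z/N\mathbb Z$-linear isomorphism since $a$ is a unit. Bilinearity of $e_{N,F}$ gives
\[
e_{N,F}\bigl(a\phi(P_1),a\phi(P_2)\bigr)=e_{N,F}\bigl(\phi(P_1),\phi(P_2)\bigr)^{a^2}=e_{N,E}(P_1,P_2)^{a^2r_\phi}.
\]
Uniqueness of the parameter in Proposition~\ref{prop:weilparameterexists} then forces $r_{[a]\circ\phi}=a^2r_\phi$. Consequently, as $a$ ranges over $(\mathbb Z/N\mathbb Z)^\times$, the parameter $r_\phi$ can be moved to any element of its coset in $(\mathbb Z/N\mathbb Z)^\times/((\mathbb Z/N\mathbb Z)^\times)^2$. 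It therefore suffices to check that the listed sets contain a representative of every square class for $N=\ell^\alpha$.

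For $\ell$ odd, $(\mathbb Z/\ell^\alpha\mathbb Z)^\times$ is cyclic of even order, so the square subgroup has index $2$. An element is a square iff its reduction mod $\ell$ is a square: Hensel's lemma handles one direction, and reduction is a surjective homomorphism of cyclic groups, which handles the other. Hence $1$ and any lift of a non-residue $\varepsilon_\ell$ represent the two classes.

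For $\ell=2$ there are three cases:
\begin{itemize}
\item $\alpha=1$: the unit group is trivial, so $\{1\}$ suffices.
\item $\alpha=2$: the group is $\{\pm1\}$ and its only square is $1$, giving $\{1,-1\}$.
\item $\alpha\ge3$: the group is $\{\pm1\}\times\langle 5\rangle\cong\mathbb Z/2\times\mathbb Z/2^{\alpha-2}$, whose squares are exactly the units $\equiv1\pmod 8$, with index $4$. The residues $1,-1,3,-3$ are pairwise distinct mod $8$ and cover all odd residues mod $8$, so they represent all four classes.
\end{itemize}

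The only step that needs any care is the $\alpha\ge3$ case: one must verify that every unit $\equiv1\pmod8$ is a square modulo $2^\alpha$. This follows from the structure theorem for $(\mathbb Z/2^\alpha\mathbb Z)^\times$, or equivalently from Hensel's lemma in its $2$-adic form.
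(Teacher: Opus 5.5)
Your proof is correct and follows the paper's argument. The rule $r_{[a]\circ\phi}=a^2r_\phi$ comes from bilinearity together with the uniqueness in Proposition~\ref{prop:weilparameterexists}, and the representatives come from the standard description of the square classes in $(\mathbb Z/\ell^\alpha\mathbb Z)^\times$: reduction mod $\ell$ for odd $\ell$, and reduction mod $8$ for $\ell=2$, $\alpha\ge3$. The only difference is that you write out the group-theoretic justifications (Hensel, and the structure $\{\pm1\}\times\langle5\rangle$), where the paper cites a textbook.
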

\begin{proof}
Since $a\in (\Z/N\Z)^\times$, multiplication by $a$ is an automorphism of the $N$-torsions. By bilinearity,
\[
e_{N,F}\bigl([a]\varphi(P_1),[a]\varphi(P_2)\bigr) =e_{N,F}\bigl(\varphi(P_1),\varphi(P_2)\bigr)^{a^2} =e_{N,E}(P_1,P_2)^{a^2r_\varphi}.
\]
Proposition~\ref{prop:weilparameterexists} therefore gives \(r_{[a] \circ\varphi}= a^2r_\varphi\). For odd \(\ell\), a unit modulo \(\ell^\alpha\) is a square if and only if its reduction modulo \(\ell\) is a square. Thus there are exactly two square classes. For \(\ell=2\), the cases \(\alpha=1,2\) are immediate. If \(\alpha\geq3\), an odd unit is a square modulo \(2^\alpha\) if and only if it is congruent to \(1\pmod8\). Thus square classes are determined by reduction modulo \(8\), giving representatives \(1,-1,3,-3\). See \cite[Chap 11.4]{lozano2019number}.
\end{proof}

For any isomorphism $\phi$, we replace it by the isomorphism $\psi:=[a]\circ \phi$ where $r_\psi$ is in a fixed set given by Lemma \ref{lem:putweiparameterinasmallset}. Once this global value of $r_\psi$ is fixed over the function field $K$, for any valuation $v$ we have the equality of local Weil multipliers $r_{\psi,v} = r_\psi$.

\subsection{Tate monodromy}
Let \(L\) be the completion of the maximal unramified extension of \(K_v\). A multiplicative elliptic curve becomes a split Tate curve over an unramified extension of degree at most two \cite[App C, Thm 14.1]{silverman2009arithmetic}. Since the residue field of \(L\) is separably closed, the reduction is split over \(L\). A split multiplicative elliptic curve over $L$ is the Tate curve
\[
E(\overline L)\simeq \overline L^\times/q_E^{\mathbb Z}, \qquad n_E=\ord(q_E)>0.
\]
Put $\nu=-\log\abs{\cdot}$. If $\rho_v=\nu(\pi)$ for a uniformizer, then the metric analytic skeleton is $\mathbb R/(\rho_vn_E)\mathbb Z$, and the metric retraction is $[z]\mapsto\nu(z)$. We shall usually divide the metric coordinate by $\rho_v$ and use the equivalent skeleton $\mathbb R/n_E\mathbb Z$. In this rescaled coordinate, the retraction and its normalized version are
\[
r_E([z])=\ord(z)\pmod{n_E\mathbb Z}, \qquad \tau_E([z])=\frac{\ord(z)}{n_E}\pmod{\mathbb Z} =\frac{\nu(z)}{\nu(q_E)}\pmod{\mathbb Z}\in\mathbb R/\mathbb Z.
\]
Thus every normalized assertion below is independent of the valuation normalization. If the valuation is renormalized after a finite local extension, both \(\nu(z)\) and \(\nu(q_E)\) are multiplied by the same positive constant, so their ratio is unchanged. The Tate uniformization and its Galois compatibility are given in \cite[Chap~V, Thm~3.1, Rmk~3.1.2]{SilvermanAdvanced}.  For the general analytic skeleton and retraction map, see \cite[Sec~6.5]{berkovich1990spectral}.

\begin{lemma}
\label{lem:tate-monodromy}
Let $Q=\ell^\alpha$ where  $(p,\ell)=1$. Choose a primitive $Q$th root of unity $\zeta_Q$ and a $Q$th root $q_E^{1/Q}$. Relative to the basis
\(
P_E=[\zeta_Q]\) and \(R_E=[q_E^{1/Q}]
\)
of $E[Q]$, there is a tame inertia element $\sigma \in \Gal(\ol L/L)$ whose matrix is
\begin{equation}
T_E=\begin{pmatrix}1&n_E\\0&1\end{pmatrix}
\quad\text{in }\operatorname{GL}_2(\mathbb Z/Q\mathbb Z).
\label{eq:tate-monodromy}
\end{equation}
\end{lemma}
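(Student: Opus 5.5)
We work directly with the Tate parametrization over $L$. We have $E(\overline L)\simeq \overline L^\times/q_E^{\mathbb Z}$ compatibly with $\Gal(\overline L/L)$, so $E[Q]$ is identified with the classes $[x]$ with $x^Q\in q_E^{\mathbb Z}$. The $Q$-torsion is therefore generated by $[\zeta_Q]$ and $[q_E^{1/Q}]$. These form a $\mathbb Z/Q\mathbb Z$-basis: $E[Q]$ is free of rank two (since $(Q,p)=1$), and the map $\mu_Q\times \frac1Q\mathbb Z\to E[Q]$, $(\zeta,a)\mapsto[\zeta q_E^{a}]$, has kernel exactly $\{1\}\times\mathbb Z$. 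We must then produce $\sigma$ in the inertia group (all of $\Gal(\overline L/L)$ is inertia, since the residue field of $L$ is separably closed) acting by $P_E\mapsto P_E$ and $R_E\mapsto P_E^{\,n_E}R_E$ in additive notation.

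\textbf{Action on $P_E$.} Since $\ell\ne p$, the residue field of $L$, being separably closed, contains all $Q$-th roots of unity. Hensel's lemma then lifts them to $L$, because $X^Q-1$ is separable mod the maximal ideal. So $\zeta_Q\in L$, and every $\sigma$ fixes $P_E$.

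\textbf{Action on $R_E$.} Write $q_E=u\pi^{n_E}$ with $u\in L^{\circ\times}$ and $\pi$ a uniformizer. Again by Hensel's lemma (since $Q$ is prime to the residue characteristic), $u$ has a $Q$-th root in $L$. Hence $L(q_E^{1/Q})=L(\pi^{n_E/Q})\subset L(\pi^{1/Q})$, which is a tamely ramified Kummer extension. Choose $\sigma\in\Gal(\overline L/L)$ with $\sigma(\pi^{1/Q})=\zeta_Q\pi^{1/Q}$. Such a $\sigma$ exists because $X^Q-\pi$ is Eisenstein, hence irreducible, so $\Gal(L(\pi^{1/Q})/L)\simeq\mu_Q$; extend it to $\overline L$. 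Any such extension lies in tame inertia, since it acts through the tame quotient on $L(\pi^{1/Q})$. Take the choices compatible: set $q_E^{1/Q}=u^{1/Q}(\pi^{1/Q})^{n_E}$, with $u^{1/Q}\in L$. Then
\[
\sigma(q_E^{1/Q})=\zeta_Q^{\,n_E}q_E^{1/Q},
\]
so $\sigma(R_E)=n_EP_E+R_E$. The matrix in the basis $(P_E,R_E)$ is $T_E$, with the columns being images of basis vectors.

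\textbf{Main obstacle.} The main obstacle is the bookkeeping of choices. The lemma fixes an arbitrary $Q$-th root $q_E^{1/Q}$, and a different root changes it by a power of $\zeta_Q$. That changes $R_E$ by a multiple of $P_E$, which conjugates the unipotent matrix by an upper triangular unipotent matrix and leaves $T_E$ unchanged. The computation $\sigma(\zeta_Q^{c}q_E^{1/Q})=\zeta_Q^{c+n_E}q_E^{1/Q}$ confirms this. We also need to justify that the Tate isomorphism is Galois-equivariant over $L$ rather than only over $K_v$; this holds because the curve is split over $L$ (it is split over an unramified quadratic extension, which embeds in $L$), and we cite the Galois compatibility of the Tate uniformization from the reference already given.
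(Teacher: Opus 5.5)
Your proof is correct and follows the paper's argument essentially step for step. Both use Hensel's lemma to get $\mu_Q\subset L$ and $q_E=\pi^{n_E}\eta_E^Q$, pass to the Eisenstein Kummer extension $L(\pi^{1/Q})/L$, lift $\pi^{1/Q}\mapsto\zeta_Q\pi^{1/Q}$ to $\sigma$, and take $q_E^{1/Q}=(\pi^{1/Q})^{n_E}\eta_E$. Your additional check that another $Q$th root replaces $R_E$ by $R_E+cP_E$ without changing the matrix is a small improvement, since the paper quietly uses this particular root although the statement allows any choice.
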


\begin{proof}
Choose a uniformizer $\pi$ of $L$. Since $Q$ is invertible in the residue field and that field is separably closed, Hensel's lemma shows that $\mu_Q\subset L$ and that the $Q$th-power map on $\mathcal O_L^\times$ is surjective. Hence $q_E=\pi^{n_E}\eta_E^Q$ for some $\eta_E\in\mathcal O_L^\times$. The polynomial $X^Q-\pi$ is Eisenstein, so $L(\pi^{1/Q})/L$ is totally ramified of degree $Q$. Since $\ell\ne p$, it is tamely ramified. Since $\mu_Q\subset L$, the extension is cyclic Galois, and
\[
\tau(\pi^{1/Q})=\zeta_Q\pi^{1/Q}
\]
defines an element of its Galois group. The restriction map from
\(G_L\) to this finite Galois group is surjective, so choose a lift
\(\sigma\) of \(\tau\). 
Take
\(
q_E^{1/Q}=(\pi^{1/Q})^{n_E}\eta_E.
\)
Then
\[
\sigma(q_E^{1/Q})=\zeta_Q^{n_E}q_E^{1/Q},
\qquad \sigma(\zeta_Q)=\zeta_Q.
\]
Thus \(\sigma(P_E)=P_E\) and
\(\sigma(R_E)=n_EP_E+R_E\), which gives (\ref{eq:tate-monodromy}).
\end{proof}

When \(E\) and \(F\) are considered simultaneously, choose the same \(\pi\), \(\pi^{1/Q}\), \(\zeta_Q\), and lift \(\sigma\), and write
\[
q_A=\pi^{n_A}\eta_A^Q,\qquad R_A=[(\pi^{1/Q})^{n_A}\eta_A], \qquad A\in\{E,F\}.\] We call the resulting Tate bases compatible. The same element \(\sigma\) then has matrices \(T_E\) and \(T_F\) in the two bases.
Let \( \phi:E[Q]\longrightarrow F[Q]\) be a local Galois-equivariant isomorphism. In compatible Tate bases, write
\[
[\phi]=\begin{pmatrix}a&b\\c&d\end{pmatrix}.
\]

\begin{lemma}
\label{lem:local-congruences}
With the notation above,
\begin{align}
n_Fc&\equiv0\pmod Q,\label{eq:local-9}\\
n_Ec&\equiv0\pmod Q,\label{eq:local-10}\\
an_E&\equiv dn_F\pmod Q.\label{eq:local-11}
\end{align}
Let $r_\phi\in(\mathbb Z/Q\mathbb Z)^\times$ be the Weil-pairing multiplier of $\phi$. Then
\begin{equation}
d^2n_F\equiv r_\phi n_E\pmod Q.
\label{eq:local-12}
\end{equation}
\end{lemma}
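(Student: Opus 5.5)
The plan is to compare the two ways of computing $\phi\circ\sigma$ and to evaluate the Weil pairing on the compatible Tate bases. Since $\phi$ is Galois-equivariant, it commutes with the common tame inertia element $\sigma$ of Lemma~\ref{lem:tate-monodromy}. In the compatible Tate bases this reads
\[
[\phi]\,T_E=T_F\,[\phi]\quad\text{in }\operatorname{M}_2(\mathbb Z/Q\mathbb Z).
\]
Write $T_A=I+n_AN_0$ with $N_0=\begin{pmatrix}0&1\\0&0\end{pmatrix}$. The identity then reduces to $n_E[\phi]N_0\equiv n_F N_0[\phi]$. Computing both sides:
\[
[\phi]N_0=\begin{pmatrix}0&a\\0&c\end{pmatrix},\qquad N_0[\phi]=\begin{pmatrix}c&d\\0&0\end{pmatrix}.
\]
Comparing entries gives three congruences:
\begin{itemize}
\item the $(1,1)$ entry gives $n_Fc\equiv0$, which is \eqref{eq:local-9};
\item the $(2,2)$ entry gives $n_Ec\equiv0$, which is \eqref{eq:local-10};
\item the $(1,2)$ entry gives $n_Ea\equiv n_Fd$, which is \eqref{eq:local-11}.
\end{itemize}

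For \eqref{eq:local-12}, I would first check that the Weil pairings of the compatible bases agree. For a Tate curve, $e_{Q}([\zeta_Q],[q^{1/Q}])$ is a fixed primitive root of unity, say $\zeta_Q^{\pm1}$ with a convention-dependent sign that is the same for every Tate curve. Call it $\zeta$. Then $e_{Q,E}(P_E,R_E)=e_{Q,F}(P_F,R_F)=\zeta$. The key point is that both bases use the same $\zeta_Q$, and the Tate-curve Weil pairing formula is independent of $q$ once a $Q$-th root of $q$ is chosen. This is the step I expect to need the most care, since I have to cite or verify the explicit pairing formula on $\overline L^\times/q^{\mathbb Z}$. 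I would justify it by the standard computation: $e_Q(x,y)$ for $x=\zeta_Q$ and $y=q^{1/Q}$ is $\zeta_Q^{\pm1}$ by the description of the pairing via the connected–étale sequence $0\to\mu_Q\to E[Q]\to\mathbb Z/Q\to0$ (Silverman, Advanced Topics, Ch.~V). Next, by Proposition~\ref{prop:weilparameterexists}, the local multiplier agrees with the global $r_\phi$, so
\[
\zeta^{\det[\phi]}=e_{Q,F}(\phi P_E,\phi R_E)=\zeta^{r_\phi},
\]
that is, $ad-bc\equiv r_\phi\pmod Q$.

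Finally I multiply this determinant identity by $dn_F$. Using \eqref{eq:local-11} in the form $adn_F\equiv a^2n_E$ is not what I want, so I proceed as follows:
\[
r_\phi n_E\equiv (ad-bc)n_E=d\,(an_E)-b\,(cn_E)\equiv d\cdot dn_F-0=d^2n_F,
\]
where I used \eqref{eq:local-11} for $an_E$ and \eqref{eq:local-10} to kill $cn_E$. This gives \eqref{eq:local-12}.
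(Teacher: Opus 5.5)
Your proof is correct and follows the paper's argument essentially verbatim: the commutation $[\phi]T_E=T_F[\phi]$ with the common inertia element $\sigma$ gives \eqref{eq:local-9}--\eqref{eq:local-11}, and equality of the Weil pairings on compatible Tate bases gives $ad-bc\equiv r_\phi \pmod Q$. From this, $d^2n_F\equiv adn_E\equiv(ad-bc)n_E\equiv r_\phi n_E$ using \eqref{eq:local-11} and \eqref{eq:local-10}. Your extra remark is a welcome addition but not a different route: $e_Q([\zeta_Q],[q^{1/Q}])$ does not depend on $q$, nor on the chosen root (by alternation). This only makes explicit what the paper asserts as ``the same convention for the Tate uniformization.''
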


\begin{proof}
Galois equivariance for the inertia element in Lemma~\ref{lem:tate-monodromy} is the matrix identity 
\[
\begin{pmatrix}a&b\\c&d\end{pmatrix}
\begin{pmatrix}1&n_E\\0&1\end{pmatrix}
=
\begin{pmatrix}1&n_F\\0&1\end{pmatrix}
\begin{pmatrix}a&b\\c&d\end{pmatrix}.
\]
Hence
\[
\begin{pmatrix}a&an_E+b\\c&cn_E+d\end{pmatrix}=
\begin{pmatrix}a+n_Fc&b+n_Fd\\c&d\end{pmatrix}.
\]
Equality of their entries gives \eqref{eq:local-9}, \eqref{eq:local-10}, and \eqref{eq:local-11}. The compatible bases use the same root $\zeta_Q$ and the same convention for the Tate uniformization, so
\[
 e_{Q,E}(P_E,R_E)=e_{Q,F}(P_F,R_F)=\zeta_Q.
\]
The Weil pairing on prime-to-$p$ torsion is bilinear, alternating, perfect, and Galois equivariant by \cite[Chap~III, Prop~8.1]{silverman2009arithmetic}. For a rank-two free module with such a pairing, a linear isomorphism multiplies the pairing by its determinant.  Hence $r_\phi=\det(\phi)=ad-bc$. Therefore
\[
d^2n_F\equiv adn_E\equiv (ad-bc)n_E \equiv r_\phi n_E\pmod Q.
\]
This proves \eqref{eq:local-12}.
\end{proof}

\begin{lemma}
\label{lem:square-roots}
Let $m\geq1$, let $\ell$ be prime, and let $u\in(\mathbb Z/\ell^m\mathbb Z)^\times$. The congruence $x^2\equiv u\pmod{\ell^m}$ has at most two solutions if $\ell$ is odd and at most four solutions if $\ell=2$.
\end{lemma}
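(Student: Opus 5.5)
The plan is to reduce to counting solutions of a quadratic congruence by factoring. If $x$ and $y$ are two solutions of $x^2\equiv u \pmod{\ell^m}$, then both are units, because $u$ is a unit. They satisfy
\[
(x-y)(x+y)\equiv 0\pmod{\ell^m}.
\]
Equivalently, $z=xy^{-1}$ satisfies $z^2\equiv 1\pmod{\ell^m}$. So the solution set is either empty or a coset of the subgroup $\mu_2(\mathbb Z/\ell^m\mathbb Z)$ of square roots of $1$ in $(\mathbb Z/\ell^m\mathbb Z)^\times$. It therefore suffices to bound the number of solutions of $z^2\equiv1$.

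For $\ell$ odd, I argue from $\ell^m\mid (z-1)(z+1)$. The two factors differ by $2$, which is a unit mod $\ell$, so $\ell$ cannot divide both of them. Hence $\ell^m$ divides one factor entirely, which gives $z\equiv\pm1\pmod{\ell^m}$ and at most two solutions. As an alternative, I could invoke cyclicity of $(\mathbb Z/\ell^m\mathbb Z)^\times$: a cyclic group has at most two elements of order dividing $2$.

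For $\ell=2$, I first handle $m\le 2$ directly: the unit group has order $1$ or $2$, so there are at most $2$ solutions. For $m\ge3$, $z$ is odd, so both $z-1$ and $z+1$ are even and exactly one of them is $\equiv 2\pmod 4$. The other factor must then absorb $2^{m-1}$. This gives $z\equiv \pm1\pmod{2^{m-1}}$, i.e. $z\in\{1,\,-1,\,1+2^{m-1},\,-1+2^{m-1}\}$ modulo $2^m$, which is at most four values. The same conclusion also follows from the standard structure $(\mathbb Z/2^m\mathbb Z)^\times\simeq \mathbb Z/2\times\mathbb Z/2^{m-2}$, whose $2$-torsion has order $4$; this is consistent with the square-class description already used in Lemma~\ref{lem:putweiparameterinasmallset}.

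There is no real obstacle here. The only care needed is the $\ell=2$ bookkeeping of powers of $2$ splitting between $z-1$ and $z+1$, and making sure the coset reduction uses that $y$ is invertible.
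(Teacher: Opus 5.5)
Your proof is correct. It is self-contained, whereas the paper gives no argument and only cites the textbook treatment \cite[Chap 11.4]{lozano2019number}.

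That reference rests on the structure of the unit group $(\mathbb Z/\ell^m\mathbb Z)^\times$. The group is cyclic for odd $\ell$, and isomorphic to $\mathbb Z/2\times\mathbb Z/2^{m-2}$ for $\ell=2$, $m\ge3$. From this the count of square roots is immediate.

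Your route is more elementary. You first reduce to the square roots of $1$ by noting that the solution set is a coset of $\mu_2(\mathbb Z/\ell^m\mathbb Z)$. You then split the $\ell$-adic valuation of $(z-1)(z+1)$. This needs only the fact that the two factors differ by $2$. It also lists the four square roots of $1$ modulo $2^m$ explicitly, namely $\pm1$ and $\pm1+2^{m-1}$.

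What the citation buys is brevity. The same structure theorem also gives the square-class representatives used in Lemma~\ref{lem:putweiparameterinasmallset}. For the way this lemma is used in Proposition~\ref{prop:diagonal-retraction}, your argument is fully sufficient.
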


\begin{proof}
See \cite[Chap 11.4]{lozano2019number}.
\end{proof}

\begin{proposition}\label{prop:diagonal-retraction}
Use the notation of Proposition~\ref{prop:smallplaces}. Suppose \(Q=\ell^\alpha>M^2\). Set \(i_0=\lfloor\log_\ell M\rfloor\) and \( Q_0=\ell^{\alpha-i_0}\).  There are integers $1\leq u_E,u_F\leq M$, a unit $\delta_0\in(\mathbb Z/Q_0\mathbb Z)^\times$, and a set $\Sigma\subset\Sigma_{\sm}$ such that
\begin{equation}\label{eq:diagonal-mass}
\deg_k\Sigma\geq\frac{H}{16(g+1)M^3},
\end{equation} 
and, for every $v\in\Sigma$,
\[
n_{E,v}=u_E,\qquad n_{F,v}=u_F,
\]
while, relative to one compatible choice of Tate bases at each $v$, the matrix of $\phi|_{E[Q_0]}$ is upper triangular with lower-right entry $\delta_0$.
\end{proposition}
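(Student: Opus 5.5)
Start from Proposition~\ref{prop:smallplaces}, which gives $\Sigma_{\sm}$ with $\deg_k\Sigma_{\sm}\ge H/(4(g+1))$, where $H=h_{\omega,K}(E)+h_{\omega,K}(F)$. At each $v\in\Sigma_{\sm}$ both curves are multiplicative with $1\le n_{E,v},n_{F,v}\le M$. The plan has three steps: pigeonhole the exponent pair, extract the local shape of $\phi$ from Lemma~\ref{lem:local-congruences}, and then pigeonhole the lower-right entry.

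\textbf{Exponent pair.} The pair $(n_{E,v},n_{F,v})$ takes at most $M^2$ values. Hence some pair $(u_E,u_F)$ is attained on a subset $\Sigma_1\subset\Sigma_{\sm}$ with
\[
\deg_k\Sigma_1\ge \frac{H}{4(g+1)M^2}.
\]

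\textbf{Local shape of $\phi$.} Fix $v\in\Sigma_1$ and compatible Tate bases over $L$, so that $[\phi]=\begin{pmatrix}a&b\\c&d\end{pmatrix}$. By \eqref{eq:local-10}, $u_Ec\equiv0\pmod Q$. Write $u_E=\ell^{s}u'$ with $\ell\nmid u'$ and $s\le i_0$ (since $u_E\le M$). Then $c\equiv0\pmod{\ell^{\alpha-s}}$, so $c\equiv 0\pmod{Q_0}$. The matrix of $\phi|_{E[Q_0]}$ is computed in the induced bases $\ell^{\alpha-\alpha_0}P,\ell^{\alpha-\alpha_0}R$ with the same entries reduced mod $Q_0$. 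So it is upper triangular mod $Q_0$, and because $\det$ is a unit, $a$ and $d$ are units mod $Q_0$.

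\textbf{Lower-right entry.} From \eqref{eq:local-12}, $d^2u_F\equiv r_\phi u_E\pmod Q$, and $r_\phi$ is the fixed global multiplier (after the normalization following Lemma~\ref{lem:putweiparameterinasmallset}, which is independent of $v$). Write $u_F=\ell^{t}u''$. Comparing $\ell$-adic valuations, using $d$ a unit and $Q>M^2$ so that valuations below $\alpha$ are detected, gives $t=s$. Dividing by $\ell^s$ then gives
\[
d^2\equiv r_\phi u'(u'')^{-1}\pmod{\ell^{\alpha-s}},
\]
and hence the same congruence modulo $Q_0$. The right side is a single unit $u_0$ depending only on $(u_E,u_F,r_\phi)$. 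By Lemma~\ref{lem:square-roots}, $d \bmod Q_0$ therefore lies in a set of at most four values. Pigeonholing over these values gives $\delta_0$ and $\Sigma\subset\Sigma_1$ with $\deg_k\Sigma\ge \deg_k\Sigma_1/4\ge H/(16(g+1)M^2)$. This is stronger than \eqref{eq:diagonal-mass}; the extra factor of $M$ can absorb any slack, for instance if one chooses instead to pigeonhole $d$ over $\ell^{s}\le M$ residue lifts.

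\textbf{Main obstacle.} The delicate point is that $d\bmod Q_0$ must not depend on the choices made at $v$: the uniformizer $\pi$, the roots $\pi^{1/Q}$ and $\zeta_Q$, and the units $\eta_A$. Changing $\pi^{1/Q}$ or $\eta_A$ alters $R_A$ by a multiple of $P_A$. Changing $\zeta_Q$ to $\zeta_Q^{k}$ scales both $P_E$ and $P_F$ by the same $k$. In every case the triangular shape and the lower-right entry are preserved. The cleanest approach is to observe that $d$ is determined by the induced map on the quotient $E[Q_0]/\langle P_E\rangle\to F[Q_0]/\langle P_F\rangle$, which is canonical (the $\mu$-part is intrinsic). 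With that observation, the pigeonhole argument over $d$ is well defined per place, and the set of four possible values comes from Lemma~\ref{lem:square-roots}.
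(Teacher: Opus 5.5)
Your proof is correct and follows essentially the paper's argument: \eqref{eq:local-10} kills the lower-left entry modulo $Q_0$, \eqref{eq:local-12} together with an $\ell$-adic valuation comparison forces $v_\ell(n_{E,v})=v_\ell(n_{F,v})$, Lemma~\ref{lem:square-roots} bounds the lower-right entry, and a degree-weighted pigeonhole finishes. Your count is sharper than the paper's, with at most $4$ rather than $4M$ residues for $\delta_v$: after dividing by $\ell^{s}$ the congruence already holds modulo $\ell^{\alpha-s}$, and $\alpha-s\ge\alpha-i_0$. Your closing paragraph on independence of choices is correct but not needed, since the statement only asks for one fixed compatible choice at each place.
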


\begin{proof}
Fix $v\in\Sigma_{\sm}$ and write $\gamma_v$ and $\delta_v$ for the lower row of the local matrix of $\phi$ at $v$. Since $n_{E,v}\leq M$, we have $v_\ell(n_{E,v})\leq i_0$. Congruence~\eqref{eq:local-10} implies
\[
v_\ell(\gamma_v)\geq\alpha-v_\ell(n_{E,v})\geq\alpha-i_0.
\]
Thus $\gamma_v\equiv0\pmod{Q_0}$. Thus the local matrix is upper triangular. Since it remains invertible, both diagonal entries are units modulo $Q_0$.
The inequality $Q>M^2$ and the definition of $i_0$ imply
\[
Q_0=Q/\ell^{i_0}\ge Q/M>M.
\]
Equation \eqref{eq:local-12} implies $\delta_v^2n_{F,v}\equiv r_\phi n_{E,v}\pmod Q$. The integers $n_{E,v}$ and $n_{F,v}$ are nonzero and strictly smaller than $Q_0$, while $\delta_v$ and $r_\phi$ are units. If their $\ell$-adic valuations differed, the two sides of \eqref{eq:local-12} would have different valuations below $v_\ell(Q_0)$, which is impossible. Hence
\[
v_\ell(n_{E,v})=v_\ell(n_{F,v})=:s_v.
\]
After dividing the equation \eqref{eq:local-12} by $\ell^{s_v}$,  Lemma~\ref{lem:square-roots} gives at most four possible residues for $\delta_v$ modulo $\ell^{\alpha-i_0-s_v}$. Each such residue has $\ell^{s_v}\leq n_{E,v}\leq M$ lifts modulo $Q_0$. Therefore, for each of the at most $M^2$ pairs $(n_{E,v},n_{F,v})$, there are at most $4M$ possible lower-right entries $\delta_v$ modulo $Q_0$.

Partition $\Sigma_{\sm}$ by the labels $(n_{E,v},n_{F,v},\delta_v)$. There are at most $4M^3$ labels. So one label occurs on a subset $\Sigma$ satisfying
\[
\deg_k\Sigma\geq\frac{\deg_k\Sigma_{\sm}}{4M^3}
\geq\frac{H}{16(g+1)M^3},
\]
where the second inequality is Proposition~\ref{prop:smallplaces}. This completes the proof.
\end{proof}

\begin{proposition}\label{prop:a0 exists} 
Use the notation as in the previous Proposition.  Choose $a_0\delta_0\equiv1\pmod{Q_0}$ and put \(\psi=[a_0]\circ\phi|_{E[Q_0]}\). For every $v\in\Sigma$, every point of the graph
\[
\Gamma_\psi=\{(x,\psi(x)):x\in E[Q_0]\}\subset(E\times F)[Q_0]
\]
retracts to the diagonal
\begin{equation}
\Delta_{u_E,u_F}=\{(u_Et,u_Ft):t\in\mathbb R/\mathbb Z\} \subset(\mathbb R/u_E\mathbb Z)\times(\mathbb R/u_F\mathbb Z).
\label{eq:weighted-diagonal}
\end{equation}
\end{proposition}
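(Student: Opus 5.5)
Fix $v\in\Sigma$ and work over $L$, the completion of the maximal unramified extension of $K_v$, where both curves are split Tate curves with $q_E=\pi^{u_E}\eta_E^Q$ and $q_F=\pi^{u_F}\eta_F^Q$, and we use the compatible Tate bases $P_A=[\zeta_Q]$, $R_A=[(\pi^{1/Q})^{n_A}\eta_A]$ of Lemma~\ref{lem:tate-monodromy}. The plan is to write $E[Q_0]$ in the bases $P'_A=\ell^{i_0}P_A$, $R'_A=\ell^{i_0}R_A$, which are $[\zeta_{Q_0}]$ and $[(\pi^{1/Q_0})^{n_A}\eta_A]$. Then we compute $\psi$ explicitly and read off the retractions via the formula $r_A([z])=\ord(z)\bmod n_A\mathbb Z$ in the rescaled skeleton coordinate $\mathbb R/n_A\mathbb Z$. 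Here $\ord$ is normalized so that $\ord(\pi)=1$.

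First, the retraction of a torsion point. A point $x=sP'_E+tR'_E$ with $s,t\in\mathbb Z/Q_0\mathbb Z$ is represented by $z=\zeta_{Q_0}^s(\pi^{1/Q_0})^{tu_E}\eta_E^t$. The factors $\zeta$ and $\eta_E$ are units, so $\ord(z)=tu_E/Q_0$. Replacing the integer lift $t$ by $t+Q_0$ changes $z$ by $q_E$, which shifts $\ord$ by exactly $u_E$. Hence
\[
r_E(x)=u_E\cdot\frac{t}{Q_0}\pmod{u_E\mathbb Z},
\]
so the image depends only on $t/Q_0\in\mathbb R/\mathbb Z$. The same computation for $F$ gives $r_F(y)=u_F\cdot t'/Q_0$ when $y=s'P'_F+t'R'_F$.

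Second, the matrix of $\psi$. By Proposition~\ref{prop:diagonal-retraction}, the matrix of $\phi|_{E[Q_0]}$ in the bases $(P_E,R_E)$ and $(P_F,R_F)$ reduced modulo $Q_0$ is upper triangular with lower-right entry $\delta_0$. I need to check that this is the same as the matrix in the primed bases, and I expect this basis bookkeeping to be the only delicate point. Multiplication by $\ell^{i_0}$ maps $E[Q]\to E[Q_0]$ surjectively, and it commutes with $\phi$ because $\phi$ is $\mathbb Z/Q\mathbb Z$-linear. So $\phi(\ell^{i_0}R_E)=\ell^{i_0}(bP_F+dR_F)=bP'_F+dR'_F$, and the matrix of $\phi|_{E[Q_0]}$ in the primed bases is the reduction of $[\phi]$ modulo $Q_0$. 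That reduction is upper triangular with lower-right entry $\delta_0$, using $c\equiv0\pmod{Q_0}$ from the proof of Proposition~\ref{prop:diagonal-retraction}. Composing with $[a_0]$ multiplies the matrix by $a_0$, so the lower-right entry becomes $a_0\delta_0\equiv1$. Therefore $\psi(sP'_E+tR'_E)=s'P'_F+tR'_F$ for some $s'$, meaning the $R'$-coordinate $t$ is preserved.

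Combining the two steps, the point $(x,\psi(x))$ retracts under $r_{E\times F}=r_E\times r_F$ to
\[
\bigl(u_E\,t/Q_0,\ u_F\,t/Q_0\bigr)\in(\mathbb R/u_E\mathbb Z)\times(\mathbb R/u_F\mathbb Z).
\]
This lies in $\Delta_{u_E,u_F}$ with parameter $t/Q_0\in\mathbb R/\mathbb Z$. Consistency of the parameter is automatic, since a single $t\in\mathbb R/\mathbb Z$ determines both coordinates.

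It remains to confirm two compatibilities.
\begin{enumerate}
\item The skeleton of $E\times F$ is the product of the two skeleta, and its retraction is the product of the retractions. This holds because the Raynaud uniformization of $E\times F$ is the product torus $\mathbb G_m^2/(q_E^{\mathbb Z}\times q_F^{\mathbb Z})$.
\item Passing from $K_v$ to $C_v$, and the unramified extension used to split the tori, only rescale both coordinates by the same positive constant. This is the normalization invariance noted before Lemma~\ref{lem:tate-monodromy}.
\end{enumerate}
Since the whole argument uses only the fixed $u_E,u_F,\delta_0$ and the global $a_0$, it is uniform over all $v\in\Sigma$.
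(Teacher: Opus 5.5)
Your proof is correct and follows the paper's argument. You write $x\in E[Q_0]$ in the $Q_0$-Tate basis, read off the normalized skeletal coordinate $t/Q_0$ from the $R$-coefficient, and use that $\psi$ is upper triangular with lower-right entry $a_0\delta_0=1$, so that coefficient is preserved. Your check that the $Q_0$-bases are $\ell^{i_0}$ times the $Q$-bases, so the matrix is the reduction modulo $Q_0$, makes explicit what the paper leaves implicit.

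One harmless slip: $\ell^{i_0}R_A=[(\pi^{1/Q_0})^{n_A}\eta_A^{\ell^{i_0}}]$, not $[(\pi^{1/Q_0})^{n_A}\eta_A]$. This changes nothing, because only the $\ord$ of the unit factor enters.
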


\begin{proof}
At $v\in\Sigma$, let $(P_E,R_E)$ and $(P_F,R_F)$ be the $Q_0$-Tate bases. Every $x\in E[Q_0]$ has a unique expression
\[
x=jP_E+iR_E,\qquad i,j\in\mathbb Z/Q_0\mathbb Z.
\]
The normalized skeletal coordinate of $x$ is $i/Q_0$ modulo $\mathbb Z$. The local matrix of $\psi$ is upper triangular and its lower-right entry is $a_0\delta_0=1$. Hence the coefficient of $R_F$ in $\psi(x)$ is also $i$, and
\[
(\tau_E(x),\tau_F(\psi(x)))
=\left(\frac{i}{Q_0},\frac{i}{Q_0}\right)\pmod{\mathbb Z^2}.
\]
In the unnormalized skeleta this point is $(u_Ei/Q_0,u_Fi/Q_0)$, which lies on \eqref{eq:weighted-diagonal}. This proves the final assertion.
\end{proof}

The orientation of the skeleton of a multiplicative elliptic curve is not intrinsic. To obtain a restriction independent of the two choices of orientation, define
\begin{equation}\label{eq:signed-diagonal}
\Delta^{\pm}_{u_E,u_F}=\Delta_{u_E,u_F}\cup\{(u_Et,-u_Ft):t\in\mathbb R/\mathbb Z\}.
\end{equation}
Proposition~\ref{prop:a0 exists} shows that the graph lies on one of its two branches at every common multiplicative place for $E$ and $F$ with bounded exponent.

\begin{remark}
\label{rem:level-ratio}
The ratio between the original and lowered levels satisfies
\begin{equation}
1\leq Q/Q_0=\ell^{i_0}\leq M.
\label{eq:level-ratio}
\end{equation}
Thus a uniform bound for $Q_0$ gives a uniform bound for $Q$.
\end{remark}

\subsection{Admissibility}\label{sec:fm-admissibility}

Let \(E\) be an elliptic curve. Denote by
\(\lambda_E:E\to E^\vee\) its canonical principal polarization.
For elliptic curves \(E\) and \(F\), we equip \(E\times F\) with the product principal polarization
\[
\lambda:=\lambda_{E\times F}
=
\lambda_E\times\lambda_F,
\]
that is an isomorphism from $E\times F$ to $E^\vee\times F^\vee$. If \(L_E\) and \(L_F\) are line bundles representing
\(\lambda_E\) and \(\lambda_F\), respectively, then the line bundle $\cL_A$
representing \(\lambda_{E\times F}\) is
\[
p_E^*L_E\otimes p_F^*L_F,
\]
where $p_E$ and $p_F$ are natural projections of $E\times F $ to $E$ and $F$. We retain the line bundle $L_A=\mathcal M_\lambda^{\otimes3}$ from \eqref{eq:intro-line-bundle}. For a semistable bad-reduction place $v$, we work over $C_v=\widehat{\overline{K_v}}$. Let $S(A_{C_v})$ be the canonical Raynaud skeleton, and let
\[
 r_{A,v}:A(C_v)\longrightarrow S(A_{C_v})
\]
be its retraction map. 

We first prove the compatibility of Tate coordinates with the fundamental ultralimit. Let $C_j$ be algebraically closed complete non-Archimedean fields. Let $E_j$ and $F_j$ be elliptic curves over $C_j$ with multiplicative reductions over $C_j$ and Tate parameters $q_j,q_j'\in C_j$ respectively. Fix a nonprincipal ultrafilter $\omega$.

\begin{lemma}\label{lem:tatecurveultralimit}
Let \(\epsilon_j\) be a sequence where \(0< \lim_\omega \epsilon_j \nu_j(q_{j}) <\infty \). Then there exists $q_\omega\in H(\omega)$ such that it is the $(\epsilon_j)$-degeneration of $q_j$ and
\[
\nu(q_\omega )= \lim_\omega \epsilon_j \nu_j(q_j) 
\]
Moreover, the ultralimit of the Tate curves \(E_j\) is the Tate curve \(E_\omega\) over \(H(\omega)\) with parameter \(q_\omega\). In particular, \(E_\omega\) has split multiplicative reduction.
\end{lemma}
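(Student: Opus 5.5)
The approach is to build $q_\omega$ directly as the class of the sequence $(q_j)$ in $H(\omega)$, check its valuation, and then use that the Tate curve is given by universal power series with integral coefficients to identify the ultralimit.

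\textbf{Step 1: the parameter.} Each $q_j$ satisfies $|q_j|<1$, so $|q_j|^{\epsilon_j}\le 1$ and $(q_j)\in\mathcal U^\epsilon$. Let $q_\omega$ be its class in $H(\omega)$. By definition of the seminorm,
\[
|q_\omega|_\omega=\lim_\omega|q_j|^{\epsilon_j}=\exp\bigl(-\lim_\omega\epsilon_j\nu_j(q_j)\bigr).
\]
The hypothesis $0<\lim_\omega\epsilon_j\nu_j(q_j)<\infty$ then gives $0<|q_\omega|_\omega<1$, so $q_\omega$ is a nonzero element of the maximal ideal of $R(\omega)$, with $\nu(q_\omega)$ as claimed. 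This element is the $(\epsilon_j)$-degeneration of the points $q_j\in\mathbb A^1(C_j)$, and it is unique because the coordinate maps $\rho_j$ jointly separate elements of $\mathcal U^\epsilon$.

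\textbf{Step 2: the Tate curves.} Recall the Tate curve $y^2+xy=x^3+a_4(q)x+a_6(q)$, where $a_4,a_6\in q\mathbb Z[[q]]$ are universal series, and $j(q)=1/q+744+\cdots$. I would show that
\[
a_4(q_\omega)=\bigl(a_4(q_j)\bigr)_j \bmod \mathfrak n_\omega, \qquad a_6(q_\omega)=\bigl(a_6(q_j)\bigr)_j \bmod \mathfrak n_\omega .
\]
Since the coefficients are integers, the tail beyond degree $m$ has norm at most $|q_j|^{m+1}$ in $C_j$. After raising to the $\epsilon_j$, this tail is bounded by $(|q_j|^{\epsilon_j})^{m+1}$. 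This bound is uniformly small only on an $\omega$-large set where $|q_j|^{\epsilon_j}\le c<1$, and such a set exists because $|q_\omega|_\omega<1$. Off that set, the remark on $\omega$-large sets lets us discard the indices. Truncations then commute with the ring operations of $\mathcal U^\epsilon$, and passing $m\to\infty$ gives convergence in the complete field $H(\omega)$ (Lemma~\ref{lem:ultralimit-field}).

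The Weierstrass equation of $E_j$ in $\mathbb P^2_{C_j}$ is therefore the fiber of a single projective family over $\mathcal U^\epsilon_J$ on an $\omega$-large set $J$, and its base change to $H(\omega)$ is $E_\omega=E_{q_\omega}$. The functoriality in Lemma~\ref{lem:projective-ultralimit} applies to this family, so ultralimits of points of $E_j$ land on $E_\omega$.

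\textbf{Step 3: identification and reduction type.} The main obstacle is to identify this $E_\omega$ with the fundamental ultralimit coming from the moduli point of Lemma~\ref{lem:bad-ultralimit}. I would try first to compare $j$-invariants, noting $j(E_\omega)=j(q_\omega)$ with $|j(q_\omega)|=|q_\omega|^{-1}>1$. Elliptic curves over an algebraically closed field with the same $j$-invariant are isomorphic, so $E_\omega$ agrees with the curve obtained from the moduli ultralimit. This relies on $j$ being a fixed rational function on the moduli space, which commutes with ultralimits by \eqref{eq:polynomial-continuity}. If level structures need matching, they can be carried along through the torsion, because the torsion of $E_{q_j}$ is given by explicit elements $\zeta^a q_j^{b/\ell}$, whose classes define torsion of $E_{q_\omega}$.

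Finally, $E_\omega$ is a Tate curve with $0<|q_\omega|<1$ over an algebraically closed field. Its reduction is therefore split multiplicative: the integral model $y^2+xy=x^3$ at $q=0$ is a nodal cubic with rational tangent slopes.
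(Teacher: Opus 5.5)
Your proposal is correct and follows the paper's proof. The paper likewise takes $q_\omega$ to be the class of $(q_j)$, computes $|q_\omega|_\omega=\exp(-\lim_\omega\epsilon_j\nu_j(q_j))$, and passes the integral $q$-expansions to the limit using the uniform tail bound on an $\omega$-large set where $|q_j|^{\epsilon_j}\le r<1$, obtaining the Tate equation with parameter $q_\omega$. Your additional $j$-invariant comparison with the moduli ultralimit is valid but not needed here: the paper simply declares the base-changed family to be the ultralimit, and makes the identification with the fundamental ultralimit in Lemma~\ref{lem aw is ew x fw} via uniqueness of degenerations.
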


\begin{proof}
Since \(q_j\) is a Tate parameter, \(|q_j|_j<1\). Hence $\sup_j |q_j|_j^{\epsilon_j}\leq 1$, so \(q=(q_j)_j\) belongs to \(U^\epsilon\). Let \(q_\omega\) be its image in \(H(\omega)\). By the definition of the norm on \(H(\omega)\),
\[
|q_\omega|_\omega = \lim_\omega |q_j|_j^{\epsilon_j} = \exp\left(-\lim_\omega\epsilon_j\nu_j(q_j)\right).
\]
The hypothesis therefore gives $0<|q_\omega|_\omega<1$, and consequently
\[
\nu_\omega(q_\omega) = -\log |q_\omega|_\omega = \lim_\omega \epsilon_j\nu_j(q_j).
\]
Put $L=\lim_\omega\epsilon_j\nu_j(q_j)>0$. Choose $(0<r<1)$ such that \(|q_j|_j^{\epsilon_j}\leq r\) on an $\omega$-large set. If $f(q)=\sum_{n\geq 0}c_nq^n$ is an integral $q$-expansion, then $|c_n|_j\leq 1$, and the non-Archimedean triangle inequality gives
\[
\left|\sum_{n>N}c_nq_j^n\right|_j^{\epsilon_j} \leq r^{N+1}
\]
on this $\omega$-large set. Thus the tails uniformly tend to zero. Therefore $\left(f(q_j)\right)_\omega=f(q_\omega)$. Recall that the Tate curve with parameter $q$ is given by a Weierstrass equation whose coefficients are convergent integral $q$-expansions \cite[Chap. V, Thm. 3.1]{SilvermanAdvanced}. The Tate equation is $y^2+xy=x^3+a_4(q)x+a_6(q)$, where $ a_4,a_6\in\Z[[q]]$, and its discriminant is
\[
 \Delta(q)=q\prod_{n\geq1}(1-q^n)^{24}.
\]
The limiting Weierstrass equation has a unit discriminant in $\cU^\epsilon$ and its restriction to $H(\omega)$ is precisely the Tate equation with parameter $q_\omega$. Since $H(\omega)$ is complete and algebraically closed,  \cite[Ch V, Thm 3.1]{SilvermanAdvanced} gives an elliptic curve
\[
E_\omega\simeq H(\omega)^\times/q_\omega^{\mathbb Z}
\]
with split multiplicative reduction. This is the required ultralimit of the $E_j$.
\end{proof}

Let $A_j=E_j\times F_j$ and $\epsilon_j:= \lambda_{\partial X}(A_j)^{-1}$. Equip the products with the  full $\ell_p$ level structures, and assume that they form a centered sequence with fundamental ultralimit $A_\omega/H(\omega)$. If 
\begin{equation}\label{eq wLimit bounded}
0<\lim_\omega\eps_j\nu_j(q_j)<\infty, \qquad 0<\lim_\omega\eps_j\nu_j(q'_j)<\infty,
\end{equation}  the previous lemma applies to either sequence to get elliptic curves $E_\omega$ and $F_\omega$ over $H(\omega)$ with Tate parameters $q_\omega, q_\omega ' \in H(\omega)$ respectively. 

\begin{lemma}\label{lem aw is ew x fw}
Assume equation (\ref{eq wLimit bounded}) holds.
Then $A_\omega\simeq E_{\omega}\times F_{\omega}$. Moreover, 
\[
\Sk(A_\omega)= \Sk(E_\omega)\times \Sk (F_\omega)
\]
\end{lemma}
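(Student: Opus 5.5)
The plan is to deduce both assertions from the functoriality of the ultralimit construction (Lemma~\ref{lem:projective-ultralimit}) and the uniqueness of degenerations (Lemma~\ref{lem:boundary-degeneration}).

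First, fix projective embeddings of the Tate curves by their Weierstrass equations, and of the products by the Segre embedding. Over $\mathcal U^\epsilon$ the Tate equations with coefficients $a_4((q_j)_j)$ and $a_6((q_j)_j)$ define projective schemes $\mathcal E$ and $\mathcal F$. Lemma~\ref{lem:tatecurveultralimit} shows that their discriminants are units on an $\omega$-large set, so they are elliptic curves there. The projections $p_E,p_F$ and the Segre map are morphisms of projective $\mathcal U^\epsilon$-schemes. By Lemma~\ref{lem:projective-ultralimit}, base change along $\rho_\omega$ commutes with these morphisms. Hence $(\mathcal E\times\mathcal F)_\omega=E_\omega\times F_\omega$, with the group law and the product polarization specializing coordinatewise.

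Next, compare this with $A_\omega$, which is defined through the moduli point $x_\omega$ obtained from the $(\epsilon_j)$-degeneration of the moduli points $x_j$ of $(A_j,\lambda_j,\text{level})$.

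1. The product principal polarization on $\mathcal E\times\mathcal F$ is defined over $\mathcal U^\epsilon$. The $\ell_p$-torsion of $\mathcal E\times\mathcal F$ is finite étale over the relevant $\omega$-large piece, because $\ell_p$ is invertible in $\kappa_p$. It therefore splits after restricting to an $\omega$-large set, on which we may assume the chosen level structures on $A_j$ come from it. Passing to an $\omega$-large set is harmless by the Remark after Definition~\ref{def:ultralimit-ring}.
2. This gives a morphism $\operatorname{Spec}\mathcal U^\epsilon\to X$ inducing each $x_j$. By the uniqueness in Lemma~\ref{lem:boundary-degeneration}, it is the degeneration $x$.
3. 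Pulling back the universal abelian scheme along $x_\omega$ then yields $A_\omega\simeq E_\omega\times F_\omega$ as principally polarized abelian varieties.

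The point needing care in steps 1--3 is that the moduli map is defined through a finite étale level cover. I expect the main obstacle to be producing a $\mathcal U^\epsilon$-point of $X$ rather than only compatible $C_j$-points. I would handle this by choosing the level bases coordinatewise: the torsion points have coordinates in the valuation rings, so they are bounded and define $\mathcal U^\epsilon$-points by Lemma~\ref{lem:projective-ultralimit}.

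For the skeleton, the Raynaud uniformization of a product is the product of the uniformizations. Concretely, $E_\omega\times F_\omega\simeq (H(\omega)^\times)^2/(q_\omega^{\mathbb Z}\times q_\omega'^{\mathbb Z})$, so the tropicalization $(z,w)\mapsto(\nu(z),\nu(w))$ identifies $S(A_\omega)$ with $\mathbb R/\nu(q_\omega)\mathbb Z\times\mathbb R/\nu(q'_\omega)\mathbb Z=\Sk(E_\omega)\times\Sk(F_\omega)$. The retraction is the product of the two retractions. This is the canonical skeleton because the canonical skeleton is functorial for isomorphisms of abelian varieties and is compatible with products of Raynaud extensions, as in \cite{berkovich1990spectral}.

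Both factors have split multiplicative reduction by Lemma~\ref{lem:tatecurveultralimit}, since $0<|q_\omega|_\omega<1$ and likewise for $q'_\omega$. Hence each factor's skeleton is a genuine circle, and the product skeleton is a $2$-torus.
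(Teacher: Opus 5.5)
Your proposal is correct and follows the paper's proof essentially step for step: you build $\mathscr E\times\mathscr F$ over $\mathcal U^\epsilon$ from the Tate equations, lift the level-$\ell_p$ bases of the $A_j$ to $\mathcal U^\epsilon$-sections by projectivity (Lemma~\ref{lem:projective-ultralimit}), identify the resulting $\mathcal U^\epsilon$-point of $X$ with the degeneration via the uniqueness in Lemma~\ref{lem:boundary-degeneration}, and read off $\Sk(A_\omega)$ from the product uniformization $(\mathbb G_m^2)^{\an}/(q_\omega^{\mathbb Z}\times (q'_\omega)^{\mathbb Z})$. The one loose phrase is that the $\ell_p$-torsion ``therefore splits'' merely because it is finite \'etale, which does not follow on its own; the actual justification is the coordinatewise lifting you describe afterwards, and the lifted sections are $\ell_p$-torsion with the prescribed Weil pairing because a $\mathcal U^\epsilon$-point is determined by its $C_j$-fibers.
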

\begin{proof}
On an $\omega$-large set both rescaled Tate lengths are bounded above and bounded away from zero. Lemma~\ref{lem:tatecurveultralimit} constructs elliptic schemes $\mathscr E$ and $\mathscr F$ over $U^\eps$. Define $\mathscr B=\mathscr E\times\mathscr F$, with the product polarization. We have isomorphisms $E_j\simeq\mathscr E_{C_j}$ and $F_j\simeq\mathscr F_{C_j}$, hence $\mathscr B_{C_j}\cong A_j$. The four elements of the $\ell_p$ torsion forming a basis for the level structure of $A_j$ therefore can be considered over $\sB_{C_j}$. The order of these four elements and the pairing on them are finite data, therefore, there is an $\omega$-large set on which the level-structure is fixed. Since $\sB$ is projective over $U^\epsilon$, we can lift these four elements of $\sB_{C_j}$ to sections of $\sB$ over $U^\epsilon$. These lifted sections are killed by $\ell_p$ and their pairings are the fixed pairing mentioned above and thus define a full symplectic level structure. We now have a $U^\eps$-point of $X$ representing $\mathscr B$ with this level structure. Its $C_j$-fibers are the original moduli points of $A_j$. By the uniqueness in Lemma \ref{lem:boundary-degeneration}, we get that $A_\omega$ is isomorphic to $\sB_\omega$ which is $E_\omega \times F_\omega$.

Finally, the analytic uniformization of this product is $(\Gm^2)^{\an}/\langle(q_\omega,1),(1,q'_\omega)\rangle$. Its tropicalization is the quotient of $\R^2$ by the lattice
\[
\nu_\omega(q_\omega)\Z\oplus\nu_\omega(q'_\omega)\Z.
\]
The canonical skeleton is the image of the Gauss section of this tropicalization \cite{gubler2023monge}. The Gauss seminorm at $(s,t)$ is the product-torus Gauss seminorm with those two coordinate valuations. Thus this skeleton is the product of the two circle skeleta $\Sk(E_\omega)\times \Sk(F_\omega)$, hence the retraction is coordinatewise. This proves the remaining assertions.
\end{proof}

\begin{lemma}\label{lem w of limit is limit of w}
With the assumption as in lemma \ref{lem aw is ew x fw}, there are representatives $(z_{j},z_{j}')$ and $(z_\omega, z'_\omega)$ in the Tate coordinate of $P_j$ and $P_\omega$ such that
\[
\frac{\nu_\omega(z_{\omega})}{\nu_\omega(q_{\omega})} =\lim_\omega\frac{\nu_j(z_{j})}{\nu_j(q_{j})}, \qquad \frac{\nu_\omega(z_{\omega}')}{\nu_\omega(q_{\omega}')} =\lim_\omega\frac{\nu_j(z_{j}')}{\nu_j(q_{j}')}.
\]
\end{lemma}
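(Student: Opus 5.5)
The plan is to treat the two coordinates separately; by symmetry it suffices to handle $E$. A point $P_j\in E_j(C_j)$ comes from $z_j\in C_j^\times$ modulo $q_j^{\mathbb Z}$. Only the class of $\nu_j(z_j)$ modulo $\nu_j(q_j)\mathbb Z$ matters, so I first choose a fundamental-domain representative with
\[
0\le \nu_j(z_j)<\nu_j(q_j).
\]
By \eqref{eq wLimit bounded}, $\epsilon_j\nu_j(q_j)$ is bounded above and below on an $\omega$-large set. Hence $\epsilon_j\nu_j(z_j)$ is bounded, and so $\sup_j|z_j|^{\epsilon_j}\le1$ and $\inf_j|z_j|^{\epsilon_j}>0$ on that set. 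After modifying the sequence off this set (which does not change the class, by the Remark after Definition~\ref{def:ultralimit-ring}), $(z_j)$ is a unit of $\mathcal U^\epsilon$. Let $z_\omega$ be its class in $H(\omega)$. The definition of $|\cdot|_\omega$ then gives
\[
\nu_\omega(z_\omega)=\lim_\omega\epsilon_j\nu_j(z_j).
\]
Dividing by $\nu_\omega(q_\omega)=\lim_\omega\epsilon_j\nu_j(q_j)>0$ from Lemma~\ref{lem:tatecurveultralimit} yields the displayed ratio identity, since the $\epsilon_j$ cancel and the limit of a quotient is the quotient of limits when the denominator limit is positive.

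The remaining, and main, step is to check that $[z_\omega]\in H(\omega)^\times/q_\omega^{\mathbb Z}$ is actually the Tate coordinate of $P_\omega$, the ultralimit point from Lemma~\ref{lem:projective-ultralimit} on the elliptic scheme $\mathscr E$ of Lemma~\ref{lem aw is ew x fw}.

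For this I would use the explicit Tate parametrization $z\mapsto (X(z,q),Y(z,q))$. Its series have integral coefficients, and they converge uniformly in the annulus $|q|<|z|\le 1$ after the standard reduction to the fundamental domain. Uniform control there follows exactly as in the proof of Lemma~\ref{lem:tatecurveultralimit}: on an $\omega$-large set one has $|q_j|^{\epsilon_j}\le r<1$, and the ratios $|q_j^n/z_j|^{\epsilon_j}$ are bounded by $r^{n-1}$. So the tails tend to zero uniformly.

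Hence the coordinate sequence $(X(z_j,q_j),Y(z_j,q_j))$ defines a point of $\mathscr E(\mathcal U^\epsilon)$ whose image in $H(\omega)$ is $(X(z_\omega,q_\omega),Y(z_\omega,q_\omega))$. The boundary case $z_j\in q_j^{\mathbb Z}$, where $P_j$ is the origin, has to be handled separately using projective coordinates. By the uniqueness and functoriality in Lemma~\ref{lem:projective-ultralimit}, this point coincides with $P_\omega$.

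I expect the main obstacle to be this convergence near the edges of the annulus, where $|z_j|$ approaches $|q_j|$ or $1$. Although the ultralimit of $\epsilon_j\nu_j(z_j)$ may equal $0$ or $\nu_\omega(q_\omega)$, the coordinate functions are continuous, in fact rational, functions of the point on the Tate curve. I would therefore first use the substitution $z\mapsto q/z$ or $z\mapsto z^{-1}$ symmetry to bring the representative into $|q|^{1/2}\le|z|\le1$. On that region all terms are uniformly bounded, and the argument above applies directly.

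The same construction with $q_j'$ and $z_j'$ gives the second identity.
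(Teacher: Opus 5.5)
Your overall route is the paper's: choose a fundamental-domain representative so that $(z_j)$ is a unit of $\mathcal U^\epsilon$, read off $\nu_\omega(z_\omega)=\lim_\omega\epsilon_j\nu_j(z_j)$, show that Tate's parametrization commutes with the ultralimit via uniform tail bounds, and divide by $\nu_\omega(q_\omega)>0$. The first and last steps are fine. The gap is in the step you yourself single out, identifying $z_\omega$ as a Tate coordinate of $P_\omega$, and it has two concrete parts.

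\emph{Your repair of the edge does not preserve the point.} Your fix for the edge $|z_j|\to|q_j|$ replaces $z_j$ by $q_j/z_j$. Modulo $q^{\mathbb Z}$, the map $z\mapsto q/z$ is the same as $z\mapsto z^{-1}$, which is negation on the Tate curve. So $q_j/z_j$ represents $-P_j$, and the argument ``applied directly'' would produce a Tate coordinate of $-P_\omega$, not of $P_\omega$. You could transport $[-1]$ through the functoriality in Lemma~\ref{lem:projective-ultralimit}, but the paper's fix is simpler. It moves the representative only by powers of $q_j$, into the symmetric annulus $|q_j|^{1/2}\le|z_j|\le|q_j|^{-1/2}$. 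Suppose $a\le\epsilon_j\nu_j(q_j)\le b$ on an $\omega$-large set. Then $(z_j)$ and $(z_j^{-1})$ both have rescaled norms at most $e^{b/2}$. With $r=e^{-a}$, one gets $|q_j^nz_j^{\pm1}|^{\epsilon_j}\le r^{n-1/2}<1$ for every $n\ge1$. Hence every denominator $1-q_j^nz_j^{\pm1}$ has norm $1$, and the tails of $R_X,R_Y$ are uniformly small. On your domain $0\le\nu_j(z_j)<\nu_j(q_j)$, the $n=1$ term only gives $|q_j/z_j|^{\epsilon_j}<1$, which is exactly the edge problem you noticed.

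\emph{Affine coordinates fail near the identity.} Even on a good annulus, the affine point $(X(z_j,q_j),Y(z_j,q_j))$ need not have coordinates in $\mathcal U^\epsilon$, although no $P_j$ is the origin. Take $|z_j|=1$ with $|1-z_j|=|q_j|^{j}$. Then the principal term $z_j/(1-z_j)^2$ has rescaled norm at least $e^{2ja}$, which tends to infinity. So your claim that on $|q|^{1/2}\le|z|\le1$ ``all terms are uniformly bounded'' is false. Treating only $z_j\in q_j^{\mathbb Z}$ in projective coordinates is not enough, because near-identity points must be handled uniformly in $j$. The paper does this for every $j$ with the tuple
\[
\bigl[z(1-z)+(1-z)^3R_X : z^2+(1-z)^3R_Y : (1-z)^3\bigr].
\]
Its entries are bounded on the symmetric annulus and commute with the ultralimit, and one checks that its largest entry has norm at least $1$. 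Lemma~\ref{lem:projective-ultralimit} then identifies the limiting point with $P_\omega$, hence $z_\omega$ with a Tate coordinate of $P_\omega$. With these two changes your argument becomes the paper's proof.
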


\begin{proof}
It is enough to treat the first factor and the second factor follows similarly. Multiplication of a representative by a power of $q_j$ allows us to have
\[
-\tfrac 1 2 \nu_j(q_j)\leq\nu_j(z_j)\leq\tfrac12\nu_j(q_j).
\]
Or equivalently $|q_j|_j^{1/2}\leq|z_j|_j\leq|q_j|_j^{-1/2}$. On an $\omega$-large set $J$, we can choose $0<a<b$ with
\[
 a\leq\eps_j\nu_j(q_j)\leq b.
\]
Then $(z_j)$ and $(z_j^{-1})$ both belong to $U^\eps$, with rescaled norms bounded by $e^{b/2}$. Their images are inverse elements, so $z_\omega$ is a nonzero element of $H(\omega)$. Put
\begin{align*}
 R_X(q,z)&=\sum_{n\geq1}\left(
 \frac{q^nz}{(1-q^nz)^2}
 +\frac{q^nz^{-1}}{(1-q^nz^{-1})^2}
 -\frac{2q^n}{(1-q^n)^2}\right),\\
 R_Y(q,z)&=\sum_{n\geq1}\left(
 \frac{(q^nz)^2}{(1-q^nz)^3}
 -\frac{q^nz^{-1}}{(1-q^nz^{-1})^3}
 +\frac{q^n}{(1-q^n)^2}\right).
\end{align*}
Tate's formulas \cite[Chap V, Thm 3.1]{SilvermanAdvanced} give the affine coordinates
\[
 X(q,z)=\frac{z}{(1-z)^2}+R_X(q,z),
 \qquad
 Y(q,z)=\frac{z^2}{(1-z)^3}+R_Y(q,z).
\]
Set $r=e^{-a}<1$. For $n\geq1$ the chosen representatives satisfy
\[
 |q_j^nz_j|_j^{\eps_j},\ |q_j^nz_j^{-1}|_j^{\eps_j}
 \leq r^{n-1/2}<1.
\]
All denominators in the two series consequently have norm $1$. Each summand has rescaled norm at most $r^{n-1/2}$, and each tail with $n>N$ has rescaled norm at most $r^{N+1/2}$. The uniform tail estimate therefore proves
\[
(R_X(q_j,z_j))_\omega=R_X(q_\omega,z_\omega), \qquad (R_Y(q_j,z_j))_\omega=R_Y(q_\omega,z_\omega).
\]
To include the identity, we  use the homogeneous tuple
\[
P(z,q):= \left[
 z(1-z)+(1-z)^3R_X(q,z):
 z^2+(1-z)^3R_Y(q,z):
 (1-z)^3
 \right].
\]
The formula for any coordinate of $P(z,q)$ commutes with the ultralimit. Thus $z_\omega = \lim_\omega z_j$ and 
\[
\nu(z_\omega )= \lim_\omega \epsilon_j \nu_j(z_j)
\]
By Lemma \ref{lem:tatecurveultralimit}, we have $\nu(q_\omega )= \lim_\omega \epsilon_j \nu_j(q_j)>0$. 
The value $\epsilon_j \nu_j(z_j)$ is bounded above, and the denominators $\epsilon_j\nu_j(q_j)$ are bounded away from zero on an $\omega$-large set. Taking the quotient proves the Lemma identity for these representatives.  
\end{proof}

Let $Y:= X_{1,\ell_p}\times X_{1,\ell_p}$ where $X_{1,\ell_p}$ is the moduli space of elliptic curves with a level-$\ell_p$ structure and let $X:=X_{2, \ell_p}$. Let $\overline X_{1,\ell_p}$ be the compactified modular curve. We use the compactification $\ol Y:= \overline X_{1,\ell_p}\times \overline X_{1,\ell_p}$. The $j$-map is finite over $\P^1$, and its boundary is the inverse image of $\infty$. See \cite[Sec 8.6.3, Thm 8.6.8]{katzmazur1985}. On $\overline Y=\overline X_{1,\ell_p}\times\overline X_{1,\ell_p}$ put
\[
 D=j_1^*(\infty)+j_2^*(\infty).
\]
This effective Cartier divisor has support $\overline Y\setminus Y$. We use $D$ to define $\lambda_{\partial Y}$. 

\begin{lemma}\label{lem f.m. map of Y to X}
 There are finitely many morphisms $f_1,\cdots , f_r :Y\to X$ over $\kappa_p$ such that every geometric point of $X$ whose principally polarized abelian surface is isomorphic to a product with its product principal polarization belongs to one of their images.
\end{lemma}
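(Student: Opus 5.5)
The plan is to build the $f_i$ directly from the universal objects over $Y$. Let $(\mathcal E_1,\alpha_1)$ and $(\mathcal E_2,\alpha_2)$ be the universal elliptic curves with full level-$\ell_p$ structure over the two factors $X_{1,\ell_p}$. Pull them back to $Y$ and form the abelian surface $\mathcal B=\mathcal E_1\times_Y\mathcal E_2$ with the product principal polarization. Its $\ell_p$-torsion is $\mathcal E_1[\ell_p]\times\mathcal E_2[\ell_p]$, and this is already trivialized by $(\alpha_1,\alpha_2)$. To get a \emph{symplectic} level structure in the sense of $X=X_{2,\ell_p}$, one must match the Weil-pairing values. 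The pairing of $\mathcal B$ is the orthogonal sum of the two pairings. So $(\alpha_1,\alpha_2)$ is a symplectic basis of $(\mathbb Z/\ell_p)^4$ for the standard form $J\oplus J$ with multipliers $e_{\ell_p}(\alpha_1)$ and $e_{\ell_p}(\alpha_2)$.

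The moduli problem for $X$ fixes the pairing to a chosen $\zeta_{\ell_p}$, or equivalently its components are indexed by multipliers. The same holds on each factor. I would therefore first split $Y$ into its finitely many connected components according to the pair of multipliers. On each component, I choose a fixed matrix $\gamma\in\mathrm{GL}_4(\mathbb Z/\ell_p)$ that carries the basis with form $J\oplus J$ to one with the form required by $X$; this is possible because both forms are nondegenerate alternating forms with the required multiplier. The result is a finite list of matrices $\gamma_1,\dots,\gamma_s$. For each component and each $\gamma_i$, the triple $(\mathcal B,\lambda_{\mathcal B},\gamma_i\circ(\alpha_1,\alpha_2))$ is a principally polarized abelian surface over $Y$ with full symplectic level-$\ell_p$ structure. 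The fine moduli property of $X$ then gives a morphism $f_i:Y\to X$ over $\kappa_p$. One may simply take all $\gamma\in\mathrm{Sp}$-cosets, a finite set, and define $f_\gamma$ on all of $Y$ wherever the multiplier condition holds, extending by components.

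For surjectivity onto products, let $\bar x\in X(\Omega)$ be a geometric point with $\Omega$ algebraically closed. It corresponds to $(A,\lambda,\beta)$ with $(A,\lambda)\simeq(E_1\times E_2,\lambda_{E_1}\times\lambda_{E_2})$. Since $\ell_p$ is invertible in $\Omega$, each $E_i[\ell_p]$ is constant, so we can choose bases $\alpha_i$ of $E_i[\ell_p]$. These give a point $y=((E_1,\alpha_1),(E_2,\alpha_2))\in Y(\Omega)$. Transporting $\beta$ through the isomorphism $A\simeq E_1\times E_2$ gives a symplectic basis of $E_1[\ell_p]\times E_2[\ell_p]$. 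This basis differs from $(\alpha_1,\alpha_2)$ by a matrix $g\in\mathrm{GL}_4(\mathbb Z/\ell_p)$ that carries the form $J\oplus J$ (with its multipliers) to the standard one. Rescaling the $\alpha_i$ within the component only changes $g$ by a bounded amount. Since $g$ lies in the finite set of such matrices, it equals some $\gamma_i$. Then $f_i(y)$ is the isomorphism class of $(E_1\times E_2,\lambda,\gamma_i(\alpha_1,\alpha_2))\simeq(A,\lambda,\beta)$, which is $\bar x$.

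The main obstacle is the bookkeeping of the Weil-pairing normalization, that is, the way $X$ and $X_{1,\ell_p}$ encode the root of unity. This is also the reason for working over $\kappa_p\ni\zeta_{\ell_p}$. I would handle it by indexing the morphisms by all of $\mathrm{GL}_4(\mathbb Z/\ell_p)$ and all multiplier components, and defining $f$ only on the components where the transported structure is symplectic for the normalization of $X$. Finiteness is automatic because $\mathrm{GL}_4(\mathbb Z/\ell_p)$ and $\pi_0(Y)$ are finite. Nothing beyond the representability of the two moduli functors and the triviality of prime-to-$p$ torsion over algebraically closed fields is needed.
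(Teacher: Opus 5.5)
Your proposal is correct and follows essentially the same route as the paper. You form the product of the two universal elliptic curves with the product polarization and product level structure, and twist that level structure by each of finitely many matrices to obtain the $f_\gamma$ from the fine moduli property. Surjectivity then comes from choosing level bases on the two factors of a product over an algebraically closed field: the prescribed level structure differs from the concatenated one by a matrix in that finite set.

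Your extra bookkeeping of Weil-pairing multipliers is a harmless refinement of a point the paper glosses over. By this I mean indexing by $\mathrm{GL}_4(\mathbb Z/\ell_p\mathbb Z)$ and by the open-closed multiplier pieces of $Y$, rather than only by $\Sp_4(\mathbb Z/\ell_p\mathbb Z)$. Make sure the list contains every admissible $\gamma$ on each piece rather than one fixed choice per component. Your surjectivity step uses exactly that, and your text first says one fixed $\gamma$ per component and only later says to take all of them.
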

\begin{proof}
The product of the two universal elliptic curves, their principal polarizations, and their level structures defines a morphism $f_0:Y\to X$. For each
\[
 \gamma\in\Sp_4(\Z/\ell_p\Z),
\]
precompose the product level structure with $\gamma$ to obtain $f_\gamma$. Let $x$ be a point of $X$ that corresponds to a product of elliptic curves, where the principal polarization comes from the product of the principal polarizations of the elliptic curves. Over an algebraically closed field, choose symplectic level bases on the two elliptic factors of a polarized product. Their concatenation is a symplectic basis of the product torsion. Any other prescribed symplectic basis differs by an element of $\Sp_4(\Z/\ell_p\Z)$, proving the lemma. 
\end{proof}

\begin{lemma}\label{lem agamma for any f}
Let $f$ be one of the maps from the previous Lemma. Denote $T$ to be the reduced closure of the graph of $f$ inside $\ol Y \times \ol X$ with natural projections $\pi_X : T\to \ol X$ and $\pi_Y:T\to \ol Y$. Let $\cJ:=\pi_X^*\cI_{\partial X}$ and $\cK:=\pi_Y^*\cI_{\partial Y}$. There are integers $a, b>0 $ that depend only on $f$ such that 
\[
\cJ^a \subset \cK, \qquad \cK^b \subset \cJ
 \]
\end{lemma}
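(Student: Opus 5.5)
The plan is to compare the two ideals by showing they have the same radical on $T$ and then invoke the Noetherian Nullstellensatz. $T$ is a closed subscheme of the projective $\kappa_p$-scheme $\overline Y\times\overline X$, so it is Noetherian. On a Noetherian scheme, if two coherent ideals $\mathcal J,\mathcal K$ satisfy $V(\mathcal J)=V(\mathcal K)$ set-theoretically, then $\sqrt{\mathcal J}=\sqrt{\mathcal K}$. Since $\sqrt{\mathcal K}$ is finitely generated, some power $\sqrt{\mathcal K}^{\,a}$ lies in $\mathcal K$, and hence $\mathcal J^a\subset\sqrt{\mathcal J}^{\,a}=\sqrt{\mathcal K}^{\,a}\subset\mathcal K$. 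Symmetrically we get $\mathcal K^b\subset\mathcal J$. The exponents are computed locally on a finite affine cover of $T$ and we take the maximum. Since they depend only on $T$, they depend only on $f$. So everything reduces to the set-theoretic identity
\[
\pi_X^{-1}(\overline X\setminus X)=\pi_Y^{-1}(\overline Y\setminus Y)\quad\text{inside }T,
\]
using $V(\pi^*\mathcal I)=\pi^{-1}V(\mathcal I)$.

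\textbf{Step 1 (over the interior).} Over $Y$, the graph of $f$ is closed in $Y\times\overline X$, because $\overline X$ is separated. Hence $\pi_Y^{-1}(Y)$ is exactly the graph, and its points map into $X$. This gives $\pi_X^{-1}(\partial X)\subset\pi_Y^{-1}(\partial Y)$.

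\textbf{Step 2 (over the boundary).} We must show that a point $t\in T$ with $\pi_Y(t)\in\partial Y$ satisfies $\pi_X(t)\in\partial X$. Since $T$ is the closure of the graph, we pick a valuation ring $R$ with fraction field $F$ and a map $\operatorname{Spec}R\to T$ whose generic point lies in the graph and whose closed point is $t$. Then $y=\pi_Y(\eta)\in Y(F)$, and $y$ specializes to the boundary of $\overline Y$, so at least one factor, say $E$, has $j(E)$ with negative valuation. Consequently $E$ has potentially multiplicative, non-good reduction. If $\pi_X(t)$ were in $X$, the point $f(y)$, which is the product $E\times E'$ with a level structure, would extend to an abelian scheme over $R$. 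By Chevalley decomposition of the special fiber of the N\'eron model, $E$ is an isogeny factor of $E\times E'$, so $E$ would then have good reduction. This is a contradiction.

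The cleanest way to justify the last implication is via the N\'eron--Ogg--Shafarevich criterion. Extend $R$ so that it is rank one, or use the $\ell_p$-adic Tate module argument directly: good reduction of $E\times E'$ forces inertia to act trivially on $T_\ell(E\times E')\supset T_\ell(E)$. Then $E$ has good reduction, whereas the Tate monodromy (Lemma~\ref{lem:tate-monodromy}) acts nontrivially.

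I expect Step 2 to be the main obstacle. The valuation ring arising from the closure can have higher rank, and $t$ need not be a closed point. To handle this, I would first reduce to DVRs: a point in the closure of a constructible set can be reached by a specialization along a discrete valuation ring, by the standard EGA result on curves through points of closures in Noetherian schemes. Alternatively, I would argue on the special fiber using only that the toroidal boundary of $\overline X$ is the locus where the pulled-back semiabelian scheme is not abelian; the semiabelian scheme over $T$ pulled back from $\overline X$ then has toric part at $t$ exactly when some factor degenerates. Once the set-theoretic equality holds, the Nullstellensatz step gives $a,b$.
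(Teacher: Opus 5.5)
Your proposal is correct and follows the paper's proof. Both establish the set-theoretic equality $\pi_Y^{-1}(\overline Y\setminus Y)=\pi_X^{-1}(\overline X\setminus X)$ by separatedness over the interior and a N\'eron--Ogg--Shafarevich contradiction along a discrete valuation ring at a boundary point, then conclude with the Noetherian radical argument. The one difference is how the DVR is obtained: you invoke the EGA~II specialization result, which also covers non-closed $t$, whereas the paper builds it by hand, choosing by prime avoidance a curve through a closed point $t$ not contained in the boundary and normalizing.
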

\begin{proof}
Suppose $t\in T$ maps to $\overline Y\setminus Y$ but $\pi_X(t)\in X$. We will show that such a point doesn't exist, so it's enough to work over an algebraically closed field.  The closed set $T\setminus \pi_Y^{-1}(Y)$ has at most dimension $1$. Let $U =\Spec A \subset T$ be an open affine neighborhood around $t$ and let $\fm$ be the maximal ideal defining $t$. Write the one dimensional components of the intersection $U \cap \big (T\setminus \pi_Y^{-1}(Y) \big )$ as $B_1, \cdots ,B_r$, and let $\fp_1, \cdots \fp_r$ be their corresponding prime ideals. Since $\fm$ is a maximal ideal of height 2, we have that 
\[
\fm \not\subset \cup_{i=1}^r \fp_i
\]
Let $g\in \fm \setminus \cup_{i=1}^r \fp_i$, then an irreducible component of $V(g)$ is a curve $B'$ passing through $t$ but it is not one of the components $B_1,\cdots,B_r$. Let $s$ be a point of the normalization of the curve $B'$ that maps to $t$. Let $R$ be the local ring at $s$. Then $R$ is a discrete valuation ring with a map $h:\Spec R \to T$. The generic point of $\Spec R$ maps to $Y$ and $X$ while the special point maps to $\ol Y \setminus Y$ and $X$ under $\pi_Y \circ h$ and $\pi_X \circ h$. Trivial action of the inertia group restricts to the trivial action on any factors. Hence the N\'eron-Ogg-Shafarevich Theorem \cite[Thm 1]{serre1971good} gives a contradiction and no such $t$ exists. Moreover $X$ and $Y$ are  separated, $\pi_Y^{-1}(Y)$ is the graph of $f$ and maps into $X$. Hence $\pi_Y^{-1}(\ol Y \setminus Y) = \pi_X^{-1}(\ol X \setminus X) $ and $\cJ$ and $\cK$ have the same support. Since $T$ is Noetherian,  there exist $a,b>0$ depending only on $f$ coming from the structure sheaf of $\cJ$ and $\cK$ such that
\[
\cJ^a\subset \cK , \qquad \cK^b \subset \cJ
\]
\end{proof}

\begin{proposition}\label{prop c and C}
There exist $c, c'>0$ such that for any Tate elliptic curves $E$ and $F$ with Tate lengths $L_E=\nu(q_E)$ and $L_F=\nu(q_F)$ over a complete algebraically closed valued field $C$ containing $\kappa_p$ trivially, we have 
\[
c(L_E+L_F)\le \lambda_{\partial_X}(E\times F) \le c'(L_E+L_F)
\]
\end{proposition}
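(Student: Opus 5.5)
The plan is to compare $\lambda_{\partial X}$ at the moduli point of $E\times F$ with $\lambda_{\partial Y}$ at the moduli point of the pair $(E,F)$, and then to compute $\lambda_{\partial Y}$ directly from the $j$-invariants.

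\emph{Step 1 (compute $\lambda_{\partial Y}$).} Choose level-$\ell_p$ structures on $E$ and $F$ over $C$; this is possible because $C$ is algebraically closed. We get a point $y\in Y(C)$ and, by Lemma~\ref{lem f.m. map of Y to X}, some $f_i$ with $f_i(y)$ equal to the moduli point of $E\times F$ with the chosen level structure on $X$. The boundary divisor $D=j_1^*(\infty)+j_2^*(\infty)$ is defined by pulling back the local equation $1/j$ near $\infty$. For a Tate curve $j(q)=q^{-1}+744+\cdots$ with integral coefficients, so $\nu(1/j_E)=\nu(q_E)=L_E$. However, $D$ is the pullback of $\infty$ under the finite map $j:\overline X_{1,\ell_p}\to\mathbb P^1$, and $j$ is ramified at the cusps. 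So the local equation of $j_1^*(\infty)$ at a cusp is a uniformizer $t$ with $1/j=t^{e}\cdot(\text{unit})$. Since $\overline X_{1,\ell_p}$ is a fixed curve with finitely many cusps, the ramification indices are bounded by some $e_{\max}$. Hence $\lambda_{\partial \overline X_{1,\ell_p}}$ lies between $L_E/e_{\max}$ and $L_E$.

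The ideal of $D$ on the product is locally $(t_1t_2)$ near a corner. This gives $\lambda_{\partial Y}(y)=\lambda_1+\lambda_2$, which is comparable to $L_E+L_F$ with constants depending only on $p$. Away from the cusps one factor contributes $0$, and its $L$ is also $0$; if a curve has good reduction it is not a Tate curve, so both lengths are positive here. The constant $744$ and the higher terms do not matter because $|q|<1$ and the coefficients are integral.

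\emph{Step 2 (transfer to $X$).} Take the graph closure $T$ of $f_i$ from Lemma~\ref{lem agamma for any f}. The point $y$ extends to $\operatorname{Spec}C^\circ\to\overline Y$, and the pair $(y,f_i(y))$ extends by properness to $\operatorname{Spec}C^\circ\to T$. The two boundary functions are the valuations of the pulled-back ideals $\mathcal K$ and $\mathcal J$. The inclusions $\mathcal J^a\subset\mathcal K$ and $\mathcal K^b\subset\mathcal J$ give
\[
a\,\lambda_{\partial X}\ge\lambda_{\partial Y},\qquad b\,\lambda_{\partial Y}\ge\lambda_{\partial X}.
\]
Here one uses that the valuation of an ideal power is multiplicative and that containment of ideals reverses the inequality of valuations. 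Finally, take the worst $a,b$ over the finitely many $f_i$. These constants do not depend on the chosen level structures, because the different choices are absorbed into the finitely many $f_\gamma$.

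\emph{Main obstacle.} The main obstacle is Step 1: justifying uniformly in $C$ that the boundary ideal valuation of a fixed projective model of $\overline X_{1,\ell_p}$ equals $\nu(1/j)$ up to the bounded ramification factor. The approach is to note that at a cusp the completed local ring is $\kappa_p'[[t]]$ with $1/j=u\,t^{e}$, where $u$ is a unit. The extended point $\operatorname{Spec}C^\circ\to\overline X_{1,\ell_p}$ lands near that cusp, so $|u|=1$ at the point, since $\kappa_p$ is trivially valued and $u$ is a unit at the closed point.
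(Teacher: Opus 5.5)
Your proof is correct and follows the paper's route. You compute the boundary function of $D=j_1^*(\infty)+j_2^*(\infty)$ from the $q$-expansion of $j$, then transfer to $\lambda_{\partial X}$ via the graph closure $T_\gamma$ and the containments $\mathcal J^a\subset\mathcal K$, $\mathcal K^b\subset\mathcal J$ of Lemma~\ref{lem agamma for any f}, taking the worst constants over the finitely many $f_\gamma$.

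One small correction: the local equation of $j_1^*(\infty)$ at a cusp is $1/j_1=t^e u$ itself, not the uniformizer $t$. With the paper's $D$ one therefore gets exactly $\lambda_D(E\times F)=\log^+|j(E)|+\log^+|j(F)|=\nu(q_E)+\nu(q_F)$, and your ramification-index hedge is unnecessary, though harmless for the inequality.
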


\begin{proof}
By \cite[Chap V, Thm 3.1]{SilvermanAdvanced} the $j$-invariant of a Tate elliptic curve with parameter $q$ is given by an integral expansion $j(q)=q^{-1}+\sum_{n\ge 0} c_nq^n$. Note that $|q|<1$ and $|j(q)|=|q|^{-1}$. Consequently the boundary function of $D$ at a pair of Tate curves is
\begin{equation}\label{eq:D-length}
\lambda_D(E\times F)=\log^+|j(E)|+\log^+|j(F)| =\nu(q_E)+\nu(q_F).
\end{equation}
Let $f_\gamma$ be one of the maps from Lemma \ref{lem f.m. map of Y to X} with the reduced graph $T_\gamma$, ideal sheaves $\cJ_\gamma$ and $\cK_\gamma$ on $T_\gamma$, and constants $a_\gamma$ and $b_\gamma$ from Lemma \ref{lem agamma for any f}. Let $x\in X$ correspond to the product of two Tate elliptic curves $E\times F$. By Lemma \ref{lem f.m. map of Y to X}, there exists $\gamma$ such that for some $y\in Y$, we have $f_\gamma(y)=x$.  Let $R$ be the valuation ring of C. Since $T_\gamma \to \ol Y$ is proper, the morphism $\Spec R \to \ol Y$ extends to a morphism $\Spec R\to T_\gamma$. By the containment of the ideals $\cJ_\gamma$ and $\cK_\gamma$ from Lemma \ref{lem agamma for any f}, we have that 
\[
\frac{1}{a_\gamma}\lambda_D(E\times F) \leq\lambda_{\partial X}(E\times F) \leq b_\gamma\lambda_D(E\times F).
\]
Taking $c=\min_\gamma a_\gamma^{-1}$ and $c'=\max_\gamma b_\gamma$ finishes the proof. 
\end{proof}

\begin{proposition}\label{prop:fm-admissible}
Fix the characteristic exponent $p$ and an integer $M\ge1$. For local products $E\times F$ having multiplicative reduction, with integer Tate lengths  $\rho u_E$ and $\rho u_F$ where $ u_E, u_F\in\{1,\dots,M\}$ and $\rho>0$, the signed diagonals $\Delta_{u_E,u_F}^{\pm}$ form a level-stably admissible system.
\end{proposition}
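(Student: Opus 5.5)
We must show that every centered sequence of pairs $(A_j,Z_j)$ in this class, with $A_j=E_j\times F_j$ over $C_j$ and $Z_j$ a signed diagonal, is $\omega$-compatible for some nonprincipal ultrafilter $\omega$. Level-stability is handled first. Passing from $v$ to $w\mid v$ multiplies both Tate lengths by the same factor $c_{w/v}$, so the ratio $u_E:u_F$ is unchanged. The scaling map $\sigma_{w/v}$ therefore sends $\Delta^{\pm}_{u_E,u_F}$ to the signed diagonal for the rescaled lengths $c_{w/v}\rho u_E$ and $c_{w/v}\rho u_F$, which again has the form allowed in the statement. So it suffices to verify admissibility for the class itself, with any $\rho>0$, in each chart $U_a$.

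\textbf{Step 1 (the ultralimit is a product of Tate curves).} Fix a sequence in the class, set $\epsilon_j=\lambda_{\partial X}(A_j)^{-1}$, and fix any nonprincipal ultrafilter $\omega$. By Proposition~\ref{prop c and C},
\[
c\,\rho_j(u_{E,j}+u_{F,j})\le \lambda_{\partial X}(A_j)\le c'\rho_j(u_{E,j}+u_{F,j}).
\]
Since $1\le u_{E,j},u_{F,j}\le M$, each rescaled length $\epsilon_j\nu_j(q_j)=\epsilon_j\rho_ju_{E,j}$ lies in $[1/(2Mc'),1/c]$, and likewise for $q'_j$. Thus \eqref{eq wLimit bounded} holds. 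The pair $(u_{E,j},u_{F,j})$ takes at most $M^2$ values, so it is constant, equal to $(u_E,u_F)$, on an $\omega$-large set $J$; restricting to $J$ is harmless by the remark after Definition~\ref{def:ultralimit-ring}. Lemma~\ref{lem aw is ew x fw} then gives $A_\omega\simeq E_\omega\times F_\omega$ with $\Sk(A_\omega)=\Sk(E_\omega)\times\Sk(F_\omega)$.

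\textbf{Step 2 (compatible coordinates and the fixed set).} Use the normalized coordinates
\[
\theta_j(x)=\bigl(\tau_{E_j}(x),\tau_{F_j}(x)\bigr)\in T=(\mathbb R/\mathbb Z)^2,
\]
and $\theta_\omega$ defined in the same way. By Lemma~\ref{lem w of limit is limit of w}, these identifications satisfy \eqref{eq:coordinate-compatibility}. In these coordinates,
\[
\theta_j(\Delta^{\pm}_{u_E,u_F})=\{(t,t)\}\cup\{(t,-t)\}=:Y,
\]
independently of $j$. This is a proper closed subset of $T$; for example $(1/4,0)\notin Y$. Corollary~\ref{cor:fixed-subset} now gives $\omega$-compatibility with $Z_\omega=\theta_\omega^{-1}(Y)$, which is the signed diagonal $\Delta^{\pm}$ for the limiting lengths.

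\textbf{Main obstacle.} The delicate point is the orientation and representative choices in Lemma~\ref{lem w of limit is limit of w}. A Tate parameter, and hence the sign of the coordinate on each circle, is determined only up to the choice of the uniformization $z\mapsto z^{-1}$. The union with the opposite branch in \eqref{eq:signed-diagonal} is what absorbs this ambiguity: $Y$ is invariant under $(s,t)\mapsto(\pm s,\pm t)$. Also, the $\omega$-limit of coordinates taken in $[-1/2,1/2]$ must be read modulo $\mathbb Z$, and reducing modulo $\mathbb Z$ is continuous on the circle, so the limit is well defined there. A second point to check is that the Tate curves in the class are exactly the abelian fibers in the chosen chart after the level structure is fixed. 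This is handled by the $\omega$-large-set argument already used in the proof of Lemma~\ref{lem aw is ew x fw}.
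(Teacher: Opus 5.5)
Your proposal is correct and follows essentially the paper's argument: the two-sided bound from Proposition~\ref{prop c and C} yields \eqref{eq wLimit bounded}, Lemma~\ref{lem aw is ew x fw} splits the ultralimit and its skeleton, and Lemma~\ref{lem w of limit is limit of w} passes normalized Tate coordinates to the limit, so the limit point lands on the signed diagonal. Routing the last step through Corollary~\ref{cor:fixed-subset} with the fixed set $Y=\{(t,\pm t)\}\subsetneq(\mathbb R/\mathbb Z)^2$, and spelling out the scaling under $\sigma_{w/v}$, makes explicit two things the paper leaves implicit: that $Z_\omega$ is proper, and that the system is level-stable.
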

\begin{proof}
Consider a centered sequence $A_j:=E_j\times F_j$ where $E_j$ and $F_j$ are Tate elliptic curves with Tate lengths $\rho u_{E_j}$ and $\rho u_{F_j}$ and let $\epsilon_j:=\lambda_{\partial X}(A_j)^{-1}$. By assumption and Proposition \ref{prop c and C}, we have 
\[
\frac{1}{c'(M+1)}\le \epsilon_j \rho u_{E_j}\, \le \frac 1 c, \qquad  \frac{1}{c'(M+1)}\le \epsilon_j \rho u_{F_j}\, \le \frac 1 c
\]
This is exactly equation (\ref{eq wLimit bounded}), thus Lemma \ref{lem aw is ew x fw} implies $\Sk(A_\omega)= \Sk(E_\omega)\times \Sk (F_\omega)$. Let $E_\omega$ and $F_\omega$ have Tate parameters $q_\omega$ and $q_\omega'$. Note that a point $P_j$ given in coordinates $(z_j, z_j')$ reduces to $\Delta_j$ if and only if $\nu_j(z_j) /  \rho u_{E_j} = \pm (\nu_j(z_j') / \rho u_{F_j})$. Let $P_j$ be a sequence of such points and let $P_\omega$ be its ultralimit given by $(z_\omega, z_\omega')$. By Lemma \ref{lem w of limit is limit of w}, we have 
\[
\frac{\nu_\omega(z_{\omega})}{\nu_\omega(q_{\omega})} =\lim_\omega\frac{\nu_j(z_{j})}{\nu_j(q_{j})}
\]
Thus  $\nu_\omega(z_{\omega}) / \nu_\omega(q_{\omega}) = \pm \nu_\omega(z_{\omega}') /\nu_\omega(q_{\omega}')$ and the point $P_\omega$ reduces to the weighted diagonal of $\Sk(E_\omega)\times \Sk (F_\omega)$. This shows that the prescribed family is level-stably admissible.
\end{proof}

\section{Proof of Theorem \ref{thm:main}}\label{sec divisors}\label{sec:proof}

\subsection{Divisors}

This subsection is independent of the function-field geometry. Throughout this subsection, let $K$ be any field and fix an algebraic closure $\ol K$. We will discuss what happens when a divisor of $E\times F$ contains a finite subgroup defined over $K$. 
\begin{definition}
Define
\[
K^\natural=
\begin{cases}
K,&\charac(K)=0,\\
K^{\perf},&\charac(K)>0,
\end{cases}
\]
where $K^{\perf}$ is the perfect closure of $K$ in $\overline K$.
\end{definition}

\begin{lemma}
\label{lem:elliptic-subgroup-descent}
Let $E$ and $F$ be elliptic curves over $K$ and let $A:=E\times F$.  Let $H\subset A_{\overline K}$ be an elliptic subgroup scheme stable under $\Gal(\ol K/ K)$. Then $H$ descends to an elliptic subgroup $H_0\subset A_{K^\natural}$. Moreover, if both projections $H_0\to E_{K^\natural}$ and $H_0\to F_{K^\natural}$ are nonzero, then $E$ and $F$ are isogenous over $K$.
\end{lemma}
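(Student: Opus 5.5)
The plan has two parts: first descend $H$ to $K^\natural$ by Galois descent for reduced closed subschemes, then build an isogeny from the two projections of $H_0$ and descend it from $K^\natural$ to $K$.

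\textbf{Descent of $H$.} Since $H\subset A_{\overline K}$ is an elliptic curve, it is a reduced closed subscheme. Over a perfect field, reduced closed subschemes are geometrically reduced. The absolute Galois group $\Gal(\overline K/K)$ equals $\Gal(\overline K/K^\natural)$, because $\overline K/K^\natural$ is Galois. So a $\Gal(\overline K/K^\natural)$-stable reduced closed subscheme of $A_{\overline K}$ descends to a closed subscheme $H_0\subset A_{K^\natural}$ with $(H_0)_{\overline K}=H$. Concretely, the ideal of $H$ in each affine open of $A$ is Galois-stable, and Galois descent for vector spaces (as in the proof of Theorem~\ref{thm:descent}, but over $\overline K/K^\natural$ with its Galois-semilinear action) gives an ideal over $K^\natural$ generating it. The group structure, the identity section, and smoothness all descend because they can be checked after base change to $\overline K$. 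Thus $H_0$ is an abelian subvariety of dimension one, that is, an elliptic subgroup.

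\textbf{Isogeny over $K^\natural$.} The projections $p_E|_{H_0}:H_0\to E_{K^\natural}$ and $p_F|_{H_0}:H_0\to F_{K^\natural}$ are homomorphisms of elliptic curves. If both are nonzero, both are isogenies, since a nonzero homomorphism between elliptic curves is surjective with finite kernel. Let $\widehat{p_E}$ denote the dual isogeny of $p_E|_{H_0}$. Then
\[
p_F|_{H_0}\circ\widehat{p_E}:E_{K^\natural}\to F_{K^\natural}
\]
is a nonzero homomorphism, hence an isogeny defined over $K^\natural$.

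\textbf{Descent to $K$.} In characteristic $0$ we have $K^\natural=K$, and there is nothing to do. In characteristic $p$, the extension $K^{\perf}/K$ is purely inseparable, hence primary. So I will invoke the bijection $\Hom_K(E,F)\xrightarrow{\sim}\Hom_{K^{\perf}}(E_{K^{\perf}},F_{K^{\perf}})$ of \cite[Lem~1.2.1.2]{chaiconoort}, exactly as in Proposition~\ref{prop:perfect-constant}. The isogeny then comes from a homomorphism $f:E\to F$ over $K$ with $f_{K^{\perf}}\neq0$. Hence $f\ne0$, and $f$ is an isogeny.

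The main subtlety is the first step in positive characteristic. As the earlier remark illustrates, Galois-invariance only yields descent to $K^{\perf}$, not to $K$. This is why the statement passes through $K^\natural$. I would take care to check that the descended subscheme is still smooth and a subgroup, which holds because these properties can be tested over $\overline K$. The final descent of the morphism to $K$ is handled by the rigidity of $\Hom$ under primary extensions rather than by Galois descent.
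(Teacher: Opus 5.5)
Your proposal is correct and follows the paper's proof: you descend $H$ to $K^\natural$ by Galois descent (group structure and smoothness checked after base change), obtain an isogeny over $K^\natural$ from the two nonzero projections, and descend it to $K$ via Chow's theorem for the primary extension $K^\natural/K$ \cite[Lem~1.2.1.2]{chaiconoort}. The only difference is that you write the isogeny explicitly as $p_F|_{H_0}\circ\widehat{p_E}$, where the paper simply says that $E$, $F$ and $H_0$ are isogenous over $K^\natural$.
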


\begin{proof}
The fixed field of $\Gal(\Kbar/K)$ in $\Kbar$ is $K$ in characteristic zero and $K^{\perf}$ in positive characteristic.  Standard Galois descent for the closed subgroup scheme $H\subset A_{\Kbar}$ therefore gives $H_0$ over $K^\natural$.  Its smoothness, connectedness, and group structure descend
faithfully flatly.

If both projections are nonzero,  $E, F$, and $H_0$ are isogenous over $K^\natural$.  The extension $K^\natural/K$ is primary, so Chow's Theorem \cite[Lem~1.2.1.2]{chaiconoort} descends this isogeny to $K$.
\end{proof}

Fix the following data for the rest of this subsection.  Let $E, F$ be nonisogenous elliptic curves over $K$.   Fix an ample line bundle $L$ on $A:=E\times_K F$ and let $d:=L^2$. For any $a\in A(\ol K)$ let $\tau_a$ denote the translation by $a$.

\begin{lemma}\label{lem no common component}
Let $\Gamma$ be a finite $\Gal(\ol K/K)$-invariant subgroup of $A_{\operatorname{tor}}(\ol K)$ such that  $|\Gamma|=n$ and $(n, p)=1$.  Let $D\in |L^{\otimes m}|$ be an effective divisor containing $\Gamma$.  If for some $\gamma\in \Gamma$ two effective divisors $D$ and $\tau_\gamma D$ do not share any irreducible component, then 
 \[
  n\le m^2 d
 \]
\end{lemma}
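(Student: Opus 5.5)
The plan is to bound $n$ by an intersection number.

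First, translation invariance. Since $A$ is an abelian variety and $\tau_\gamma$ is translation by a point, $\tau_\gamma^*L^{\otimes m}$ is algebraically equivalent to $L^{\otimes m}$; the theorem of the square gives this, and translations act trivially on Néron--Severi. Hence the divisor $\tau_\gamma D$ (which equals $\tau_{-\gamma}^*D$) has the same numerical class as $D$. Both are effective on the smooth projective surface $A_{\overline K}$, so their intersection number is
\[
D\cdot \tau_\gamma D=(mL)^2=m^2 d.
\]

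Second, the lower bound. Work over $\overline K$. Because $\Gamma$ is a group and $\gamma\in\Gamma$, translation by $\gamma$ permutes $\Gamma$. Since $D\supset\Gamma$, we get $\tau_\gamma D\supset \tau_\gamma\Gamma=\Gamma$. Thus every point of $\Gamma$ lies on both divisors. By hypothesis $D$ and $\tau_\gamma D$ share no irreducible component, so they meet properly, in a finite set. Their intersection number is then the sum over the finitely many intersection points of the local intersection multiplicities. Each multiplicity is a positive integer, at least $1$ at every common point (Hartshorne V.1.4 and its generalization to effective divisors without common components).

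Third, conclude. The points of $\Gamma$ are $n$ distinct geometric points, where distinctness holds because $\Gamma$ is a subgroup of order $n$. Hence
\[
n\le \sum_{x\in D\cap\tau_\gamma D}(D\cdot\tau_\gamma D)_x = m^2 d.
\]
The coprimality $(n,p)=1$ guarantees that $\Gamma$ is reduced, i.e. consists of $n$ honest points over $K^{\mathrm{sep}}$. It is otherwise not needed for this counting step. Galois invariance is likewise not needed here; it is presumably used later to descend $D$ via Theorem~\ref{thm:descent}.

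The only point requiring care is the claim that $\tau_\gamma D$ is numerically equivalent to $D$. On an abelian variety this is standard: $\tau_a^*L\otimes L^{-1}\in\operatorname{Pic}^0$, and $\operatorname{Pic}^0$ classes have zero intersection with everything. The rest of the argument is routine.
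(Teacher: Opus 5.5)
Your proposal is correct and follows essentially the same route as the paper. Translation acts trivially on Néron--Severi, so $D\cdot\tau_\gamma D=D^2=m^2d$, and the proper intersection contains all $n$ points of $\Gamma$, each with local multiplicity at least one. You also spell out why $\Gamma\subset\tau_\gamma D$ (translation by $\gamma\in\Gamma$ permutes $\Gamma$), which the paper leaves implicit.
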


\begin{proof}
Write
\[
D=\sum_{i=1}^r m_iC_i,
\]
where $C_i$ are the distinct integral components of the support and $m_i\geq 1$. Translation on an abelian variety acts trivially on the N\'eron--Severi group \cite[Sec 6]{mumford1974abelian}. Thus $D\cdot\tau_\gamma D=D^2$. Since the two divisors have no common component, their intersection is proper. Each of the $n$ distinct points of $\Gamma$ contributes local intersection multiplicity at least one, so
\[
n\leq D\cdot\tau_\gamma D=D^2= m^2 L^2=m^2d.
\]
\end{proof}

\begin{lemma}\label{lem finite stabilizer}
Let $C\subset A_{\Kbar}$ be an irreducible curve, let $\mathcal S=\Stab(C)$ be its scheme-theoretic stabilizer, and write
\[
 S=\mathcal S(\Kbar)=\{a\in A(\Kbar):\tau_aC=C\}.
\]
Let $G\subset S\cap A_{\operatorname{{tor}}}(\Kbar)$ be a finite subgroup of order prime to $p$. If $S$ is finite, then
\[
 |G|\le\frac{C^2}{2}.
\]
\end{lemma}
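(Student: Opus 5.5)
The idea is to compare $C$ with a translate $\tau_g C$ for some $g\in G$ and use the fact that translates of $C$ by elements of $G$ all pass through common points. I will first exploit finiteness of $S$: for $g\in G$, the curve $\tau_gC$ equals $C$ because $G\subset S$. So a direct self-intersection of distinct translates is not available. Instead I will use the quotient by $G$. Since $|G|$ is prime to $p$, the quotient map $\pi:A_{\Kbar}\to A'=A_{\Kbar}/G$ is a finite étale isogeny of degree $|G|$. Because $C$ is $G$-stable, $C$ is a union of fibres of $\pi|_C$, so $\pi^{-1}(\pi(C))=C$ set-theoretically. As $\pi$ is étale, the pullback is reduced, and $\pi^*C'=C$ as divisors, where $C'=\pi(C)$.

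Next I will compute self-intersections. Since $\pi$ is finite flat of degree $|G|$, we have
\[
C^2=(\pi^*C')^2=|G|\,(C')^2 .
\]
So it suffices to show $(C')^2\ge 2$ (the inequality $(C')^2\geq 1$ would give only $|G|\le C^2$). The stabilizer of $C'$ in $A'$ is finite, since its preimage under $\pi$ stabilizes $C$ and contains $G$ with finite index. A curve on an abelian surface with finite stabilizer is not a translate of an elliptic curve, because such curves are stabilized by an elliptic subgroup. Hence $C'$ has positive self-intersection. On an abelian surface the canonical class is trivial, so adjunction gives $(C')^2=2p_a(C')-2$, which is even. Positivity therefore forces $(C')^2\ge2$. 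This yields $|G|\le C^2/2$.

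The main obstacle is justifying $(C')^2>0$. The index theorem gives $(C')^2\ge0$ for an effective irreducible curve on an abelian surface (nefness, since translates move it). If $(C')^2=0$, then $p_a(C')=1$, so the normalization has genus $\le1$. It cannot be rational on an abelian variety, so $C'$ is a smooth genus-one curve, hence a translate of an elliptic subgroup, which has infinite stabilizer. I will check this standard fact and the étale pullback reducedness carefully, using that $(|G|,p)=1$ so that $\pi$ is étale.
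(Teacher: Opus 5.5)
Your argument is correct, but it takes a different route from the paper's.

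\textbf{The paper's route.} The paper works on the normalization $\widetilde C$. It first shows $g(\widetilde C)\ge 2$: an abelian variety contains no rational curves, and a curve of geometric genus one is a translate of an elliptic subgroup, so it has infinite stabilizer. It then lets $G$ act freely on $\widetilde C$ by translation. Riemann--Hurwitz for the \'etale quotient $\widetilde C\to\widetilde C/G$ gives $2g(\widetilde C)-2=|G|(2g'-2)$ with $g'\ge 2$. Hence $|G|\le g(\widetilde C)-1\le p_a(C)-1=C^2/2$.

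\textbf{Your route.} You quotient the ambient surface instead. From $\pi^*C'=C$ and the projection formula you get the exact identity $C^2=|G|\,(C')^2$. You then prove $(C')^2\ge 2$ on $A'$ using three inputs: nefness, parity from adjunction, and the same two geometric facts (no rational curves; genus-one curves are translates of elliptic subgroups). These facts are now applied to $C'$, whose stabilizer is $\pi(S)$ and hence finite.

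\textbf{Comparison.} The geometric inputs are the same, but you use them on the image $C'$ rather than on $C$ itself.
\begin{itemize}
\item Your version avoids normalization and Hurwitz. In exchange it needs the quotient abelian surface, reducedness of the \'etale pullback, and $\pi^{-1}(C')=C$, all of which you justify correctly. It yields the divisibility $|G|\mid C^2/2$.
\item The paper's version stays on the curve. Its intermediate statement is in terms of geometric genus, $|G|\mid g(\widetilde C)-1$.
\end{itemize}

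\textbf{One attribution to fix.} $(C')^2\ge 0$ does not come from the Hodge index theorem. It comes from the reason in your parenthetical: $(C')^2=C'\cdot\tau_a C'$ for every $a$, and for $a\notin\Stab(C')$ the two irreducible curves share no component.

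\textbf{A side remark.} In both proofs the prime-to-$p$ hypothesis is used only to get \'etaleness. That holds anyway for any finite subgroup of $A(\Kbar)$, viewed as a constant (hence reduced) group scheme, and for any free action of a finite group on a smooth curve.
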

\begin{proof}
Let $\widetilde C$ be the normalization of $C$. There are no nonconstant maps from $\P^1$ into any abelian varieties \cite[Cor 3.8]{milne1986abelian} and the stabilizer of a genus 1 curve has dimension 1 because a curve of geometric genus one in an abelian variety is a translate of an elliptic subgroup \cite[Cor~2.2]{milne1986abelian}.  Hence the genus of  $\Tilde{C}$ is at least two.  The prime-to-$p$ group $G$ acts freely on $\widetilde C$, since a nonzero translation has no fixed point on $A$.  The quotient is therefore an \'{e}tale cover of degree $|G|$. Let $g'$ be the genus of the quotient curve.  Hurwitz formula gives
\[
 2g(\widetilde C)-2=|G|(2g'-2).
\]
Since the left side is positive, $g'\ge2$, so $|G|\le g(\tilde C)-1$. Adjunction on the abelian surface, whose canonical class is trivial, gives $2p_a(C)-2=C^2$. As $g(\tilde C)\le p_a(C)$, it follows that $|G|\le C^2/2$.   These are the usual Hurwitz and adjunction formulas \cite[Chap~IV, Cor~2.4, Chap~V,
Prop~1.5]{hartshorne2013algebraic}.  
\end{proof}

\begin{theorem}\label{thm zero-lemma}
Let $(N,p)=1$ and let $\psi:E[N]\xrightarrow{\sim}F[N]$ be $G_K$-equivariant.  Set
\[
\Gamma=\{(x,\psi(x)):x\in E[N]\}\subset A[N].
\]
If an effective divisor $D\in|L^{\otimes m}|$, defined over $K$, contains $\Gamma$, then
\begin{equation}\label{eq:zero-bound}
 N\leq Z(m,d):= \max\left\{ m\sqrt d,\ md,\ \frac{m^2d^{3/2}}{\sqrt2},\ \sqrt2\,m^3d^{5/2} \right\}.
\end{equation}
\end{theorem}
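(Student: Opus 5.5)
The plan is to split according to whether some translate $\tau_\gamma D$ with $\gamma\in\Gamma$ shares no component with $D$, and in the remaining case to locate a component of $D$ whose stabilizer contains a large part of $\Gamma$. Note that $\Gamma\simeq(\mathbb Z/N\mathbb Z)^2$ has $N^2$ elements, all of order prime to $p$, and $\Gamma$ is $\Gal(\Kbar/K)$-stable because $\psi$ is $G_K$-equivariant.

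\emph{Step 1: no common component.} If some $\gamma\in\Gamma$ has $D$ and $\tau_\gamma D$ without common components, Lemma~\ref{lem no common component} applied to the subgroup $\Gamma$ of order $N^2$ gives $N^2\le m^2d$. Hence $N\le m\sqrt d$.

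\emph{Step 2: covering $\Gamma$ by cosets of stabilizers.} Otherwise, for every $\gamma\in\Gamma$ there are components $C_i,C_j$ of $D_{\Kbar}$ with $\tau_\gamma C_i=C_j$.

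1. The number $r$ of components satisfies $r\le D\cdot L=m d$, since each $C_i\cdot L\ge 1$ by ampleness.
2. For fixed $(i,j)$, the set $\{\gamma\in\Gamma:\tau_\gamma C_i=C_j\}$ is empty or a coset of $G_i:=\Stab(C_i)\cap\Gamma$.
3. So $\Gamma$ is covered by at most $r^2$ such cosets, and some $i$ has $|G_i|\ge N^2/(md)^2$.
4. The Hodge index theorem together with $C_i\cdot L\le md$ gives
\[
C_i^2\le (C_i\cdot L)^2/L^2\le m^2 d .
\]

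\emph{Step 3: finite stabilizer.} If $\Stab(C_i)$ is finite, Lemma~\ref{lem finite stabilizer} gives $|G_i|\le C_i^2/2\le m^2d/2$. Combined with the lower bound on $|G_i|$ this yields $N^2\le m^4d^3/2$, which is one of the terms in $Z(m,d)$.

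\emph{Step 4: infinite stabilizer.} This is the main obstacle. If $\Stab(C_i)$ is infinite, $C_i$ is a translate of an elliptic subgroup $H\subset A_{\Kbar}$, and $G_i=\Gamma\cap H(\Kbar)$.

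1. If $H=E\times 0$ or $H=0\times F$, then $\Gamma\cap H=0$ because $\Gamma$ is the graph of an isomorphism. This contradicts $|G_i|\ge N^2/(md)^2\ge1$ unless $N$ is already small, which is the $md$ term.
2. So both projections of $H$ are nonzero. Since $E$ and $F$ are nonisogenous, Lemma~\ref{lem:elliptic-subgroup-descent} shows that $H$ is not Galois-stable, so $H^\sigma\ne H$ for some $\sigma\in\Gal(\Kbar/K)$.
3. Because $D$ and $\Gamma$ are defined over $K$, the curve $H^\sigma$ plays the same role with $|\Gamma\cap H^\sigma|=|G_i|$.
4. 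Inside $\Gamma$ we have
\[
|\Gamma\cap H|\cdot|\Gamma\cap H^\sigma|\le |\Gamma|\cdot|H\cap H^\sigma| .
\]
5. The group $H\cap H^\sigma$ is finite, and $|H\cap H^\sigma|\le H\cdot H^\sigma$. Since $H^2=0$, Hodge index bounds $H\cdot H^\sigma\le 2(H\cdot L)^2/L^2\le 2m^2d$.
6. This gives $N^4/(md)^4\le 2m^2 d\,N^2$, that is, $N\le\sqrt2\,m^3d^{5/2}$.

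The delicate points are the intersection bound for $H\cdot H^\sigma$ and justifying the descent step. Collecting the cases, and using $N\le md$ whenever the coset count degenerates, gives \eqref{eq:zero-bound}.
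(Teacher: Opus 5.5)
Your proposal is correct and follows the paper's proof essentially step for step. It uses the same split according to whether some $\tau_\gamma D$ shares no component with $D$, and the same pigeonhole over at most $r^2\le (md)^2$ cosets of $\Gamma\cap\Stab(C_i)$. It then has the same three subcases: a finite stabilizer (Lemma~\ref{lem finite stabilizer} plus Hodge index), a coordinate factor meeting $\Gamma$ only at the origin (giving $N\le r\le md$), and a non-Galois-stable elliptic subgroup $H$, handled by the subgroup product formula for $\Gamma\cap H$, $\Gamma\cap H^\sigma$ together with Hodge index applied to $H+H^\sigma$.
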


\begin{proof}
Write \( D=\sum_{i=1}^r m_iC_i\) with distinct integral curves $C_i$ over $\ol K$ and $m_i\geq1$.  Since $L$ is ample, $L\cdot C_i$ is a positive integer, and
\begin{equation}\label{eq:r-bound}
 r\leq\sum_i m_i(L\cdot C_i)=L\cdot D=md.
\end{equation}
Suppose first that for some $\gamma\in\Gamma$, the divisors $D$ and $\tau_\gamma D$ have no common component.  Then Lemma \ref{lem no common component} proves that
\[
 N^2\leq m^2d,
\]
which gives the first term in \eqref{eq:zero-bound}. We may now assume that $D$ and $\tau_\gamma D$ share a component for every $\gamma\in\Gamma$.  For $1\leq i,j\leq r$, put
\[
 T_{ij}=\{a\in A(\Kbar):\tau_aC_i=C_j\}, \qquad S_i=T_{ii}=\Stab(C_i).
\]
Then $\Gamma\subset\bigcup_{i,j}T_{ij}$.  If $T_{ij}$ is nonempty, it is a coset of $S_i$.  Hence some $i$ satisfies
\begin{equation}\label{eq:s-bound}
 s:=|\Gamma\cap S_i|\geq\frac{N^2}{r^2}.
\end{equation}
The identity component $S_i^0$ has dimension zero or one. Assume first that $S_i$ is finite.  By Lemma \ref{lem finite stabilizer} and the Hodge index theorem 
\[
s\le \frac{C_i^2}{2}\leq\frac{(L\cdot C_i)^2}{2d}\leq \frac{m^2d}{2}.
\]
Together with \eqref{eq:s-bound} and \eqref{eq:r-bound}, this yields
\[
 N^2\leq r^2s\leq\frac{m^4d^3}{2},
\]
which is the third term in \eqref{eq:zero-bound}. It remains to consider $\dim S_i^0=1$.  Put $H=(S_i^0)_{\red}$, the reduced scheme of the identity component of the $S_i$.  The orbit of any point of $C_i$ under $H$ is a closed integral curve contained in $C_i$, so $C_i$ is a translate of $H$.  It follows that $S_i=H$.  If one projection of $H$ to $E$ or $F$ is zero, then $H$ is a coordinate factor.  The graph $\Gamma$ meets a coordinate factor only at the origin, so $s=1$.  Equations \eqref{eq:s-bound} and \eqref{eq:r-bound} give $N\leq r\leq md$, the second term in \eqref{eq:zero-bound}.

Suppose both projections are nonzero.  Since $D$ is defined over $K$, elements of the Galois group permute its components.  Thus every conjugate $H^\sigma$ is the stabilizer of another component and $|\Gamma\cap H^\sigma|=s$. If no conjugate differs from $H$, then $H$ is Galois-stable. Lemma \ref{lem:elliptic-subgroup-descent} and the nonzero projections imply that $E$ and $F$ are isogenous over $K$, contrary to the hypothesis. Therefore, for some $\sigma$ we have  $H^\sigma\neq H$ and the subgroup formula inside the group $\Gamma$ of order $N^2$ gives
\[
 |\Gamma\cap H\cap H^\sigma| \ge \frac{|\Gamma\cap H||\Gamma\cap H^\sigma|}{|\Gamma|}=\frac{s^2}{N^2}\geq\frac{N^2}{r^4}.
\]
The left side is at most the intersection number $H\cdot H^\sigma$. An elliptic subgroup has self-intersection zero, so $H^2=(H^\sigma)^2=0$. The Hodge index Theorem applied to $H+H^\sigma$ gives
\[
 2d(H\cdot H^\sigma) \leq\bigl( L\cdot(H+H^\sigma) \bigr)^2 \leq(2md)^2,
\]
so $H\cdot H^\sigma\leq2m^2d$.  Consequently,
\[
 N^2\leq2m^2dr^4\leq2m^6d^5,
\]
which is the last term in \eqref{eq:zero-bound}.
\end{proof}

\subsection{Proof of the main theorem}

\begin{proof}[ Proof of Theorem \ref{thm:main}]
    
  Fix $b$ and
$p$, and set
\[
 M=48(b+1).
\]
By Proposition \ref{prop:perfect-constant}, we may work over a perfect constant field.  In
positive characteristic, Proposition \ref{prop:frob-minimal} replaces $E$ and $F$ by elliptic curves with generically separable $j$-invariants without changing prime-to-$p$ torsion.  If these new elliptic curves were isogenous, then the original curves would be isogenous as well.  We may therefore assume, when proving a bound for nonisogenous pairs, that both $j$-maps are generically separable.

Consider first a prime-power level $Q=\ell^\alpha$.  Levels $Q\leq M^2$ already satisfy a uniform bound. Suppose $Q>M^2$.  Proposition~\ref{prop:smallplaces} and Proposition~\ref{prop:diagonal-retraction} give a lowered level $Q_0$, integers $u_E,u_F\leq M$, and a set $\Sigma$ of jointly multiplicative places satisfying
\begin{equation}\label{eq:final-density}
 \deg_k\Sigma\geq \frac{h_\omega(E)+h_\omega(F)}{16(b+1)M^3}.
\end{equation}
After the scalar normalization $\psi := \phi \circ [a]$ in Proposition \ref{prop:a0 exists}, the graph $\Gamma_\psi$ is universally restricted at every $v\in\Sigma$ by the signed diagonal $\Delta^\pm_{u_E,u_F}$. Let $\lambda_E$ and $\lambda_F$ be principal polarizations on the elliptic curves $E,F$. Put $A=E\times F$ and give it the product principal polarization $\lambda:= \lambda_E \times \lambda_F$. Define 
\[
 M_\lambda:=(\operatorname{id}_A,\lambda)^*\mathcal P_A, \qquad L_A:=M_\lambda^{\otimes3}.
\]
Let $\Theta:=E\times\{0_F\}+\{0_E\}\times F$, then the product principal polarization is represented by $\mathcal O_A(\Theta)$, and $M_\lambda$ represents $2\lambda$ as in Section~1. Thus $L_A\equiv6\Theta$. The two coordinate factors have self-intersection zero and meet transversely at the origin, so $\Theta^2=2$ and $d:=L_A^2=72$. We have
\[
h_\omega(A)=h_\omega(E)+h_\omega(F)>0.
\]
Take
\[
 M_0=16(b+1)M^3.
\]
Let $\mathcal Z_M$ be the system of signed diagonals $\Delta_{u_E,u_F}^{\pm}$ with  $1\le u_E,u_F\le M$. By  Proposition \ref{prop:fm-admissible}, $\mathcal Z_M$ is level-stably admissible. Apply Theorem \ref{thm:heightgap} with abelian dimension $2$, characteristic exponent $p$, the ratio $M_0$, and the system $\mathcal Z_M$. This gives a constant $q\ge2$  depending only on $b$ and $p$ such that for the universally $\Sigma$-restricted set of torsion points $\Gamma_\psi$ we have that either $|\Gamma_\psi|< h^0(A,L^{\otimes q})$ or a nonzero section of $L^{\otimes q}_{\ol K}$ vanishes on $\Gamma_\psi$. Riemann--Roch on the abelian surface gives 
\[
 h^0(A,\cL_A^{\otimes q})=\frac{q^2d}{2} = 36q^2. 
\]

Since $\psi$ is an isomorphism on $Q_0$-torsion, the set $\Gamma_\psi$ consists of $Q_0^2$ many points. If $Q_0^2<h^0(A,L^{\otimes q})$, then $Q_0<6q$. Now assume that a nonzero section of $L^{\otimes q}_{\ol K}$ vanishes on $\Gamma_\psi$. Every point of $\Gamma_\psi$ is prime-to-$p$ torsion, hence separable. Theorem \ref{thm:descent} gives an effective divisor $D\in|L_A^{\otimes q}|$, defined over $K$, that contains $\Gamma_\psi$. By Theorem \ref{thm zero-lemma},
\[
 Q_0\le Z(q,72).
\]
Together with \eqref{eq:level-ratio}, every nonisogenous pair at a prime-power level satisfies
\[
 Q\le C_0(b,p):=\max\{M^2,Mq\sqrt{72/2},MZ(q,72)\}.
\]

Finally, let $N$ be arbitrary and prime to $p$.  For every prime power $\ell^\alpha\Vert N$, an isomorphism $E[N]\simeq F[N]$ restricts to $E[\ell^\alpha]\simeq F[\ell^\alpha]$.  Apply Lemma \ref{lem:primary-decomp} with $C_0=C_0(b,p)$ and define
\[
 C(b,p)=C_1(C_0(b,p),p).
\]
Thus a nonisogenous pair can only occur when $N\leq C(b,p)$. This finishes the proof of Theorem \ref{thm:main}.
\end{proof}

\section*{Acknowledgment}
I would like to thank my advisor, Jacob Tsimerman, for mentioning this question and the work of Looper and Yap, for his guidance and support, and for many valuable discussions. I am also grateful to Jit Wu Yap for suggesting the extension from characteristic zero to arbitrary characteristic, and for emphasizing the importance of verifiable admissibility criteria. I'd also like to thank Hamid Abban for his support during the preparation of this paper. This research has been partially funded by the Royal Society International Award $ICA\backslash R 1\backslash231019$.

 \bibliographystyle{alpha}
 \bibliography{Main}

\begin{thebibliography}{BGM22}

\bibitem[AGV72]{artin1973theorie}
Michael Artin, Alexandre Grothendieck, and Jean~Louis Verdier.
\newblock {\em Th{\'e}orie des topos et cohomologie etale des sch{\'e}mas: Tome 2}.
\newblock Springer-Verlag, 1972.

\bibitem[Ber90]{berkovich1990spectral}
Vladimir~G Berkovich.
\newblock {\em Spectral theory and analytic geometry over non-Archimedean fields}.
\newblock Number~33. American Mathematical Soc., 1990.

\bibitem[BG07]{bombieri2007heights}
Enrico Bombieri and Walter Gubler.
\newblock {\em Heights in Diophantine geometry}, volume~4.
\newblock Cambridge university press, 2007.

\bibitem[BGM22]{boucksom2022differentiability}
S{\'e}bastien Boucksom, Walter Gubler, and Florent Martin.
\newblock Differentiability of relative volumes over an arbitrary non-archimedean field.
\newblock {\em International Mathematics Research Notices}, 2022(8):6214--6242, 2022.

\bibitem[Bil99]{billingsley1999convergence}
Patrick Billingsley.
\newblock {\em Convergence of probability measures}.
\newblock wiley New York, 2nd edition, 1999.

\bibitem[BL84]{bosch1984stable}
Siegfried Bosch and Werner L{\"u}tkebohmert.
\newblock Stable reduction and uniformization of abelian varieties ii.
\newblock {\em Inventiones mathematicae}, 78(2):257--297, 1984.

\bibitem[BLR90]{BLR}
Siegfried Bosch, Werner L{\"u}tkebohmert, and Michel Raynaud.
\newblock {\em N{\'e}ron models}.
\newblock Springer Science \& Business Media, 1990.

\bibitem[BT16]{bakker2016p}
Benjamin Bakker and Jacob Tsimerman.
\newblock p-torsion monodromy representations of elliptic curves over geometric function fields.
\newblock {\em Annals of Mathematics}, pages 709--744, 2016.

\bibitem[CCO13]{chaiconoort}
Ching-Li Chai, Brian Conrad, and Frans Oort.
\newblock {\em Complex multiplication and lifting problems}, volume 195.
\newblock American Mathematical Soc., 2013.

\bibitem[CL03]{chambert2003mesures}
Antoine Chambert-Loir.
\newblock Mesures et équidistribution sur les espaces de berkovich.
\newblock {\em arXiv preprint math/0304023}, 2003.

\bibitem[FC13]{faltings2013degeneration}
Gerd Faltings and Ching-Li Chai.
\newblock {\em Degeneration of abelian varieties}.
\newblock Springer Science \& Business Media, 2013.

\bibitem[FG25]{favre2025non}
Charles Favre and Chen Gong.
\newblock Non-archimedean techniques and dynamical degenerations.
\newblock {\em Peking Mathematical Journal}, pages 1--63, 2025.

\bibitem[GP22]{GP22}
Richard Griffon and Fabien Pazuki.
\newblock Isogenies of elliptic curves over function fields.
\newblock {\em International Mathematics Research Notices}, (19):14697--14740, 2022.

\bibitem[GS23]{gubler2023monge}
Walter Gubler and Stefan Stadl{\"o}der.
\newblock Monge--amp{\`e}re measures for toric metrics on abelian varieties.
\newblock {\em Publications math\'{e}matiques de Besan\c{c}on. Alg\`{e}bre et th\'{e}orie des nombres}, pages 49--84, 2023.

\bibitem[Gub10]{gubler2010non}
Walter Gubler.
\newblock Non-archimedean canonical measures on abelian varieties.
\newblock {\em Compositio Mathematica}, 146(3):683--730, 2010.

\bibitem[Har92]{JoeHarris1992Alg.Geom}
Joe Harris.
\newblock {\em Algebraic Geometry}.
\newblock Springer New York, 1992.

\bibitem[Har13]{hartshorne2013algebraic}
Robin Hartshorne.
\newblock {\em Algebraic geometry}.
\newblock Springer Science \& Business Media, 2013.

\bibitem[KM85]{katzmazur1985}
Nicholas~M. Katz and Barry Mazur.
\newblock {\em Arithmetic Moduli of Elliptic Curves}.
\newblock Number 108 in Annals of Mathematics Studies. Princeton University Press, Princeton, NJ, 1985.

\bibitem[Loo24]{looper2024arakelov}
Nicole~R Looper.
\newblock Arakelov-green's functions for dynamical systems on projective varieties.
\newblock {\em arXiv preprint arXiv:2404.06981}, 2024.

\bibitem[LR19]{lozano2019number}
{\'A}lvaro Lozano-Robledo.
\newblock {\em Number theory and geometry: An introduction to arithmetic geometry}, volume~35.
\newblock American Mathematical Soc., 2019.

\bibitem[LY26]{looper2026uniform}
Nicole Looper and Jit~Wu Yap.
\newblock Uniform boundedness of small points on abelian varieties over function fields.
\newblock {\em arXiv preprint arXiv:2603.23396}, 2026.

\bibitem[MG78]{MazurGoldfeld1978}
B.~Mazur and D.~Goldfeld.
\newblock Rational isogenies of prime degree.
\newblock {\em Inventiones Mathematicae}, 44(2):129–162, 1978.

\bibitem[Mil86]{milne1986abelian}
James~S Milne.
\newblock Abelian varieties.
\newblock {\em Arithmetic geometry}, pages 103--150, 1986.

\bibitem[Mum70]{mumford1974abelian}
David Mumford.
\newblock {\em Abelian varieties}, volume~5.
\newblock Oxford university press Oxford, 1970.

\bibitem[Poo13]{poonen2013extending}
Bjorn Poonen.
\newblock Extending self-maps to projective space over finite fields.
\newblock {\em Documenta Mathematica}, 18:1039--1044, 2013.

\bibitem[PS00]{pesenti2000inegalite}
J{\'e}r{\^o}me Pesenti and Lucien Szpiro.
\newblock In{\'e}galit{\'e} du discriminant pour les pinceaux elliptiques {\`a} r{\'e}ductions quelconques.
\newblock {\em Compositio Mathematica}, 120(1):83--117, 2000.

\bibitem[Sek77]{sekiguchi1977projective}
Tsutomu Sekiguchi.
\newblock On projective normality of abelian varieties ii.
\newblock {\em Journal of the Mathematical Society of Japan}, 29(4):709--727, 1977.

\bibitem[Sil94]{SilvermanAdvanced}
Joseph~H. Silverman.
\newblock {\em Advanced Topics in the Arithmetic of Elliptic Curves}, volume 151 of {\em Graduate Texts in Mathematics}.
\newblock Springer, New York, 1994.

\bibitem[Sil09]{silverman2009arithmetic}
Joseph~H Silverman.
\newblock {\em The arithmetic of elliptic curves}, volume 106.
\newblock Springer, 2009.

\bibitem[ST68]{serre1971good}
Jean-Pierre Serre and John Tate.
\newblock Good reduction of abelian varieties.
\newblock {\em Annals of Mathematics}, 88(3):492--517, 1968.

\bibitem[Sti09]{stichtenoth2009algebraic}
Henning Stichtenoth.
\newblock {\em Algebraic function fields and codes}.
\newblock Springer, 2009.

\bibitem[Ulm11]{ulmer2011}
Douglas Ulmer.
\newblock Park {C}ity lectures on elliptic curves over function fields, 2011.

\end{thebibliography}
 
\end{document}